\documentclass[a4paper,english,11pt]{article}% or something else
\usepackage{pdfpages}
\usepackage[T1]{fontenc}
\usepackage[utf8]{inputenc}
\usepackage{babel}
\usepackage{amsmath}
\usepackage{amssymb}
\usepackage{amsthm}
\usepackage{textcomp}
\usepackage{enumitem}
\usepackage{dsfont}
\usepackage{mathrsfs}
\usepackage{cite}
\usepackage{color}
\usepackage{breqn}
\usepackage[colorlinks=true,urlcolor=blue,linkcolor=blue]{hyperref}

%Pour voir les labels
%\usepackage{refcheck}
%%\usepackage[notcite,notref,color]{showkeys}

\setlength{\topmargin}{-0.65cm}
\setlength{\headheight}{0pt}
\setlength{\headsep}{0pt}
\setlength{\topskip}{0pt}
\setlength{\textheight}{247mm}
\setlength{\oddsidemargin}{-0.76cm}
\setlength{\marginparsep}{0pt}
\setlength{\textwidth}{175mm} 
\setlength{\marginparwidth}{0pt}

\newcommand{\R}{\mathbb{R}}

\newcommand{\N}{\mathbb{N}}
\newcommand{\C}{\mathbb{C}}
\renewcommand{\S}{\mathbb{S}}
\newcommand{\T}{\mathbb{T}}
\newcommand{\F}{\mathscr{F}}
\newcommand{\holo}{\textnormal{Hol}}

\newcommand{\heis}{\mathbb{H}^1}
\newcommand{\cayley}{\mathcal{C}}
\newcommand{\hlapl}{\Delta_{\heis}}
\newcommand{\Dlapl}{\mathcal{D}}
\newcommand{\Jc}{J_\mathcal{C}}
\newcommand{\harm}{\textnormal{Ha}}
\newcommand{\id}{\textnormal{Id}}
\newcommand{\sh}{\textnormal{sinh}}
\newcommand{\ch}{\textnormal{cosh}}
\newcommand{\classeC}{\mathcal{C}}
\renewcommand{\L}{\mathcal{L}}
\renewcommand{\Im}{\textnormal{Im}}
\renewcommand{\Re}{\textnormal{Re}}
\newcommand{\un}{\mathds{1}}
\newcommand{\hardy}{\mathcal{H}}
\newcommand{\scw}{\mathscr{S}}
\newcommand{\half}{\frac{1}{2}}
\newcommand{\V}{\mathcal{V}}
\newcommand{\Qbeta}{\mathcal{Q}_{\beta}}
\newcommand{\Qplus}{\mathcal{Q}_{+}}
\newcommand{\Res}{\textnormal{Res}}
\newcommand{\Vect}{\textnormal{Vect}}

\renewcommand\d{\,{\mathrm d}}
\newcommand\e{\,{\mathrm e}}
\newcommand{\gdO}{{\mathcal O}}

\newcommand{\longrightarroww}[2] {\mathop{\longrightarrow}\limits_{#1}^{#2}}

\newtheorem{mydef}{Definition}[section]
\newtheorem{thm}[mydef]{Theorem}
\newtheorem{lem}[mydef]{Lemma}
\newtheorem{prop}[mydef]{Proposition}
\newtheorem{cor}[mydef]{Corollary}

\usepackage{authblk}

\title{On the radially symmetric traveling waves for the Schrödinger equation on the Heisenberg group.}
\author{Louise Gassot}
\date{}

\newcommand{\Addresses}{{
  \bigskip
  \footnotesize
\noindent
  \textsc{Louise Gassot}, \textit{Laboratoire de Mathématiques d'Orsay, Bâtiment 307, Université Paris-Sud (XI), 91405 Orsay Cedex, France}\par\nopagebreak
  \noindent
  \textit{E-mail address :} \texttt{louise.gassot@math.u-psud.fr}
}}

\begin{document}
\maketitle
\abstract{
We consider radial solutions to the cubic Schrödinger equation on the Heisenberg group
$$
i\partial_t u-\hlapl u=|u|^2u, \quad
	\hlapl=\frac{1}{4}(\partial_x^2+\partial_y^2)+(x^2+y^2)\partial_s^2, \quad
	(t,x,y,s)\in \R\times\heis.
$$
This equation is a model for totally non-dispersive evolution equations. We show existence of ground state traveling waves with speed $\beta\in (-1,1)$. When the speed $\beta$ is sufficiently close to $1$, we prove their uniqueness up to symmetries and their smoothness along the parameter $\beta$. The main ingredient is the emergence of a limiting system as $\beta$ tends to the limit $1$, for which we establish linear stability of the ground state traveling wave.
}

\tableofcontents

%\newpage
%%%%%%%%%%%%%%%%%%%%%%%%%%%%%%%%%%%%%%%%%%%%%%%%%%%%%%%%%%%%%%%%%%%%%%%%%%%%%%%%%%%%%%%%%%%%%%%%%%%%%%%%%%%%%%%%%%%%%%%%%%%%%%%%%%%%%%%%%%%%%%%%%
\section{Introduction}
%%%%%%%%%%%%%%%%%%%%%%%%%%%%%%%%%%%%%%%%%%%%%%%%%%%%%%%%%%%%%%%%%%%%%%%%%%%%%%%%%%%%%%%%%%%%%%%%%%%%%%%%%%%%%%%%%%%%%%%%%%%%%%%%%%%%%%%%%%%%%%%%%

%\newpage
%%%%%%%%%%%%%%%%%%%%%%%%%%%%%%%%%%%%%%%%%%%%%%%%%%%%%%%%%%%%%%%%%%%%%%%%%%%%%%%%%%%%%%%%%%%%%%%%%%%%%%%%%%%%%%%%%%%%%%%%%%%%%%%%%%%%%%%%%%%%%%%%%
\subsection{Dispersion for non-linear Schrödinger equations}
%%%%%%%%%%%%%%%%%%%%%%%%%%%%%%%%%%%%%%%%%%%%%%%%%%%%%%%%%%%%%%%%%%%%%%%%%%%%%%%%%%%%%%%%%%%%%%%%%%%%%%%%%%%%%%%%%%%%%%%%%%%%%%%%%%%%%%%%%%%%%%%%%

In this paper, we consider the cubic focusing Schrödinger equation on the Heisenberg group
\begin{equation}\label{eq:H}
i\partial_t u-\hlapl u=|u|^2u, \quad (t,x,y,s)\in\R\times\heis,
\end{equation}
where $\hlapl$ denotes the sub-Laplacian on the Heisenberg group. When the solution is radial, in the sense that it only depends on $t$, $|x+iy|$ and $s$, the sub-Laplacian writes
$$
\hlapl =\frac{1}{4}(\partial_x^2+\partial_y^2)+(x^2+y^2)\partial_s^2.
$$
The Heisenberg group is a typical case of geometry where dispersive properties of the non-linear Schrödinger equation disappear. Let us recall the motivation for this setting.

Fix a Riemannian manifold $M$, and denote by $\Delta$ the Laplace operator associated to the metric $g$ on $M$. As observed by Burq, Gérard and Tzvetkov \cite{Burq2005}, qualitative properties of the solutions to the non-linear Schrödinger equation
$$
i\partial_t u-\Delta u=|u|^2u, \quad (t,x)\in\R\times M
$$
are strongly influenced by the underlying geometry of the manifold $M$. When some loss of dispersion occurs, for example in the spherical geometry, a condition for well-posedness of the Cauchy problem in $H^s(M)$ is that $s$ must be larger than a critical parameter.
%In order to treat the Cauchy problem, the usual method is to solve it first locally via a fixed point argument, and then to globalize in time thanks to conservation laws . However (see \cite{Burq2005}, remark 2.12), the existence of a smooth local in time flow map on the Sobolev space $H^s(M)$ implies the following Strichartz-type esimate
%$$
%\|\e^{it\Delta}f\|_{L^4([0,1]\times M}\leq C\|f\|_{H^{\frac{s}{2}}(M)},
%$$
%which is a sign of dispersion. On the Euclidean space $M=\R^d$,...

To take it further, on sub-Riemannian manifolds, Bahouri, Gérard and Xu \cite{BahouriGerardXu2000} noticed that the dispersion properties totally disappear for the sub-Laplacian on the Heisenberg group, leaving the existence and uniqueness of smooth global in time solutions as an open problem. In \cite{DelHierro2005}, Del Hierro analyzed the dispersion properties on H-type groups, proving sharp decay estimates for the Schrödinger equation depending on the dimension of the center of the group. More generally, Bahouri, Fermanian and Gallagher \cite{BahouriFermanianGallagher2016} proved optimal dispersive estimates on stratified Lie groups of step $2$ under some property of the canonical skew-symmetric form. In contrast, they also give a class of groups without this property displaying total lack of dispersion, which includes the Heisenberg group.

In this spirit, Gérard and Grellier introduced the cubic Szeg\H{o} equation on the torus \cite{GerardGrellier2008,GerardGrellier2010} as a simpler model of non-dispersive Hamiltonian equation in order to better understand the situation on the Heisenberg group. The cubic Szeg\H{o} equation was then studied on the real line by Pocovnicu \cite{Pocovnicu2011}, where it writes
$$
i\partial_t u=\Pi(|u|^2u), \quad (t,x)\in\R\times\R,
$$
$\Pi:L^2(\R)\to L^2_+(\R)$ being the Szeg\H{o} projector onto the space $L^2_+(\R)$ of fonctions in $L^2(\R)$ with non-negative frequencies. The cubic Szeg\H{o} equation displays a strong link with the mass-critical half-wave equation on the torus \cite{GerardGrellier2012} resp. on the real line \cite{KriegerLenzmannRaphael2013}. On the real line, the cubic focusing half-wave equation writes
$$
i\partial_t u+|D|u=|u|^2u, \quad (t,x)\in\R\times \R,
$$
where $D=-i\partial_x$, $\widehat{|D|f}(\xi)=|\xi|\widehat{f}(\xi)$. Some of the interactions between the Szeg\H{o} equation and the half-wave equation will be detailed below, because they can be transferred  to the setting of the Heisenberg group.

%\newpage
%%%%%%%%%%%%%%%%%%%%%%%%%%%%%%%%%%%%%%%%%%%%%%%%%%%%%%%%%%%%%%%%%%%%%%%%%%%%%%%%%%%%%%%%%%%%%%%%%%%%%%%%%%%%%%%%%%%%%%%%%%%%%%%%%%%%%%%%%%%%%%%%%
\subsection{Traveling waves and limiting profiles}
%%%%%%%%%%%%%%%%%%%%%%%%%%%%%%%%%%%%%%%%%%%%%%%%%%%%%%%%%%%%%%%%%%%%%%%%%%%%%%%%%%%%%%%%%%%%%%%%%%%%%%%%%%%%%%%%%%%%%%%%%%%%%%%%%%%%%%%%%%%%%%%%%

Constructing traveling wave solutions which are weak global solutions in the energy space can be obtained by a classical variational argument. For example, this technique was used to study the famous focusing mass-critical NLS problem
$$
i\partial_t u-\Delta u=|u|^{\frac{4}{n}}u, \quad (t,x)\in\R\times\R^n.
$$
From Weinstein's work \cite{Weinstein1983}, the existence of a ground state positive solution $Q\in H^1(\R^n)$ to
$$
\Delta Q-Q+Q^{1+\frac{4}{n}}=0,
$$
leads to a criterion for global existence of solutions in $H^1(\R^n)$. The uniqueness of this ground state (up to symmetries) holds \cite{GidasNiNirenberg1979,Kwong1989}.

Concerning the half-wave equation, the Cauchy problem is locally well-posed in the energy space $H^{\half}(\R)$\cite{GerardGrellier2012,KriegerLenzmannRaphael2013}. Moreover, one also gets a global existence criterion, derived from the existence of a 
%Consider the Gagliardo-Nirenberg inequality
%$$
%\|u\|_{L^4(\R)}^4\leq C\||D|^{\half}u\|_{L^2(\R)}^2\|u\|_{L^2(\R)}^2
%$$
%which is attained on a unique (up to symmetries) 
unique \cite{FrankLenzmann2013} ground state positive solution $Q\in H^{\half}(\R)$ to
$$
|D|Q+Q-Q^3=0.
$$
%Then, the fact that $u(0,\cdot)\in H^{\half}(\R)$ and $\|u(0,\cdot)\|_{L^2(\R)}<\|Q\|_{L^2(\R)}$ implies that the solution $u$ is global.
Contrary to the mass-critical Schrödinger equation on $\R^n$, the half-wave equation admits mass-subcritical traveling waves with speed $\beta\in(-1,1)$ (see Krieger, Lenzmann and Raphaël \cite{KriegerLenzmannRaphael2013})
$$
u(t,x)=Q_\beta\Big(\frac{x+\beta t}{1-\beta}\Big)\e^{-it}.
$$
The profile $Q_\beta$ is a solution to
$$
\frac{|D|-\beta D}{1-\beta}Q_\beta+Q_\beta=|Q_\beta|^2Q_\beta.
$$
Moreover, it satisfies
$$
\lim_{\beta\to0}\|Q_\beta-Q\|_{H^\half(\R)}=0 \quad \textnormal{and}\quad \|Q_\beta\|_{L^2(\R)}<\|Q\|_{L^2(\R)}.
$$
While the existence of the profiles $Q_\beta$ follows from a standard variational argument, their uniqueness is more delicate to prove. This can be done through the study of the  photonic limit $\beta\to1$ as follows. It has been shown \cite{GerardLenzmannPocovnicuRaphael2018} that the traveling waves converge as $\beta$ tends to $1$ to a solution of the cubic Szeg\H{o} equation. More precisely, $(Q_\beta)_\beta$ converges in $H^\half(\R)$ to a profile $Q_+$, which is a ground state solution to
$$
DQ_++Q_+=\Pi(|Q_+|^2Q_+), \quad D=-i\partial_x.
$$
From $Q_+$, we recover a traveling wave solution to the cubic Szeg\H{o} equation by setting
$$
u(t,x)=Q_+(x-t)\e^{-it}.
$$
But Pocovnicu showed \cite{Pocovnicu2011} that the traveling waves $u$ are unique up to symmetries, and that $Q_+$ must have the form
$$
Q_+(x)=\frac{2}{2x+i}.
$$
Moreover, the linearized operator around $Q_+$ is coercive \cite{Pocovnicu2012}, and in particular, the Szeg\H{o} profile is orbitally stable. Gérard, Lenzmann, Pocovnicu and Raphaël \cite{GerardLenzmannPocovnicuRaphael2018} deduced the invertibility of the linearized operator for the half-wave equation around the profiles $Q_\beta$ when $\beta$ is close enough to $1$, which leads to their uniqueness up to symmetries. This allowed them to define a smooth map of solutions $\beta\mapsto Q_\beta$ on a neighbourhood of $1$. 

On the Heisenberg group, one can also construct a family of traveling waves with speed $\beta\in(-1,1)$ under the form
$$
u(t,x,y,s)=Q_\beta\Big(\frac{x}{\sqrt{1-\beta}},\frac{y}{\sqrt{1-\beta}},\frac{s+\beta t}{1-\beta}\Big).
$$
The profile $Q_\beta$ satisfies the following stationary hypoelliptic equation
\begin{equation}\label{eq:Hbeta}
-\frac{\hlapl +\beta D_s}{1-\beta}Q_\beta=|Q_\beta|^2Q_\beta.
\end{equation}
There exist ground state solutions, constructed as optimizers for some Gagliardo-Nirenberg inequalities derived from the Folland-Stein embedding $\dot{H}^1(\heis)\hookrightarrow L^4(\heis)$ \cite{Folland1974}. The proof of existence relies on a concentration-compactness argument, which first appeared in the work of Cazenave and Lions \cite{Cazenave1982} and was refined into a profile decomposition theorem on $\R^n$ by Gérard \cite{Gerard1998}. The profile decomposition theorem was then adapted to the Heisenberg group by Benameur~\cite{Benameur2008}.

Our purpose is to show the  uniqueness of the profiles $Q_\beta$ when their speed $\beta$ is close to $1$ up to some symmetries. Following the strategy deployed on the half-wave equation, we derive a limiting system in the photonic limit $\beta\to 1$. We then determine all ground states solutions to the limiting system and prove their linear stability. From the linear stability of the limiting ground states, we recover the uniqueness of the profiles $Q_\beta$ up to symmetries when their speed $\beta$ is close to $1$.

%\newpage
%%%%%%%%%%%%%%%%%%%%%%%%%%%%%%%%%%%%%%%%%%%%%%%%%%%%%%%%%%%%%%%%%%%%%%%%%%%%%%%%%%%%%%%%%%%%%%%%%%%%%%%%%%%%%%%%%%%%%%%%%%%%%%%%%%%%%%%%%%%%%%%%%
\subsection{Main results}
%%%%%%%%%%%%%%%%%%%%%%%%%%%%%%%%%%%%%%%%%%%%%%%%%%%%%%%%%%%%%%%%%%%%%%%%%%%%%%%%%%%%%%%%%%%%%%%%%%%%%%%%%%%%%%%%%%%%%%%%%%%%%%%%%%%%%%%%%%%%%%%%%

The Schrödinger equation on the Heisenberg group \eqref{eq:H} enjoys the following symmetries~: if $u$ is a solution, then
\begin{itemize}
\item for all $s_0\in\R$, $(t,x,y,s)\mapsto u(t,x,y,s+s_0)$ is a solution (translation in $s$);
\item for all $\theta\in\T$, $(t,x,y,s)\mapsto \e^{i\theta}u(t,x,y,s)$ is a solution (phase multiplication);
\item for all $\lambda\in\R$, $(t,x,y,s)\mapsto \lambda u(\lambda^2t,\lambda x,\lambda y,\lambda^2s)$ is a solution (scaling).
\end{itemize}

Our main result is the uniqueness of the ground states $Q_\beta$ when $\beta$ is close to $1$.

\begin{thm}\label{thm:main_intro}
There exists $\beta_*\in(0,1)$ such that the following holds. For all $\beta\in(\beta_*,1)$, there is a unique ground state up to symmetries  to \eqref{eq:Hbeta}
$$
-\frac{\hlapl +\beta D_s}{1-\beta}Q_\beta=|Q_\beta|^2Q_\beta.
$$
Denote by $Q_\beta$ this ground state, then the set of all ground state solutions of the above equation can be described as
$$
\{T_{s_0,\theta,\alpha}Q_\beta:(x,y,s)\mapsto \e^{i\theta}\alpha Q_\beta(\alpha x,\alpha y,\alpha^2 (s+s_0)); (s_0,\theta,\alpha)\in \R\times\T\times\R_+^* \}.
$$
For $\beta\in(\beta_*,1)$, $Q_\beta$ can be chosen such that it tends as $\beta$ tends to $1$ to the profile
$$
Q_+:(x,y,s)\in\heis\mapsto \frac{\sqrt{2}i}{s+i(x^2+y^2)+i},
$$
and so that the map $\beta\in(\beta_*,1)\mapsto Q_\beta\in\dot{H}^1(\heis)$ is smooth. Moreover, for all $\gamma\in(0,\frac{1}{4})$ and all $k\in[1,+\infty)$, $Q_\beta$ lies in $\dot{H}^k(\heis)$, and as $\beta$ tends to $1$,
$$
\|Q_\beta-Q_+\|_{\dot{H}^k(\heis)}=\gdO((1-\beta)^\gamma).
$$
\end{thm}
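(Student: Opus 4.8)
The plan is to follow the photonic-limit strategy that succeeds for the half-wave equation, replacing the Szegő projector on the line by its analogue on the Heisenberg group. First I would secure the existence of a ground state $Q_\beta$ for each $\beta\in(-1,1)$ by the standard concentration--compactness / profile-decomposition argument (Benameur's version on $\heis$, \cite{Benameur2008}) applied to the Gagliardo--Nirenberg functional attached to \eqref{eq:Hbeta}. The entry point for uniqueness is the \emph{limiting equation}. Writing $\beta D_s=D_s-(1-\beta)D_s$, equation \eqref{eq:Hbeta} becomes
$$
-\frac{\hlapl+D_s}{1-\beta}Q_\beta+D_sQ_\beta=|Q_\beta|^2Q_\beta,
$$
so any limit $Q_+$ of $Q_\beta$ as $\beta\to1$ must lie in the kernel $\hardy$ of $\hlapl+D_s$ and solve the Szegő-type equation $D_sQ_+=\Pi_{\hardy}(|Q_+|^2Q_+)$, where $\Pi_{\hardy}$ is the orthogonal projection onto $\hardy$; one checks directly that the announced profile satisfies $(\hlapl+D_s)Q_+=0$. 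I would make the convergence $Q_\beta\to Q_+$ precise in $\dot H^1(\heis)$ using the uniform bounds furnished by the variational characterisation, exactly as in \cite{GerardLenzmannPocovnicuRaphael2018}.

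The heart of the analysis is the limiting system. Taking the partial Fourier transform in $s$ diagonalises $\hlapl+D_s$ against the eigenfunctions of the rescaled harmonic oscillator $-\tfrac14(\partial_x^2+\partial_y^2)+\tau^2(x^2+y^2)$ (the Landau levels), and $\hardy$ turns out to consist of the functions supported on positive frequencies $\tau$ and on the lowest level only. On this space the Cayley transform $\cayley$ sends $Q_+$ to an explicit rational (holomorphic) profile, so that $\hardy$ becomes a concrete Hardy space on which the nonlinear problem is tractable. Within this model I would first identify \emph{all} ground states of the limiting equation and show that, up to the symmetries (phase $\theta$, translation $s_0$, scaling $\alpha$), they reduce to the single profile $Q_+$, the analogue of Pocovnicu's rigidity $Q_+(x)=\tfrac{2}{2x+i}$ on the line.

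The main obstacle, and the technical core of the work, is the \emph{linear stability} of $Q_+$: I must show that the real-linear operator $\L_+$ obtained by linearising $D_sQ-\Pi_{\hardy}(|Q|^2Q)$ at $Q_+$ is coercive on the orthogonal complement of its kernel, the latter being spanned exactly by the three infinitesimal symmetries $iQ_+$, $\partial_sQ_+$ and $(1+x\partial_x+y\partial_y+2s\partial_s)Q_+$. I expect the only workable route to be the explicit one: conjugate $\L_+$ through $\cayley$ to the Hardy model, where $Q_+$ is elementary and $\L_+$ becomes a Toeplitz-type operator whose spectrum can be computed, or at least bounded away from $0$ off the symmetry directions, by hand (compare \cite{Pocovnicu2012}). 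Establishing that the kernel is \emph{exactly} three-dimensional, with no extra neutral mode hidden in the higher Landau levels, is the delicate point; controlling those levels is precisely what has no counterpart in the one-dimensional half-wave analysis.

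Finally I would transfer this stability to $\beta<1$. Writing the profile equation as $F(\beta,Q)=0$ on the appropriate function space and noting that the linearisation of $F$ at $(1,Q_+)$ is $\L_+$, the coercivity just established makes $\L_+$ invertible transversally to the symmetry group; by continuity the linearised operator at $Q_\beta$ stays invertible for $\beta$ close to $1$. The implicit function theorem then yields a smooth branch $\beta\mapsto Q_\beta\in\dot H^1(\heis)$ together with local uniqueness up to symmetries, and the variational characterisation (all ground states concentrate near $Q_+$ as $\beta\to1$) upgrades this into the full uniqueness and the orbit description $\{T_{s_0,\theta,\alpha}Q_\beta\}$ claimed. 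The higher regularity $Q_\beta\in\dot H^k(\heis)$ follows by bootstrapping \eqref{eq:Hbeta}, and the rate $\|Q_\beta-Q_+\|_{\dot H^k(\heis)}=\gdO((1-\beta)^\gamma)$ for $\gamma\in(0,\tfrac14)$ comes from interpolating the uniform high-regularity bounds against the $\dot H^1$ convergence rate supplied by the smooth branch.
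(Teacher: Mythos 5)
Your overall skeleton matches the paper's: existence by concentration--compactness, identification of the limiting Szeg\H{o}-type system on the kernel of $\hlapl+D_s$ (the positive-frequency lowest Landau level $V_0^+$), explicit classification of the limiting ground states, coercivity of the linearized operator at $Q_+$ via the Cayley transform to the CR sphere, and then a transfer to $\beta<1$. The kernel you announce, $\Vect_{\R}(iQ_+,\partial_sQ_+,(1+x\partial_x+y\partial_y+2s\partial_s)Q_+)$, is the same as the paper's $\Vect_{\R}(iQ_+,\partial_sQ_+,Q_++2i\partial_sQ_+)$, since for this profile the scaling generator coincides (up to sign) with $Q_++2i\partial_sQ_+$.

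However, there is a genuine gap in your final step. You propose to write the profile equation as $F(\beta,Q)=0$, observe that ``the linearisation of $F$ at $(1,Q_+)$ is $\L_+$,'' and conclude ``by continuity'' that $\L_{Q_\beta}$ stays invertible for $\beta$ near $1$. This cannot work as stated: since
$$
-\frac{\hlapl+\beta D_s}{1-\beta}=-\frac{\hlapl+D_s}{1-\beta}+D_s,
$$
the operator blows up like $(1-\beta)^{-1}$ on $\bigoplus_{(n,\pm)\neq(0,+)}V_n^{\pm}$, so $F$ is not even continuous in $\beta$ at $\beta=1$ on $\dot H^1(\heis)$, the limiting operator $\L_+$ is only defined on the constrained subspace $V_0^+$, and $\L_{Q_\beta}-\L_+$ is in no sense a small perturbation. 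The paper instead proves invertibility of $\L_{Q_\beta}$ \emph{directly} (Proposition \ref{prop:estimates_Lbeta_invertible}): decompose $h=h^++h_\perp$ along $V_0^+$ and its complement, use the coercivity of $\L$ on $V_0^+$ for $h^+$, and exploit the fact that the singular part is not an enemy but a friend --- it is coercive with the large constant $\tfrac{1}{2(1-\beta)}$ on the complement (inequality \eqref{eq:normesV0perp}) --- while the cross terms $2\Pi_0^+((|Q_\beta|^2-|Q_+|^2)h^+)$, $\Pi_0^+(|Q_\beta|^2h_\perp)$, etc.\ are absorbed using uniform bounds on $Q_\beta$. The implicit function theorem is then applied only for each fixed $\beta\in(\beta_*,1)$, never at $\beta=1$.

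A second, related underestimate is your phrase ``the higher regularity follows by bootstrapping'' and ``the rate comes from interpolating \ldots against the $\dot H^1$ convergence rate supplied by the smooth branch.'' The absorption of the cross terms above, and the uniform $\dot H^k$ bounds needed for interpolation, require bounds on $Q_\beta$ in $L^p(\heis)$ for $p>2$ (indeed in $L^\infty$) that are \emph{uniform in} $\beta$; these do not follow from the energy bound ($\dot H^1\hookrightarrow L^4$ only) nor from a naive bootstrap of the degenerating equation. The paper devotes Section \ref{subsection:regularity_decay} to this, via the fundamental solution $m_\beta$, Lorentz-space Young/H\"older inequalities, and a duality--contradiction argument. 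Also, the $\dot H^1$ rate $\gdO((1-\beta)^{1/4})$ does not come from the smooth branch (whose derivative $\dot{Q_\beta}$ degenerates as $\beta\to1$, since $\partial_\beta F\sim(1-\beta)^{-2}$); it comes from the quantitative variational stability: $\delta(Q_\beta)=\gdO((1-\beta)^{1/2})$ (Lemma \ref{lem:delta_Qbeta}) combined with Corollaries \ref{cor:estimates_delta}--\ref{cor:estimates_stability}.
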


We refer to Theorem \ref{thm:description_Qbeta} for a more precise statement.

We now briefly present the emergence of the profile $Q_+$ as a ground state solution to a limiting system, and the key ingredient for the proof of Theorem \ref{thm:main_intro} which relies on the study of the limiting geometry.

We are interested in solutions with values in the homogeneous energy space $\dot{H}^1(\heis)$, which is a Hilbert space endowed with the real scalar product
$$
(u,v)_{\dot{H}^1(\heis)}=\Re\left(\int_{\heis}-\hlapl u(x,y,s)\overline{v(x,y,s)}\d x\d y\d s\right).
$$
For $u\in\dot{H}^{-1}(\heis)$ and $v\in \dot{H}^1(\heis)$, we will also make use of the duality product
$$
(u,v)_{\dot{H}^{-1}(\heis)\times\dot{H}^1(\heis)}
	=\Re\left(\int_{\heis}u(x,y,s)\overline{v(x,y,s)}\d x\d y\d s\right).
$$

Up to the three symmetries (translation, phase multiplication, scaling), one can show the convergence as $\beta$ tends to $1$ of the profiles $Q_\beta$  to some profile $Q_+$ in $\dot{H}^1(\heis)$. Then, $Q_+$ is a ground state solution to
\begin{equation}\label{eq:Hplus}
D_s Q_+=\Pi_0^+(|Q_+|^2Q_+), \quad D_s=-i\partial_s.
\end{equation}
The operator $\Pi_0^+$ is an orthogonal projector from $L^2(\heis)$ onto a subspace $L^2(\heis)\cap V_0^+$, which will be defined in part \ref{subsection:hermite}. In order to study this projector and the space $L^2(\heis)\cap V_0^+$, we introduce a link between the space $L^2(\heis)\cap V_0^+$ and the Bergman space $L^2(\C_+)\cap\holo(\C_+)$ on the complex upper half-plane \cite{bergman_projectors}. The orthogonal projection $\Pi_0^+$ from $L^2(\heis)$ onto $L^2(\heis)\cap V_0^+$ then matches with a Bergman projector. This projection is a simplification of the usual Cauchy-Szeg\H{o} projector for the Heisenberg group in the radial case.

A salutary fact is that the profile $Q_+$ can be determined explicitly, and is unique up to symmetry~:
$$
Q_+(x,y,s)=\frac{\sqrt{2}i}{s+i(x^2+y^2)+i}.
$$

Our key result is the coercivity of the linearized operator $\L$ around $Q_+$ on the orthogonal of a finite-dimensional manifold in some subspace $\dot{H}^1(\heis)\cap V_0^+$ of $\dot{H}^1(\heis)$ (cf. part \ref{subsection:hermite}). On $\dot{H}^1(\heis)\cap V_0^+$, the linearized operator $\L$ around $Q_+$ is defined by
$$
\L h=D_s h-2\Pi_0^+(|Q_+|^2h)-\Pi_0^+(Q_+^2\overline{h}).
$$
\begin{thm}\label{thm:coercivityL_intro}
For some constant $c>0$, the following holds. Let $h\in \dot{H}^1(\heis)\cap V_0^+$, and suppose $h$ orthogonal to the directions $Q_+,iQ_+,\partial_sQ_+$ and $i\partial_sQ_+$ in the Hilbert space $\dot{H}^1(\heis)$. Then
$$
(\L h,h)_{\dot{H}^{-1}(\heis)\times\dot{H}^1(\heis)}
	\geq c\|h\|_{\dot{H}^1(\heis)}^2.
$$
\end{thm}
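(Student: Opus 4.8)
The plan is to transport the entire problem to the half-line representation furnished by the spectral decomposition of $V_0^+$, where $\L$ becomes an explicit real-linear operator that can be diagonalized by hand. Using the identification of part \ref{subsection:hermite}, a radial $u\in V_0^+$ is encoded by a profile $g$ through $\hat u(x,y,\lambda)=g(\lambda)\e^{-\lambda(x^2+y^2)}\un_{\lambda>0}$ (the lowest Hermite mode at positive frequency), equivalently $u(x,y,s)=\int_0^{+\infty}g(\lambda)\e^{i\lambda(s+i(x^2+y^2))}\d\lambda=F(s+i(x^2+y^2))$, a holomorphic function on $\C_+$. First I would record that on $V_0^+$ the sub-Laplacian reduces to $-\hlapl=D_s=$ multiplication by $\lambda$, so that $\|u\|_{\dot H^1(\heis)}^2$ equals, up to a positive constant, the weighted norm $\int_0^{+\infty}\lambda|g(\lambda)|^2\d\lambda$; a direct computation then gives the dictionary entries $Q_+\leftrightarrow\sqrt2\,\e^{-\lambda}$, $iQ_+\leftrightarrow i\sqrt2\,\e^{-\lambda}$, $\partial_sQ_+\leftrightarrow i\sqrt2\,\lambda\e^{-\lambda}$, $i\partial_sQ_+\leftrightarrow-\sqrt2\,\lambda\e^{-\lambda}$, and shows that the generator of the scaling symmetry is $-Q_+-2i\partial_sQ_+$. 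In particular the three symmetry directions (phase, $s$-translation, scaling) already span a three-dimensional subspace of $\mathrm{Vect}_\R\{Q_+,iQ_+,\partial_sQ_+,i\partial_sQ_+\}$.

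Next I would rewrite the quadratic form $(\L h,h)_{\dot H^{-1}(\heis)\times\dot H^1(\heis)}$ in the variable $g$. The term $D_sh$ contributes $\int_0^{+\infty}\lambda|g|^2\d\lambda$, while the two nonlinear terms $\Pi_0^+(|Q_+|^2h)$ and $\Pi_0^+(Q_+^2\overline h)$ become explicit integral operators: projecting the triple products back onto $V_0^+$ amounts, in the $\lambda$ variable, to convolution-type operations against the profiles of $Q_+$ and $Q_+^2$, which are the elementary functions $\e^{-\lambda}$ and $2\lambda\e^{-\lambda}$. I expect the form to read
$$
(\L h,h)_{\dot H^{-1}(\heis)\times\dot H^1(\heis)}=\int_0^{+\infty}\lambda|g|^2\d\lambda-\mathcal A(g,g)-\Re\,\mathcal B(g,g),
$$
with $\mathcal A$ a Hermitian form coming from the $|Q_+|^2$ multiplier and $\mathcal B$ a bilinear form coming from the antilinear term $\Pi_0^+(Q_+^2\overline h)$, both given by closed-form kernels in $\lambda$.

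The heart of the argument is an exact diagonalization. I would expand $g=\sum_{n\ge0}a_n\phi_n$ in the Laguerre basis $\phi_n(\lambda)=\frac{2}{\sqrt{n+1}}L_n^{(1)}(2\lambda)\e^{-\lambda}$, orthonormal for the weight $\lambda\d\lambda$, for which $Q_+$ is the lowest mode $\phi_0$ and $\partial_sQ_+$ lies in $\mathrm{Vect}\{\phi_0,\phi_1\}$. Inserting this expansion turns the form into $\sum_{n,m}A_{nm}a_n\overline{a_m}+\Re\sum_{n,m}B_{nm}a_na_m$, whose coefficients I compute from standard Laguerre integrals. The feature to establish is that the matrices $(A_{nm})$ and $(B_{nm})$ are banded, so that $\L$ decouples into small blocks; on each block I compute the eigenvalues explicitly, treating the real-linear (not complex-linear) nature induced by $\Pi_0^+(Q_+^2\overline h)$ by separating the real and imaginary parts of the $a_n$, exactly as in the Grillakis--Shatah--Strauss analysis of the linearized NLS operator. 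The target statement is that $\L$ has a finite, explicitly located set of non-positive eigenvalues whose eigenspace is precisely $\mathrm{Vect}_\R\{Q_+,iQ_+,\partial_sQ_+,i\partial_sQ_+\}$ — three directions being the kernel generated by the phase, translation and scaling symmetries, the remaining one being the single negative direction characteristic of a focusing ground state — while every other eigenvalue is bounded below by a positive constant. Coercivity on the orthogonal complement, with $c$ equal to this spectral gap, is then immediate.

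The main obstacle is precisely this spectral computation: producing the explicit kernels of $\mathcal A$ and $\mathcal B$, evaluating the Laguerre matrix elements $A_{nm}$ and $B_{nm}$, and proving that the non-positive subspace is \emph{exactly} four-dimensional with a uniform gap above it. The delicate points are twofold. First, the projector $\Pi_0^+$ forces one to extract the $V_0^+$-component of products that a priori leave $V_0^+$ (notably $Q_+^2\overline h$, which carries negative-frequency content), so the kernels must be derived from the Hermite/Bergman projection formulas rather than guessed. Second, the antilinearity couples $a_n$ with $\overline{a_n}$, so one must diagonalize a real-symmetric rather than Hermitian form and rule out any spurious zero mode, i.e.\ confirm that the three symmetry directions together with the one negative mode account for the whole degeneracy. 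Once the banded structure is in place, this reduces to a finite, if delicate, eigenvalue check.
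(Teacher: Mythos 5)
Your strategy is viable, and it is a genuinely different route from the paper's — in fact, carried out with the right normalization it yields a cleaner and fully explicit proof. The paper never diagonalizes $\L$ in a basis. Instead it first shows (Proposition \ref{prop:L_on_S3}) that the antilinear term $\Pi_0^+(Q_+^2\overline{h})$ contributes nothing to the quadratic form once $h\perp Q_+,iQ_+$ (because $F_{h^2}(i)=F_h(i)^2=0$), then conjugates $-\hlapl-2|Q_+|^2$ into $\Dlapl-\id$ on the CR sphere via the Cayley transform and invokes the explicit spectrum of the conformal sub-Laplacian. The price of going to the sphere is that the surviving negative modes $\un,\zeta_2,\overline{\zeta_2}$ are \emph{not} aligned with the $\dot{H}^1(\heis)$-orthogonality conditions of the theorem, so the paper has to estimate an auxiliary quadratic form through the projector $P_W$ — and that is exactly what forces the residue computations and the numerical bound $C\approx 0.2046<\half$ of Lemma \ref{lem:calculation_proj}. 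Your route stays in the profile picture, where the orthogonality conditions \emph{do} align with the natural basis; this is what makes numerics unnecessary and produces an explicit coercivity constant.

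Two repairs are needed to turn your sketch into a proof. First, a normalization error: with your convention $\widehat{u}=g(\lambda)\e^{-\lambda(x^2+y^2)}\un_{\lambda>0}$, one has $\|u\|_{\dot H^1(\heis)}^2=\frac{\pi}{2}\int_0^{+\infty}|g|^2\d\lambda$ — the weight is $\d\lambda$, not $\lambda\d\lambda$ — so the correct orthonormal family is $\sqrt2\,L_n^{(0)}(2\lambda)\e^{-\lambda}$, not $L_n^{(1)}$; with the $\lambda\d\lambda$ convention, the profile of $Q_+$ is proportional to $\e^{-\lambda}/\sqrt{\lambda}$ and is \emph{not} your lowest mode $\phi_0$, so the two halves of your dictionary are incompatible as written. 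Second, the structure is better than ``banded'': it is diagonal, and no block eigenvalue analysis is needed. The Fourier transforms of $L_n^{(0)}(2\lambda)\e^{-\lambda}\un_{\lambda>0}$ are the functions $e_n(z)=\frac{1}{z+i}\big(\frac{z-i}{z+i}\big)^n$, i.e.\ monomials in the disc variable $w=\frac{z-i}{z+i}$, in which $F_{Q_+}$ is a constant. Writing $F_h=\sum_{n\geq 0}a_ne_n$ and using the paper's dictionary $\|u\|_{L^2(\heis)}^2=\pi\|F_u\|^2_{L^2(\C_+)}$ together with the change of variables $w=\frac{z-i}{z+i}$, one finds $\|h\|_{\dot H^1(\heis)}^2=\frac{\pi^2}{2}\sum_n|a_n|^2$, then $2\int_{\heis}|Q_+|^2|h|^2=\pi^2\sum_n\frac{|a_n|^2}{n+1}$ (multiplication by $F_{Q_+}$ is, after the Jacobian, the embedding of the Hardy space of the disc into its Bergman space, diagonal with entries $\frac{1}{n+1}$), and $\Re\int_{\heis}Q_+^2\overline{h}^2=-\frac{\pi^2}{2}\Re(a_0^2)$ by the mean value property. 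Hence
$$
(\L h,h)_{\dot{H}^{-1}(\heis)\times\dot{H}^1(\heis)}
=\frac{\pi^2}{2}\Big[\sum_{n\geq0}|a_n|^2-2\sum_{n\geq0}\frac{|a_n|^2}{n+1}+\Re(a_0^2)\Big].
$$
By Proposition \ref{prop:formula_orthogonal}, the four orthogonality conditions are exactly $F_h(i)=F_h'(i)=0$, i.e.\ $a_0=a_1=0$, and for $n\geq2$ the diagonal coefficient is $1-\frac{2}{n+1}\geq\frac13$, which gives the theorem with $c=\frac13$. The low modes confirm your prediction: the coordinates $\Re a_0$ and $a_1$ carry the three kernel directions $iQ_+$, $\partial_sQ_+$, $Q_++2i\partial_sQ_+$, and $\Im a_0$ carries the single negative direction $Q_+$; moreover the antilinear coupling you worried about lives on the single coefficient $a_0$, so the Grillakis--Shatah--Strauss real/imaginary splitting is needed only there. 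What remains to finish your argument is precisely the derivation of the three displayed identities, all of which are elementary consequences of the Paley--Wiener correspondence already set up in part \ref{subsection:bergman_spaces}.
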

In particular, the linearized operator $\L$ is non degenerate, in the sense that its kernel is composed only of three directions coming from the three symmetries of the equation~:
$$
\textnormal{Ker}(\L)=\Vect_{\R}(\partial_sQ_+,iQ_+,Q_++2i\partial_sQ_+).
$$

Following the approach employed in the study of the half-wave equation \cite{GerardLenzmannPocovnicuRaphael2018}, one can then prove the invertibility of the linearized operators $\L_{Q_\beta}$ for the Schrödinger equation around the profiles $Q_\beta$ for $\beta$ close enough to $1$. In order to do so, we need to combine the above coercivity result with some regularity estimates and decay properties for $Q_\beta$. This enables us to achieve our goal, which is the uniqueness of these profiles up to symmetries for $\beta$ close to $1$.

%\newpage
%%%%%%%%%%%%%%%%%%%%%%%%%%%%%%%%%%%%%%%%%%%%%%%%%%%%%%%%%%%%%%%%%%%%%%%%%%%%%%%%%%%%%%%%%%%%%%%%%%%%%%%%%%%%%%%%%%%%%%%%%%%%%%%%%%%%%%%%%%%%%%%%%
\subsection{Stereographic projection and Cayley transform}
%%%%%%%%%%%%%%%%%%%%%%%%%%%%%%%%%%%%%%%%%%%%%%%%%%%%%%%%%%%%%%%%%%%%%%%%%%%%%%%%%%%%%%%%%%%%%%%%%%%%%%%%%%%%%%%%%%%%%%%%%%%%%%%%%%%%%%%%%%%%%%%%%

Conclusive information on the linearized operator $\L$ around $Q_+$ is not easy to obtain directly. Indeed, the operator $\L$ is self-adjoint acting on $L^2(\heis)$, but the space we consider is the Hilbert space $\dot{H}^1(\heis)$. In order to get a coercivity estimate, we rely on a conformal invariance between the Heisenberg group $\heis$ and the CR sphere $\S^3$ in $\C^2$ called the Cayley transform
$$
\begin{matrix}
\cayley: &\heis & \to & \S^3\setminus (0,-1)\\
~ & (w,s) & \mapsto & (\frac{2w}{1+|w|^2+is},\frac{1-|w|^2-is}{1+|w|^2+is})
\end{matrix},
$$
where $\heis$ is here parametrized by the complex number $w=x+iy$ and by $s$.

This transformation links estimates for the linearized operator $\L$ to the spectrum of the sub-Laplacian on the CR sphere, which is explicit \cite{Stanton1989}. Potential negative eigenvalues are discarded by the orthogonality conditions from Theorem \ref{thm:coercivityL_intro}. This latter step follows from technical but direct calculations.

For the $n$-dimensional Heisenberg group $\mathbb{H}^n$, the Cayley transform gives an equivalence between $\mathbb{H}^n$ and the CR sphere $\S^{2n+1}$ in $\C^n$. This transform is the counterpart of the stereographic projection, which links the space $\R^n$ with the euclidean sphere $\S^n$ in $\R^{n+1}$. Both transformations have been a useful tool in the study of fractional Folland-Stein inequalities on $\mathbb{H}^n$, resp. fractional Sobolev inequalities in $\R^n$, as we will now recall.

On the space $\R^n$, Lieb \cite{Lieb1983} characterized all optimizers for the fractional Sobolev embeddings $\dot{H}^k(\R^n)\hookrightarrow L^p(\R^n)$, $0<k<\frac{n}{2}$, $p=\frac{2n}{n-2k}$, as the set of functions which, up to translation, dilation and multiplication by a non-zero constant, coincide with
$$
U(x)=\frac{1}{(1+|x|^2)^{\frac{n-2k}{2}}},\quad U\in \dot{H}^k(\R^n).
$$
The stereographic projection appears in Lieb's paper in order to show that these functions are actually optimizers. The formula for $U$ was first established with different methods for $k=2$ and $n=3$ by Rosen \cite{Rosen1971}, and then for $k=1$ and arbitrary $n$ by Aubin \cite{Aubin1976} and Talenti \cite{Talenti1976}.

Chen, Frank and Weth \cite{ChenFrankWeth2013} showed a quadratic estimate for the remainder terms for the equivalent fractional Hardy-Littlewood-Sobolev inequalities. In their proof, the stereographic projection enables them to transfer the second order term in the Taylor expansion to the unit sphere $\S^n$, and give a simpler form to the eigenvalue problem.

On the Heisenberg group $\mathbb{H}^n$, Frank and Lieb \cite{FrankLieb2010} determined the optimizers for the fractional Folland-Stein embeddings $\dot{H}^k(\mathbb{H}^n) \hookrightarrow L^p(\mathbb{H}^n)$, $0<k<\frac{Q}{2}$, $p=\frac{2Q}{Q-2k}$, $Q=2n+2$. These optimizers are the functions equal, up to translations, dilations and multiplication by a constant, to
$$
H(u)=\frac{1}{((1+\|w\|^2)^2+\|s\|^2)^{\frac{Q-2k}{4}}},\quad H\in\dot{H}^k(\mathbb{H}^n).
$$
Here, the notation $u=(w,s)$ uses the identification of $\mathbb{H}^n$ with $\C^n\times\R^n$. Using the Cayley transform, both problems of characterizing the optimizers \cite{FrankLieb2010} and studying the remainder term (see Liu and Zhang \cite{LiuZhang2015}) are carried to the complex sphere $\S^{2n+1}$. When $k=1$, the optimizers were first determined by Jerison and Lee \cite{JerisonLee1988}, who already made use of the Cayley transform. One can notice that fixing $n=k=1$, $u=(x,y,s)\in\heis$, we get
$$
H(u)=\frac{1}{((1+x^2+y^2)^2+s^2)^{\half}}.
$$
Therefore, up to multiplication by a constant, $H$ coincides  with $|Q_+|$, where $Q_+$ is the ground state we are interested in. In fact, $Q_+$ is an optimizer for the Folland-Stein inequality $\dot{H}^1(\heis)\hookrightarrow L^4(\heis)$ restricted to the subspace $\dot{H}^1(\heis)\cap V_0^+$.

\paragraph{Plan of the paper} The paper is organized as follows. In section \ref{section:existence_and_limit}, we prove the existence of the profiles $Q_\beta$ and their convergence to a ground state solution to the limiting system \eqref{eq:Hplus}. We then determine all the limiting profiles (part  \ref{subsection:ground_state_solutions_limit}), in particular, we show that they are unique up to symmetries. In section \ref{section:limiting_problem}, we focus on the linear stability of the limiting profile $Q_+$. After recalling some results about orthogonal projections on Bergman spaces (part \ref{subsection:bergman_spaces}) and about the spectrum of the sub-Laplacian on the CR sphere (part \ref{subsection:cayley_transform}), we prove the coercivity of the linearized operator around $Q_+$. Finally, in section \ref{section:uniqueness_schrodinger}, we retrieve the uniqueness of the profiles $Q_\beta$ up to symmetries for $\beta$ close to $1$. In order to do so, we first need to collect some regularity properties and decay estimates on the profiles $Q_\beta$, which come from the theory of elliptic and hypoelliptic equations (part \ref{subsection:regularity_decay}).

\paragraph{Acknowledgements} The author is grateful to her PhD advisor P. Gérard for introducing her to this problem and for his patient guidance. She also thanks F. Rousset and J. Sabin for enlightening discussions and references.

%\newpage
%%%%%%%%%%%%%%%%%%%%%%%%%%%%%%%%%%%%%%%%%%%%%%%%%%%%%%%%%%%%%%%%%%%%%%%%%%%%%%%%%%%%%%%%%%%%%%%%%%%%%%%%%%%%%%%%%%%%%%%%%%%%%%%%%%%%%%%%%%%%%%%%%
\section{Notation}
%%%%%%%%%%%%%%%%%%%%%%%%%%%%%%%%%%%%%%%%%%%%%%%%%%%%%%%%%%%%%%%%%%%%%%%%%%%%%%%%%%%%%%%%%%%%%%%%%%%%%%%%%%%%%%%%%%%%%%%%%%%%%%%%%%%%%%%%%%%%%%%%%

%\newpage
%%%%%%%%%%%%%%%%%%%%%%%%%%%%%%%%%%%%%%%%%%%%%%%%%%%%%%%%%%%%%%%%%%%%%%%%%%%%%%%%%%%%%%%%%%%%%%%%%%%%%%%%%%%%%%%%%%%%%%%%%%%%%%%%%%%%%%%%%%%%%%%%%
\subsection{The Heisenberg group}
%%%%%%%%%%%%%%%%%%%%%%%%%%%%%%%%%%%%%%%%%%%%%%%%%%%%%%%%%%%%%%%%%%%%%%%%%%%%%%%%%%%%%%%%%%%%%%%%%%%%%%%%%%%%%%%%%%%%%%%%%%%%%%%%%%%%%%%%%%%%%%%%%

Let us now recall some facts about the Heisenberg group. We identify the Heisenberg group $\heis$ with $\R^3$. The group multiplication is given by
$$
(x,y,s)\cdot(x',y',s')=(x+x',y+y',s+s'+2(x'y-xy')).
$$
The Lie algebra of left-invariant vector fields on $\heis$ is spanned by the vector fields $X=\partial_x+2y\partial_s$, $Y=\partial_y-2x\partial_s$ and $T=\partial_s=\frac{1}{4}[Y,X]$. The sub-Laplacian is defined as
\begin{align*}
\mathcal{L}_0 
	:=\frac{1}{4}(X^2+Y^2)
	=\frac{1}{4}(\partial_x^2+\partial_y^2)+(x^2+y^2)\partial_s^2+(y\partial_x -x\partial_y)\partial_s.
\end{align*}
When $u$ is a radial function, the sub-Laplacian coincides with the operator
$$
\hlapl :=\frac{1}{4}(\partial_x^2+\partial_y^2)+(x^2+y^2)\partial_s^2.
$$

The space $\heis$ is endowed with a smooth left invariant measure, the Haar measure, which in the coordinate system $(x,y,s)$ is the Lebesgue measure $\d\lambda_3(x,y,s)$. Sobolev spaces of positive order can then be constructed on $\heis$ from powers of the operator $-\hlapl$, for example, $\dot{H}^1(\heis)$ is the completion of the Schwarz space $\scw(\heis)$ for the norm
$$
\|u\|_{\dot{H}^1(\heis)}:=\|(-\hlapl)^{\frac{1}{2}}u\|_{L^2(\heis)}.
$$
The distance  between two points $(x,y,s)$ and $(x',y',s')$ in $\heis$ is defined as
$$
d((x,y,s),(x',y',s')):=\left(\left((x-x')^2+(y-y')^2\right)^2+\left(s-s'+2(x'y-xy')\right)^2\right)^{\frac{1}{4}}
$$
For convenience, the distance to the origin is denoted by
$$
\rho(x,y,s):=((x^2+y^2)^2+s^2)^{\frac{1}{4}}.
$$

%\newpage
%%%%%%%%%%%%%%%%%%%%%%%%%%%%%%%%%%%%%%%%%%%%%%%%%%%%%%%%%%%%%%%%%%%%%%%%%%%%%%%%%%%%%%%%%%%%%%%%%%%%%%%%%%%%%%%%%%%%%%%%%%%%%%%%%%%%%%%%%%%%%%%%%
\subsection{Decomposition along the Hermite functions}\label{subsection:hermite}
%%%%%%%%%%%%%%%%%%%%%%%%%%%%%%%%%%%%%%%%%%%%%%%%%%%%%%%%%%%%%%%%%%%%%%%%%%%%%%%%%%%%%%%%%%%%%%%%%%%%%%%%%%%%%%%%%%%%%%%%%%%%%%%%%%%%%%%%%%%%%%%%%

In order to study radial functions valued on the Heisenberg group $\heis$, it is convenient to use their decomposition along Hermite-type functions (see for example \cite{stein_harmonic}, Chapters 12 and 13). The Hermite functions
$$
h_m(x)=\frac{1}{\pi^{\frac{1}{4}}2^{\frac{m}{2}}(m!)^{\half}}(-1)^m\e^{\frac{x^2}{2}}\partial_x^m(\e^{-x^2}), \quad x\in\R, m\in\N,
$$
form an orthonormal basis of $L^2(\R)$. In $L^2(\R^2)$, the family of products of two Hermite functions $(h_m(x)h_p(y))_{m,p\in\N}$ diagonalizes the two-dimensional harmonic oscillator~: for all $m,p\in\N$,
$$
(-\Delta_{x,y}+x^2+y^2)h_m(x)h_p(y)=2(m+p+1)h_m(x)h_p(y).
$$

Given $u\in \scw(\heis)$, we will denote by $\widehat{u}$ its usual Fourier transform under the $s$ variable, with corresponding variable $\sigma$
$$
\widehat{u}(x,y,\sigma)=\frac{1}{\sqrt{2\pi}}\int_{\R}\e^{-is\sigma}u(x,y,s)\d s.
$$
For $m,p\in\N$, set $\widehat{h_{m,p}}(x,y,\sigma):=h_m(\sqrt{2|\sigma|}x)h_p(\sqrt{2|\sigma|}y)$. Then
$$
\widehat{\hlapl h_{m,p}}=-(m+p+1)|\sigma|\widehat{h_{m,p}}.
$$
Let $k\in\{-1,0,1\}$, and denote by $\dot{H}^k(\heis)\cap V_n^{\pm}$ the subspace of functions in $\dot{H}^k(\heis)$ spanned by $\{h_{m,p};\; m,p\in\N, m+p=n\}$. A function $u_n^{\pm}\in \dot{H}^k(\heis)$ belongs to $\dot{H}^k(\heis)\cap V_n^{\pm}$ if there exist functions $f_{m,p}^\pm$ such that
$$
\widehat{u_n^\pm}(x,y,\sigma)=\sum_{\substack{m,p\in\N;\\ m+p=n}}f_{m,p}^{\pm}(\sigma)\widehat{h_{m,p}}(x,y,\sigma)\un_{\sigma\gtrless 0}.
$$
For $u_n^\pm\in\dot{H}^k(\heis)\cap V_n^{\pm}$, the $\dot{H}^k$ norm of $u_n^\pm$ writes 
\begin{align*}
\|u_n^{\pm}\|_{\dot{H}^k(\heis)}^2
	&=\int_{\R_{\pm}}((n+1)|\sigma|)^k\int_{\R^2}|\widehat{u_n}(x,y,\sigma)|^2\d x\d y\d\sigma\\
	&=\sum_{\substack{m,p\in\N;\\ m+p=n}}\int_{\R_{\pm}}((n+1)|\sigma|)^k|f_{m,p}^{\pm}(\sigma)|^2\frac{\d\sigma}{2|\sigma|}.
\end{align*}
Any function $u\in\dot{H}^k(\heis)$ admits a decomposition along the orthogonal sum of the subspaces $\dot{H}^k(\heis)\cap V_n^{\pm}$. Let us write $u=\sum_{\pm}\sum_{n\in\N}u_n^{\pm}$ where $u_n^{\pm}\in \dot{H}^k(\heis)\cap V_n^{\pm}$ for all $(n,\pm)$. Then 
\begin{align*}
\|u\|_{\dot{H}^k(\heis)}^2
	&=\sum_{\pm}\sum_{n\in\N}\|u_n^{\pm}\|_{\dot{H}^k(\heis)}^2.
\end{align*}
Note that rotations of the $(x,y)$ variable commute with $-\hlapl$ so $u\in \dot{H}^k(\heis)$ is radial if and only if for all $(n,\pm)$, $u_n^\pm$ is radial.
Moreover, $u\in\dot{H}^k(\heis)$ belongs to $\dot{H}^{k}(\heis)\cap V_n^\pm$ if and only if $-\hlapl u$ belongs to $\dot{H}^{k-2}(\heis)\cap V_n^\pm$, and the same holds for $D_su$.

For $k=0$, we get an orthogonal decomposition of the space $L^2(\heis)$, and denote by $\Pi_n^{\pm}$ the associated orthogonal projectors.

The particular space $V_0^+$ will be especially interesting in our discussion below. This space is spanned by a unique radial function $h_0^+$, satisfying
$$
\widehat{h_0^+}(x,y,\sigma)=\frac{1}{\sqrt{\pi}}\e^{-(x^2+y^2)\sigma}\un_{\sigma\geq 0}.
$$
Set $u\in \dot{H}^k(\heis)\cap V_0^+$, then there exists $f$ such that
$$
\widehat{u}(x,y,s)=f(\sigma)\widehat{h_0^+}(x,y,\sigma),
$$
and in this case
$$
\|u\|_{\dot{H}^k(\heis)}^2=\int_{\R_+}|f(\sigma)|^2\frac{\d\sigma}{2\sigma^{1-k}}.
$$

%\newpage
%%%%%%%%%%%%%%%%%%%%%%%%%%%%%%%%%%%%%%%%%%%%%%%%%%%%%%%%%%%%%%%%%%%%%%%%%%%%%%%%%%%%%%%%%%%%%%%%%%%%%%%%%%%%%%%%%%%%%%%%%%%%%%%%%%%%%%%%%%%%%%%%%
\section{Existence of traveling waves and limiting profile}\label{section:existence_and_limit}
%%%%%%%%%%%%%%%%%%%%%%%%%%%%%%%%%%%%%%%%%%%%%%%%%%%%%%%%%%%%%%%%%%%%%%%%%%%%%%%%%%%%%%%%%%%%%%%%%%%%%%%%%%%%%%%%%%%%%%%%%%%%%%%%%%%%%%%%%%%%%%%%%

In this section, we prove the existence of ground states $Q_\beta$ for equation \eqref{eq:Hbeta} with speed $\beta\in(-1,1)$ (part \ref{subsection:existence_Qbeta}). Then, we show the convergence in $\dot{H}^1(\heis)$ of the profiles $Q_\beta$ to a limiting profile $Q_+$ as $\beta$ tends to $1$ (part \ref{subsection:limit}). The profile $Q_+$ is a ground state solution of equation \eqref{eq:Hplus}, which will determine explicitly in part \ref{subsection:ground_state_solutions_limit}.

%\newpage
%%%%%%%%%%%%%%%%%%%%%%%%%%%%%%%%%%%%%%%%%%%%%%%%%%%%%%%%%%%%%%%%%%%%%%%%%%%%%%%%%%%%%%%%%%%%%%%%%%%%%%%%%%%%%%%%%%%%%%%%%%%%%%%%%%%%%%%%%%%%%%%%%
\subsection{Existence of traveling waves with speed \texorpdfstring{$\beta\in(-1,1)$}{beta in (-1,1)}}\label{subsection:existence_Qbeta}
%%%%%%%%%%%%%%%%%%%%%%%%%%%%%%%%%%%%%%%%%%%%%%%%%%%%%%%%%%%%%%%%%%%%%%%%%%%%%%%%%%%%%%%%%%%%%%%%%%%%%%%%%%%%%%%%%%%%%%%%%%%%%%%%%%%%%%%%%%%%%%%%%

A family of traveling wave solutions to the Schrödinger equation on the Heisenberg group \eqref{eq:H} can be found under the form
$$
u(t,x,y,s)=Q_\beta\Big(\frac{x}{\sqrt{1-\beta}},\frac{y}{\sqrt{1-\beta}},\frac{s+\beta t}{1-\beta}\Big),
$$
$Q_\beta$ satisfying the equation
\begin{equation*}
-\frac{\hlapl +\beta D_s}{1-\beta}Q_\beta=|Q_\beta|^2Q_\beta.
\end{equation*}
The $Q_\beta$ are constructed as minimizers of some Gagliardo-Nirenberg inequalities. We will be adapting the proofs of Krieger, Lenzmann and Raphaël \cite{KriegerLenzmannRaphael2013} which concern the $L^2$-critical half-wave equation on the real line. Our starting point is the Folland-Stein embedding \cite{Folland1974}.
%Krieger cite \cite{Frohlich2007} pour l'idée

\begin{thm}[Folland-Stein]
Let $p\in(1,4)$ and set $p^*=\frac{4p}{4-p}$. Then there exists $C_p>0$ such that, for $u\in\classeC_c^{\infty}(\heis)$,
$$
\left(\int_{\heis}|u(x,y,s)|^{p^*}\d x\d y\d s\right)^{\frac{1}{p^*}}\leq C_p \left(\int_{\heis} |(-\hlapl)^{\half}u(x,y,s)|^p\d x\d y\d s\right)^{\frac{1}{p}}.
$$ 
\end{thm}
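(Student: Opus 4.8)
The plan is to read this as the homogeneous Sobolev (Riesz potential) embedding attached to the operator $(-\hlapl)^{-1/2}$ on the Heisenberg group, whose \emph{homogeneous} dimension equals $4$ for the dilations $\delta_\lambda(x,y,s)=(\lambda x,\lambda y,\lambda^2 s)$ (under which the Haar measure scales by $\lambda^4$ and the homogeneous norm $\rho$ by $\lambda$). Setting $v=(-\hlapl)^{1/2}u$, so that $u=(-\hlapl)^{-1/2}v$, the asserted inequality is equivalent to the boundedness
$$
(-\hlapl)^{-1/2}\colon L^p(\heis)\to L^{p^*}(\heis),\qquad \frac{1}{p^*}=\frac1p-\frac14,
$$
and the stated relation $p^*=\frac{4p}{4-p}$ is precisely $\frac{1}{p^*}=\frac1p-\frac14$. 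Since the relevant functions are radial, and on radial functions $\hlapl$ coincides with the genuine sub-Laplacian $\mathcal{L}_0$, I would carry out the argument for $\mathcal{L}_0$ on the group $\heis$ and specialize at the end, reducing to $v$ in $L^p$ with the inverse identity $(-\mathcal{L}_0)^{-1/2}(-\mathcal{L}_0)^{1/2}u=u$ valid on $C_c^\infty(\heis)$.

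First I would record the structure of the kernel of $(-\mathcal{L}_0)^{-1/2}$. By the subordination formula
$$
(-\mathcal{L}_0)^{-1/2}=\frac{1}{\Gamma(1/2)}\int_0^\infty t^{-1/2}\e^{t\mathcal{L}_0}\,\frac{\d t}{t}\cdot t,
$$
combined with Folland's Gaussian-type bounds and the exact self-similarity $p_t(g)=t^{-2}p_1(\delta_{t^{-1/2}}g)$ of the sub-Laplacian heat kernel $p_t$ (equivalently, with Folland's explicit fundamental solution $c\,\rho^{-2}$ of $\mathcal{L}_0$, of homogeneity $-(Q-2)=-2$), the operator $(-\mathcal{L}_0)^{-1/2}$ is right convolution on $\heis$ with a kernel $I_1$ that is homogeneous of degree $-(Q-1)=-3$, so that
$$
(-\hlapl)^{-1/2}v=v*I_1,\qquad |I_1(x,y,s)|\leq C\,\rho(x,y,s)^{-3}.
$$
In particular $I_1$ lies in the weak Lebesgue space $L^{4/3,\infty}(\heis)$: indeed $\{\rho^{-3}>\lambda\}=\{\rho<\lambda^{-1/3}\}$ has Haar measure $c\,\lambda^{-4/3}$ by the scaling of the measure.

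Next I would invoke the Hardy--Littlewood--Sobolev inequality for fractional integration on homogeneous groups (as in Folland--Stein): convolution with a kernel in $L^{r,\infty}$ maps $L^p$ into $L^q$ whenever $1<p<q<\infty$ and $\frac1q=\frac1p+\frac1r-1$. Taking $r=\frac43$ gives $\frac1q=\frac1p-\frac14$, i.e.\ $q=p^*$, whence
$$
\|v*I_1\|_{L^{p^*}(\heis)}\leq C\,\|I_1\|_{L^{4/3,\infty}}\,\|v\|_{L^p(\heis)}=C_p\,\|v\|_{L^p(\heis)},
$$
and substituting $v=(-\hlapl)^{1/2}u$ proves the theorem for $p\in(1,4)$ (so that the admissibility $1<p<p^*<\infty$ holds). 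The weak-type Young inequality itself I would establish by the standard splitting of $I_1$ at a threshold radius into an $L^1$-type and a weak-$L^{4/3}$-type piece, controlling one piece by the Hardy--Littlewood maximal operator on the space of homogeneous type $(\heis,d,\d x\,\d y\,\d s)$ and optimizing over the threshold; this is formally identical to the Euclidean argument with the topological dimension replaced by the homogeneous dimension $4$.

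The main obstacle is the first step: identifying $(-\hlapl)^{-1/2}$ as convolution with a kernel of homogeneity exactly $-3$ carrying the pointwise bound $\rho^{-3}$. This is where the specific sub-Riemannian structure enters, through Folland's explicit fundamental solution $c\,\rho^{-2}$ for $\mathcal{L}_0$ and the heat-semigroup scaling that transfers this homogeneity to the half-power. Once the kernel estimate is secured, the rest is the scale-invariant Hardy--Littlewood--Sobolev machinery, which is robust and dimension-agnostic apart from the bookkeeping of the homogeneous dimension.
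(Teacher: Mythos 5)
The paper does not prove this statement at all --- it imports it from Folland's work --- so the only benchmark is the standard literature proof. Your core argument is precisely that proof: subordination of $(-\mathcal{L}_0)^{-1/2}$ to the heat semigroup, homogeneity $-(Q-1)=-3$ of the resulting convolution kernel under the parabolic dilations (homogeneous dimension $Q=4$), membership of that kernel in $L^{4/3,\infty}(\heis)$, and the weak-type Young/Hardy--Littlewood--Sobolev inequality for group convolution --- an inequality the paper itself records later as Lemma \ref{young}. All of this is correct for the left-invariant sub-Laplacian $\mathcal{L}_0=\frac{1}{4}(X^2+Y^2)$, and for arbitrary (not necessarily radial) data.

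The genuine gap is in your transfer to the operator actually appearing in the statement. The theorem is asserted for $\hlapl=\frac{1}{4}(\partial_x^2+\partial_y^2)+(x^2+y^2)\partial_s^2$ and for every $u\in\classeC_c^{\infty}(\heis)$, with no radial hypothesis, whereas your argument reaches $\hlapl$ only on radial functions: $\hlapl$ differs from $\mathcal{L}_0$ by $(y\partial_x-x\partial_y)\partial_s$, and, decisively for your method, $\hlapl$ is \emph{not} left-invariant, so $(-\hlapl)^{-1/2}$ is not a group convolution and the kernel machinery does not apply to it as such. Your sentence ``since the relevant functions are radial'' imports context from the paper rather than from the statement --- and the paper does use the non-radial case: the Gagliardo--Nirenberg proposition and Proposition \ref{prop:estimates_Lbeta_invertible} apply $\dot{H}^1(\heis)\hookrightarrow L^4(\heis)$ (the case $p=2$, with $\dot{H}^1$ built from $\hlapl$) to arbitrary $h\in\dot{H}^1(\heis)$. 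For $p=2$ the gap can be closed by a fiberwise spectral comparison: after Fourier transform in $s$ and decomposition into angular momentum $\ell$ and radial Hermite index $k$, the operator $-\hlapl$ has eigenvalues $(2k+|\ell|+1)|\sigma|$ while $-\mathcal{L}_0$ has $(2k+|\ell|+1)|\sigma|-\ell\sigma$; since $|\ell\sigma|\leq(2k+|\ell|+1)|\sigma|$, this yields $-\mathcal{L}_0\leq 2(-\hlapl)$ in the quadratic-form sense, hence $\|u\|_{L^4(\heis)}\lesssim\|(-\mathcal{L}_0)^{\half}u\|_{L^2(\heis)}\lesssim\|(-\hlapl)^{\half}u\|_{L^2(\heis)}$ for all $u$. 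For $p\neq 2$ no such soft comparison is available (it becomes a spectral multiplier problem), and one would instead have to run your scheme directly for $\hlapl$ viewed as a Baouendi--Grushin operator of homogeneous dimension $4$, with two-point heat-kernel bounds replacing the convolution kernel. A smaller point in the same vein: even for radial $u$, the identity $(-\hlapl)^{\half}u=(-\mathcal{L}_0)^{\half}u$ deserves a word --- both operators are self-adjoint, commute with rotations, and coincide on the rotation-invariant subspace, so their functional calculi agree there; equality of the differential operators alone does not formally give equality of their square roots.
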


In particular, from the embedding $\dot{H}^1(\heis)\hookrightarrow L^4(\heis)$, we deduce some Gagliardo-Nirenberg inequalities.

\begin{prop}[Gagliardo-Nirenberg]
Set $\beta\in(-1,1)$. Then there exists some constant $C>0$ such that for every $u\in \dot{H}^1(\heis)$,
$$
\|u\|_{L^4(\heis)}^4\leq C (-(\hlapl +\beta D_s)u,u)_{\dot{H}^{-1}(\heis)\times\dot{H}^1(\heis)}^2.
$$
\end{prop}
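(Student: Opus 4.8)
The plan is to combine the Folland–Stein embedding $\dot H^1(\heis)\hookrightarrow L^4(\heis)$ (the case $p=2$, $p^*=4$ of the preceding theorem) with a coercivity estimate for the quadratic form
$$
\Qbeta(u):=\big(-(\hlapl+\beta D_s)u,u\big)_{\dot{H}^{-1}(\heis)\times\dot{H}^1(\heis)}.
$$
Writing $C_2$ for the Folland–Stein constant at $p=2$, the embedding gives $\|u\|_{L^4(\heis)}\le C_2\|u\|_{\dot{H}^1(\heis)}$, hence $\|u\|_{L^4(\heis)}^4\le C_2^4\|u\|_{\dot{H}^1(\heis)}^4$. Since $\|u\|_{\dot{H}^1(\heis)}^2=(-\hlapl u,u)_{\dot{H}^{-1}(\heis)\times\dot{H}^1(\heis)}$ by definition of the $\dot H^1$ norm, it suffices to prove that there is a constant $c>0$ (allowed to depend on $\beta$) with $\Qbeta(u)\ge c\,\|u\|_{\dot{H}^1(\heis)}^2$, after which
$$
\|u\|_{L^4(\heis)}^4\le C_2^4\|u\|_{\dot{H}^1(\heis)}^4\le \frac{C_2^4}{c^2}\,\Qbeta(u)^2
$$
closes the argument with $C=C_2^4/c^2$.

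To establish this coercivity I would diagonalise $\Qbeta$ using the Hermite decomposition of part~\ref{subsection:hermite}. Both $-\hlapl$ and $D_s$ preserve each subspace $V_n^{\pm}$ and act there as Fourier multipliers: $\widehat{-\hlapl\, u_n^{\pm}}=(n+1)|\sigma|\,\widehat{u_n^{\pm}}$ and $\widehat{D_s u}=\sigma\widehat{u}$. Decomposing $u=\sum_{\pm}\sum_n u_n^{\pm}$ and using that the $V_n^{\pm}$ are mutually orthogonal in $\dot H^1(\heis)$, the quadratic form $\Qbeta$ splits as a sum over the modes $(n,\pm)$, the cross terms vanishing. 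The only point requiring care is the sign of $\sigma$: on $V_n^{+}$ one has $\sigma>0$, so the symbol of $-(\hlapl+\beta D_s)$ is $((n+1)-\beta)|\sigma|$, whereas on $V_n^{-}$ one has $\sigma<0$, which turns $-\beta\sigma$ into $+\beta|\sigma|$ and gives the symbol $((n+1)+\beta)|\sigma|$.

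Comparing these symbols to the symbol $(n+1)|\sigma|$ of $-\hlapl$ mode by mode, the ratios are $1-\tfrac{\beta}{n+1}$ on $V_n^{+}$ and $1+\tfrac{\beta}{n+1}$ on $V_n^{-}$. For $n\ge 0$ and $\beta\in(-1,1)$ each ratio is bounded below by $1-|\beta|>0$, the extreme case occurring at $n=0$. Integrating in $\sigma$ and summing, this yields $\Qbeta(u)\ge(1-|\beta|)\,\|u\|_{\dot{H}^1(\heis)}^2$, i.e.\ the desired coercivity with $c=1-|\beta|$, and hence the stated inequality with $C=C_2^4/(1-|\beta|)^2$. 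I do not expect a genuine analytic obstacle here: the substance of the proof is the observation that the Hermite decomposition simultaneously reduces $-\hlapl$ and $D_s$ to Fourier multipliers, so that both the positivity of $-(\hlapl+\beta D_s)$ and its comparability to $-\hlapl$ reduce to an elementary inspection of symbols. The one feature to keep in mind is that this lower bound degenerates as $|\beta|\to 1$, which is precisely why the constant is permitted to depend on $\beta$.
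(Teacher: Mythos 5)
Your proposal is correct and follows essentially the same route as the paper: both decompose $u$ along the Hermite subspaces $V_n^{\pm}$, observe that $-(\hlapl+\beta D_s)$ acts there with symbol $((n+1)\mp\beta)|\sigma|$ to obtain the coercivity $(-(\hlapl+\beta D_s)u,u)_{\dot{H}^{-1}(\heis)\times\dot{H}^1(\heis)}\geq(1-|\beta|)\|u\|_{\dot{H}^1(\heis)}^2$, and then conclude with the Folland--Stein embedding $\dot{H}^1(\heis)\hookrightarrow L^4(\heis)$. The only cosmetic difference is that the paper records the two-sided norm equivalence \eqref{eq:equivalence_norms} (the upper bound being used later in the paper), whereas you only need and prove the lower bound.
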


\begin{proof}

Fix $u\in \dot{H}^1(\heis)$, and decompose $u$ along the spaces $V_n^+\cup V_n^-$~: $u=\sum_{n\in \N}u_n$, where $u_n=u_n^++u_n^-$. Then
$$
(-(\hlapl +\beta D_s)u,u)_{\dot{H}^{-1}(\heis)\times\dot{H}^1(\heis)}
	=\sum_{n\in\N} \int_{\R^3}((n+1)|\sigma|-\beta\sigma) |\widehat{u_n}(x,y,\sigma)|^2\d x\d y\d \sigma
$$
and
$$
\|u\|_{\dot{H}^1(\heis)}^2
	=\sum_{n\in\N}\int_{\R^3}(n+1)|\sigma| |\widehat{u_n}(x,y,\sigma)|^2\d x\d y\d \sigma.
$$
We deduce the equivalence of norms
\begin{equation}\label{eq:equivalence_norms}
(1-|\beta|)\|u\|_{\dot{H}^1(\heis)}^2\leq (-(\hlapl +\beta D_s)u,u)_{\dot{H}^{-1}(\heis)\times\dot{H}^1(\heis)} \leq (1+|\beta|)\|u\|_{\dot{H}^1(\heis)}^2.
\end{equation}
The result follows from the Folland-Stein embedding $\dot{H}^1(\heis)\hookrightarrow L^4(\heis)$. 
\end{proof}

From the Gagliardo-Nirenberg inequalities, one knows that the infimum over non-zero radial functions $u\in \dot{H}^{1}(\heis)$ of the functional
$$
J_\beta(u):=\frac{(-(\hlapl +\beta D_s)u,u)_{\dot{H}^{-1}(\heis)\times\dot{H}^1(\heis)}^2}{\|u\|_{L^4}^4}
$$
is positive. Let us denote by $I_\beta$ the minimal value of $J_\beta$. We want to show that it is attained by some $Q_\beta\in\dot{H}^1(\heis)$. We consider a minimizing sequence for $J_\beta$. Then this sequence converges to a minimizer for $J_\beta$ thanks to the following profile decomposition theorem.

\begin{mydef}
The couples of scalings and cores $((\tilde{h_i})_{i\in\N},(\tilde{s_{i}})_{i\in\N})$ and $(({h_i})_{i\in\N},({s_{i}})_{i\in\N})$ of $(\R_+^*)^{\N}\times\R^{\N}$ are said to be strange if
$$
\left(\Big|\log\Big(\frac{\tilde{h_n}}{h_n}\Big)\Big|\longrightarroww{n\to+\infty}{}+\infty\right)
\text{ or if }
\left((\tilde{h_n})_n=(h_n)_n\text{ and } \frac{|\tilde{s_n}-s_n|}{h_n^2}\longrightarroww{n\to+\infty}{}+\infty\right).
$$
\end{mydef}

\begin{thm}[Concentration-compactness]\label{thm:concentration_compactness}
Fix a bounded sequence $\underline{u}=(u_n)_{n\in\N}$ of radial functions in $\dot{H}^1(\heis)$. Then there exist a subsequence $(u_{n_i})_{i\in\N}$, of $\underline{u}$, and sequences of cores $(s_{n_i}^{(j)})_{i,j\in\N}\subset\R$, scalings $(h_{n_i}^{(j)})_{i,j\in\N}\subset\R$, and radial functions $(U^{(j)})_{j\in\N}\subset\dot{H}^1(\heis)$ such that~:
\begin{enumerate}
\item the couples $((h_{n_i}^{(j)})_{i},(s_{n_i}^{(j)})_{i})$, $j\in\N$, are pairwise strange ;
\item let
$$
r_{n_i}^{(l)}(x,y,s)=u_{n_i}(x,y,s)-\sum_{j=1}^l \frac{1}{h_{n_i}^{(j)}} U^{(j)}\left(\frac{x}{h_{n_i}^{(j)}},\frac{y}{h_{n_i}^{(j)}}, \frac{s-s_{n_i}^{(j)}}{(h_{n_i}^{(j)})^2}\right),
$$
then
$$
\lim_{l\to+\infty}\limsup_{i\to+\infty} \|r_{n_i}^{(l)}\|_{L^4(\heis)}=0.
$$
\end{enumerate}
Moreover, for all $l\geq 1$, one has the following orthogonality relations as $i$ goes to $+\infty$~:
\begin{align*}
\|u_{n_i}\|_{\dot{H}^1(\heis)}^2
	=\sum_{j=1}^l\|U^{(j)}\|_{\dot{H}^1(\heis)}^2
	+\|r_{n_i}^{(l)}\|_{\dot{H}^1(\heis)}^2
	+o(1),
\end{align*}
$$
(D_su_{n_i},u_{n_i})_{\dot{H}^{-1}(\heis)\times \dot{H}^1(\heis)}
	=\sum_{j=1}^l(D_sU^{(j)},U^{(j)})_{\dot{H}^{-1}(\heis)\times \dot{H}^1(\heis)}
	+(D_sr_{n_i}^{(l)},r_{n_i}^{(l)})_{\dot{H}^{-1}(\heis)\times \dot{H}^1(\heis)}+o(1),
$$
and
$$
\|u_{n_i}\|_{L^4(\heis)}^4 \longrightarroww{i\to+\infty}{} \sum_{j=1}^{+\infty}\|U^{(j)}\|_{L^4(\heis)}^4.
$$
\end{thm}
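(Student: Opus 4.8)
The plan is to follow the now-classical profile decomposition scheme of Gérard \cite{Gerard1998}, in the form adapted to the Heisenberg group by Benameur \cite{Benameur2008}, exploiting that the only obstructions to compactness of the embedding $\dot{H}^1(\heis)\hookrightarrow L^4(\heis)$ come from the scaling and $s$-translation symmetries recalled in the introduction. For $h>0$ and $s_0\in\R$, write $g_{h,s_0}$ for the operator $(g_{h,s_0}U)(x,y,s)=\frac{1}{h}U(\frac{x}{h},\frac{y}{h},\frac{s-s_0}{h^2})$. A direct computation using the homogeneity $\hlapl(U\circ\delta_\lambda)=\lambda^2(\hlapl U)\circ\delta_\lambda$ under the anisotropic dilation $\delta_\lambda(x,y,s)=(\lambda x,\lambda y,\lambda^2 s)$ and the invariance of the Haar (Lebesgue) measure shows that each $g_{h,s_0}$ is a simultaneous isometry of $\dot{H}^1(\heis)$ and of $L^4(\heis)$, and that it preserves radiality. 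The decomposition produced by the theorem is exactly a sum of weak limits of the $g_{h_n,s_n}^{-1}u_n$ along suitable scale/core sequences, plus a remainder that is negligible in $L^4$.

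The engine of the proof is a quantitative defect-of-compactness lemma. First I would establish a refined Folland-Stein inequality of the form $\|u\|_{L^4(\heis)}\le C\|u\|_{\dot{H}^1(\heis)}^{1/2}\|u\|_{B}^{1/2}$, where $\|\cdot\|_{B}$ is a weak, scale- and translation-covariant Besov-type norm built from a Littlewood-Paley decomposition adapted to $-\hlapl$, conveniently realized on the Fourier/Hermite side through the frequencies $(n+1)|\sigma|$ of part \ref{subsection:hermite}. I would then show that if $(u_n)$ is bounded in $\dot{H}^1(\heis)$ with $\limsup_n\|u_n\|_{L^4(\heis)}=\nu>0$, the refined inequality forces $\limsup_n\|u_n\|_{B}\ge c\nu^2/M$ with $M=\sup_n\|u_n\|_{\dot{H}^1(\heis)}$; and that a lower bound on this weak norm localizes, up to a subsequence, a single scale $h_n$ and core $s_n$ with $g_{h_n,s_n}^{-1}u_n\rightharpoonup U$ weakly in $\dot{H}^1(\heis)$, $U\neq 0$ and $\|U\|_{\dot{H}^1(\heis)}\ge c\nu^2/M$. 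I expect this lemma to be the main obstacle: it is where the Heisenberg harmonic analysis genuinely enters (identifying the correct weak norm, proving the refined embedding, and extracting a concentration profile from a lower bound on that norm), whereas the remaining steps are structural.

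Next comes the iteration. Set $r_n^{(0)}=u_n$; given the first $j-1$ profiles, apply the lemma to $r_n^{(j-1)}$ to produce $U^{(j)}$, scales $h_n^{(j)}$ and cores $s_n^{(j)}$, and set $r_n^{(j)}=r_n^{(j-1)}-g_{h_n^{(j)},s_n^{(j)}}U^{(j)}$; a diagonal argument extracts one common subsequence $(n_i)$. The couples are pairwise strange in the sense of the Definition: one checks directly that the composition $g_{h_n^{(j')},s_n^{(j')}}^{-1}g_{h_n^{(j)},s_n^{(j)}}$ converges weakly to $0$ \emph{precisely} when the two couples are strange, which both forces the weak limit of $g_{h_n^{(j)},s_n^{(j)}}^{-1}r_n^{(j-1)}$ to be orthogonal to all earlier profiles and, conversely, would lead to two concentrations being captured in a single weak limit if it failed. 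The quantitative bound $\|U^{(j)}\|_{\dot{H}^1(\heis)}\ge c(\limsup_i\|r_{n_i}^{(j-1)}\|_{L^4(\heis)})^2/M$, combined with the Pythagorean identity below giving $\sum_j\|U^{(j)}\|_{\dot{H}^1(\heis)}^2\le\liminf_i\|u_{n_i}\|_{\dot{H}^1(\heis)}^2<\infty$, shows $\|U^{(j)}\|_{\dot{H}^1(\heis)}\to 0$, hence $\limsup_i\|r_{n_i}^{(l)}\|_{L^4(\heis)}\to 0$ as $l\to\infty$, which is conclusion (2).

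Finally, the orthogonality relations. Both $u\mapsto\|u\|_{\dot{H}^1(\heis)}^2$ and $u\mapsto(D_su,u)_{\dot{H}^{-1}(\heis)\times\dot{H}^1(\heis)}$ are continuous Hermitian forms on $\dot{H}^1(\heis)$ — for the second, the equivalence of norms \eqref{eq:equivalence_norms} (equivalently $|\sigma|\le(n+1)|\sigma|$) gives $|(D_su,u)|\le\|u\|_{\dot{H}^1(\heis)}^2$ — and a computation as in the first paragraph shows both forms are invariant under every $g_{h,s_0}$. Combining this invariance with the asymptotic orthogonality $g_{h_{n_i}^{(j')},s_{n_i}^{(j')}}^{-1}g_{h_{n_i}^{(j)},s_{n_i}^{(j)}}\rightharpoonup 0$ for $j\neq j'$ and the weak convergence $g_{h_{n_i}^{(j)},s_{n_i}^{(j)}}^{-1}r_{n_i}^{(j-1)}\rightharpoonup U^{(j)}$, the cross terms vanish and one obtains the two stated Pythagorean expansions for each fixed $l$, with error $o(1)$ as $i\to\infty$. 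The $L^4$ decoupling then follows from a Brezis-Lieb type argument: iterating the elementary estimate $\big|\|f+g\|_{L^4}^4-\|f\|_{L^4}^4-\|g\|_{L^4}^4\big|\le C(\|f\|_{L^4}^3\|g\|_{L^4}+\|f\|_{L^4}\|g\|_{L^4}^3)$ along the strange couples kills the cross terms between distinct bubbles and between bubbles and remainder, while conclusion (2) removes the remainder's own contribution as $l\to\infty$, yielding $\|u_{n_i}\|_{L^4(\heis)}^4\to\sum_j\|U^{(j)}\|_{L^4(\heis)}^4$.
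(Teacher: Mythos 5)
You should first know that the paper does not actually prove Theorem \ref{thm:concentration_compactness}: its entire treatment is the remark following the statement, attributing the result to the concentration-compactness method of Cazenave--Lions, refined into a profile decomposition by G\'erard on $\R^n$ and adapted to the Heisenberg group by Benameur, ``restricted to the subspace of radial functions''. Your proposal therefore reconstructs the cited proof rather than competing with one in the paper, and its architecture --- refined Folland--Stein inequality through a scale- and translation-covariant weak norm, extraction of one nonzero weak limit per application with the quantitative bound $\|U\|_{\dot{H}^1}\geq c\nu^2/M$, iteration plus diagonal extraction, and Pythagorean expansions of the two quadratic forms via asymptotic orthogonality of the operators $g_{h,s_0}$ --- is exactly G\'erard's scheme that Benameur transposes to $\heis$. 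The exponents in your refined inequality are the correct ones for homogeneous dimension $Q=4$, and the invariance computations you invoke for $\|\cdot\|_{\dot{H}^1(\heis)}^2$ and $(D_su,u)_{\dot{H}^{-1}(\heis)\times\dot{H}^1(\heis)}$ are right.

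Two steps, however, do not close as written. First, the $L^4$ decoupling: the elementary inequality you state, with $\|f\|_{L^4}^3\|g\|_{L^4}+\|f\|_{L^4}\|g\|_{L^4}^3$ on the right-hand side, cannot kill cross terms between two \emph{distinct bubbles}, since $\|g_{h,s_0}U^{(j)}\|_{L^4}=\|U^{(j)}\|_{L^4}$ is a fixed nonzero constant for each $j$; the right-hand side stays $O(1)$ no matter how strange the couples are, and strangeness never enters that estimate. What is needed is the integral form
$$
\Big|\|f+g\|_{L^4(\heis)}^4-\|f\|_{L^4(\heis)}^4-\|g\|_{L^4(\heis)}^4\Big|
	\leq C\int_{\heis}\big(|f|^3|g|+|f||g|^3\big)\d\lambda_3,
$$
together with a separate lemma that the overlap integrals of two bubbles along strange couples tend to $0$ (approximate each profile by compactly supported functions and use separation of scales, or of cores at a common scale). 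Your norm-product bound does suffice for the bubble-remainder terms, where $\limsup_i\|r_{n_i}^{(l)}\|_{L^4(\heis)}\to 0$; only the bubble-bubble terms need the overlap argument. Second, and more substantively: your key lemma claims to extract a scale $h_n$ and a core $s_n\in\R$, but the refined-inequality mechanism pins a concentration point of $\heis$, i.e.\ a full triple $(x_n,y_n,s_n)$, and with only dilations and $s$-translations at your disposal the renormalized sequence has \emph{zero} weak limit whenever $|x_n+iy_n|/h_n\to+\infty$. This is precisely where radiality must be used, and it is not automatic: one needs the argument that an off-axis concentration of a cylindrically symmetric function replicates, by rotation invariance, into roughly $|x_n+iy_n|/h_n$ essentially disjoint bumps of comparable $\dot{H}^1$ energy, contradicting the uniform bound; hence concentration can only occur at cylindrical distance $O(h_n)$ from the center axis and the core may be taken of the form $(0,0,s_n)$. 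Since this axis-localization is the only point where the radial statement differs from Benameur's general theorem (whose cores are arbitrary points of $\heis$), it should be named explicitly as part of the ``main obstacle'' lemma you isolate; as written, both your sketch and the paper gloss over it.
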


This result is an adaptation of a concentration-compactness argument due to Cazenave and Lions \cite{Cazenave1982}, which was refined into a profile decomposition theorem as above by Gérard \cite{Gerard1998} for Sobolev spaces on $\R^n$. One can find a proof of this profile decomposition theorem for Sobolev spaces on the Heisenberg group in Benameur's work \cite{Benameur2008}, which is here restricted to the subspace of radial functions.% The presence of scaling factors $(h_{n_i}^{(j)})$, which do not appear for the half-wave equation, is due to the fact that the exponent $p=4$ is the critical exponent for the embedding $H^1(\heis)\hookrightarrow L^p(\heis)$. 

%\newpage
%%%%%%%%%%%%%%%%%%%%%%%%%%%%%%%%%%%%%%%%%%%%%%%%%%%%%%%%%%%%%%%%%%%%%%%%%%%%%%%%%%%%%%%%%%%%%%%%%%%%%%%%%%%%%%%%%%%%%%%%%%%%%%%%%%%%%%%%%%%%%%%%%
\subsection{The limit \texorpdfstring{$\beta\to 1^-$}{beta -> 1}}\label{subsection:limit}
%%%%%%%%%%%%%%%%%%%%%%%%%%%%%%%%%%%%%%%%%%%%%%%%%%%%%%%%%%%%%%%%%%%%%%%%%%%%%%%%%%%%%%%%%%%%%%%%%%%%%%%%%%%%%%%%%%%%%%%%%%%%%%%%%%%%%%%%%%%%%%%%%

In this part, we study the behavior of the traveling waves $Q_\beta$ as $\beta$ tends to the limit $1^-$. We show that these traveling waves converge up to symmetries to a limiting profile. The strategy is similar to \cite{GerardLenzmannPocovnicuRaphael2018} for the half-wave equation.

For $\beta\in(-1,1)$, let $Q_\beta$ be a minimizer of $J_\beta$ : $I_\beta=J_\beta(Q_\beta)$. Up to a change of functions $Q_\beta\rightsquigarrow \alpha Q_\beta$, one can choose $Q_\beta$ such that
$$
\frac{(-(\hlapl +\beta D_s)Q_\beta,Q_\beta)_{\dot{H}^{-1}(\heis)\times\dot{H}^1(\heis)}}{1-\beta}=\|Q_\beta\|_{L^4(\heis)}^4,
$$
so that $Q_\beta$ is a solution to equation \eqref{eq:Hbeta}.

\begin{mydef}[Minimizers in $\Qbeta$]\label{def:Qbeta}
For all $\beta \in (-1,1)$, denote by $\Qbeta$ the set of minimizers $Q_\beta$ of $J_\beta:u\mapsto \frac{(-(\hlapl +\beta D_s)u,u)_{\dot{H}^{-1}(\heis)\times\dot{H}^1(\heis)}^2}{\|u\|_{L^4}^4}$ which are satisfying
\begin{equation}\label{eq:nbeta}
\frac{(-(\hlapl +\beta D_s)Q_\beta,Q_\beta)_{\dot{H}^{-1}(\heis)\times\dot{H}^1(\heis)}}{1-\beta}  =\|Q_\beta\|_{L^4}^4=\frac{I_\beta}{(1-\beta)^2},\quad
	I_\beta=J_\beta(Q_\beta).
\end{equation}
Note that for $Q_\beta\in\Qbeta$, equation \eqref{eq:Hbeta} is verified
$$
-\frac{\hlapl +\beta D_s}{1-\beta}Q_\beta=|Q_\beta|^2Q_\beta.
$$
\end{mydef}

\begin{mydef}[Minimizers in $\Qplus$]
For all radial functions $u\in \dot{H}^1(\heis)\cap V_{0}^+\setminus \{0\}$ whose Fourier transform have a non-zero component only along the Hermite-type function $\widehat{h_0^+}$, define
$$
J_+(u):=\frac{\|u\|_{\dot{H}^1(\heis)}^4}{\|u\|_{L^4(\heis)}^4}
$$
(note that on the space $\dot{H}^1(\heis)\cap V_0^+$, $-\hlapl=D_s$). Denote by $I_+$ its infimum
$$
I_+:=\inf\left\{J_+(u);\; u\in  \dot{H}^1(\heis)\cap V_{0}^+\setminus \{0\}\right\}.
$$
Let $\Qplus$ be the set of minimizers $Q_+$ of $J_+$ such that
\begin{equation*}
\|Q_+\|_{\dot{H}^1(\heis)}^2 =  \|Q_+\|_{L^4}^4=I_+, \quad I_+=J_+(Q_+).
\end{equation*}
Then any $Q_+\in\Qplus$ is a solution to equation \eqref{eq:Hplus}
\begin{equation*}
D_sQ_+=\Pi_0^+(|Q_+|^2Q_+).
\end{equation*}
\end{mydef}

Here are some remarks about this definition.

The minimum $I_+$ is attained and positive. The proof is similar as for the minimum $I_\beta$, all there is to do is to restrict the profile decomposition theorem to the closed subspace $ \dot{H}^1(\heis)\cap V_0^+$ of $\dot{H}^1(\heis)$.

The term $\Pi_0^+(|Q_+|^2Q_+)$ may not seem suitable since $|Q_+|^2Q_+$ belongs to $L^{\frac{4}{3}(\heis)}\hookrightarrow\dot{H}^{-1}(\heis)$ whereas $\Pi_0^+$ is a projector defined on $L^2(\heis)$. Several arguments make sense to this term in later parts. On the one hand, we will see that $|Q_+|^2Q_+\in L^2(\heis)$ (cf. part \ref{subsection:ground_state_solutions_limit}). On the other hand, the projector $\Pi_0^+$ extends to $L^p(\heis)$ for all $p>1$ (see Theorem \ref{thm:P0_bounded}).

The convergence result is as follows.

\begin{thm}[Convergence]\label{thm:limit_beta1}
For all $\beta\in(-1,1)$, fix $Q_\beta\in\Qbeta$. Then, there exist a subsequence $\beta_n\to 1^-$, scalings $(\alpha_n)_{n\in\N}\in(\R_+^*)^{\N}$, cores $(s_n)_{n\in\N}\in\R^{\N}$ and a function $Q_+\in\Qplus$ such that
$$
\|\alpha_nQ_{\beta_n}(\alpha_n\cdot,\alpha_n\cdot,\alpha_n^2(\cdot+s_n))-Q_+\|_{\dot{H}^1(\heis)}
	\longrightarroww{n\to+\infty}{} 0.
$$
\end{thm}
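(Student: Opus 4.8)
The plan is to show that, as $\beta\to1^-$, the minimizers $Q_\beta$ concentrate entirely on the degenerating subspace $V_0^+$ and that their $V_0^+$-components form a minimizing sequence for $J_+$, to which the concentration-compactness argument already used for $I_+$ applies. Throughout I would write $\mathcal{B}_\beta(u):=(-(\hlapl+\beta D_s)u,u)_{\dot{H}^{-1}(\heis)\times\dot{H}^1(\heis)}$. Decomposing $u=\sum_{n,\pm}u_n^\pm$ along the spaces $V_n^\pm$, the symbol $(n+1)|\sigma|-\beta\sigma$ gives $\mathcal{B}_\beta(u)=\sum_{n,\pm}c_n^{\pm}(\beta)\,\|u_n^\pm\|_{\dot{H}^1(\heis)}^2$, where $c_0^+(\beta)=1-\beta$ while every other coefficient stays uniformly bounded below for $\beta$ close to $1$ (the least being $c_1^+(1)=\tfrac12$); fix $c_0>0$ with $c_n^\pm(\beta)\ge c_0$ for $(n,\pm)\ne(0,+)$. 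This spectral gap between the mode $V_0^+$ and all the others is the backbone of the argument.

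First I would record two consequences of the normalization \eqref{eq:nbeta}. Testing $J_\beta$ against functions of $V_0^+$, on which $\mathcal{B}_\beta(u)=(1-\beta)\|u\|_{\dot{H}^1(\heis)}^2$ and hence $J_\beta=(1-\beta)^2J_+$, yields the upper bound $I_\beta\le(1-\beta)^2 I_+$; combined with \eqref{eq:nbeta} this gives $\mathcal{B}_\beta(Q_\beta)=\frac{I_\beta}{1-\beta}\le(1-\beta)I_+\to0$. On the other hand, the Folland--Stein inequality together with $\mathcal{B}_\beta(u)\ge(1-\beta)\|u\|_{\dot{H}^1(\heis)}^2$ gives $I_\beta\ge(1-\beta)^2 I_{\mathrm{all}}$, where $I_{\mathrm{all}}>0$ is the global Sobolev quotient constant, so that the scale-invariant quantity $t_\beta:=\frac{I_\beta}{(1-\beta)^2}=\|Q_\beta\|_{L^4(\heis)}^4$ remains trapped in $[I_{\mathrm{all}},I_+]$ and in particular is bounded away from $0$.

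The key step, and the one I expect to be the main obstacle, is the vanishing of the non-$V_0^+$ part. Setting $w_\beta:=\Pi_0^+Q_\beta\in V_0^+$ and using the spectral gap,
$$
c_0\,\|Q_\beta-w_\beta\|_{\dot{H}^1(\heis)}^2\le\sum_{(n,\pm)\ne(0,+)}c_n^{\pm}(\beta)\,\|(Q_\beta)_n^\pm\|_{\dot{H}^1(\heis)}^2\le\mathcal{B}_\beta(Q_\beta)\to0,
$$
so $\|Q_\beta-w_\beta\|_{\dot{H}^1(\heis)}\to0$ and hence $\|Q_\beta-w_\beta\|_{L^4(\heis)}\to0$ by the embedding. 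Consequently $\|w_\beta\|_{L^4(\heis)}^4=t_\beta+o(1)$, and since the $V_n^\pm$-decomposition is $\mathcal{B}_\beta$-orthogonal one has $\|w_\beta\|_{\dot{H}^1(\heis)}^2=\frac{1}{1-\beta}\big(\mathcal{B}_\beta(Q_\beta)-\mathcal{B}_\beta(Q_\beta-w_\beta)\big)\le t_\beta$. Therefore $J_+(w_\beta)=\|w_\beta\|_{\dot{H}^1(\heis)}^4/\|w_\beta\|_{L^4(\heis)}^4\le t_\beta+o(1)\le I_+$; since $J_+(w_\beta)\ge I_+$ always, a squeeze forces $t_\beta\to I_+$, $J_+(w_\beta)\to I_+$, and then $\|w_\beta\|_{\dot{H}^1(\heis)}^2\to I_+$, $\|w_\beta\|_{L^4(\heis)}^4\to I_+$. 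Thus $w_\beta$ is a minimizing sequence for $J_+$ on $\dot{H}^1(\heis)\cap V_0^+$ with the normalization of $\Qplus$.

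Finally I would invoke the concentration-compactness argument establishing attainment of $I_+$. Since the scalings and the $s$-translations preserve the closed subspace $\dot{H}^1(\heis)\cap V_0^+$, restricting Theorem \ref{thm:concentration_compactness} keeps all profiles in $V_0^+$; for a minimizing sequence, strict subadditivity (via $a_j\ge\sqrt{I_+\,b_j}$ and $\sum\sqrt{b_j}\ge\sqrt{\sum b_j}$ with equality only for a single profile) forces exactly one non-trivial profile carrying all the $\dot{H}^1$-energy and all the $L^4$-mass, the remainder vanishing in $\dot{H}^1(\heis)$. Extracting $\beta_n\to1^-$ and using the associated scalings $\alpha_n$ and cores $s_n$, one obtains a limit $Q_+$ with $J_+(Q_+)=I_+$ and the correct normalization, i.e. $Q_+\in\Qplus$, and $\alpha_nw_{\beta_n}(\alpha_n\cdot,\alpha_n\cdot,\alpha_n^2(\cdot+s_n))\to Q_+$ in $\dot{H}^1(\heis)$. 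Because this rescaling is an isometry of $\dot{H}^1(\heis)$ and $\|Q_{\beta_n}-w_{\beta_n}\|_{\dot{H}^1(\heis)}\to0$, the same rescaling applied to $Q_{\beta_n}$ converges to $Q_+$, which is the assertion. Everything downstream of the vanishing step is the standard variational machinery; the whole difficulty is concentrated in exploiting the degeneracy $c_0^+(\beta)=1-\beta$, set against the uniform gap for all other modes, to kill the components of $Q_\beta$ outside $V_0^+$.
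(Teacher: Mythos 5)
Your proposal is correct, and its overall architecture coincides with the paper's: isolate the component of $Q_\beta$ along $V_0^+$, show the complementary part vanishes, deduce that the $V_0^+$-part is a minimizing sequence for $J_+$ with the $\Qplus$ normalization, apply the profile decomposition of Theorem \ref{thm:concentration_compactness} restricted to the closed, symmetry-invariant subspace $\dot{H}^1(\heis)\cap V_0^+$ with the strict-subadditivity argument forcing a single profile, and transfer the convergence back to $Q_{\beta_n}$ by isometry. The genuine difference is in the key vanishing step, i.e.\ the content of Lemma \ref{lem:delta_Qbeta}. The paper tests the Euler--Lagrange equation \eqref{eq:Hbeta} against $\overline{R_\beta}$, applies H\"older and the Folland--Stein embedding, and closes a quadratic inequality using the spectral gap \eqref{eq:normesV0perp}. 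You never use the equation at all: you observe that the normalization \eqref{eq:nbeta} together with the test-function bound $I_\beta\le(1-\beta)^2I_+$ forces $\mathcal{B}_\beta(Q_\beta)=\frac{I_\beta}{1-\beta}\le(1-\beta)I_+\to 0$, and since every spectral mode contributes non-negatively to $\mathcal{B}_\beta(Q_\beta)$ with coefficient bounded below by $\tfrac12$ outside $(0,+)$, the whole complement $R_\beta=Q_\beta-\Pi_0^+Q_\beta$ must die, with $\|R_\beta\|_{\dot{H}^1(\heis)}^2=\gdO(1-\beta)$ --- exactly the rate stated in Lemma \ref{lem:delta_Qbeta}. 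This purely variational argument is shorter and arguably more robust (it would apply to near-minimizers, not just critical points); what the paper's equation-based route buys is, in fact, a slightly stronger implicit bound $\mathcal{B}_\beta(R_\beta)=\gdO((1-\beta)^2)$, though the paper only records the $\gdO(1-\beta)$ rate, which is all that is used here and in Section \ref{section:uniqueness_schrodinger}. Your subsequent bookkeeping ($\|w_\beta\|_{\dot H^1}^2\le t_\beta$ by $\mathcal{B}_\beta$-orthogonality, the squeeze $I_+\le J_+(w_\beta)\le t_\beta+o(1)\le I_+ +o(1)$ giving $t_\beta\to I_+$) reproduces the conclusion $\delta(Q_\beta^+)\to 0$ of the paper, and your single-profile argument via $\sum_j\sqrt{b_j}\ge\sqrt{\sum_j b_j}$ is an equivalent variant of the chain of inequalities in the proof of Proposition \ref{prop:stability1}.
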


We introduce the quantity $\delta(u)$, which quantifies the gap between the norms of a function $u$ in $\dot{H}^1(\heis)$ and those of the profiles $Q_+\in\Qplus$. We prove that $\delta(Q_\beta)$ is small, and then show that $\delta(u)$ controls the distance up to symmetries from $u$ to the profiles $Q_+$ in $ \Qplus$.

\begin{mydef}\label{def:delta_u}
For  $u\in \dot{H}^{1}(\heis)$, define
$$
\delta(u)=\big|\|u\|_{\dot{H}^1(\heis)}^2-I_+\big|+\big|\|u\|_{L^4(\heis)}^4-I_+\big|.
$$
\end{mydef}

We first show a lemma about $\delta(Q_\beta)$, $Q_\beta\in\Qbeta$.
\begin{lem}\label{lem:delta_Qbeta}
There exist $C>0$ and $\beta_*\in(0,1)$ such that the following holds. For all $\beta\in(\beta_*,1)$ fix $Q_\beta\in\Qbeta$, and decompose $Q_\beta$ along the Hermite-type functions from part \ref{subsection:hermite}
$$
Q_\beta=Q_\beta^++R_\beta,
$$
where $Q_\beta^+\in \dot{H}^1(\heis)\cap V_0^+$ and $R_\beta\in \dot{H}^1(\heis)\cap \bigoplus_{(n,\pm)\neq (0,+)}V_n^\pm$. Then $\|R_\beta\|_{\dot{H}^1(\heis)}\leq C(1-\beta)^\half$, $\delta(Q_\beta^+)\leq C(1-\beta)^\half$ and $\delta(Q_\beta)\leq C(1-\beta)^\half$.
\end{lem}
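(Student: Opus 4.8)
The plan is to exploit the spectral gap separating $V_0^+$ from the remaining subspaces $V_n^\pm$. On $\dot{H}^1(\heis)\cap V_0^+$ one has $-\hlapl=D_s$, so $-(\hlapl+\beta D_s)=(1-\beta)(-\hlapl)$ and the ratio of the quadratic form $(-(\hlapl+\beta D_s)\cdot,\cdot)_{\dot{H}^{-1}(\heis)\times\dot{H}^1(\heis)}$ to $\|\cdot\|_{\dot{H}^1(\heis)}^2$ equals $1-\beta$ there. A direct computation with the Hermite decomposition shows this ratio equals $\frac{n+1-\beta}{n+1}$ on $V_n^+$ and $\frac{n+1+\beta}{n+1}$ on $V_n^-$, hence it is $\geq 1-\frac{\beta}{2}$ on every $V_n^\pm$ with $(n,\pm)\neq(0,+)$. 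First I would record, by testing $J_\beta$ against functions of $V_0^+$ (where $J_\beta=(1-\beta)^2J_+$), the upper bound $I_\beta\leq(1-\beta)^2I_+$; equivalently, writing $A_\beta:=(-(\hlapl+\beta D_s)Q_\beta,Q_\beta)_{\dot{H}^{-1}(\heis)\times\dot{H}^1(\heis)}$ and $N_\beta:=\|Q_\beta\|_{L^4(\heis)}^4$, the normalisation \eqref{eq:nbeta} gives $A_\beta=(1-\beta)N_\beta=I_\beta/(1-\beta)\leq(1-\beta)I_+$ and $N_\beta=I_\beta/(1-\beta)^2\leq I_+$.

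The smallness of $R_\beta$ is then immediate: since the quadratic form is diagonal in the decomposition $Q_\beta=Q_\beta^++R_\beta$ and the ratio on the support of $R_\beta$ is $\geq 1-\frac{\beta}{2}\geq\half$, one gets $\half\|R_\beta\|_{\dot{H}^1(\heis)}^2\leq A_\beta\leq(1-\beta)I_+$, whence $\|R_\beta\|_{\dot{H}^1(\heis)}\leq C(1-\beta)^\half$. By the Folland–Stein embedding this also yields $\|R_\beta\|_{L^4(\heis)}\leq C(1-\beta)^\half$, which proves the first assertion.

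For the two $\delta$ estimates I first pin down $N_\beta$. The Gagliardo–Nirenberg inequality combined with the equivalence of norms \eqref{eq:equivalence_norms} gives $N_\beta\leq (C_{FS}/(1-\beta)^2)A_\beta^2$, so $I_\beta=A_\beta^2/N_\beta\geq(1-\beta)^2/C_{FS}$ and the $\beta$-dependence cancels to produce the crucial uniform lower bound $N_\beta\geq 1/C_{FS}=:c_0>0$. Next, $A_\beta\geq(1-\beta)\|Q_\beta^+\|_{\dot{H}^1(\heis)}^2$ together with $J_+(Q_\beta^+)\geq I_+$ and $A_\beta^2=(1-\beta)^2N_\beta^2$ yields the key relation $N_\beta^2\geq I_+\|Q_\beta^+\|_{L^4(\heis)}^4$. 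Since $\|Q_\beta^+\|_{L^4(\heis)}$ differs from $\|Q_\beta\|_{L^4(\heis)}=N_\beta^{1/4}$ by at most $\|R_\beta\|_{L^4(\heis)}=\gdO((1-\beta)^\half)$ and $N_\beta\in[c_0,I_+]$ is bounded away from $0$ and $\infty$, this relation forces $N_\beta\geq I_+-C(1-\beta)^\half$; with $N_\beta\leq I_+$ I obtain $|N_\beta-I_+|\leq C(1-\beta)^\half$, and likewise $|\|Q_\beta^+\|_{L^4(\heis)}^4-I_+|\leq C(1-\beta)^\half$. The $\dot{H}^1$ parts follow similarly: $\|Q_\beta^+\|_{\dot{H}^1(\heis)}^2\leq\|Q_\beta\|_{\dot{H}^1(\heis)}^2\leq N_\beta\leq I_+$, while $J_+(Q_\beta^+)\geq I_+$ gives the matching lower bound $\|Q_\beta^+\|_{\dot{H}^1(\heis)}^2\geq I_+^{1/2}\|Q_\beta^+\|_{L^4(\heis)}^2\geq I_+-C(1-\beta)^\half$; transferring through $\|R_\beta\|_{\dot{H}^1(\heis)}^2=\gdO(1-\beta)$ then controls $\|Q_\beta\|_{\dot{H}^1(\heis)}^2$ as well. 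Assembling these bounds gives $\delta(Q_\beta^+)\leq C(1-\beta)^\half$ and $\delta(Q_\beta)\leq C(1-\beta)^\half$.

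The main obstacle is the uniform lower bound $N_\beta\geq c_0>0$: without it the comparison $N_\beta^2\geq I_+\|Q_\beta^+\|_{L^4(\heis)}^4$ would yield only qualitative convergence $N_\beta\to I_+$ rather than the quantitative rate, and one could not exclude $Q_\beta^+$ degenerating. The point is that the Gagliardo–Nirenberg constant blows up exactly like $(1-\beta)^{-2}$, and this cancels against the normalisation of $I_\beta$; everything else is bookkeeping around the spectral gap and the Folland–Stein embedding. I would also note that $c_0>0$ guarantees $Q_\beta^+\neq 0$ for $\beta$ close to $1$, so that the inequality $J_+(Q_\beta^+)\geq I_+$ is legitimate; this is what fixes the threshold $\beta_*$.
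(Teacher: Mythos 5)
Your proof is correct, and while it shares the paper's overall variational skeleton (the two-sided bounds $c_0\le I_\beta/(1-\beta)^2\le I_+$, the Hermite decomposition with the spectral gap \eqref{eq:normesV0perp}, and the near-optimality of $Q_\beta^+$ for $J_+$), it diverges at the key step controlling $R_\beta$. The paper pairs the Euler--Lagrange equation \eqref{eq:Hbeta} with $\overline{R_\beta}$, uses the invariance of the spaces $V_n^\pm$ to replace $Q_\beta$ by $R_\beta$ on the left-hand side, and then closes a quadratic inequality in $\big(-\frac{\hlapl+\beta D_s}{1-\beta}R_\beta,R_\beta\big)$ via H\"older and Folland--Stein. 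You never touch the equation: you use only that the quadratic form is block-diagonal and nonnegative on each block, so that $\half\|R_\beta\|_{\dot{H}^1(\heis)}^2\le A_\beta=I_\beta/(1-\beta)\le(1-\beta)I_+$, where the last inequality comes from testing against $V_0^+$. This is shorter, purely variational (it needs only the normalisation \eqref{eq:nbeta} and the test-function bound $I_\beta\le(1-\beta)^2I_+$), and gives an explicit constant $\sqrt{2I_+}$. What the paper's equation-based route buys is a stronger (unstated) bound: its intermediate estimate $\big(-\frac{\hlapl+\beta D_s}{1-\beta}R_\beta,R_\beta\big)=\gdO(1-\beta)$, combined once more with \eqref{eq:normesV0perp}, actually yields $\|R_\beta\|_{\dot{H}^1(\heis)}=\gdO(1-\beta)$, one order better than the $\gdO((1-\beta)^{\half})$ your positivity argument (and the lemma) provides --- though only the weaker rate is needed anywhere. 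Your subsequent derivation of $|N_\beta-I_+|=\gdO((1-\beta)^{\half})$, via $N_\beta^2\ge I_+\|Q_\beta^+\|_{L^4(\heis)}^4$ and the triangle inequality, is a reorganisation of the paper's computation $I_+\le J_+(Q_\beta^+)=\frac{I_\beta}{(1-\beta)^2}\big(1+\gdO((1-\beta)^{\half})\big)$: both rest on the same two inputs, namely $J_+(Q_\beta^+)\ge I_+$ and the smallness of $R_\beta$, and both are rigorous. Your explicit remark that $N_\beta\ge c_0>0$ (the paper's counterpart is $I_\beta\ge(1-\beta)^2I_0$) is indeed the point that keeps all divisions and square roots uniform and guarantees $Q_\beta^+\neq 0$, fixing the threshold $\beta_*$.
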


\begin{proof}
Fix $u\in \dot{H}^1(\heis)$. Thanks to inequality \eqref{eq:equivalence_norms},
$$
(1-|\beta|)\|u\|_{\dot{H}^1}^2\leq (-(\hlapl +\beta D_s)u,u)_{\dot{H}^{-1}(\heis)\times\dot{H}^1(\heis)} \leq (1+|\beta|)\|u\|_{\dot{H}^1}^2,
$$
one knows that
$I_\beta\geq (1-\beta)^{2}I_0$ when $\beta\in(0,1)$.

Furthermore, let $Q_+\in \Qplus$. Then, using the fact that $-\hlapl Q_+=D_sQ_+$,
\begin{align*}
I_\beta
	&\leq J_\beta(Q_+)\\
   &=\frac{(1-\beta)^2(D_sQ_+,Q_+)_{\dot{H}^{-1}(\heis)\times\dot{H}^1(\heis)}^2}{\|Q_+\|_{L^4}^4}\\
   &=(1-\beta)^{2}I_+.
\end{align*}
Consequently,  $(\frac{I_\beta}{(1-\beta)^{2}})_\beta$ is bounded above and below~:
$$
I_0\leq\frac{I_\beta}{(1-\beta)^{2}}\leq I_+.
$$
We will show that actually $\frac{I_\beta}{(1-\beta)^{2}}\to I_+$ as $\beta$ tends to $1$.

Let us decompose a minimizer $Q_\beta\in\Qbeta$ along the Hermite-type functions from part \ref{subsection:hermite}
$$
Q_\beta=Q_\beta^++R_\beta,
$$
where $Q_\beta^+\in \dot{H}^1(\heis)\cap V_0^+$, and $R_\beta\in \dot{H}^1(\heis)\cap \bigoplus_{(n,\pm)\neq (0,+)}V_n^\pm$ is a remainder term which will go to zero.

Multiplying equation \eqref{eq:Hbeta} by $\overline{R_\beta}$, we get that for all $n$,
$$
\big(-\frac{\hlapl +\beta D_s}{1-\beta}Q_\beta,R_\beta\big)_{\dot{H}^{-1}(\heis)\times\dot{H}^1(\heis)}
	=\big(|Q_\beta|^2Q_\beta,R_\beta\big)_{L^{\frac{4}{3}}(\heis)\times L^4(\heis)}.
$$
Since the operators $\hlapl $ and $D_s$ let invariant the spaces $V_n^{\pm}$, we can replace $Q_\beta$ by $R_\beta$ in the left term of the equality
$$
\big(-\frac{\hlapl +\beta D_s}{1-\beta}R_\beta,R_\beta\big)_{\dot{H}^{-1}(\heis)\times\dot{H}^1(\heis)}
	=\big(|Q_\beta|^2Q_\beta,R_\beta\big)_{L^{\frac{4}{3}}(\heis)\times L^4(\heis)}.
$$
Applying Hölder's inequality, we deduce that
\begin{equation}\label{eq:Rbeta}
\big(-\frac{\hlapl +\beta D_s}{1-\beta}R_\beta,R_\beta\big)_{\dot{H}^{-1}(\heis)\times\dot{H}^1(\heis)}
	\leq \|Q_\beta\|_{L^4(\heis)}^3\|R_\beta\|_{L^4(\heis)}.
\end{equation}

Now, let us write more precisely the equivalence \eqref{eq:equivalence_norms} between the norms $\|u\|_{\dot{H}^1(\heis)}$ and $\big(-(\hlapl +\beta D_s)u,u\big)_{\dot{H}^{-1}(\heis)\times\dot{H}^1(\heis)}^{\half}$. The left inequality can be controlled with sharper constants which do not depend on $\beta$ when we impose the function $u\in\dot{H}^1(\heis)$ to have a zero component $u_0^+$. Indeed, remark that when $n\geq 1$,
$$
n+1-\beta\geq n\geq (n+1)/2,
$$
and when $n\geq 0$,
$$
n+1+\beta\geq n+1\geq (n+1)/2.
$$
We deduce that for all $u\in\dot{H}^1(\heis)\cap \bigoplus_{(n,\pm)\neq(0,+)}V_n^\pm$, decomposing $u$ as $u=\sum_{(n,\pm)\neq (0,+)}u_n^\pm$, $u_n^\pm\in \dot{H}^1(\heis)\cap V_n^\pm$,
\begin{align*}
(-(\hlapl +\beta D_s)u,u)_{\dot{H}^{-1}(\heis)\times\dot{H}^1(\heis)}
	&=\sum_{(n,\pm)\neq(0,0)}\int_{\R^3}((n+1)|\sigma|-\beta\sigma) |\widehat{u_n^\pm}(x,y,\sigma)|^2\d x\d y\d \sigma\\
	&\geq \half \sum_{(n,\pm)\neq(0,0)}\int_{\R^3}(n+1)|\sigma|  |\widehat{u_n^\pm}(x,y,\sigma)|^2\d x\d y\d \sigma.
\end{align*}
This implies the inequality
\begin{equation}\label{eq:normesV0perp}
\|u\|_{\dot{H}^1(\heis)}^2
	\leq 2\big(-(\hlapl +\beta D_s)u,u\big)_{\dot{H}^{-1}(\heis)\times\dot{H}^1(\heis)},\quad
	u\in \dot{H}^1(\heis)\cap \bigoplus_{(n,\pm)\neq(0,+)}V_n^\pm,
\end{equation}
which we can use for $u=R_\beta$. Combining this inequality and the Folland-Stein inequality $\|u\|_{L^4(\heis)}\leq C\|u\|_{\dot{H}^1(\heis)}$ in \eqref{eq:Rbeta} , we get 
\begin{multline*}
\big(-\frac{\hlapl +\beta D_s}{1-\beta}R_\beta,R_\beta\big)
	_{\dot{H}^{-1}(\heis)\times\dot{H}^1(\heis)}\\
	\leq C \|Q_\beta\|_{L^4(\heis)}^3 \left(2(1-\beta)\big(-\frac{\hlapl +\beta D_s}{1-\beta}R_\beta,R_\beta\big)_{\dot{H}^{-1}(\heis)\times\dot{H}^1(\heis)}\right)^\half,
\end{multline*}
so
$$
\big(-\frac{\hlapl +\beta D_s}{1-\beta}R_\beta,R_\beta\big)_{\dot{H}^{-1}(\heis)\times\dot{H}^1(\heis)}
	\leq 2C^2(1-\beta) \|Q_\beta\|_{L^4(\heis)}^6.
$$
Since $(\|Q_\beta\|_{L^4(\heis)})_\beta$ is bounded independently of $\beta$ thanks to the norm conditions \eqref{eq:nbeta} and the boundedness of $(\frac{I_\beta}{(1-\beta)^2})_\beta$, we deduce that as $\beta$ goes to $1$,
$$\
\big(-\frac{\hlapl +\beta D_s}{1-\beta}R_\beta,R_\beta\big)_{\dot{H}^{-1}(\heis)\times\dot{H}^1(\heis)}
	=\gdO(1-\beta).
$$
This implies immediately that
$
\|R_\beta\|_{\dot{H}^1(\heis)}^2=\gdO(1-\beta)
$
and
$
\|R_\beta\|_{L^4(\heis)}^2=\gdO(1-\beta).
$
Using the orthogonal decomposition $Q_\beta=Q_\beta^++R_\beta$ in $\dot{H}^1(\heis)$ and the fact that $-\hlapl=D_s$ on $\dot{H}^1(\heis)\cap V_0^+$, we get
\begin{align*}
\|Q_\beta^+\|_{\dot{H}^1(\heis)}^2
	&=\big(-\frac{\hlapl +\beta D_s}{1-\beta}Q_\beta^+,Q_\beta^+\big)_{\dot{H}^{-1}(\heis)\times\dot{H}^1(\heis)}\\
	&=\big(-\frac{\hlapl +\beta D_s}{1-\beta}Q_\beta,Q_\beta\big)_{\dot{H}^{-1}(\heis)\times\dot{H}^1(\heis)}+\gdO(1-\beta)\\
	&=\frac{I_\beta}{(1-\beta)^2}+\gdO(1-\beta),
\end{align*}
and
\begin{align*}
\|Q_\beta^+\|_{L^4(\heis)}^4
	&=\|Q_\beta\|_{L^4(\heis)}^4+\gdO((1-\beta)^\half)\\
	&=\frac{I_\beta}{(1-\beta)^2}+\gdO((1-\beta)^\half).
\end{align*}

We are now in position to prove that 
$
\frac{I_\beta}{(1-\beta)^{2}}\longrightarroww{\beta\to 1^-}{} I_+.
$
From the definition of $I_+$ as a minimum on $\dot{H}^1(\heis)\cap V_0^+$,
\begin{align*}
I_+
	&\leq \frac{\|Q_\beta^+\|_{\dot{H}^1(\heis)}^4}{\|Q_\beta^+\|_{L^4(\heis)}^4}\\
	&=\frac{\left(\frac{I_\beta}{(1-\beta)^2}+\gdO(1-\beta)\right)^2}{\frac{I_\beta}{(1-\beta)^2}+\gdO((1-\beta)^\half)}\\
	&=\frac{I_\beta}{(1-\beta)^2}(1+\gdO((1-\beta)^\half)).
\end{align*}
We already know that $\frac{I_\beta}{(1-\beta)^2}\leq I_+$ for all $\beta$, so we conclude that
$$
\frac{I_\beta}{(1-\beta)^{2}}\longrightarroww{\beta\to 1^-}{} I_+.
$$
Therefore, the norms of $Q_\beta^+$ rewrite
$
\|Q_\beta^+\|_{\dot{H}^1(\heis)}^2= I_++\gdO((1-\beta)^\half)
$
and
$
\|Q_\beta^+\|_{L^4(\heis)}^4= I_++\gdO((1-\beta)^\half).
$
We conclude that
$$
\delta(Q_\beta^+)
	=\gdO((1-\beta)^\half)
$$
and
$$
\delta(Q_\beta)
	=\delta(Q_\beta^++R_{\beta})
	=\gdO((1-\beta)^\half).
$$
\end{proof}

The following stability result allows us to complete the proof of Theorem \ref{thm:limit_beta1}.

\begin{prop}\label{prop:stability1}
Fix a sequence $(u_n)_{n\in\N}$ of radial functions in $\dot{H}^{1}(\heis)\cap V_0^+$. Suppose that $\delta(u_n)\longrightarroww{n\to+\infty}{} 0$. Then, up to a subsequence, there exist scalings $(\alpha_n)_{n\in\N}\in (\R_+^*)^{\N}$, cores $(s_n)_{n\in\N}\in\R^{\N}$ and a ground state $Q_+\in\Qplus$ optimizing
$$
I_+=\inf\left\{J_+(u)=\frac{\|u\|_{\dot{H}^1(\heis)}^4}{\|u\|_{L^4(\heis)}^4};\quad u\in  \dot{H}^1(\heis)\cap V_{0}^+\setminus \{0\}\right\},
$$
such that
$$
\Big\|\alpha_nu_n(\alpha_n\cdot,\alpha_n\cdot,\alpha_n^2(\cdot+s_n))-Q_+\Big\|_{\dot{H}^{1}(\heis)}\longrightarroww{n\to+\infty}{} 0.
$$
\end{prop}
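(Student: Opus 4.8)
The plan is to run the standard concentration-compactness scheme, using the sharp Gagliardo-Nirenberg constant $I_+$ to force the profile decomposition to collapse to a single bubble. First I would record that $\delta(u_n)\to 0$ means precisely $\|u_n\|_{\dot{H}^1(\heis)}^2\to I_+$ and $\|u_n\|_{L^4(\heis)}^4\to I_+$; in particular $(u_n)$ is bounded in $\dot{H}^1(\heis)$. I then apply the profile decomposition of Theorem~\ref{thm:concentration_compactness}, restricted to the closed subspace $\dot{H}^1(\heis)\cap V_0^+$. Since $V_0^+$ is closed and invariant under the scaling and the $s$-translation appearing in the decomposition, the profiles $U^{(j)}$ and the remainders $r_{n_i}^{(l)}$ all remain in $V_0^+$. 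After passing to the subsequence $(n_i)$ provided by the theorem I obtain profiles $(U^{(j)})_j\subset\dot{H}^1(\heis)\cap V_0^+$, scalings $h_{n_i}^{(j)}$, cores $s_{n_i}^{(j)}$, the $\dot{H}^1$ and $L^4$ orthogonality relations, and the smallness of $\|r_{n_i}^{(l)}\|_{L^4(\heis)}$.

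The heart of the argument is to show that exactly one profile is nontrivial and carries all the mass. Writing $A_j:=\|U^{(j)}\|_{\dot{H}^1(\heis)}^2$, the $\dot{H}^1$ almost-orthogonality together with $\|r_{n_i}^{(l)}\|_{\dot{H}^1(\heis)}^2\ge 0$ gives $\sum_j A_j\le I_+$, while the $L^4$ relation gives $\sum_j\|U^{(j)}\|_{L^4(\heis)}^4=I_+$. Since each profile lies in $\dot{H}^1(\heis)\cap V_0^+$, the definition of $I_+$ yields $\|U^{(j)}\|_{L^4(\heis)}^4\le I_+^{-1}A_j^2$. Combining these,
\[
I_+=\sum_j\|U^{(j)}\|_{L^4(\heis)}^4\le \frac{1}{I_+}\sum_j A_j^2\le \frac{1}{I_+}\Big(\sum_j A_j\Big)^2\le I_+ ,
\]
so every inequality is an equality; in particular $\sum_j A_j^2=\big(\sum_j A_j\big)^2$ with $\sum_j A_j=I_+$, which forces at most one $A_j$ to be nonzero. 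Relabelling, I set $Q_+:=U^{(1)}$, for which $\|Q_+\|_{\dot{H}^1(\heis)}^2=I_+$ and, since only this term survives in the $L^4$ sum, $\|Q_+\|_{L^4(\heis)}^4=I_+$. Hence $J_+(Q_+)=I_+$ and $Q_+$ satisfies the normalization defining $\Qplus$, so $Q_+\in\Qplus$.

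It then remains to upgrade the $L^4$-compactness to $\dot{H}^1$-convergence and to undo the symmetry. Taking $l=1$ in the $\dot{H}^1$ orthogonality relation gives $\|r_{n_i}^{(1)}\|_{\dot{H}^1(\heis)}^2=\|u_{n_i}\|_{\dot{H}^1(\heis)}^2-\|Q_+\|_{\dot{H}^1(\heis)}^2+\gdO(1)\to I_+-I_+=0$. Setting $\alpha_{n_i}:=h_{n_i}^{(1)}$ and $s_{n_i}:=s_{n_i}^{(1)}/(h_{n_i}^{(1)})^2$, and using that $v\mapsto \alpha\, v(\alpha\cdot,\alpha\cdot,\alpha^2(\cdot+s))$ is an isometry of $\dot{H}^1(\heis)$ (the same scaling and translation invariance that leave $I_+$ unchanged), I rewrite the rescaled difference as exactly $r_{n_i}^{(1)}$ transported by this isometry, so that
\[
\big\|\alpha_{n_i}u_{n_i}(\alpha_{n_i}\cdot,\alpha_{n_i}\cdot,\alpha_{n_i}^2(\cdot+s_{n_i}))-Q_+\big\|_{\dot{H}^1(\heis)}=\|r_{n_i}^{(1)}\|_{\dot{H}^1(\heis)}\to 0 ,
\]
which is the claim.

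I expect the main obstacle to be the second step, namely ruling out the splitting of mass among several bubbles. This is where the strict super-additivity hidden in the sharp exponent $4$ of the Folland-Stein embedding does the work: the chain of equalities displayed above is precisely the quantitative statement that $\sum_j\|U^{(j)}\|_{L^4}^4$ can reach $I_+$ only when a single profile is present. Two technical points deserve care: genuinely checking that each weak limit $U^{(j)}$ stays in $V_0^+$, so that the bound $\|U^{(j)}\|_{L^4(\heis)}^4\le I_+^{-1}A_j^2$ is legitimate (this uses that $V_0^+$ is closed in $\dot{H}^1(\heis)$ and stable under the symmetries), and keeping track of the scaling and core indices when inverting the bubble in the last step.
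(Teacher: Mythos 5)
Your proof is correct and follows essentially the same route as the paper: restrict the profile decomposition to the closed, symmetry-invariant subspace $\dot{H}^1(\heis)\cap V_0^+$, use the definition of $I_+$ together with the orthogonality relations to force a chain of inequalities into equalities so that exactly one profile survives with full $\dot{H}^1$ and $L^4$ norm, and then deduce $\dot{H}^1$-convergence from the vanishing of the remainder after undoing the scaling and translation. (One cosmetic remark: the $\gdO(1)$ in your final display should be $o(1)$, as in the orthogonality relation of Theorem \ref{thm:concentration_compactness}; your conclusion already presumes this.)
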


\begin{proof}
Let $(u_n)_{n\in\N}\in(\dot{H}^{1}(\heis)\cap V_0^+)^{\N}$ such that $\delta(u_n)\longrightarroww{n\to+\infty}{} 0$. Since $\dot{H}^{1}(\heis)\cap V_0^+$ is a closed subspace of $\dot{H}^{1}(\heis)$, one can restrict the concentration-compactness theorem \ref{thm:concentration_compactness} to this subspace. In consequence, one can assume that the profiles $U^{(j)}$ from the theorem lie in $\dot{H}^{1}(\heis)\cap V_0^+$. Therefore, up to a subsequence, there exist a core sequence $(s_{n}^{(j)})_{n,j\in\N}\subset\R$, a scaling sequence $(h_{n}^{(j)})_{n,j\in\N}\subset\R$, and radial functions $(U^{(j)})_{j\in\N}\subset \dot{H}^{1}(\heis)\cap V_0^+$ such that
\begin{itemize}
\item for all $j,k\in\N$, $j\neq k$, the couples $((h_{n}^{(j)})_{n},(s_{n}^{(j)})_{n})$ are pairwise strange ;
\item let
$$
r_{n}^{(l)}(x,y,s)=u_{n}(x,y,s)-\sum_{j=1}^l \frac{1}{h_{n}^{(j)}} U^{(j)}\left(\frac{x}{h_{n}^{(j)}},\frac{y}{h_{n}^{(j)}}, \frac{s-s_{n}^{(j)}}{(h_{n}^{(j)})^2}\right),
$$
then
$$
\lim_{l\to+\infty}\limsup_{n\to+\infty} \|r_{n}^{(l)}\|_{L^4(\heis)}=0.
$$
\end{itemize}
Moreover, for all $l$, as $n$ goes to $+\infty$,
\begin{equation}\label{eq:orthogonality}
\|u_n\|_{\dot{H}^1(\heis)}^2=\sum_{j=1}^l\|U^{(j)}\|_{\dot{H}^1(\heis)}^2+\|r_{n}\|_{\dot{H}^1(\heis)}^2+o(1),
\end{equation}
and
$$
\|u_{n}\|_{L^4(\heis)}^4 \longrightarroww{n\to+\infty}{} \sum_{j=1}^{+\infty}\|U^{(j)}\|_{L^4(\heis)}^4.
$$
By construction, since $\delta(u_n)$ goes to $0$, $\sum_{j=1}^{+\infty}\|U^{(j)}\|_{\dot{H}^1(\heis)}^4=I_+$, $\sum_{j=1}^{+\infty}\|U^{(j)}\|_{\dot{H}^1(\heis)}^2\leq I_+$ and $\frac{\|u_n\|_{\dot{H}^1(\heis)}^4}{\|u_n\|_{L^4(\heis)}^4}$ tends to $I_+$.
But from the definition of $I_+$ as a minimum,
\begin{align*}
I_+^2
	&\geq \left(\sum_{j=1}^{+\infty}\|U^{(j)}\|_{\dot{H}^1(\heis)}^2\right)^2\\
	&\geq \sum_{j=1}^{+\infty}\|U^{(j)}\|_{\dot{H}^1(\heis)}^4\\
	&\geq I_+\sum_{j=1}^{+\infty}\|U^{(j)}\|_{\dot{H}^1(\heis)}^4\\
	&\geq I_+\sum_{j=1}^{+\infty}\|U^{(j)}\|_{L^4(\heis)}^4\\
	&=I_+^2.
\end{align*}
All the above inequalities must then be equalities. 

In particular, only one of the profiles $U^{(j)}$ is allowed to be non-zero, we denote this profile by $Q_+$, and by $r_n, h_n$ and $s_n$ the corresponding rests, scalings and cores. Then $Q_+$ must be a ground state of the functional $J_+$, and
\begin{align*}
u_{n}(x,y,s)
	&= \frac{1}{h_{n}} Q_+\left(\frac{x}{h_{n}},\frac{y}{h_{n}}, \frac{s-s_{n}}{h_{n}^2}\right)+r_{n}(x,y,s).
\end{align*}
From relation \eqref{eq:orthogonality}, as $n$ goes to $+\infty$,
$$
\|u_n\|_{\dot{H}^1(\heis)}^2=\|Q_+\|_{\dot{H}^1(\heis)}^2+\|r_n\|_{\dot{H}^1(\heis)}^2+o(1).
$$
Since $\|u_n\|_{\dot{H}^1(\heis)}^2$ must converge to $\|Q_+\|_{\dot{H}^1(\heis)}^2$ because of the inequalities turned into equalities, we get that $\|r_n\|_{\dot{H}^1(\heis)}^2\longrightarroww{n\to+\infty}{}0$, therefore the sequence $h_nu_n(h_n\cdot,h_n\cdot,h_n^2(\cdot+s_n))$ converges to $Q_+$ in $\dot{H}^1(\heis)$.
\end{proof}

\begin{proof}[Proof of Theorem \ref{thm:limit_beta1}]
Consider the sequence $(Q_\beta^+)_{\beta\in(-1,1)}$ from Lemma \ref{lem:delta_Qbeta}. We know that $\delta(Q_\beta^+)=\gdO((1-\beta)^\half)$.

Applying Proposition \ref{prop:stability1}, there exist a subsequence $(Q_{\beta_n}^+)_{n\in\N}$ with $\beta_n\longrightarroww{n\to+\infty}{} 1^-$, a core sequence $(s_n)_{n\in\N}\in\R^{\N}$, a scaling sequence $(\alpha_n)_{n\in\N}\in(\R_+^*)^{\N}$, and a ground state $Q_+\in\Qplus$ such that
$$
\|\alpha_nQ_{\beta_n}^+(\alpha_n\cdot,\alpha_n\cdot,\alpha_n^2(\cdot+s_n))-Q_+ \|_{\dot{H}^1(\heis)}
	\longrightarroww{n\to+\infty}{}0.
$$

To conclude, since $R_{\beta_n}=Q_{\beta_n}-Q_{\beta_n}^+$ satisfies
$
\|R_{\beta_n}\|_{\dot{H}^1(\heis)}\longrightarroww{n\to+\infty}{}0,
$
and since the $\dot{H}^1$ norm is invariant by translation and scaling, we deduce that
$$
\|\alpha_nQ_{\beta_n}(\alpha_n\cdot,\alpha_n\cdot,\alpha_n^2(\cdot+s_n))-Q_+ \|_{\dot{H}^1(\heis)}
	\longrightarroww{n\to+\infty}{}0.
$$
\end{proof}

%\newpage
%%%%%%%%%%%%%%%%%%%%%%%%%%%%%%%%%%%%%%%%%%%%%%%%%%%%%%%%%%%%%%%%%%%%%%%%%%%%%%%%%%%%%%%%%%%%%%%%%%%%%%%%%%%%%%%%%%%%%%%%%%%%%%%%%%%%%%%%%%%%%%%%%
\subsection{Ground state solutions to the limiting equation}\label{subsection:ground_state_solutions_limit}
%%%%%%%%%%%%%%%%%%%%%%%%%%%%%%%%%%%%%%%%%%%%%%%%%%%%%%%%%%%%%%%%%%%%%%%%%%%%%%%%%%%%%%%%%%%%%%%%%%%%%%%%%%%%%%%%%%%%%%%%%%%%%%%%%%%%%%%%%%%%%%%%%

We now show that the optimizers for
$$
I_+:=\inf\left\{\frac{\|u\|_{\dot{H}^1(\heis)}^4}{\|u\|_{L^4(\heis)}^4};\quad u\in  \dot{H}^1(\heis)\cap V_{0}^+\setminus \{0\}\right\}
$$
are unique up to symmetries (translation, phase multiplication and scaling).

\begin{prop}
The minimum $I_+$ is equal to $\pi^2$. Moreover,
\begin{itemize}
\item the set composed of all minimizing functions for $I_+$ is
$$
\{(x,y,s)\in\heis\mapsto \frac{C}{s+s_0+i(x^2+y^2)+i\alpha};\quad (s_0,C,\alpha)\in\R\times\C\times\R_+^*\};
$$
\item the set $\Qplus$ composed of all minimizing functions for $I_+$ which satisfy
$$\|Q_+\|_{\dot{H}^1(\heis)}^2=\|Q_+\|_{L^4(\heis)}^4=I_+$$
(so that $Q_+$ is a solution to equation \eqref{eq:Hplus}) is
$$
\Qplus=\{(x,y,s)\in\heis\mapsto\frac{i\e^{i\theta}\sqrt{2\alpha}}{s+s_0+i(x^2+y^2)+i\alpha}; \quad (s_0,\theta,\alpha)\in\R\times\T\times\R_+^*\}.
$$
\end{itemize}
\end{prop}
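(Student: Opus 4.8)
The plan is to reduce this two-norm variational problem to a one-dimensional sharp convolution inequality by passing to the holomorphic profile attached to $V_0^+$. First I would use the description of $V_0^+$ from part~\ref{subsection:hermite}: every $u\in\dot H^1(\heis)\cap V_0^+$ is determined by a single density $f$ through $\widehat u(x,y,\sigma)=\frac{f(\sigma)}{\sqrt\pi}\e^{-(x^2+y^2)\sigma}\un_{\sigma\ge0}$, with $\|u\|_{\dot H^1(\heis)}^2=\frac12\int_0^\infty|f(\sigma)|^2\d\sigma$. Inverting the Fourier transform in $s$ gives $u(x,y,s)=F\!\left(s+i(x^2+y^2)\right)$, where $F(z)=\frac{1}{\pi\sqrt2}\int_0^\infty f(\sigma)\e^{i\sigma z}\d\sigma$ is holomorphic on $\C_+=\{\Im z>0\}$, being the Fourier--Laplace transform of a density supported on $[0,\infty)$.

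Next I would rewrite the $L^4$ norm through $F$. Passing to polar coordinates in $(x,y)$ and setting $t=x^2+y^2$, $z=s+it$, one gets $\|u\|_{L^4(\heis)}^4=\pi\int_{\C_+}|F(z)|^4\d A(z)$. Writing $F^2(z)=\frac{1}{2\pi^2}\int_0^\infty(f*f)(\mu)\e^{i\mu z}\d\mu$ with $(f*f)(\mu)=\int_0^\mu f(\tau)f(\mu-\tau)\d\tau$, and using the half-plane identity $\int_{\C_+}|G|^2\d A=\pi\int_0^\infty\frac{|\hat g(\mu)|^2}{\mu}\d\mu$ valid for $G(z)=\int_0^\infty\hat g(\mu)\e^{i\mu z}\d\mu$ (Plancherel in $s$, then integration of $\e^{-2\mu t}$ in $t$), I obtain $\|u\|_{L^4(\heis)}^4=\frac{1}{4\pi^2}\int_0^\infty\frac{|(f*f)(\mu)|^2}{\mu}\d\mu$. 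Both norms are now expressed purely in terms of $f$.

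The heart of the argument is then the sharp inequality $\int_0^\infty\frac{|(f*f)(\mu)|^2}{\mu}\d\mu\le\big(\int_0^\infty|f(\sigma)|^2\d\sigma\big)^2$. It follows from one Cauchy--Schwarz step, $|(f*f)(\mu)|^2\le\mu\int_0^\mu|f(\tau)|^2|f(\mu-\tau)|^2\d\tau$: dividing by $\mu$ and integrating, the resulting double integral factorizes as $(\int_0^\infty|f|^2)^2$. Together with the previous two steps this reads $\|u\|_{L^4(\heis)}^4\le\frac{1}{\pi^2}\|u\|_{\dot H^1(\heis)}^4$, i.e. $J_+(u)\ge\pi^2$; since $F(z)=\frac1{z+i}$ turns every inequality into an equality, the minimum is attained and $I_+=\pi^2$.

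Finally I would extract the classification from the equality case. Equality in the Cauchy--Schwarz step forces $f(\tau)f(\mu-\tau)$ to be independent of $\tau$ for almost every $\mu$ in the support of $f*f$; this Cauchy-type functional equation (take logarithms where $f\neq0$) yields $f(\sigma)=c\,\e^{p\sigma}$, and $\int_0^\infty|f|^2<\infty$ forces $\Re p<0$. Inverting gives $F(z)=\frac{C}{z+s_0+i\alpha}$ with $\alpha>0$, that is the announced family $(x,y,s)\mapsto\frac{C}{s+s_0+i(x^2+y^2)+i\alpha}$. To isolate $\Qplus$ I would impose $\|Q_+\|_{\dot H^1(\heis)}^2=\|Q_+\|_{L^4(\heis)}^4=I_+$: from $\|u\|_{\dot H^1(\heis)}^2=\frac\pi2\int_\R|F(s)|^2\d s=\frac{\pi^2|C|^2}{2\alpha}$ this gives $|C|^2=2\alpha$, hence $C=i\e^{i\theta}\sqrt{2\alpha}$. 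I expect the main difficulty to lie in Step~2 — the two norm identities with their exact constants, especially the Fourier--Laplace/Bergman computation of $\|u\|_{L^4}^4$ — and in making the equality analysis rigorous (propagating equality in the integrated Cauchy--Schwarz to the pointwise condition, and showing the functional equation admits only pure exponentials); the inequality itself is then immediate.
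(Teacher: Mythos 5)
Your proposal is correct and follows essentially the same route as the paper: reduction to the density $f$ on $\R_+$, the two norm identities with the same constants ($\|u\|_{\dot H^1}^2=\tfrac12\|f\|_{L^2}^2$ and $\|u\|_{L^4}^4=\tfrac{1}{4\pi^2}\int_0^{+\infty}|f*f|^2\,\frac{\d\mu}{\mu}$), the sharp Cauchy--Schwarz bound on the half-line convolution, and the equality case forcing $f(\sigma)=K\e^{-\alpha\sigma}$ with $\Re(\alpha)>0$, followed by the same normalization. The only differences are cosmetic: you route the $L^4$ computation through the half-plane holomorphic representation rather than directly through the Fourier transform on $\heis$, and where you ``take logarithms'' the paper instead integrates the functional equation $f(\sigma)f(\zeta)=C(\sigma+\zeta)$ to gain regularity and derives an ODE for $f$, which cleanly avoids the branch and vanishing issues that arise for complex-valued $f$ (a gap you correctly flagged as the point needing rigor).
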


\begin{proof}
Let $U\in \dot{H}^{1}(\heis)\cap V_0^+$. Let us transform the expression of the $L^4$ norm of $U$ as follows
\begin{align*}
\|U\|_{L^4(\heis)}^4
	&=\|U^2\|_{L^2(\heis)}^2\\
	&=\|\widehat{U^2}\|_{L^2(\heis)}^2\\
	&=\frac{1}{2\pi}\|\widehat{U}*\widehat{U}\|_{L^2(\heis)}^2.
\end{align*}
Let $f$ be the function associated to $U$ in the decomposition along $\widehat{h_0^+}$
$$
\widehat{U}(x,y,s)=f(\sigma)\frac{1}{\sqrt{\pi}}\e^{-(x^2+y^2)\sigma}.
$$
Then
$$
\|U\|_{\dot{H}^1(\heis)}^2=\half\int_{0}^{+\infty}|f(\sigma)|^2\d\sigma
$$
and
\begin{align*}
\|U\|_{L^4(\heis)}^4
	&=\frac{1}{2\pi}\int_{\R_+}\int_{\R}\int_{\R}\left|\int_{0}^{\sigma}f(\sigma-\sigma')\frac{1}{\sqrt{\pi}}\e^{-(x^2+y^2)(\sigma-\sigma')}f(\sigma')\frac{1}{\sqrt{\pi}}\e^{-(x^2+y^2)\sigma'}\d\sigma'\right|^2\d x\d y\d\sigma\\
	&=\frac{1}{2\pi^2}\int_{\R_+}\left|\int_0^{\sigma}f(\sigma-\sigma')f(\sigma')\d\sigma'\right|^2\frac{\d\sigma}{2\sigma}.
\end{align*}
Applying Cauchy-Schwarz's inequality,
\begin{align*}
\|U\|_{L^4(\heis)}^4
	&\leq \frac{1}{4\pi^2} \int_{0}^{+\infty}\int_0^{\sigma}|f(\sigma-\sigma')f(\sigma')|^2\d\sigma'\int_{0}^{\sigma}1\d\sigma'\frac{\d\sigma}{\sigma}\\
	&=\frac{1}{4\pi^2} \int_{0}^{+\infty}\int_0^{\sigma}|f(\sigma-\sigma')f(\sigma')|^2\d\sigma'\d\sigma\\
	&=\frac{1}{4\pi^2} \|f\|_{L^2(\R_+)}^4\\
	&=\frac{1}{\pi^2} \|U\|_{\dot{H}^1(\heis)}^4.
\end{align*}
Consequently,
$
I_+\geq \pi^2.
$

Let us discuss the equality case. Equality holds if and only if there is equality in Cauchy-Schwarz's inequality, that is to say, for almost every $\sigma>0$, and almost every $\sigma'\in]0,\sigma[$,
$$
f(\sigma')f(\sigma-\sigma')=C(\sigma).
$$
Fix an open interval $I$ contained in $]0,\sigma[$ with positive length $|I|$. Then
$$
\int_If(\sigma')f(\sigma-\sigma')\d\sigma'=|I|C(\sigma),
$$
therefore, $C$ is continuous on $\R_+^*$ as a product of two $L^2$ functions. Since $f$ is not identically zero, one can find an interval $J\subset\R_+^*$ such that
$$
\int_Jf(\zeta)\d \zeta\neq 0.
$$
Integrating equality
\begin{equation}\label{eq:eqf}
f(\sigma)f(\zeta)=C(\sigma+\zeta),\quad (\sigma,\zeta)\in(\R_+^*)^2
\end{equation}
along the $\zeta$ variable, one gets that for all $\sigma\in\R_+^*$,
$$
f(\sigma)\int_Jf(\zeta)\d \zeta=\int_J C(\sigma+\zeta)\d\zeta=\int_{J+\sigma}C(\zeta)\d\zeta.
$$
Therefore,  $f$ has $\mathcal{C}^1$ regularity on $\R_+^*$, so $C$ also has $\mathcal{C}^1$ regularity on $\R_+^*$. Fix $\zeta>0$ such as $f(\zeta)\neq0$. Letting $\sigma\to 0^+$ in equality \eqref{eq:eqf}, one knows that $f$ admits a finite limit as  $\sigma\to 0^+$ which is equal to 
$$
f(0^+)=\frac{C(\zeta)}{f(\zeta)}.
$$
Likewise, computing the derivative along the $\sigma$ variable of equality \eqref{eq:eqf},
$$
f'(\sigma)f(\zeta)=C'(\sigma+\zeta),
$$
one gets that $f'$ admits a finite limit at $0^+$ which is equal to
$$
f'(0^+)=\frac{C'(\zeta)}{f(\zeta)}.
$$
We deduce that $f$ satisfies the differential equation
$$
f'(\sigma)f(0^+)=f(\sigma)f'(0^+)=C'(\sigma),\quad \sigma\in\R_+^*.
$$
Let us show that $f(0^+)\neq 0$. Supposing $f(0^+)=0$, we would get that for all $\sigma>0$, $C'(\sigma)=0$. Then $C$ would be a constant function, so $f$ would be constant too since
$$
f(\sigma)=\frac{C(\sigma+\zeta)}{f(\zeta)}.
$$
As $f$ is in $L^2(\R_+)$, this would imply that $f$ is identically zero, which is a contradiction.

Therefore, solving the differential equation, there exist some constants $K$ and $\alpha$ such that, for all $\sigma\geq0$,
$$
f(\sigma)=K\e^{-\alpha\sigma}.
$$
The assumption $f\in L^2(\R_+)$ implies that $\textnormal{Re}(\alpha)>0$.

Computing the inverse Fourier transform leads to 
\begin{align*}
U(x,y,s)
	&=\frac{1}{\sqrt{2\pi}}\int_0^{+\infty}\e^{is\sigma}f(\sigma)\frac{1}{\sqrt{\pi}}\e^{-(x^2+y^2)\sigma}\d \sigma\\
	&=\frac{K}{\pi\sqrt{2}}\int_0^{+\infty}\e^{is\sigma-\alpha\sigma-(x^2+y^2)\sigma}\d \sigma
\end{align*}
so
\begin{equation*}
U(x,y,s)
	=\frac{K}{\pi\sqrt{2}}\frac{1}{x^2+y^2+\alpha-is}.
\end{equation*}
This is the first point of the proposition. Let us now prove the second point.

Since the equation and the result we want to show are both invariant under translation of the $s$ variable, up to translating of a factor $s_0$, we will assume from now on that $\alpha$ is a (positive) real number.

Now,
\begin{align*}
\|U\|_{\dot{H}^1(\heis)}^2
	=\half|K|^2\int_{0}^{+\infty}\e^{-2\alpha\sigma}\d\sigma
	=\half\frac{|K|^2}{2\alpha}
\end{align*}
and
\begin{align*}
\|U\|_{L^4(\heis)}^4
	&=\frac{1}{4\pi^2}|K|^4\int_{0}^{+\infty}\left|\int_0^{\sigma}\e^{-\alpha(\sigma-\sigma')}\e^{-\alpha\sigma'}\d\sigma'\right|^2\frac{\d\sigma}{\sigma}\\
	&=\frac{1}{4\pi^2}|K|^4\int_{0}^{+\infty}\sigma \e^{-2\alpha\sigma}\d\sigma\\
	&=\frac{1}{4\pi^2}\frac{|K|^4}{(2\alpha)^2},
\end{align*}
so $U$ satisfies $\|U\|_{\dot{H}^1(\heis)}^2=\|U\|_{L^4(\heis)}^4=I_+$ if and only if
$|K|^2=4\pi^2\alpha$. In this case, write $K=2\pi\sqrt{\alpha}\e^{i\theta}$ for some $\theta\in \T$, then,
\begin{align*}
U(s,x,y)
	&=\frac{K}{\pi\sqrt{2}}\frac{1}{x^2+y^2+\alpha-is}\\
	&=\frac{\e^{i\theta}\sqrt{2\alpha}}{x^2+y^2+\alpha-is}.
\end{align*}
\end{proof}

We proved that up to the symmetries of the equation, there is a unique minimizer $Q_+$ in $\Qplus$, which is equal with the choice of parameters $(s_0,\theta,\alpha)=(0,0,1)$ to
$$
Q_+(s,x,y)
%	=\frac{\sqrt{2}}{x^2+y^2+1-is}
	=\frac{i\sqrt{2}}{s+i(x^2+y^2)+i}
,
$$
with Fourier transform
$$
\widehat{Q_+}(x,y,\sigma)=2\pi \e^{-\sigma}\widehat{h_0^+}(x,y,\sigma).
$$
Note that the profile $Q_+$ has infinite mass.

%\newpage
%%%%%%%%%%%%%%%%%%%%%%%%%%%%%%%%%%%%%%%%%%%%%%%%%%%%%%%%%%%%%%%%%%%%%%%%%%%%%%%%%%%%%%%%%%%%%%%%%%%%%%%%%%%%%%%%%%%%%%%%%%%%%%%%%%%%%%%%%%%%%%%%%
\section{The limiting problem}\label{section:limiting_problem}
%%%%%%%%%%%%%%%%%%%%%%%%%%%%%%%%%%%%%%%%%%%%%%%%%%%%%%%%%%%%%%%%%%%%%%%%%%%%%%%%%%%%%%%%%%%%%%%%%%%%%%%%%%%%%%%%%%%%%%%%%%%%%%%%%%%%%%%%%%%%%%%%%

We now focus on the stability of $Q_+$, which is the unique ground state solution up to symmetry to \eqref{eq:Hplus}
$$
D_sQ_+=\Pi_0^+(|Q_+|^2Q_+).
$$
Let us study the linearized operator $\L$ close to $Q_+$
$$
\L h=- \hlapl h-2\Pi_0^+(|Q_+|^2h)-\Pi_0^+(Q_+^2\overline{h}), \quad h\in \dot{H}^1(\heis)\cap V_0^+.
$$
We first study the linearized operator on the real subspace spanned by $(Q_+,iQ_+,\partial_sQ_+,i\partial_sQ_+)$ with the help of the correspondence with Bergman spaces (parts \ref{subsection:bergman_spaces} and \ref{subsection:othogonality}). Then, on the orthogonal of this subspace in $\dot{H}^1(\heis)\cap V_0^+$, we prove the coercivity of $\L$ by using the spectral properties of the sub-Laplacian on the CR sphere via the Cayley transform (parts \ref{subsection:cayley_transform} and \ref{subsection:coercivity}). We conclude this section with some estimates about the invertibility of the linearized operator $\L$ (part \ref{subsection:L_invertible}).

%\newpage
%%%%%%%%%%%%%%%%%%%%%%%%%%%%%%%%%%%%%%%%%%%%%%%%%%%%%%%%%%%%%%%%%%%%%%%%%%%%%%%%%%%%%%%%%%%%%%%%%%%%%%%%%%%%%%%%%%%%%%%%%%%%%%%%%%
\subsection{Bergman spaces on the upper half plane}\label{subsection:bergman_spaces}
%%%%%%%%%%%%%%%%%%%%%%%%%%%%%%%%%%%%%%%%%%%%%%%%%%%%%%%%%%%%%%%%%%%%%%%%%%%%%%%%%%%%%%%%%%%%%%%%%%%%%%%%%%%%%%%%%%%%%%%%%%%%%%%%%%

In order to better understand the spaces $\dot{H}^k(\heis)\cap V_0^+$, $k\in\{-1,0,1\}$, we need to introduce their link with Bergman spaces on the upper half-plane $\C_+$. The space $\dot{H}^k(\heis)\cap V_0^+$ is the subspace of $\dot{H}^k(\heis)$ spanned (after a Fourier transform under the $s$ variable) by $\widehat{h_0^+}(x,y,\sigma)=\frac{1}{\sqrt{\pi}}\exp(-(x^2+y^2)\sigma) \un_{\sigma\geq 0}$~: $u\in \dot{H}^k(\heis)\cap V_0^+$ if $u\in\dot{H}^k(\heis)$ and
$$
\widehat{u}(x,y,s)=f(\sigma)\widehat{h_0^+}(x,y,\sigma),
$$
where
$$
\|u\|_{\dot{H}^k(\heis)}^2=\|(-\hlapl)^{\frac{k}{2}}u\|_{L^2(\heis)}^2=\int_{\R_+}|f(\sigma)|^2\frac{\d\sigma}{2\sigma^{1-k}}.
$$

Let us define the weighted Bergman spaces as follows.
\begin{mydef}[Weighted Bergman spaces] Given $k<1$ and $p\in[1,+\infty)$, the weighted Bergman space $A_{1-k}^p$ is the subspace of
$
L^p_{1-k}:=L^p(\C_+,\Im(z)^{-k}\d\lambda(z))
$
composed of holomorphic functions of the complex upper half-plane $\C_+$~:
$$
A_{1-k}^p:=\left\{F\in\holo(\C_+);\quad \|F\|_{L_{1-k}^p}^p:=\int_0^{+\infty}\int_{\R}|F(s+it)|^p\d s\frac{\d t}{t^{k}}<+\infty\right\}.
$$
\end{mydef}

Thanks to the following Paley-Wiener theorem on weighted Bergman spaces \cite{bergman_projectors}, one can associate to each element of $\dot{H}^k(\heis)\cap V_0^+$ a function of the weighted Bergman space $A_{1-k}^2$.
\begin{thm}[Paley-Wiener] Let $k<1$. Then for every $f\in L^2(\R_+,\sigma^{k-1}\d\sigma)$, the following integral is absolutely convergent on $\C_+$
\begin{equation}\label{eq:paley1}
F(z)=\frac{1}{\sqrt{2\pi}}\int_0^{+\infty}\e^{iz\sigma}f(\sigma)\d\sigma,
\end{equation}
and defines a function $F\in A^2_{1-k}$ which satisfies
\begin{equation}\label{eq:paley2}
\|F\|^2_{L^2_{1-k}}=\frac{\Gamma(1-k)}{2^{1-k}}\int_{0}^{+\infty}|f(\sigma)|^2\frac{\d\sigma}{\sigma^{1-k}}.
\end{equation}
Conversely, for every $F\in A^2_{1-k}$, there exists $f\in L^2(\R_+,\sigma^{k-1}\d\sigma)$ such that \eqref{eq:paley1} and \eqref{eq:paley2} hold.
\end{thm}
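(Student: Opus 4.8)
The plan is to treat the two implications separately: the direct statement is essentially a Plancherel-plus-Fubini computation, while the converse requires genuine complex-analytic input. For the direct part, I would first fix $t>0$ and write $\e^{iz\sigma}=\e^{is\sigma}\e^{-t\sigma}$ with $z=s+it$. Splitting $\e^{-t\sigma}|f(\sigma)|=\big(\e^{-t\sigma}\sigma^{(1-k)/2}\big)\big(|f(\sigma)|\sigma^{(k-1)/2}\big)$ and applying Cauchy--Schwarz bounds $\int_0^{+\infty}\e^{-t\sigma}|f(\sigma)|\d\sigma$ by $\big(\int_0^{+\infty}\e^{-2t\sigma}\sigma^{1-k}\d\sigma\big)^{\half}\|f\|_{L^2(\R_+,\sigma^{k-1}\d\sigma)}$, whose first factor is a finite Gamma integral precisely because $k<1$ (indeed $k<2$ suffices here). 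This gives absolute convergence of \eqref{eq:paley1} at every $z\in\C_+$, locally uniformly, so $F$ is holomorphic by differentiation under the integral sign, the differentiated integrand $i\sigma\e^{iz\sigma}f(\sigma)$ being dominated in the same way.

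The norm identity \eqref{eq:paley2} is the heart of the direct part. The key observation is that, for fixed $t$, the slice $s\mapsto F(s+it)$ is exactly the inverse Fourier transform of $\sigma\mapsto\e^{-t\sigma}f(\sigma)\un_{\sigma>0}$, so Plancherel gives $\int_{\R}|F(s+it)|^2\d s=\int_0^{+\infty}\e^{-2t\sigma}|f(\sigma)|^2\d\sigma$. Integrating against $t^{-k}\d t$ and exchanging the two integrations by Tonelli (the integrand is nonnegative) reduces everything to the elementary integral $\int_0^{+\infty}\e^{-2t\sigma}t^{-k}\d t=\Gamma(1-k)(2\sigma)^{k-1}$, obtained by the substitution $u=2t\sigma$ and again using $k<1$ for convergence at the origin. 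This reproduces exactly the constant $\frac{\Gamma(1-k)}{2^{1-k}}$ and shows $F\in A_{1-k}^2$.

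\emph{Converse direction.} Given $F\in A_{1-k}^2$, the goal is to recover $f$. Since $|F|^2$ is subharmonic, the sub-mean-value inequality over small disks bounds each slice $\int_{\R}|F(s+it)|^2\d s$ by a local average of $\|F\|_{L^2_{1-k}}$, so every horizontal slice lies in $L^2(\R)$ and I may set $\widehat{F}_t(\sigma)=\frac{1}{\sqrt{2\pi}}\int_{\R}\e^{-is\sigma}F(s+it)\d s$. Shifting the contour of integration between two heights $0<t_1<t_2$, legitimate because $F$ is holomorphic and decays along horizontal lines, yields the two structural facts $\widehat{F}_t(\sigma)=0$ for $\sigma<0$ and $\e^{t\sigma}\widehat{F}_t(\sigma)$ independent of $t$. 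Defining $f(\sigma):=\e^{t\sigma}\widehat{F}_t(\sigma)$ on $\R_+$, Fourier inversion on each slice is precisely \eqref{eq:paley1}, and running the computation of the direct part backwards (Plancherel, then Tonelli, then the Gamma integral) shows at once that $f\in L^2(\R_+,\sigma^{k-1}\d\sigma)$ and that \eqref{eq:paley2} holds, the finiteness of $\|F\|_{L^2_{1-k}}$ guaranteeing integrability of $f$.

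I expect the only delicate point to be in the converse, namely justifying the contour shift that produces the support condition $\widehat{F}_t(\sigma)=0$ for $\sigma<0$ and the $t$-independence of $\e^{t\sigma}\widehat{F}_t$. This needs honest decay of $F$ as $\Re z\to\pm\infty$ within a horizontal strip, which is not immediate from Bergman membership alone. I would secure it through the subharmonic mean-value estimate, which controls $F$ pointwise by the local Bergman mass and hence forces decay away from where that mass concentrates; alternatively, one can first prove the statement for a dense class of well-behaved $F$ and pass to the limit using the isometry established in the direct part. The direct statement itself, by contrast, is routine once the Gamma integral is identified.
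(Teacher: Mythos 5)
The paper itself offers no proof of this statement: it is quoted from the cited reference on Bergman projectors, so there is no internal argument to compare yours against, and I can only judge the proposal on its merits. Your direct half is correct and complete: the Cauchy--Schwarz bound with the splitting $\e^{-t\sigma}|f|=\big(\e^{-t\sigma}\sigma^{(1-k)/2}\big)\big(|f|\sigma^{(k-1)/2}\big)$, holomorphy by domination, Plancherel on each horizontal slice applied to $\sigma\mapsto\e^{-t\sigma}f(\sigma)\un_{\sigma>0}$, Tonelli, and the Gamma integral $\int_0^{+\infty}\e^{-2t\sigma}t^{-k}\d t=\Gamma(1-k)(2\sigma)^{k-1}$ reproduce \eqref{eq:paley2} exactly, and you correctly locate where $k<1$ (as opposed to $k<2$) is actually needed.

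In the converse there is one genuine mis-step: shifting the contour between heights $t_1<t_2$ can only give the $t$-independence of $\e^{t\sigma}\widehat{F}_t(\sigma)$; it cannot give the support condition $\widehat{F}_t(\sigma)=0$ for $\sigma<0$, because that condition is not a statement about a strip at all (any function holomorphic and decaying in the strip satisfies the same $t$-independence there, whatever its frequency content). The support property must come from the behaviour as $t\to+\infty$. Your subharmonicity estimate in fact supplies the needed input: it gives $\int_{\R}|F(s+it)|^2\d s\leq C\,t^{k-1}\|F\|_{L^2_{1-k}}^2$, which tends to $0$ as $t\to+\infty$ precisely because $k<1$; writing $g(\sigma)=\e^{t\sigma}\widehat{F}_t(\sigma)$ (independent of $t$ by the contour shift), one has $\int_{-\infty}^{0}\e^{2t|\sigma|}|g(\sigma)|^2\d\sigma\leq\int_{\R}|\widehat{F}_t(\sigma)|^2\d\sigma\to 0$, and monotone convergence forces $g=0$ almost everywhere on $(-\infty,0)$. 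With that step repaired, the rest of your converse (Fourier inversion on a slice, then running the direct computation backwards with Tonelli to get both $f\in L^2(\R_+,\sigma^{k-1}\d\sigma)$ and \eqref{eq:paley2}) is sound. Alternatively --- and this is the cleaner route your slice estimate already puts within reach --- the same bound shows $\sup_{t>\varepsilon}\int_{\R}|F(s+it)|^2\d s<+\infty$ for each $\varepsilon>0$, i.e. $F(\cdot+i\varepsilon)\in\hardy^2(\C_+)$; one can then invoke the classical Hardy-space Paley--Wiener theorem (also stated in the paper) to produce $f$ directly, with the $t$-independence and support properties inherited from that theorem, avoiding the contour-shift justification altogether.
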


When dealing with functions from the space $\dot{H}^1(\heis)$, we use the usual Paley-Wiener theorem \cite{Rudin1987}.

\begin{mydef}
The Hardy space $\hardy^2(\C_+)$ space of holomorphic functions of the upper half-plane $\C_+$ such that the following norm is finite~:
$$
\|F\|^2_{\hardy^2(\C_+)}:=\sup_{t>0}\int_{\R}|F(s+it)|^2\d s<+\infty.
$$
\end{mydef}
\begin{thm}[Paley-Wiener] For every $f\in L^2(\R_+)$, the following integral is absolutely convergent on $\C_+$
\begin{equation}\label{eq:paley3}
F(z)=\frac{1}{\sqrt{2\pi}}\int_0^{+\infty}\e^{iz\sigma}f(\sigma)\d\sigma,
\end{equation}
and defines a function $F$ in the Hardy space $\hardy^2(\C_+)$ which satisfies
\begin{equation}\label{eq:paley4}
\|F\|^2_{\hardy^2(\C_+)}=\int_{0}^{+\infty}|f(\sigma)|^2\d\sigma.
\end{equation}
Conversely, for every $F\in \hardy^2(\C_+)$, there exists $f\in L^2(\R_+)$ such that \eqref{eq:paley3} and \eqref{eq:paley4} hold.
\end{thm}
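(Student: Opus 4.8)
The plan is to treat the two implications of the theorem separately: the forward direction is elementary (Cauchy--Schwarz plus Plancherel), while the converse contains the single genuinely delicate point, a contour-integration argument. Throughout I would write $z=s+it\in\C_+$ with $t>0$, so that $|\e^{iz\sigma}|=\e^{-t\sigma}$ for $\sigma>0$, and use the paper's symmetric normalization of the Fourier transform in the $s$ variable.

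For the forward direction I would start from $f\in L^2(\R_+)$. Absolute convergence of \eqref{eq:paley3} on $\C_+$ is immediate from Cauchy--Schwarz,
$$
\int_0^{+\infty}\e^{-t\sigma}|f(\sigma)|\d\sigma\le\Big(\int_0^{+\infty}\e^{-2t\sigma}\d\sigma\Big)^{\half}\|f\|_{L^2(\R_+)}=\frac{\|f\|_{L^2(\R_+)}}{\sqrt{2t}},
$$
and the same bound applied after differentiating the integrand in $z$ (which merely inserts a factor $i\sigma$, harmless against $\e^{-t\sigma}$) lets me differentiate under the integral sign, so $F$ is holomorphic on $\C_+$. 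For the norm identity I would fix $t>0$ and note that $s\mapsto F(s+it)$ is the inverse Fourier transform of $\sigma\mapsto\e^{-t\sigma}f(\sigma)\un_{\sigma\ge0}$; Plancherel then gives
$$
\int_{\R}|F(s+it)|^2\d s=\int_0^{+\infty}\e^{-2t\sigma}|f(\sigma)|^2\d\sigma.
$$
Since the right-hand side is decreasing in $t$, its supremum over $t>0$ is its limit as $t\to0^+$, which equals $\int_0^{+\infty}|f(\sigma)|^2\d\sigma$ by monotone convergence; this is exactly \eqref{eq:paley4}.

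For the converse I would fix $F\in\hardy^2(\C_+)$. For each $t>0$ the slice $g_t:=F(\cdot+it)$ lies in $L^2(\R)$ with $\|g_t\|_{L^2(\R)}\le\|F\|_{\hardy^2(\C_+)}$, and I write $\widehat{g_t}$ for its Fourier transform. The crux is to show that $\e^{t\sigma}\widehat{g_t}(\sigma)$ is independent of $t$. For fixed $0<t_1<t_2$ I would integrate the holomorphic function $z\mapsto F(z)\e^{-iz\sigma}$ around the boundary of the rectangle $[-R,R]\times[t_1,t_2]$; Cauchy's theorem makes this contour integral vanish, and in the limit $R\to\infty$ the two horizontal sides produce (up to the normalization constant) $\widehat{g_{t_1}}(\sigma)\e^{t_1\sigma}$ and $\widehat{g_{t_2}}(\sigma)\e^{t_2\sigma}$. \textbf{This is the one genuinely hard step}: one must show the two vertical sides do not contribute. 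I would obtain this from $\int_{t_1}^{t_2}\int_{\R}|F(s+it)|^2\d s\,\d t\le(t_2-t_1)\|F\|_{\hardy^2(\C_+)}^2<+\infty$, which furnishes a sequence $R_n\to+\infty$ along which $\int_{t_1}^{t_2}|F(\pm R_n+it)|^2\d t\to0$, so Cauchy--Schwarz forces the vertical contributions to vanish along $R_n$. This yields a single function $f$, independent of $t$, with $\widehat{g_t}(\sigma)=\e^{-t\sigma}f(\sigma)$.

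It would then remain to locate the support of $f$ and identify its norm. Plancherel gives $\int_{\R}\e^{-2t\sigma}|f(\sigma)|^2\d\sigma=\|g_t\|_{L^2(\R)}^2\le\|F\|_{\hardy^2(\C_+)}^2$ for every $t>0$; letting $t\to+\infty$ and applying Fatou forces $f$ to vanish a.e.\ on $\{\sigma<0\}$ (otherwise $\e^{-2t\sigma}$ would blow up there), while letting $t\to0^+$ and applying monotone convergence shows $f\in L^2(\R_+)$ with $\int_0^{+\infty}|f(\sigma)|^2\d\sigma\le\|F\|_{\hardy^2(\C_+)}^2$. Inverting the Fourier transform in $\widehat{g_t}(\sigma)=\e^{-t\sigma}f(\sigma)\un_{\sigma\ge0}$ recovers precisely \eqref{eq:paley3}, and the forward direction applied to this $f$ upgrades the last inequality to the equality \eqref{eq:paley4}. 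The entire difficulty is thus concentrated in the control of the vertical sides in the contour integral; everything else reduces to Cauchy--Schwarz, Plancherel, and a monotone/Fatou limit in the height $t$.
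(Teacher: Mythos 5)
Your proof is correct and is essentially the same argument as the paper's: the paper gives no proof of this statement, quoting it directly from the cited reference \cite{Rudin1987}, and the classical proof there is exactly what you reproduce (Plancherel plus a monotone limit in $t$ for the forward direction; the rectangle contour with Fubini--Cauchy--Schwarz control of the vertical sides, then Fatou to localize the support, for the converse). The one point to make rigorous is the identification of the horizontal limits: the truncated integrals $\int_{-R_n}^{R_n}F(s+it_j)\e^{-is\sigma}\d s$ converge to $\sqrt{2\pi}\,\widehat{g_{t_j}}(\sigma)$ in $L^2(\d\sigma)$ rather than pointwise, so the identity $\e^{t_1\sigma}\widehat{g_{t_1}}(\sigma)=\e^{t_2\sigma}\widehat{g_{t_2}}(\sigma)$ should be extracted for a.e.\ $\sigma$ along a further subsequence, exactly as in Rudin.
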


Given any $h\in\dot{H}^k(\heis)$ radial, one can define
\begin{equation*}
F_h(s+i(x^2+y^2)):=h(x,y,s).
\end{equation*}

If $h\in \dot{H}^k(\heis)\cap V_0^+$, $k\in\{-1,0,1\}$, then $F_h$ is holomorphic, since the holomorphic representation given by the suitable Paley-Wiener theorem is  given by $\sqrt{\pi}F_h$.
Note that
$$
F_{-\hlapl h}=-iF_{\partial_s h}=-iF_h',\quad h\in\dot{H}^k(\heis)\cap V_0^+
$$
and
$$
F_{gh}=F_gF_h,\quad g,h\in\dot{H}^k(\heis)\cap V_0^+.
$$
%$
%\|F_h\|_{L_{1-k}^2}^2=\frac{\Gamma(1-k)}{\pi 2^{1-k}}\|h\|_{\dot{H}^k(\heis)}^2
%$
%when $k<1$, and
%$
%\|F\|_{\hardy^2(\C_+)}^2=\frac{1}{\pi}\|h\|_{\dot{H}^1(\heis)}^2.
%$
Moreover, if $h\in L^2(\heis)$,
\begin{equation}\label{eq:paley_cste}
\|h\|_{L^2(\heis)}^2=\pi\|F_h\|_{L^2(\C_+)}^2.
\end{equation}

For example, the holomorphic representation in the Hardy space $\hardy^2(\C_+)$ of
\begin{align*}
Q_+(x,y,s)
	&=\frac{i\sqrt{2}}{i(x^2+y^2)+i+s}
\end{align*}
is
$$
F_{Q_+}(z)=\frac{i\sqrt{2}}{z+i}.
$$

One can now identify the orthogonal projector $\Pi_0^+$ from the Hilbert space $L^2(\heis)$ onto its closed subspace $L^2(\heis)\cap V_0^+$ as a projector $P_0$ from $L^2(\C_+)$ to $A^2_1=L^2(\C_+)\cap\holo(\C_+)$. More generally, for $k<1$, the orthogonal projector from the Hilbert space $\dot{H}^k(\heis)$ onto its closed subspace $\dot{H}^k(\heis)\cap V_0^+$ corresponds to the Bergman projector $P_k$ from $L^2_{1-k}$ onto $A^2_{1-k}$. For general $k<1$, the Bergman projector $P_k$ can be expressed as a convolution through a reproducing kernel called Bergman kernel \cite{bergman_projectors}. We are here interested in the case $k=0$.

%\begin{prop}\cite{bergman_projectors}
%Let $k<1$. Then for every $F\in L^2_k$ and $z\in\C_+$, we have that
%$$
%P_kF(z)=\int_{\C_+}B_k(z,s+it)F(s+it)\d s\frac{\d t}{t^k}
%$$
%where
%$$
%B_k(z,w)=\frac{1-k}{2^k\pi}\left(\frac{i}{z-\overline{w}}\right)^{2-k}
%$$
%is the weighted Bergman kernel associated to $A^2_{1-k}$. 
%\end{prop}

\begin{prop}\label{prop:formula_P0}
For all $F\in L^2(\C_+)$,
$$
P_0(F)(z)
	=-\frac{1}{\pi}\int_{\C_+}\frac{1}{(z-s+it)^2}F(s+it)\d s\d t.
$$
\end{prop}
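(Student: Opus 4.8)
The plan is to recognise the kernel $-\frac{1}{\pi(z-\bar w)^2}$ as the reproducing kernel of the Bergman space $A^2_1=L^2(\C_+)\cap\holo(\C_+)$, and to compute it by transporting the problem to the Fourier side through the Paley-Wiener theorem in the case $k=0$. First I would set up the dictionary: since $\Gamma(1)/2^{1}=\half$, the map $f\mapsto F$, $F(z)=\frac{1}{\sqrt{2\pi}}\int_0^{+\infty}\e^{iz\sigma}f(\sigma)\d\sigma$, is a $\C$-linear unitary isomorphism from $L^2(\R_+,\frac{\d\sigma}{2\sigma})$ onto $A^2_1$ with $\|F\|_{L^2(\C_+)}^2=\int_0^{+\infty}|f(\sigma)|^2\frac{\d\sigma}{2\sigma}$, so by polarisation the Hermitian inner products correspond, $\langle F,G\rangle_{L^2(\C_+)}=\int_0^{+\infty}f(\sigma)\overline{g(\sigma)}\frac{\d\sigma}{2\sigma}$. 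Then I would check that for each $z\in\C_+$ the point evaluation $F\mapsto F(z)$ is bounded: the element $k_z(\sigma):=\sqrt{\tfrac{2}{\pi}}\,\sigma\,\e^{-i\bar z\sigma}$ lies in $L^2(\R_+,\frac{\d\sigma}{2\sigma})$ because $\Im z>0$, and by construction $F(z)=\int_0^{+\infty}f(\sigma)\overline{k_z(\sigma)}\frac{\d\sigma}{2\sigma}=\langle F,K_z\rangle_{L^2(\C_+)}$, where $K_z\in A^2_1$ is the image of $k_z$. Thus $A^2_1$ is a reproducing kernel Hilbert space with reproducing element $K_z$ at $z$.

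Next I would compute $K_z$ explicitly by applying the transform to $k_z$ and integrating:
\[
K_z(w)=\frac{1}{\sqrt{2\pi}}\int_0^{+\infty}\e^{iw\sigma}k_z(\sigma)\d\sigma=\frac{1}{\pi}\int_0^{+\infty}\sigma\,\e^{i(w-\bar z)\sigma}\d\sigma=-\frac{1}{\pi(w-\bar z)^2},
\]
where the last integral is absolutely convergent and evaluated via $\int_0^{+\infty}\sigma\,\e^{ia\sigma}\d\sigma=-a^{-2}$ for $\Im a>0$ (here $a=w-\bar z\in\C_+$). Finally, for an arbitrary $F\in L^2(\C_+)$ I would use that $K_z\in A^2_1$ while $(I-P_0)F\perp A^2_1$, so that the reproducing property gives
\[
(P_0F)(z)=\langle P_0F,K_z\rangle_{L^2(\C_+)}=\langle F,K_z\rangle_{L^2(\C_+)}=\int_{\C_+}F(w)\,\overline{K_z(w)}\d\lambda(w).
\]
Writing $w=s+it$ and $\overline{K_z(w)}=-\frac{1}{\pi(\bar w-z)^2}=-\frac{1}{\pi(z-s+it)^2}$ then yields exactly the announced formula for all $F\in L^2(\C_+)$, not merely on $A^2_1$.

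The computation is essentially mechanical once the Paley-Wiener dictionary is fixed; the points that genuinely require care are the boundedness of the point evaluation (which is what makes $K_z$ well defined), the absolute convergence and value of the scalar integral defining $K_z$, and the Fubini interchange implicit in passing from the Fourier-side pairing to the double integral over $\C_+$. I expect the Fubini/absolute-convergence bookkeeping to be the only mildly delicate part, since one must control $\overline{K_z(w)}F(w)$ uniformly enough to legitimise the exchange. As a cross-check I could instead transport the classical Bergman kernel $\frac{1}{\pi(1-z\bar w)^2}$ of the unit disc to $\C_+$ through the Cayley transform; this produces the same kernel $-\frac{1}{\pi(z-\bar w)^2}$ and hence the same projector.
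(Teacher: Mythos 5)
Your proof is correct. It is worth noting that the paper itself gives no proof of this proposition: the formula is quoted from the cited literature on weighted Bergman projectors, so your argument supplies a self-contained derivation where the paper has none, and it does so using only the $k=0$ case of the Paley--Wiener theorem that the paper already states. All the details check out: $\Gamma(1)/2=\half$ makes the Paley--Wiener map a unitary from $L^2(\R_+,\frac{\d\sigma}{2\sigma})$ onto $A^2_1$; your $k_z(\sigma)=\sqrt{2/\pi}\,\sigma\e^{-i\bar z\sigma}$ lies in that space precisely because $\Im (z)>0$; the normalization $\half\sqrt{2/\pi}=\frac{1}{\sqrt{2\pi}}$ gives the reproducing identity $F(z)=\langle F,K_z\rangle_{L^2(\C_+)}$; the kernel integral $\frac{1}{\pi}\int_0^{+\infty}\sigma\e^{i(w-\bar z)\sigma}\d\sigma=-\frac{1}{\pi(w-\bar z)^2}$ converges absolutely since $\Im(w-\bar z)=\Im (w)+\Im (z)>0$; the orthogonality step $\langle P_0F,K_z\rangle_{L^2(\C_+)}=\langle F,K_z\rangle_{L^2(\C_+)}$ is exactly what extends the formula from $A^2_1$ to all of $L^2(\C_+)$; and the sign bookkeeping $(\bar w-z)^2=(z-s+it)^2$ matches the statement. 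One simplification: the Fubini interchange you flag as the only delicate point is in fact vacuous. The identity $F(z)=\langle F,K_z\rangle_{L^2(\C_+)}$ follows from the polarized isometry (a surjective linear isometry between complex Hilbert spaces preserves Hermitian inner products) combined with the definition of the Paley--Wiener transform, and your final display is nothing but the definition of the $L^2(\C_+)$ pairing; no interchange of integrals occurs anywhere, so the only analytic input is $k_z\in L^2(\R_+,\frac{\d\sigma}{2\sigma})$. Your Cayley-transform cross-check is also correct: transporting the disc kernel $\frac{1}{\pi(1-z\bar w)^2}$ by $\varphi(z)=\frac{z-i}{z+i}$ with the factor $\varphi'(z)\overline{\varphi'(w)}$ indeed returns $-\frac{1}{\pi(z-\bar w)^2}$, which gives an independent confirmation that the paper's literature citation and your computation agree.
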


%We say that
%$
%B_0(z,w)=-\frac{1}{\pi}\frac{1}{(z-\overline{w})^2}
%$
%is the weighted Bergman kernel associated to $A^2_{1}$. 

For $h\in L^2(\heis)$, the holomorphic function $F_{\Pi_0^+(h)}$ is the projection of $F_h$ on the subspace $A^2_1$ of $L^2(\C_+)$~:
$$
F_{\Pi_0^+(h)}(z)
	=P_0(F_h)(z),
$$
so
\begin{equation*}
F_{\Pi_0^+(h)}(z)
	=-\frac{1}{\pi}\int_{\C_+}\frac{1}{(z-s+it)^2}F_h(s+it)\d s\d t.
\end{equation*}

For $p\in(1,+\infty)$, the orthogonal projector $P_0$ can be extended as a bounded operator from the space $L^p(\C_+,\d\lambda(z))$ onto the Bergman space $A_{1}^p$ \cite{bergman_projectors}.
\begin{thm}\label{thm:P0_bounded}
Let $p\in[1,+\infty)$. Then the Bergman projector $P_0$ is a bounded operator in $L^p(\C_+)$ if and only if $p>1$.
% In this case, the operator $P_0^+$ with positive kernel $|B_0(z,w)|$ is also bounded in $L^p(\C_+)$.
\end{thm}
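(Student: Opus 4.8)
The plan is to prove both implications directly from the explicit kernel of $P_0$ recorded in Proposition \ref{prop:formula_P0}, namely
$$
P_0(F)(z)=-\frac{1}{\pi}\int_{\C_+}\frac{1}{(z-\bar{w})^2}F(w)\d\lambda(w),\qquad w=s+it,\ \d\lambda(w)=\d s\,\d t,
$$
whose modulus is the positive kernel $|K(z,w)|=\frac{1}{\pi|z-\bar{w}|^2}$. The preliminary step, common to both directions, is to record the Forelli--Rudin type estimate
$$
\int_{\C_+}\frac{(\Im w)^a}{|z-\bar{w}|^b}\d\lambda(w)=C_{a,b}\,(\Im z)^{a+2-b},\qquad a>-1,\ b>a+2,
$$
which I would obtain by integrating first in $\Re w$ (a one–dimensional integral of Poisson type, convergent for $b>1$ and equal to a constant times $(\Im z+\Im w)^{1-b}$), and then in $\Im w$ after the substitution $\Im w=(\Im z)\tau$, recognising a Beta integral whose convergence imposes exactly $a>-1$ and $b>a+2$.

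For the sufficiency ($p>1$) I would invoke the Schur test for $L^p$ with the positive kernel $|K|$ and the weight $h(w)=(\Im w)^{-\alpha}$, where $\tfrac1p+\tfrac1{p'}=1$. Using the estimate above with $b=2$ and $a=-\alpha p'$ gives
$$
\int_{\C_+}|K(z,w)|\,h(w)^{p'}\d\lambda(w)=C\,(\Im z)^{-\alpha p'}=C\,h(z)^{p'},
$$
valid as soon as $0<\alpha p'<1$, and symmetrically, with $a=-\alpha p$,
$$
\int_{\C_+}|K(z,w)|\,h(z)^{p}\d\lambda(z)=C\,h(w)^{p},
$$
valid as soon as $0<\alpha p<1$. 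Since $1<p<\infty$ forces both $p$ and $p'$ to lie in $(1,\infty)$, any $\alpha$ with $0<\alpha<\frac{1}{\max(p,p')}$ satisfies both constraints simultaneously, and the Schur test yields boundedness of $P_0$ on $L^p(\C_+)$.

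For the necessity I would show that $P_0$ is unbounded on $L^1(\C_+)$ by testing on an approximate identity. Fix $w_0\in\C_+$ and take nonnegative bumps $(\phi_\varepsilon)_\varepsilon$ supported near $w_0$ with $\|\phi_\varepsilon\|_{L^1}=1$ and $\phi_\varepsilon\to\delta_{w_0}$. Because $z\neq\bar{w}$ for all $z,w\in\C_+$, the map $w\mapsto K(z,w)$ is continuous near $w_0$ for each fixed $z$, so $P_0\phi_\varepsilon(z)\to K(z,w_0)=-\frac{1}{\pi(z-\bar{w_0})^2}$ pointwise in $z$. A direct computation (integrate in $\Re z$ to get $\pi/(\Im z+\Im w_0)$, then in $\Im z$) shows
$$
\int_{\C_+}|K(z,w_0)|\d\lambda(z)=+\infty,
$$
the divergence being logarithmic at infinity. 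Fatou's lemma then gives $\liminf_\varepsilon\|P_0\phi_\varepsilon\|_{L^1}\geq\|K(\cdot,w_0)\|_{L^1}=+\infty$ while $\|\phi_\varepsilon\|_{L^1}=1$, so $P_0$ cannot be bounded on $L^1$.

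I expect the only genuinely delicate point to be the bookkeeping in the Schur test, that is, checking that the ranges $0<\alpha p<1$ and $0<\alpha p'<1$ can be met at once together with the hypotheses $a>-1$, $b>a+2$ of the Forelli--Rudin estimate; the necessity is then immediate from the explicit divergence of the $L^1$ norm of a single kernel slice. As an alternative route for the necessity, one may note that the kernel is self-adjoint, $\overline{K(w,z)}=K(z,w)$, so $L^1$-boundedness would force $L^\infty$-boundedness by $L^1$--$L^\infty$ duality, which fails; I would keep the Fatou argument as the main line since it is self-contained.
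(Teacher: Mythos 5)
Your proof is correct, but there is nothing in the paper to compare it against: Theorem \ref{thm:P0_bounded} is stated without proof, as a quotation of a known result on Bergman projectors from the cited reference. Your argument is in fact the standard proof of that result. The Forelli--Rudin estimate is computed correctly (the inner integral in $\Re w$ needs $b>1$, which holds in your application since $b=2$; the Beta integral gives exactly the constraints $a>-1$ and $b>a+2$), and the Schur-test bookkeeping is right: with $h(w)=(\Im w)^{-\alpha}$ both conditions reduce to $0<\alpha p'<1$ and $0<\alpha p<1$, simultaneously satisfiable precisely because $p,p'\in(1,\infty)$ — this is the only place where $p>1$ enters the sufficiency, and since $|K(z,w)|=|K(w,z)|$ the second Schur condition does follow from the same estimate by symmetry, as you claim. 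The necessity at $p=1$ is also sound: $K(\cdot,w_0)$ fails to be integrable (the divergence is logarithmic in $\Im z$, as you say), the pointwise convergence $P_0\phi_\varepsilon(z)\to K(z,w_0)$ holds because $\bar w_0$ lies in the lower half-plane so the kernel is continuous near $w_0$ for each fixed $z\in\C_+$, and Fatou's lemma then gives $\liminf_\varepsilon\|P_0\phi_\varepsilon\|_{L^1(\C_+)}=+\infty$ against $\|\phi_\varepsilon\|_{L^1(\C_+)}=1$. One small caveat on your alternative route for necessity: the $L^1$--$L^\infty$ duality remark would still require exhibiting the failure of $L^\infty$-boundedness (where the kernel integral against a bounded function is not even absolutely convergent, so the adjoint must be interpreted on a dense class); you are right to keep the Fatou argument as the main line, since it is complete as stated.
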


One has
$
\|h\|_{L^p(\heis)}^p=\pi\|F_h\|_{L^p(\C_+)}^p
$
when this quantity is finite. Therefore, if $h_1,h_2,h_3\in\dot{H}^1(\heis)$ (which embeds in $L^4(\heis)$), it makes sense to consider $\Pi_0^+(h_1h_2h_3)$.

%\newpage
%%%%%%%%%%%%%%%%%%%%%%%%%%%%%%%%%%%%%%%%%%%%%%%%%%%%%%%%%%%%%%%%%%%%%%%%%%%%%%%%%%%%%%%%%%%%%%%%%%%%%%%%%%%%%%%%%%%%%%%%%%%%%%%%%%%%%%%%%%%%%%%%%
\subsection{Symmetries of the equation and orthogonality conditions}\label{subsection:othogonality}
%%%%%%%%%%%%%%%%%%%%%%%%%%%%%%%%%%%%%%%%%%%%%%%%%%%%%%%%%%%%%%%%%%%%%%%%%%%%%%%%%%%%%%%%%%%%%%%%%%%%%%%%%%%%%%%%%%%%%%%%%%%%%%%%%%%%%%%%%%%%%%%%%

In this part, we focus on the linearized operator $\L$ around $Q_+$
$$
\L h=- \hlapl h-2\Pi_0^+(|Q_+|^2h)-\Pi_0^+(Q_+^2\overline{h}), \quad h\in \dot{H}^1(\heis)\cap V_0^+.
$$
This operator is self-adjoint acting on $L^2(\heis)$, but we are interested in elements of $\dot{H}^1(\heis)$ endowed with its own scalar product. After studying the action of $\L$ on the real subspace $V$ spanned by $(Q_+,iQ_+,\partial_s Q_+,i\partial_s Q_+)$, we will try to find a new form for $(\L h,h)_{\dot{H}^{-1}(\heis)\times\dot{H}^1(\heis)}$ on the orthogonal of $V$ in $\dot{H}^1(\heis)$ which is more suitable for a spectral study.

\begin{prop}
In the real subspace $V$ of $\dot{H}^1(\heis)\cap V_0^+$ spanned by the orthogonal basis of vectors $(\partial_s Q_+,iQ_+-\partial_sQ_+,Q_++2i\partial_s Q_+,Q_+)$, the linearized operator $\L$ has the form
$$
\L_{|V}=
\left(
\begin{matrix}
0 & 0 & 0 & 0\\
0 & 0 & 0 & 0\\
0 & 0 & 0 & 1\\
0 & 0 & 0 & -1\\
\end{matrix}
\right).
$$
\end{prop}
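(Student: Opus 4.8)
The plan is to recognize $\L$ as the real Fréchet derivative at $Q_+$ of the map $E(u)=D_su-\Pi_0^+(|u|^2u)$ (recall $-\hlapl=D_s$ on $V_0^+$), whose zeros are exactly the solutions of \eqref{eq:Hplus}. Linearizing $|u|^2u=u^2\overline u$ at $Q_+$ in a direction $h$ produces $2|Q_+|^2h+Q_+^2\overline h$, so $DE(Q_+)h=\L h$; note that $\L$ is only \emph{real}-linear, because of the $\overline h$ term. The strategy is then to obtain the first three (zero) columns from the three one-parameter symmetry groups of \eqref{eq:Hplus}, and the last column by a short algebraic computation. Write $e_1=\partial_sQ_+$, $e_2=iQ_+-\partial_sQ_+$, $e_3=Q_++2i\partial_sQ_+$, $e_4=Q_+$ for the given ordered basis. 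The projector terms are well defined since $Q_+,h\in\dot{H}^1(\heis)\hookrightarrow L^4(\heis)$ and $\Pi_0^+$ is bounded on $L^{4/3}$ by Theorem \ref{thm:P0_bounded} ($p=4/3>1$); one also checks $\L(V)\subseteq V$, so the matrix representation makes sense.

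For the kernel directions I would differentiate each symmetry orbit of $Q_+$ at the identity. Translation in $s$, $u=Q_+(\cdot,\cdot,\cdot+s_0)$, has tangent vector $\partial_sQ_+$, whence $\L(\partial_sQ_+)=0$; phase multiplication $u=\e^{i\theta}Q_+$ gives $iQ_+$, whence $\L(iQ_+)=0$ and therefore $\L(e_2)=\L(iQ_+-\partial_sQ_+)=0$. For scaling $u=\alpha Q_+(\alpha\cdot,\alpha\cdot,\alpha^2\cdot)$ (which stays in $\Qplus$, hence solves \eqref{eq:Hplus}) the generator is $Q_++(x\partial_x+y\partial_y+2s\partial_s)Q_+$. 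Differentiating the explicit formula $Q_+=i\sqrt2\,D^{-1}$ with $D=s+i(x^2+y^2)+i$, and using the identity $(x^2+y^2)-is=-(1+iD)$, yields $(x\partial_x+y\partial_y+2s\partial_s)Q_+=-2Q_+-2i\partial_sQ_+$, so the generator equals $-(Q_++2i\partial_sQ_+)=-e_3$. Consequently $\L(e_3)=0$, disposing of the first three columns. (In the holomorphic picture the same identity reads $(z+i)F_{Q_+}'(z)=-F_{Q_+}(z)$.)

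For the last column I would compute $\L Q_+$ directly. Since $Q_+^2\overline{Q_+}=|Q_+|^2Q_+$, the two nonlinear terms combine into $-3\Pi_0^+(|Q_+|^2Q_+)$, and equation \eqref{eq:Hplus} gives $\Pi_0^+(|Q_+|^2Q_+)=D_sQ_+$; hence $\L Q_+=D_sQ_+-3D_sQ_+=-2D_sQ_+=2i\partial_sQ_+$. In the basis this reads $2i\partial_sQ_+=(Q_++2i\partial_sQ_+)-Q_+=e_3-e_4$, i.e. the fourth column is $(0,0,1,-1)^{\mathrm T}$, as claimed. Finally, the orthogonality of $(e_1,e_2,e_3,e_4)$ in $\dot{H}^1(\heis)$ follows from a direct evaluation of $(u,v)_{\dot{H}^1(\heis)}=\Re\tfrac12\int_0^{+\infty}f_u(\sigma)\overline{f_v(\sigma)}\d\sigma$ using $f_{Q_+}(\sigma)=2\pi\e^{-\sigma}$, $f_{\partial_sQ_+}(\sigma)=2\pi i\sigma\e^{-\sigma}$ and $f_{iQ_+}(\sigma)=2\pi i\e^{-\sigma}$, which reduce to elementary Gamma integrals.

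The routine steps are clear; the point requiring care is the first one — justifying that $\L$ is exactly the real derivative of $E$ and that each symmetry orbit is a genuinely differentiable curve of solutions in $\dot{H}^1(\heis)\cap V_0^+$, so that differentiating the relation $E(u)=0$ is legitimate. The only genuine calculation is the scaling-generator identity $(x\partial_x+y\partial_y+2s\partial_s)Q_+=-2Q_+-2i\partial_sQ_+$; once it is established, the matrix falls out immediately from the algebraic value of $\L Q_+$.
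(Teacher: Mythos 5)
Your proof is correct and follows essentially the same route as the paper: the paper likewise obtains the three zero columns by differentiating the translation, phase and scaling orbits of solutions to \eqref{eq:Hplus}, and gets the last column from $\L Q_+=-i\partial_sQ_+-3\Pi_0^+(|Q_+|^2Q_+)=2i\partial_sQ_+=e_3-e_4$. The only cosmetic difference is that the paper performs these differentiations in the holomorphic Hardy-space picture (via $F_{Q_+}$ and the identity $zF_{Q_+}'=-F_{Q_+}-iF_{Q_+}'$), whereas you work directly on $\heis$ with the explicit formula for $Q_+$; your scaling-generator identity $(x\partial_x+y\partial_y+2s\partial_s)Q_+=-2Q_+-2i\partial_sQ_+$ is exactly the real-variable form of the paper's computation.
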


\begin{proof}
We define
$$
\tilde{\L}(F):=-iF'-2P_0(|F_{Q_+}|^2F_h)-P_0(F_{Q_+}^2\overline{F_h}),\quad F\in\hardy^2(\C_+).
$$
For $h\in\dot{H}^1(\heis)\cap V_0^+$, the holomorphic function $F_h\in\hardy^2(\C_+)$ satisfies
$$
\tilde{\L}(F_h)=F_{\L h}.
$$
We study $\tilde{L}$ on $\hardy^2(\C_+)$. For $F\in\hardy^2(\C_+)$, define
$$
\F(F):=-iF'-P_0(|F|^2F).
$$
Let $U$ be a $\mathcal{C}^1$ function defined on a neighbourhood of $t=0$, valued in $\hardy^2(\C_+)$, and satisfying $U(0)=F_{Q_+}$ and $U'(0)=F$. Then
$$
\tilde{\L} F=\frac{\d}{\d t}\Big|_{t=0}\F(U(t)).
$$

Thanks to the invariance under translation in the $s$ variable, we consider $U:s_0\in\R\mapsto F_{Q_+}(\cdot+s_0)$. For all $s_0\in\R$, $\F(U(s_0))=0$, so
$$
\tilde{\L} (F_{Q_+}')=0=\L(\partial_sQ_+).
$$

Following the same pattern, the invariance under phase multiplication gives, with  $U: \theta\in\R\mapsto \e^{i\theta}F_{Q_+}$, that  $\F(U(\theta))=0$ for all $\theta$, so
$$
\tilde{\L}(iF_{Q_+})=0=\L (i Q_+).
$$

Finally, let $U:\lambda\in]-1,1[\mapsto(1+\lambda)F_{Q_+}((1+\lambda)^2\cdot)$, then $\F(U(\lambda))=0$ for all $\lambda$ thanks to the scaling invariance, so
$$
\tilde{\L} (F_{Q_+}+2zF_{Q_+}')=0.
$$
Remark that
$$
zF_{Q_+}'=-\frac{i\sqrt{2}z}{(z+i)^2}=-F_{Q_+}-iF_{Q_+}'.
$$
Consequently,
$$
\L (Q_++2i\partial_sQ_+)=0.
$$

In order to determine $\L $ entirely on the subspace $V$, it is sufficient to calculate $\L (Q_+)$. Yet
\begin{align*}
\L (Q_+)
	&=-i\partial_sQ_+-3\Pi_0^+(|Q_+|^2Q_+)
   =2i\partial_sQ_+.
\end{align*}

We have proved that in the orthogonal basis $(\partial_sQ_+,iQ_+-\partial_sQ_+,Q_++2i\partial_sQ_+,Q_+)$ of $V$, $\L $ admits the matrix representation
$$
\left(
\begin{matrix}
0 & 0 & 0 & 0\\
0 & 0 & 0 & 0\\
0 & 0 & 0 & 1\\
0 & 0 & 0 & -1\\
\end{matrix}
\right).
$$
\end{proof}

We want now to work on the orthogonal of $V$, so we will study the orthogonality conditions. For this part, it is more natural to work with the complex scalar product in $\dot{H}^1(\heis)$
\begin{align*}
\langle h_1,h_2\rangle_{\dot{H}^{1}(\heis)}
	&=\int_{\heis}(-\hlapl h_1) \overline{h_2}\d x\d y\d s\\
	&=\langle -\hlapl h_1, h_2\rangle_{\dot{H}^{-1}(\heis)\times\dot{H}^{1}(\heis)}.
\end{align*}
We have
$$
\langle h,Q_+\rangle_{\dot{H}^1(\heis)}=(h,Q_+)_{\dot{H}^1(\heis)}+i(h,iQ_+)_{\dot{H}^1(\heis)}.
$$

\begin{prop}\label{prop:formula_orthogonal}
Let $h\in \dot{H}^1(\heis)\cap V_0^+$, $F_h(s+i(x^2+y^2)) = h(x,y,s)$ its holomorphic counterpart. Then
\begin{equation*}
\langle h,Q_+\rangle_{\dot{H}^1(\heis)}=\sqrt{2}\pi^2 F_h(i).
\end{equation*}
Consequently,
\begin{itemize}
\item $h$ is orthogonal to $Q_+$ and $iQ_+$ in $\dot{H}^1(\heis)$ if and only if $F_h(i)=0$;
\item  $h$ is orthogonal to $\partial_sQ_+$ and $i\partial_sQ_+$ if and only if $F_h'(i)=0$.
\end{itemize}
\end{prop}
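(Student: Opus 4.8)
The plan is to reduce the $\dot{H}^1(\heis)$ inner product to a one-dimensional integral in the Fourier variable $\sigma$ and then to recognize that integral as a point evaluation of the holomorphic representative, using the Paley-Wiener correspondence already set up. The conceptual content is that, under $h\mapsto F_h$, the vector $Q_+$ is proportional to the reproducing kernel of the relevant Hardy space at the point $i$.

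First I would record the inner-product version of the $\dot{H}^1$ norm formula on $V_0^+$. Polarizing the identity $\|u\|_{\dot{H}^1(\heis)}^2=\int_{\R_+}|f(\sigma)|^2\,\frac{\d\sigma}{2}$ (or recomputing directly by Plancherel in $s$ together with the Gaussian integral $\int_{\R^2}\e^{-2(x^2+y^2)\sigma}\,\d x\,\d y=\frac{\pi}{2\sigma}$), I obtain, for $u_1,u_2\in\dot{H}^1(\heis)\cap V_0^+$ with associated functions $f_1,f_2$,
$$
\langle u_1,u_2\rangle_{\dot{H}^1(\heis)}=\frac12\int_{\R_+}f_1(\sigma)\overline{f_2(\sigma)}\,\d\sigma.
$$
Next I would substitute $Q_+$: since $\widehat{Q_+}(x,y,\sigma)=2\pi\e^{-\sigma}\widehat{h_0^+}(x,y,\sigma)$, the function associated to $Q_+$ is $f_{Q_+}(\sigma)=2\pi\e^{-\sigma}$, whence
$$
\langle h,Q_+\rangle_{\dot{H}^1(\heis)}=\pi\int_{\R_+}f_h(\sigma)\e^{-\sigma}\,\d\sigma.
$$
To conclude I would use the inversion furnished by Paley-Wiener, namely $F_h(z)=\frac{1}{\sqrt2\,\pi}\int_{\R_+}\e^{iz\sigma}f_h(\sigma)\,\d\sigma$ (this is exactly the relation $\sqrt{\pi}F_h=\mathcal{F}_h$ recorded earlier, or a direct inverse Fourier transform in $s$ evaluated along $z=s+i(x^2+y^2)$). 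At $z=i$ this gives $F_h(i)=\frac{1}{\sqrt2\,\pi}\int_{\R_+}\e^{-\sigma}f_h(\sigma)\,\d\sigma$, and substituting back yields precisely $\langle h,Q_+\rangle_{\dot{H}^1(\heis)}=\sqrt2\,\pi^2F_h(i)$. The only place where care is needed is the bookkeeping of the $\pi$, $\sqrt2$ and $\tfrac12$ factors; as a sanity check one verifies $\langle Q_+,Q_+\rangle_{\dot{H}^1(\heis)}=\sqrt2\,\pi^2F_{Q_+}(i)=\pi^2=I_+$ against the explicit $F_{Q_+}(z)=\frac{i\sqrt2}{z+i}$.

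For the two orthogonality statements I would invoke $\langle h,Q_+\rangle_{\dot{H}^1(\heis)}=(h,Q_+)_{\dot{H}^1(\heis)}+i(h,iQ_+)_{\dot{H}^1(\heis)}$. A complex number vanishes if and only if both its real and imaginary parts vanish, so $h\perp Q_+$ and $h\perp iQ_+$ for the real scalar product is equivalent to $\langle h,Q_+\rangle_{\dot{H}^1(\heis)}=0$, i.e. to $F_h(i)=0$. The second bullet requires one extra computation: repeating the three steps above with $\partial_sQ_+$ in place of $Q_+$. Its associated function is $f_{\partial_sQ_+}(\sigma)=i\sigma f_{Q_+}(\sigma)=2\pi i\sigma\e^{-\sigma}$, and its holomorphic counterpart satisfies $F_{\partial_sQ_+}=F_{Q_+}'$, which leads to
$$
\langle h,\partial_sQ_+\rangle_{\dot{H}^1(\heis)}=-\sqrt2\,\pi^2F_h'(i).
$$
Equivalently, one may simply differentiate the reproducing identity in the evaluation point. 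Hence $h\perp\partial_sQ_+$ and $h\perp i\partial_sQ_+$ if and only if $F_h'(i)=0$. I expect no genuine obstacle beyond the constant bookkeeping; the whole proposition is the statement that, under the isometry (up to a factor) between $\dot{H}^1(\heis)\cap V_0^+$ and the Hardy space $\hardy^2(\C_+)$, the directions $Q_+$ and $\partial_sQ_+$ are proportional to the reproducing kernel at $i$ and to its derivative.
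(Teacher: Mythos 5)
Your proposal is correct, and it proves the identity by a genuinely different route than the paper. The paper first rewrites $\langle h,Q_+\rangle_{\dot{H}^1(\heis)}$ as the duality pairing $\langle h,-i\partial_sQ_+\rangle_{\dot{H}^{1}(\heis)\times\dot{H}^{-1}(\heis)}$, transfers it to the half-plane through the identity $\|u\|_{L^2(\heis)}^2=\pi\|F_u\|_{L^2(\C_+)}^2$, and then evaluates the resulting planar integral $\int_{\C_+}(\overline{z}-i)^{-2}F_h(z)\d\lambda(z)$ by contour integration: the residue theorem on large rectangles, a choice of sequences $a_j,b_j\to+\infty$ along which the boundary terms vanish, an extra integration in $t$, and the decay bound $|F_h(it)|\leq \tfrac{1}{2\sqrt{\pi t}}\|f\|_{L^2}$. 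You instead stay entirely on the Fourier side: polarizing the norm formula on $V_0^+$ gives $\langle u_1,u_2\rangle_{\dot{H}^1(\heis)}=\tfrac12\int_{\R_+}f_1(\sigma)\overline{f_2(\sigma)}\d\sigma$, and since $f_{Q_+}(\sigma)=2\pi\e^{-\sigma}$ the pairing collapses to $\pi\int_{\R_+}f_h(\sigma)\e^{-\sigma}\d\sigma$, which is $\sqrt{2}\pi^2F_h(i)$ by the representation $F_h(z)=\tfrac{1}{\sqrt{2}\pi}\int_0^{+\infty}\e^{iz\sigma}f_h(\sigma)\d\sigma$; the derivative statement is the same computation with $f_{\partial_sQ_+}(\sigma)=2\pi i\sigma\e^{-\sigma}$, with differentiation under the integral justified because $\sigma\e^{-\sigma}f_h\in L^1(\R_+)$ by Cauchy--Schwarz. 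Your constants are right (the check $\langle Q_+,Q_+\rangle_{\dot{H}^1(\heis)}=\pi^2=I_+$ and the sign $\langle h,\partial_sQ_+\rangle_{\dot{H}^1(\heis)}=-\sqrt{2}\pi^2F_h'(i)$ both agree with the paper). As for what each approach buys: yours is shorter and avoids all contour and limiting analysis, making transparent that $Q_+$ and $\partial_s Q_+$ correspond to the reproducing kernel at $i$ and its derivative; the paper's residue computation produces the formula in the $L^2(\C_+)$-pairing form $\langle F,F_{Q_+}'\rangle_{L^2(\C_+)}=-i\sqrt{2}\pi F(i)$ that is reused verbatim in part \ref{subsection:coercivity}, but your argument yields that variant just as easily (run the same computation with the $k=0$ weight $\tfrac{\d\sigma}{2\sigma}$), so nothing downstream is lost.
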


Note that this proposition enables us to check easily that the basis $(\partial_sQ_+,iQ_+-\partial_sQ_+,Q_++2i\partial_sQ_+,Q_+)$ of $V$ is orthogonal in $\dot{H}^1(\heis)$.

\begin{proof}
We study of the duality bracket in $\dot{H}^{-1}(\heis)\times \dot{H}^1(\heis)$ between $-\hlapl Q_+=D_sQ_+$ and $h$, for which we use the holomorphic function $F_h$. Knowing that
$$
F_{\partial_sQ_+}(z)=F_{Q_+}'(z)=-\frac{i\sqrt{2}}{(z+i)^2},
$$
equality \eqref{eq:paley_cste} $\|u\|_{L^2(\heis)}^2=\pi\|F_u\|_{L^2(\C_+)}^2$ for $u\in L^2(\heis)$ leads to
\begin{align*}
\langle h, \partial_sQ_+\rangle_{\dot{H}^{1}(\heis)\times\dot{H}^{-1}(\heis)}
	=\pi\int_{\C_+}\frac{i\sqrt{2}}{(\overline{z+i })^2}F_h(z)\d \lambda(z).
\end{align*}

Let $t>0$, and define $f_t:z\mapsto F_h(z+it)$ on $\{z\in\C;\;  \textnormal{Im}(z)>-t\}$. Applying the residue formula to $z\mapsto \frac{1}{(z-it-i )^2}f_t(z)$, which is holomorphic on $\{z\in\C;\;  \textnormal{Im}(z)>-t\}\setminus \{it+i \}$ with a simple pole at $it+i$, we get that on every rectangle $\mathscr{R}:=[-a,a]+i[0,b]$ containing $it+i $,
\begin{equation}\label{eq:rectangles}
\int_{\partial\mathscr{R}}\frac{1}{(z-it-i )^2}f_t(z)\d z
	=2i\pi f_t'(it+i )
	=2i\pi F_h'(2it+i ).
\end{equation}

Since the integral of $z\mapsto \frac{1}{(z-it-i )^2}f_t(z)$ is absolutely convergent on $\{z\in\C;\; \textnormal{Im}(z)>-t\}$, there are some sequences $(a_j)_{j\in\N}$ and $(b_j)_{j\in\N}$ of real numbers converging to $+\infty$ and satisfying
$$
\int_{\R_+}\frac{1}{(-a_j+it'-it-i )^2}f_t(-a_j+it')\d t'\to 0,
$$
$$
\int_{\R_+}\frac{1}{(a_j+it'-it-i )^2}f_t(a_j+it')\d t'\to 0,
$$
and
$$
\int_{\R}\frac{1}{(s+ib_j-it-i )^2}f_t(s+ib_j)\d s\to 0.
$$

Applying formula \eqref{eq:rectangles} to the rectangles $[-a_j,a_j]\times[0,b_j]$ and passing to the limit $j\to+\infty$, one gets
\begin{align*}
\int_{\R}\frac{1}{(s-it-i )^2}f_t(s)\d s
	&=2i\pi F_h'(2it+i ).
\end{align*}
Consequently
\begin{align*}
\langle h, \partial_sQ_+\rangle_{\dot{H}^{1}(\heis)\times\dot{H}^{-1}(\heis)}
	&=i\pi\sqrt{2}2i\pi\int_{\R_+}F_h'(2it+i )\d t\\
	&=-i\pi^2\sqrt{2}F_h(i ),
\end{align*}
since $F_h(it)$ goes to $0$ as $t$ goes to $+\infty$. This latter fact can be established by using the function $f\in L^2(\R_+)$ associated to $F_h$, which satisfies for all $t\in\R_+^*$
\begin{equation*}
F_h(it)
	=\frac{1}{\sqrt{2\pi}}\int_0^{+\infty}\e^{-t\sigma}f(\sigma)\d\sigma,
\end{equation*}
indeed,
\begin{align*}
|F_h(it)|
	&\leq\frac{1}{\sqrt{2\pi}}\left(\int_0^{+\infty}\e^{-2t\sigma}\d\sigma\right)^{\half}\left(\int_0^{+\infty}|f(\sigma)|^2\d\sigma\right)^{\half}
	=\frac{1}{2\sqrt{\pi t}}\|f\|_{L^2},
\end{align*}
which goes to $0$ as $t$ goes to $+\infty$. 

We have shown as wanted that
\begin{equation*}
\langle h, Q_+\rangle_{\dot{H}^{1}(\heis)}
	=\langle h, -i\partial_sQ_+\rangle_{\dot{H}^{1}(\heis)\times\dot{H}^{-1}(\heis)}
	=\sqrt{2}\pi^2F_h(i ).
\end{equation*}

In particular,
$$
\langle h,\partial_sQ_+\rangle_{\dot{H}^{1}(\heis)}
	=-\langle \partial_s h, -i\partial_sQ_+\rangle_{\dot{H}^{1}(\heis)\times\dot{H}^{-1}(\heis)}
	=-\sqrt{2}\pi^2F_h'(i ).
$$
\end{proof}

%Another particular quantity we can compute is
%\begin{align*}
%( \L Q_+,Q_+)_{\dot{H}^{-1}(\heis)\times\dot{H}^{1}(\heis)}
%	&=( 2i\partial_sQ_+,Q_+)_{\dot{H}^{-1}(\heis)\times\dot{H}^{1}(\heis)}\\
%	&=2i\times i\sqrt{2}\pi^2\overline{\left(\frac{i\sqrt{2}}{2i}\right)}\\
%	&=-2\pi^2,
%\end{align*}
%so $(\L Q_+,Q_+)<0$.

We now check that $\L h$, $h\in\dot{H}^1(\heis)$, decomposes in the Hilbert space $\dot{H}^{-1}(\heis)$ as an orthogonal sum $\L h=\L_{|V}h+\L_{|V^\perp}h$, where $V^\perp$ is the orthogonal of $V$ in $\dot{H}^1(\heis)$.

\begin{cor}\label{cor:decomposition_h+h-}
Let $h\in\dot{H}^1(\heis)$ and decompose $h$ as $h=h_0+h_-+h_+$, where $h_0\in\dot{H}^1(\heis)\cap V_0^+\cap \Vect_{\R}(\partial_sQ_+,iQ_+,Q_++2i\partial_s Q_+)$, $h_-\in\dot{H}^1(\heis)\cap V_0^+\cap \Vect_{\R}(Q_+)$ and $h_+\in \dot{H}^1(\heis)\cap V_0^+\cap (Q_+,iQ_+,\partial_s Q_+,i\partial_sQ_+)^{\perp,\dot{H}^1(\heis)}$. Then
$$
(\L h,h)_{\dot{H}^{-1}(\heis)\times\dot{H}^{1}(\heis)}
	=(\L h_+,h_+)_{\dot{H}^{-1}(\heis)\times\dot{H}^{1}(\heis)}
	+(\L h_-,h_-)_{\dot{H}^{-1}(\heis)\times\dot{H}^{1}(\heis)}
$$
and
$$
\|\L h\|_{\dot{H}^{-1}(\heis)}^2
	=\|\L h_+\|_{\dot{H}^{-1}(\heis)}^2
	+\|\L h_-\|_{\dot{H}^{-1}(\heis)}^2.
$$
\end{cor}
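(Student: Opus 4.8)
The plan is to reduce everything to two facts already at hand. First, the previous proposition shows that in the orthogonal basis $(\partial_sQ_+,iQ_+-\partial_sQ_+,Q_++2i\partial_sQ_+,Q_+)$ the matrix of $\L_{|V}$ has its first three columns equal to zero; hence the three vectors $\partial_sQ_+$, $iQ_+$ and $Q_++2i\partial_sQ_+$ all lie in $\ker\L$, so that $\L h_0=0$ and, by real-linearity of $\L$, $\L h=\L h_-+\L h_+$. Second, $\L$ defines a symmetric real bilinear form: since $-\hlapl$ and $\Pi_0^+$ are self-adjoint on $L^2(\heis)$, $|Q_+|^2$ is real, and both $h_-$ and $h_+$ lie in $V_0^+$ (so $\Pi_0^+$ acts as the identity on them), one checks that $(\L g,h)_{\dot{H}^{-1}(\heis)\times\dot{H}^1(\heis)}=(\L h,g)_{\dot{H}^{-1}(\heis)\times\dot{H}^1(\heis)}$ for $g,h\in\dot{H}^1(\heis)\cap V_0^+$.

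Granting these, I would expand $(\L h,h)=(\L h_-+\L h_+,\,h_0+h_-+h_+)$. The two cross terms containing $h_0$ vanish by symmetry together with $\L h_0=0$, leaving the terms $(\L h_-,h_-)$, $(\L h_+,h_+)$ and the two cross terms $(\L h_-,h_+)=(\L h_+,h_-)$. Writing $h_-=cQ_+$ with $c\in\R$, I compute $\L h_-=c\,\L Q_+=2ci\,\partial_sQ_+$. Using that $-\hlapl=D_s$ on $V_0^+$, so that $i\partial_sQ_+=-\hlapl(-Q_+)$, and the defining relation $(-\hlapl g,h)_{\dot{H}^{-1}(\heis)\times\dot{H}^1(\heis)}=(g,h)_{\dot{H}^1(\heis)}$, I get $(\L h_-,h_+)_{\dot{H}^{-1}(\heis)\times\dot{H}^1(\heis)}=-2c\,(Q_+,h_+)_{\dot{H}^1(\heis)}$, which is zero because $h_+$ is orthogonal to $Q_+$ (and $iQ_+$) in $\dot{H}^1(\heis)$. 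This gives the first identity.

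For the norm identity I would expand $\|\L h\|_{\dot{H}^{-1}(\heis)}^2=\|\L h_-\|_{\dot{H}^{-1}(\heis)}^2+2(\L h_-,\L h_+)_{\dot{H}^{-1}(\heis)}+\|\L h_+\|_{\dot{H}^{-1}(\heis)}^2$ and kill the cross term. Translating the $\dot{H}^{-1}(\heis)$ inner product into the duality pairing via $\L h_-=-\hlapl(-2cQ_+)$, one finds $(\L h_-,\L h_+)_{\dot{H}^{-1}(\heis)}=-2c\,(\L h_+,Q_+)_{\dot{H}^{-1}(\heis)\times\dot{H}^1(\heis)}$; by symmetry this equals $-2c\,(\L Q_+,h_+)_{\dot{H}^{-1}(\heis)\times\dot{H}^1(\heis)}=-2c\,(-\hlapl(-2Q_+),h_+)_{\dot{H}^{-1}(\heis)\times\dot{H}^1(\heis)}=4c\,(Q_+,h_+)_{\dot{H}^1(\heis)}=0$, once more by orthogonality of $h_+$ to $Q_+$.

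The only genuinely delicate point is the bookkeeping among the three pairings — the $\dot{H}^1(\heis)$ inner product, the $\dot{H}^{-1}(\heis)$ inner product, and the $\dot{H}^{-1}(\heis)\times\dot{H}^1(\heis)$ duality bracket — together with the fact that $\L$ is only \emph{real}-linear because of the antiholomorphic term $\Pi_0^+(Q_+^2\overline{h})$; one must use the symmetry of the associated real bilinear form rather than $\C$-linear self-adjointness. The structural reason all cross terms collapse is that the single ``bad'' direction $h_-=cQ_+$ is sent by $\L$ into $2ci\,\partial_sQ_+\in\R(-\hlapl Q_+)$, that is, into the $\dot{H}^{-1}(\heis)$-image of the line $\R Q_+$, against which $h_+$ is orthogonal by construction.
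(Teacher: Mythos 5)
Your proof is correct, but it takes a genuinely different route from the paper's. The paper proves the stronger statement that $\L h_+$ is orthogonal, for the duality pairing, to \emph{all four} directions $Q_+,iQ_+,\partial_sQ_+,i\partial_sQ_+$; this requires the holomorphic machinery of Proposition \ref{prop:formula_orthogonal} (the function $F_{Q_+h_+}=F_{Q_+}F_{h_+}$ has a double zero at $i$, whence $\langle \Pi_0^+(|Q_+|^2h_+),Q_+\rangle=2\pi^2F_{Q_+h_+}(i)=0$, and similar computations for the other terms), and then both identities follow at once, the cross term in the $\dot H^{-1}$ norm being $(\L h_+,-\lambda Q_+)_{\dot H^{-1}\times\dot H^1}=0$. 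You instead exploit the symmetry of the real bilinear form $(g,h)\mapsto(\L g,h)_{\dot H^{-1}\times\dot H^1}$ on $\dot H^1(\heis)\cap V_0^+$ (which holds because $-\hlapl$ pairs into the real $\dot H^1$ product, $\Pi_0^+$ is self-adjoint and acts as the identity on $V_0^+$, and $\Re\int|Q_+|^2g\overline h$, $\Re\int Q_+^2\overline g\overline h$ are symmetric in $g,h$), combined with the kernel computations of the preceding proposition ($\L h_0=0$, $\L Q_+=2i\partial_sQ_+$); the only orthogonality you ever invoke is $(Q_+,h_+)_{\dot H^1}=0$. Your argument is more elementary in that it entirely bypasses the residue/holomorphic computations; what it does not give is the structural fact, recorded in the paper's proof, that $\L$ maps $h_+$ into the dual-orthogonal complement of $V$, i.e.\ that $\L$ respects the splitting $V\oplus V^{\perp}$ — a block-diagonality statement that is conceptually aligned with the coercivity analysis that follows. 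One small point to make airtight in your version: the formal moves of $\Pi_0^+$ across the pairing in the symmetry check involve $|Q_+|^2g\in L^{4/3}(\heis)$ rather than $L^2(\heis)$, so they should be justified by the boundedness of $P_0$ on $L^p$, $p>1$ (Theorem \ref{thm:P0_bounded}) and $L^{4/3}$--$L^4$ duality; this is the same justification the paper relies on implicitly.
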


\begin{proof}
We decompose $\L h$ as $\L h=\L h_++\L h_-$.

Let us show that $\L h_+$ is orthogonal to $Q_+$, $iQ_+$, $\partial_s Q_+$ and $i\partial_s Q_+$ for the duality product  $\dot{H}^{-1}(\heis)\times\dot{H}^{1}(\heis)$. Let us treat separately each term of
$$
\L h_+=- \hlapl h_+-2\Pi_0^+(|Q_+|^2h_+)-\Pi_0^+(Q_+^2\overline{h_+}).
$$
By assumption on $h_+$, $-\hlapl h_+=D_sh_+$ and
$$
\langle D_sh_+, Q_+\rangle_{\dot{H}^{-1}(\heis)\times\dot{H}^{1}(\heis)}
	=\langle h_+, Q_+\rangle_{\dot{H}^{1}(\heis)}
	=0
$$
and
$$
\langle D_sh_+, \partial_sQ_+\rangle_{\dot{H}^{-1}(\heis)\times\dot{H}^{1}(\heis)}
	=\langle h_+, \partial_sQ_+\rangle_{\dot{H}^{1}(\heis)}
	=0.
$$
Moreover, using Proposition \ref{prop:formula_orthogonal},
\begin{align*}
\langle \Pi_0^+(|Q_+|^2h_+),Q_+\rangle_{\dot{H}^{-1}(\heis)\times\dot{H}^{1}(\heis)}
	&=\langle Q_+ h_+,Q_+^2\rangle_{L^2(\heis)\times L^2(\heis)}\\
	&=\langle Q_+ h_+,-i\sqrt{2}\partial_s Q_+\rangle_{L^2(\heis)\times L^2(\heis)}\\
	&=2\pi^2F_{Q_+h_+}(i)\\	
	&=0,
\end{align*}
since $F_{Q_+h_+}=F_{Q_+}F_{h_+}$ and $F_{h_+}(i)=0$. In the same way,
\begin{align*}
\langle \Pi_0^+(Q_+^2\overline{h_+}),Q_+\rangle_{\dot{H}^{-1}(\heis)\times\dot{H}^{1}(\heis)}
	&=\langle Q_+^2,Q_+h_+\rangle_{L^2(\heis)\times L^2(\heis)}\\
	&=0.
\end{align*}

Finally,
\begin{align*}
\langle \Pi_0^+(|Q_+|^2h_+),\partial_sQ_+\rangle_{\dot{H}^{-1}(\heis)\times\dot{H}^{1}(\heis)}
	&=\half \langle Q_+ h_+,\partial_s(Q_+^2)\rangle_{L^2(\heis)\times L^2(\heis)}\\
	&=-\half \langle \partial_s(Q_+ h_+),Q_+^2\rangle_{\dot{H}^{-2}(\heis)\times \dot{H}^2(\heis)}\\
	&=-\frac{1}{2}\langle \partial_s(Q_+ h_+),-i\sqrt{2}\partial_s Q_+\rangle_{\dot{H}^{-2}(\heis)\times \dot{H}^2(\heis)}\\
	&=-\pi^2F_{Q_+h_+}'(i)\\	
	&=0,
\end{align*}
and in the same way,
\begin{align*}
\langle \Pi_0^+(Q_+^2\overline{h_+}),\partial_s Q_+\rangle_{\dot{H}^{-1}(\heis)\times\dot{H}^{1}(\heis)}
	&=\langle Q_+^2,\partial_s (Q_+)h_+\rangle_{L^2(\heis)\times L^2(\heis)}\\
	&=2\pi^2\overline{F_{\partial_s(Q_+)h_+}(i)}\\
	&=0.
\end{align*}

Therefore, $\L h_+\in \dot{H}^{-1}(\heis)\cap V_0^+\cap (Q_+,iQ_+,\partial_s Q_+,i\partial_sQ_+)^{\perp, L^2(\heis)}$, where the orthogonal is taken for the duality product $\dot{H}^{-1}(\heis)\times\dot{H}^{1}(\heis)$. In particular,
$$
(\L h_+,h)_{\dot{H}^{-1}(\heis)\times\dot{H}^1(\heis)}
	=(\L h_+,h_+)_{\dot{H}^{-1}(\heis)\times\dot{H}^1(\heis)}.
$$

Now, since $\L h_-$ is in $\Vect_{\R}(i\partial_s Q_+)$, write $\L h_-=\lambda i\partial_s Q_+=\lambda \hlapl Q_+$ for some real number $\lambda$. One has
\begin{align*}
(\L h_-,h)_{\dot{H}^{-1}(\heis)\times\dot{H}^1(\heis)}
	&=-\lambda(Q_+,h)_{\dot{H}^1(\heis)}\\
	&=-\lambda(Q_+,h_-)_{\dot{H}^1(\heis)}\\
	&=(\L h_-,h_-)_{\dot{H}^{-1}(\heis)\times\dot{H}^1(\heis)},
\end{align*}
which gives the first part of the proposition.

Then,
$$
(\L h_+,\L h_-)_{\dot{H}^{-1}(\heis)}
	=(\L h_+,-\lambda Q_+)_{\dot{H}^{-1}(\heis)\times\dot{H}^1(\heis)}
	=0,
$$
so we conclude that
$$
\|\L h\|_{\dot{H}^{-1}(\heis)}^2
	=\|\L h_+\|_{\dot{H}^{-1}(\heis)}^2+\|\L h_-\|_{\dot{H}^{-1}(\heis)}^2.
$$
\end{proof}

We now give a simplified expression of $(\L h,h)_{\dot{H}^{-1}(\heis)\times\dot{H}^{1}(\heis)}$ when $h$ is orthogonal to $Q_+$ and $iQ_+$.

\begin{prop}\label{prop:L_on_S3}
For $h\in\dot{H}^1(\heis)\cap V_0^+ \cap \Vect_{\R}(Q_+,iQ_+)^{\perp,\dot{H}^1(\heis)}$, the following identity is true
$$
(\L h,h)_{\dot{H}^{-1}(\heis)\times\dot{H}^{1}(\heis)}
	=\langle-\hlapl h-2|Q_+|^2h,h\rangle_{\dot{H}^{-1}(\heis)\times\dot{H}^{1}(\heis)}
$$
\end{prop}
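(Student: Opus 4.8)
The plan is to expand the definition $\L h = -\hlapl h - 2\Pi_0^+(|Q_+|^2 h) - \Pi_0^+(Q_+^2\overline{h})$, pair each of the three summands against $h$ in the $\dot{H}^{-1}(\heis)\times\dot{H}^1(\heis)$ duality, and show that the first two summands reproduce the two terms on the right-hand side while the third vanishes identically under the orthogonality hypothesis. Since each complex pairing $\langle\,\cdot\,,h\rangle$ will turn out to be real, the real-part duality $(\cdot,\cdot)$ and the complex bracket $\langle\cdot,\cdot\rangle$ agree throughout, and the stated identity follows by summing.

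First I would record the mapping properties: as $h\in\dot{H}^1(\heis)\cap V_0^+$ one has $-\hlapl h\in V_0^+$, while $|Q_+|^2 h$ and $Q_+^2\overline{h}$ lie in $L^{4/3}(\heis)$ by Hölder (using $Q_+,h\in L^4(\heis)$), so $\Pi_0^+$ acts on them by Theorem \ref{thm:P0_bounded}. The first summand contributes exactly $\langle -\hlapl h,h\rangle$. For the second, the orthogonal projector $\Pi_0^+$ has a Hermitian reproducing kernel, hence is self-adjoint for the $L^{4/3}$--$L^4$ duality, and $\Pi_0^+ h=h$; therefore $\langle \Pi_0^+(|Q_+|^2 h),h\rangle=\langle |Q_+|^2 h,h\rangle=\int_{\heis}|Q_+|^2|h|^2\d x\d y\d s$, which is real and matches the right-hand side. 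The self-adjointness in this pairing I would justify by density of $L^2(\heis)\cap L^{4/3}(\heis)$ together with the $L^p$-boundedness of Theorem \ref{thm:P0_bounded}.

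The crux is the third summand. The same self-adjointness gives $\langle \Pi_0^+(Q_+^2\overline{h}),h\rangle=\int_{\heis}Q_+^2\overline{h^2}\d x\d y\d s$. Now $Q_+^2$ and $h^2$ both belong to $L^2(\heis)\cap V_0^+$, so polarizing \eqref{eq:paley_cste} turns this into $\pi\int_{\C_+}F_{Q_+}^2\,\overline{F_h^2}\,\d\lambda$. With $F_{Q_+}(z)=i\sqrt{2}/(z+i)$, so $F_{Q_+}^2=-2/(z+i)^2$, and the Bergman kernel $K(z,w)=-1/\bigl(\pi(z-\bar w)^2\bigr)$ from Proposition \ref{prop:formula_P0}, I would check the pointwise identity $1/(w+i)^2=-\pi\,\overline{K(i,w)}$, so that, since $F_h^2\in A_1^2$, the reproducing property yields
$$
\int_{\C_+}\frac{\overline{F_h(w)^2}}{(w+i)^2}\,\d\lambda(w)=-\pi\,\overline{\int_{\C_+}F_h(w)^2\,K(i,w)\,\d\lambda(w)}=-\pi\,\overline{F_h(i)^2}.
$$
Because $h$ is orthogonal to $Q_+$ and $iQ_+$, Proposition \ref{prop:formula_orthogonal} forces $F_h(i)=0$, so the third summand is zero.

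The main obstacle is precisely this third-term identity: recognizing $(w+i)^{-2}$ as $-\pi$ times the conjugate of the Bergman kernel evaluated at the base point $z=i$, and then invoking the reproducing property to reduce everything to the single value $F_h(i)$, which the orthogonality assumption annihilates. The only other point requiring care is the extension of the self-adjointness of $\Pi_0^+$ from $L^2(\heis)$ to the $L^{4/3}$--$L^4$ pairing, which is routine given Theorem \ref{thm:P0_bounded}.
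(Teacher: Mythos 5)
Your proof is correct and follows essentially the same route as the paper: both expand $\L$, drop the projector $\Pi_0^+$ by self-adjointness, and reduce the cross term $\langle \Pi_0^+(Q_+^2\overline{h}),h\rangle$ to the point value $F_h(i)^2$, which vanishes by Proposition \ref{prop:formula_orthogonal}. The only cosmetic difference is that you evaluate $\int_{\heis}Q_+^2\overline{h^2}$ via the reproducing property of the Bergman kernel from Proposition \ref{prop:formula_P0}, whereas the paper uses the identity $Q_+^2=-i\sqrt{2}\,\partial_sQ_+$ together with Proposition \ref{prop:formula_orthogonal} --- two packagings of the same evaluation-at-$i$ fact.
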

Note that it is more convenient to switch to a complex scalar product because $ - \hlapl h-2|Q_+|^2h$ is a complex linear operator of the variable $h$.

\begin{proof}
We only have to show that $(\Pi_0^+(Q_+^2\overline{h}),h)_{\dot{H}^{-1}(\heis)\times\dot{H}^{1}(\heis)}
$ is zero. We calculate
\begin{align*}
(\Pi_0^+(Q_+^2\overline{h}),h)_{\dot{H}^{-1}(\heis)\times\dot{H}^{1}(\heis)}
   &= (Q_+^2\overline{h},h)_{\dot{H}^{-1}(\heis)\times\dot{H}^{1}(\heis)}\\
   &= (Q_+^2,h^2)_{\dot{H}^{-1}(\heis)\times\dot{H}^{1}(\heis)}\\
   &=(-i\sqrt{2}\partial_sQ_+,h^2)_{\dot{H}^{-1}(\heis)\times\dot{H}^{1}(\heis)}\\
   &=2\pi^2\Re(F_{h^2}(i)).
\end{align*}
Now, $F_{h^2}=F_h^2$, therefore, $F_{h^2}(i)=0$ as soon as $h\in \dot{H}^1(\heis)\cap V_0^+ \cap \Vect_{\R}(Q_+,iQ_+)^{\perp,\dot{H}^1(\heis)}$.
\end{proof}

%\newpage
%%%%%%%%%%%%%%%%%%%%%%%%%%%%%%%%%%%%%%%%%%%%%%%%%%%%%%%%%%%%%%%%%%%%%%%%%%%%%%%%%%%%%%%%%%%%%%%%%%%%%%%%%%%%%%%%%%%%%%%%%%%%%%%%%%%%%%%%%%%%%%%%%
\subsection{Study of the limiting profile through the Cayley transform}\label{subsection:cayley_transform}
%%%%%%%%%%%%%%%%%%%%%%%%%%%%%%%%%%%%%%%%%%%%%%%%%%%%%%%%%%%%%%%%%%%%%%%%%%%%%%%%%%%%%%%%%%%%%%%%%%%%%%%%%%%%%%%%%%%%%%%%%%%%%%%%%%%%%%%%%%%%%%%%%

We now study the spectrum of $-\hlapl-2|Q_+|^2$, which is now natural since we search for a coercivity estimate on $\L$ and we just proved (Proposition \ref{prop:L_on_S3}) that
$$
(\L h,h)_{\dot{H}^{-1}(\heis)\times\dot{H}^{1}(\heis)}
	=\langle-\hlapl h-2|Q_+|^2h,h\rangle_{\dot{H}^{-1}(\heis)\times\dot{H}^{1}(\heis)}.
$$
This spectrum can be determined via the equivalence between the Heisenberg group $\heis$ and the CR sphere $\S^{3}$ in $\C^2$ called the Cayley transform. We rely on \cite{Branson2013} in order to introduce this equivalence and its spectral consequences. In this part, we will denote by $(w,s)$ the elements of the Heisenberg group, bearing in mind that $w=x+iy$ with the former notations. The Cayley transform writes
$$
\begin{matrix}
\cayley: &\heis & \to & \S^3\setminus (0,-1)\\
~ & (w,s) & \mapsto & (\frac{2w}{1+|w|^2+is},\frac{1-|w|^2-is}{1+|w|^2+is})
\end{matrix}.
$$
The inverse of $\cayley$ is $\cayley^{-1}(\zeta_1,\zeta_2)=(\frac{\zeta_1}{1+\zeta_2},\Im(\frac{1-\zeta_2}{1+\zeta_2}))$.
%The equivalent of the north pole is $N=\cayley(0,0)=(0,1)$.
The Jacobian of the Cayley transform is
$$
|\Jc(w,s)|
	=\frac{8}{\left((1+|w|^2)^2+s^2\right)^2}.
$$
Notice that% in the complex variable $z=s+i|w|^2$,
 $|\Jc|$ is linked to $Q_+$ as follows
$$
|\Jc(x+iy,s)|
	=2|Q_+(x,y,s)|^4.
$$
For any integrable function $F$ on $\S^3$, we have the relation
$$
\int_{\S^3}F\d\zeta=\int_{\heis}(F\circ \cayley)|\Jc| \d \lambda_3(w,s).
$$
Here, $\d\zeta$ denotes the standard Euclidean volume element of $\S^3$. We consider the complex scalar product on $L^2(\S^3)$
$$
\langle F,G\rangle_{L^2(\S^3)}=\int_{\S^3}F\overline{G}\d\zeta, \quad F,G\in L^2(\S^3).
$$
One can notice that
\begin{align*}
\int_{\S^3}|F|^2\d\zeta
	&=\int_{\heis}|F\circ \cayley|^2|\Jc| \d \lambda_3(w,s).
\end{align*}
In particular, $|J_C|=2|Q_+|^4$ is in $L^2(\heis)$, so if a function $F$ is such that $F\circ\cayley$ belongs to $L^4(\heis)$ (for example if $F\circ\cayley\in \dot{H}^1(\heis)$), then $|F\circ \cayley|^2$ belongs to $L^2(\heis)$, and therefore $F$ is in $L^2(\S^3)$.

On the standard sphere $\S^3$, denote
$$
\mathcal{R}=\zeta_1\partial_{\zeta_1}+\zeta_2\partial_{\zeta_2}.
$$
Then the vector fields
$$
T_i=\partial_{\zeta_i}-\overline{\zeta_i}\mathcal{R}, \quad i=1,2,
$$
generate the holomorphic tangent space to $\S^3$.

The conformal sub-Laplacian is defined as
$$
\Dlapl=-\half\sum_{i=1}^2(T_i\overline{T_i}+\overline{T_i}T_i)+\frac{1}{4},
$$
where $\Dlapl-\frac{1}{4}$ is the sub-Laplacian. One can construct the Sobolev space
$$
H^1(\S^3):=\{v\in L^2(\S^3);\; \|v\|_{H^1(\S^3)}:=\|\Dlapl^\half v\|_{L^2(\S^3)}<+\infty\}.
$$

The operator $\Dlapl$ on the sphere has a direct link with the sub-Laplacian on the Heisenberg group via the Cayley transform~: for any radial function $F\circ\cayley$ in $\dot{H}^1(\heis)$,
\begin{equation*}
-\hlapl\left((2|\Jc|)^{\frac{1}{4}}(F\circ \cayley)\right)
	=(2|\Jc|)^{\frac{3}{4}}(\Dlapl F)\circ \cayley.
\end{equation*}

Notice that a function in  $\dot{H}^1(\heis)$ maps to a function in  $H^1(\S^3)$ via the following transformation.
\begin{prop}\label{prop:lapl_on_S3}
Let $h$ be a function on $\heis$, and define a function $v_h$ on $\S^3$ by
\begin{equation}\label{eq:vh}
h(x,y,s)
	=(2|\Jc|)^{\frac{1}{4}}(v_h\circ \cayley)(x+iy,s)
	=\sqrt{2}|Q_+|(v_h\circ \cayley)(x+iy,s).
\end{equation}
Then for radial $h$,
$$
\langle\Dlapl v_h,v_h\rangle_{L^2(\S^3)}
	=\half \langle -\hlapl h,h\rangle_{\dot{H}^{-1}(\heis)\times\dot{H}^1(\heis)}
$$
and
\begin{equation*}
\langle v_h,v_h\rangle_{L^2(\S^3)}
	=\int_{\heis} |h|^2|Q_+|^2\d\lambda_3.
\end{equation*}
Therefore, $v_h$ defines a function in $H^1(\S^3)$ if and only if $h$ is in $\dot{H}^1(\heis)$.
\end{prop}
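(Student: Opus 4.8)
The plan is to obtain both displayed identities by pulling the relevant quadratic forms back to $\heis$ through the Cayley transform, using the three ingredients already recorded just above the statement: the conformal covariance relation $-\hlapl\big((2|\Jc|)^{1/4}(F\circ\cayley)\big)=(2|\Jc|)^{3/4}(\Dlapl F)\circ\cayley$, the change-of-variables formula $\int_{\S^3}G\,\d\zeta=\int_{\heis}(G\circ\cayley)|\Jc|\,\d\lambda_3$, and the pointwise identity $|\Jc|=2|Q_+|^4$. The real content is the bookkeeping of the conformal weights, and the factor $\half$ in the first identity will appear precisely because the two weights $(2|\Jc|)^{3/4}$ and $(2|\Jc|)^{1/4}$ multiply to $2|\Jc|$ while the change of variables only absorbs a single factor $|\Jc|$.

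First I would dispatch the $L^2$ identity, which is purely algebraic. Writing $h=\sqrt2\,|Q_+|\,(v_h\circ\cayley)$ gives $|h|^2|Q_+|^2=2|Q_+|^4|v_h\circ\cayley|^2=|\Jc|\,|v_h\circ\cayley|^2$, so applying the change-of-variables formula with $G=|v_h|^2$ yields $\int_{\heis}|h|^2|Q_+|^2\,\d\lambda_3=\int_{\heis}|v_h\circ\cayley|^2|\Jc|\,\d\lambda_3=\int_{\S^3}|v_h|^2\,\d\zeta=\langle v_h,v_h\rangle_{L^2(\S^3)}$.

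For the $\dot{H}^1$ identity I would set $F=v_h$, so that $h=(2|\Jc|)^{1/4}(F\circ\cayley)$, and apply the covariance relation to get $-\hlapl h=(2|\Jc|)^{3/4}(\Dlapl F)\circ\cayley$. Since $|\Jc|>0$ is real, forming the integrand gives $(-\hlapl h)\overline h=(2|\Jc|)^{3/4+1/4}\big((\Dlapl F)\circ\cayley\big)\overline{(F\circ\cayley)}=2|\Jc|\,\big((\Dlapl F)\overline F\big)\circ\cayley$. Integrating over $\heis$ and using the change of variables with $G=(\Dlapl F)\overline F$ yields $\langle-\hlapl h,h\rangle_{\dot{H}^{-1}(\heis)\times\dot{H}^1(\heis)}=\int_{\heis}(-\hlapl h)\overline h\,\d\lambda_3=2\int_{\S^3}(\Dlapl v_h)\overline{v_h}\,\d\zeta=2\langle\Dlapl v_h,v_h\rangle_{L^2(\S^3)}$, which is the claimed equality after dividing by $2$.

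Finally I would deduce the equivalence from the two identities. The first gives $\|v_h\|_{H^1(\S^3)}^2=\langle\Dlapl v_h,v_h\rangle_{L^2(\S^3)}=\half\|h\|_{\dot{H}^1(\heis)}^2$, so finiteness of the $H^1$-seminorm of $v_h$ is equivalent to $h\in\dot{H}^1(\heis)$; membership in $H^1(\S^3)$ additionally demands $v_h\in L^2(\S^3)$, which the second identity supplies whenever $h\in\dot{H}^1(\heis)$, since the Folland-Stein embedding $\dot{H}^1(\heis)\hookrightarrow L^4(\heis)$ and $Q_+\in L^4(\heis)$ give $\int_{\heis}|h|^2|Q_+|^2\,\d\lambda_3\le\|h\|_{L^4}^2\|Q_+\|_{L^4}^2<\infty$ by Cauchy-Schwarz. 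There is no genuine obstacle here beyond the weight arithmetic on the factors $(2|\Jc|)^{\,\cdot}$; the only point requiring care is justifying these manipulations on the dense class of Schwartz functions before passing to the completion defining $\dot{H}^1(\heis)$.
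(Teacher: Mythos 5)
Your proposal is correct and follows essentially the same route as the paper's proof: both identities are obtained by combining the conformal covariance relation for $\hlapl$ and $\Dlapl$, the change-of-variables formula $\int_{\S^3}G\,\d\zeta=\int_{\heis}(G\circ\cayley)|\Jc|\,\d\lambda_3$, and the identity $|\Jc|=2|Q_+|^4$, with the factor $2$ arising from the weight arithmetic $(2|\Jc|)^{3/4}\cdot(2|\Jc|)^{1/4}=2|\Jc|$. Your closing argument for the $H^1(\S^3)$ equivalence (Cauchy--Schwarz plus the Folland--Stein embedding to get $v_h\in L^2(\S^3)$) is a slightly more explicit justification of a step the paper leaves implicit, but it is the same idea.
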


\begin{proof}
Fix a radial function $h$, and define $v_h$ by \eqref{eq:vh}.
Then
\begin{align*}
(-\hlapl h)\cdot\overline{h}
	&=(2|\Jc|)^{\frac{3}{4}}(\Dlapl v_h)\circ \cayley
	\cdot \overline{(2|\Jc|)^{\frac{1}{4}}(v_h\circ \cayley)}\\
	&=2|\Jc|(\Dlapl v_h)\circ \cayley \cdot \overline{v_h\circ \cayley},
\end{align*}
so
$$
\langle-\hlapl h,h\rangle_{\dot{H}^{-1}(\heis)\times\dot{H}^1(\heis)}=2\langle\Dlapl v_h,v_h\rangle_{L^2(\S^3)}.
$$
Moreover, when $h\in L^4(\heis)$, then $v_h\in L^2(\S^3)$ and
\begin{align*}
\langle v_h,v_h\rangle_{L^2(\S^3)}
	&=\int_{\S^3} |v_h|^2\d\zeta\\
	&=\int_{\heis} |v_h\circ \cayley|^2|\Jc|\d\lambda_3(w,s)\\
%	&=\int_{\heis} (2|\Jc|)^{-\half}|h|^2|\Jc|\d\lambda_3(w,s)\\
	&=\frac{1}{\sqrt{2}}\int_{\heis} |h|^2|\Jc|^\half\d\lambda_3(w,s)\\
	&=\int_{\heis} |h|^2|Q_+|^2\d\lambda_3.
\end{align*}
\end{proof}

Propositions \ref{prop:L_on_S3} and \ref{prop:lapl_on_S3} combined imply the following corollary.

\begin{cor}
Let $h$ in $\dot{H}^1(\heis)\cap V_0^+\cap (Q_+,iQ_+)^{\perp,\dot{H}^1(\heis)}$. Then
$$
(\L h,h)_{\dot{H}^{-1}(\heis)\times\dot{H}^{1}(\heis)}
	=2\langle\Dlapl v_h,v_h\rangle_{L^2(\S^3)}-2\langle v_h,v_h\rangle_{L^2(\S^3)}.
$$
\end{cor}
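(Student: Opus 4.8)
The plan is simply to chain the two preceding propositions, as the corollary is a direct algebraic combination of them. First I would invoke Proposition \ref{prop:L_on_S3}, whose orthogonality hypothesis $h\in\dot{H}^1(\heis)\cap V_0^+\cap\Vect_{\R}(Q_+,iQ_+)^{\perp,\dot{H}^1(\heis)}$ is exactly the one assumed here, to replace the quadratic form of $\L$ by
$$
(\L h,h)_{\dot{H}^{-1}(\heis)\times\dot{H}^{1}(\heis)}
	=\langle-\hlapl h-2|Q_+|^2h,h\rangle_{\dot{H}^{-1}(\heis)\times\dot{H}^{1}(\heis)}.
$$
By linearity of the duality bracket I would then split the right-hand side into the two contributions $\langle-\hlapl h,h\rangle$ and $-2\langle|Q_+|^2h,h\rangle$, and treat each separately using Proposition \ref{prop:lapl_on_S3}.

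For the first contribution, the first identity of Proposition \ref{prop:lapl_on_S3} gives directly
$$
\langle-\hlapl h,h\rangle_{\dot{H}^{-1}(\heis)\times\dot{H}^1(\heis)}=2\langle\Dlapl v_h,v_h\rangle_{L^2(\S^3)},
$$
where $v_h$ is the function on $\S^3$ attached to $h$ through the Cayley transform by \eqref{eq:vh}. The only point needing care is that Proposition \ref{prop:lapl_on_S3} is stated for radial $h$; this is automatic, since membership in $V_0^+$ forces $h$ to be radial (its Fourier transform is a multiple of the radial profile $\widehat{h_0^+}$). For the second contribution, I would observe that because $h\in\dot{H}^1(\heis)\hookrightarrow L^4(\heis)$ while $|Q_+|^2\in L^2(\heis)$, the product $|Q_+|^2h$ lies in $L^{4/3}(\heis)\hookrightarrow\dot{H}^{-1}(\heis)$, so the bracket is a genuine integral and
$$
\langle|Q_+|^2h,h\rangle_{\dot{H}^{-1}(\heis)\times\dot{H}^{1}(\heis)}=\int_{\heis}|Q_+|^2|h|^2\d\lambda_3.
$$
The second identity of Proposition \ref{prop:lapl_on_S3} then identifies this integral with $\langle v_h,v_h\rangle_{L^2(\S^3)}$.

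Assembling the two contributions yields the claimed identity
$$
(\L h,h)_{\dot{H}^{-1}(\heis)\times\dot{H}^{1}(\heis)}
	=2\langle\Dlapl v_h,v_h\rangle_{L^2(\S^3)}-2\langle v_h,v_h\rangle_{L^2(\S^3)}.
$$
There is no real obstacle: the statement is purely a juxtaposition of the two propositions, and the only verifications are the radiality of $h$ (from $V_0^+$) and the integrability ensuring that the bracket with $|Q_+|^2h$ is well defined (from the Folland-Stein embedding). The conceptual work—transferring the self-adjoint operator $-\hlapl-2|Q_+|^2$ on $\dot{H}^1(\heis)$ to the sphere, where the spectrum of $\Dlapl$ is explicit—has already been carried out in establishing Propositions \ref{prop:L_on_S3} and \ref{prop:lapl_on_S3}, and this corollary merely records the combined conclusion in a form ready for the spectral analysis of the coercivity estimate.
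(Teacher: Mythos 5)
Your proof is correct and takes exactly the paper's route: the paper states this corollary as an immediate combination of Propositions \ref{prop:L_on_S3} and \ref{prop:lapl_on_S3}, which is precisely the chaining you carry out. Your additional verifications (radiality of elements of $V_0^+$, and integrability of $|Q_+|^2h$ via the Folland--Stein embedding so that the duality bracket is a genuine integral) are sound details that the paper leaves implicit.
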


The spectrum of the operator $\Dlapl$ on $H^1(\S^3)$ is well known.
Indeed, the space $L^2(\S^3)$ endowed with the inner product $\langle F,G\rangle_{L^2(\S^3)}=\int_{\S^3}F\overline{G}\d\zeta$ admits the orthogonal decomposition
$$
L^2(\S^3)=\bigoplus_{j,k\geq 0}\harm_{j,k},
$$
where $\harm_{j,k}$ is the space of harmonic polynomials on $\C^2$ that are homogeneous of degree $j$ in $\zeta_1,\zeta_2$ and $k$ in $\overline{\zeta_1},\overline{\zeta_2}$, restricted to the sphere $\S^3$. Fix $j,k\geq 0$, then the dimension of $\harm_{j,k}$ is
$$
m_{j,k}:=\dim(\harm_{j,k})=j+k+1.
$$
The spectrum of $\Dlapl$ is as follows \cite{Stanton1989}.

\begin{prop}
Let $\lambda_j=j+\half$. Then for all $Y_{j,k}\in \harm_{j,k}$,
$$
\Dlapl Y_{j,k}=\lambda_j\lambda_k Y_{j,k}.
$$
\end{prop}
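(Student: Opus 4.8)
The plan is to pull the eigenvalue computation back to $\C^2$ and to exploit that $Y_{j,k}$, being a harmonic polynomial, is a convenient ambient extension of its own restriction to $\S^3$. First I would record that the fields $T_i$ and $\overline{T_i}$ are tangent to the sphere: writing $r^2=|\zeta_1|^2+|\zeta_2|^2$ and using $\mathcal{R}(r^2)=r^2$, one gets $T_i(r^2)=\overline{\zeta_i}(1-r^2)$, which vanishes on $\S^3$. Consequently, applying the tangential operator $\sum_i(T_i\overline{T_i}+\overline{T_i}T_i)$ to the restriction $Y_{j,k}|_{\S^3}$ coincides with restricting to $\S^3$ the result of applying the ambient differential operator to the polynomial $Y_{j,k}$ on $\C^2$. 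This turns the whole problem into an algebraic computation with constant-coefficient commutators.

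Next I would introduce the holomorphic and antiholomorphic Euler operators $\mathcal{R}=\zeta_1\partial_{\zeta_1}+\zeta_2\partial_{\zeta_2}$ and $\overline{\mathcal{R}}=\overline{\zeta_1}\partial_{\overline{\zeta_1}}+\overline{\zeta_2}\partial_{\overline{\zeta_2}}$, which act on $\harm_{j,k}$ as the scalars $j$ and $k$ respectively, and note that they commute with each other and with $\Delta_0=\sum_i\partial_{\zeta_i}\partial_{\overline{\zeta_i}}$, whose kernel on polynomials is precisely the harmonic ones. Expanding $\overline{T_i}T_i=(\partial_{\overline{\zeta_i}}-\zeta_i\overline{\mathcal{R}})(\partial_{\zeta_i}-\overline{\zeta_i}\mathcal{R})$ with the canonical relations $[\partial_{\overline{\zeta_i}},\overline{\zeta_i}]=1$ and $[\overline{\mathcal{R}},\overline{\zeta_i}]=\overline{\zeta_i}$, summing over $i$, and using $\sum_i|\zeta_i|^2=r^2$, I expect to obtain, after restricting to $\S^3$ (where $r^2=1$, once all differentiations are performed),
$$
\sum_i \overline{T_i}T_i = \Delta_0 - \mathcal{R}\,\overline{\mathcal{R}} - \mathcal{R}, \qquad \sum_i T_i\overline{T_i} = \Delta_0 - \overline{\mathcal{R}}\,\mathcal{R} - \overline{\mathcal{R}},
$$
the second identity following from the first by conjugate symmetry.

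Finally I would evaluate on $Y_{j,k}$: harmonicity kills $\Delta_0$, while $\mathcal{R}\overline{\mathcal{R}}=jk$, $\mathcal{R}=j$, $\overline{\mathcal{R}}=k$ as scalars, so $\sum_i\overline{T_i}T_i\,Y_{j,k}=-(jk+j)Y_{j,k}$ and $\sum_i T_i\overline{T_i}\,Y_{j,k}=-(jk+k)Y_{j,k}$. Substituting into $\Dlapl=-\frac{1}{2}\sum_i(T_i\overline{T_i}+\overline{T_i}T_i)+\frac{1}{4}$ yields the eigenvalue $jk+\frac{j+k}{2}+\frac{1}{4}=\big(j+\frac{1}{2}\big)\big(k+\frac{1}{2}\big)=\lambda_j\lambda_k$, which is the claim.

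The main obstacle is purely organizational: one must keep the order of the noncommuting factors correct when expanding $\overline{T_i}T_i$, and in particular treat the factor $r^2=\sum_i\zeta_i\overline{\zeta_i}$ that arises from the quadratic term as an operator coefficient, setting it to $1$ only after all differentiations — since it multiplies $(\overline{\mathcal{R}}+1)\mathcal{R}$ — and then check that the surviving quadratic Euler terms combine to leave exactly $-\mathcal{R}\overline{\mathcal{R}}-\mathcal{R}$. Everything else follows from the tangency of the $T_i$ and the spectral data of $\mathcal{R}$ and $\overline{\mathcal{R}}$ on $\harm_{j,k}$.
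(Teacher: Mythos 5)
Your proof is correct. Note, however, that the paper itself does not prove this proposition: it simply quotes the spectrum of $\Dlapl$ from Stanton \cite{Stanton1989} (via the presentation in \cite{Branson2013}), so your argument supplies a self-contained verification of a fact the paper takes from the literature. All three of your steps are sound: the tangency $T_i(r^2)=\overline{\zeta_i}(1-r^2)$ legitimizes computing ambiently with the polynomial $Y_{j,k}$ and restricting at the end, since a field tangent to $\S^3$ applied to a function has restriction to $\S^3$ depending only on the restriction of that function; the commutator expansion is exactly right --- writing $\Delta_0=\sum_i\partial_{\zeta_i}\partial_{\overline{\zeta_i}}$ and $\overline{\mathcal{R}}=\overline{\zeta_1}\partial_{\overline{\zeta_1}}+\overline{\zeta_2}\partial_{\overline{\zeta_2}}$ as you do, one finds ambiently
$$
\sum_{i}\overline{T_i}T_i
	=\Delta_0-(\overline{\mathcal{R}}+2)\mathcal{R}-\mathcal{R}\overline{\mathcal{R}}
	+r^2(\overline{\mathcal{R}}+1)\mathcal{R},
$$
and since $r^2$ enters as a multiplication operator acting last, restricting to $r^2=1$ gives $\Delta_0-\mathcal{R}\overline{\mathcal{R}}-\mathcal{R}$, with the conjugate identity for $\sum_i T_i\overline{T_i}$; and on $\harm_{j,k}$ the spectral data $\Delta_0=0$, $\mathcal{R}=j$, $\overline{\mathcal{R}}=k$ then yield $\Dlapl=jk+\frac{j+k}{2}+\frac{1}{4}=\lambda_j\lambda_k$. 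As a sanity check against the paper's normalization, your identities give $\Dlapl\zeta_1=\frac{3}{4}\zeta_1=\lambda_1\lambda_0\,\zeta_1$, consistent with the paper's statement that the eigenvalue $\lambda_{1,0}-1=-\frac{1}{4}$ of $\Dlapl-\id$ has $\zeta_1$ among its eigenvectors. What the citation buys the paper is brevity and the general context of Stanton's results (arbitrary CR spheres $\S^{2n+1}$ and related operators); what your computation buys is independence from the literature and a transparent explanation of why the bigraded spaces $\harm_{j,k}$, rather than merely the spaces of spherical harmonics of fixed total degree, diagonalize $\Dlapl$.
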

In particular, the smallest eigenvalue of $\Dlapl-\id$ is $\lambda_{0,0}-1=-\frac{3}{4}$, with multiplicity $1$ and eigenvectors the constant functions on $\S^3$. The second one is also negative, equal to $ \lambda_{1,0}-1=\lambda_{0,1}-1=-\frac{1}{4}$, with eigenvectors spanned by $\zeta_1,\zeta_2,\overline{\zeta_1},\overline{\zeta_2}$. The third one is positive, equal to $\lambda_{2,0}-1=\lambda_{0,2}-1=\frac{1}{4}$.

Let us study the radial property on $\S^3$. Let $h\in\dot{H}^1(\heis)$ be a radial function, $v_h$ as in \eqref{eq:vh}
$$
h(x,y,s)
	=(2|\Jc|)^{\frac{1}{4}}(v_h\circ \cayley)(x+iy,s).
$$
Since $h$ and $|\Jc|$ only depend on $|x+iy|$ and $s$, so does $v_h\circ \cayley$, which means that $v_h$ only depends on $|\zeta_1|$, $\zeta_2$ and $\overline{\zeta_2}$. This discards the eigenfunctions $\zeta_1$ and $\overline{\zeta_1}$ in the above orthogonal decomposition of $v_h$.

The last step left is to treat the remaining eigenvectors with negative eigenvalues for the operator $\Dlapl-\id$, in order to find a lower bound in the quadratic form
$$
(\L h,h)_{\dot{H}^{-1}(\heis)\times\dot{H}^{1}(\heis)}
	=2\langle\Dlapl v_h,v_h\rangle_{L^2(\S^3)}
	-2\langle v_h,v_h\rangle_{L^2(\S^3)}
$$
for $h\in \dot{H}^1(\heis)\cap V_0^+\cap\Vect_{\R}(Q_+,iQ_+,\partial_s Q_+,i\partial_sQ_+)^{\perp,\dot{H}^1(\heis)}.$ These eigenvectors are the constant function $e_1=\un$ (with eigenvalue $-\frac{3}{4}$) and the harmonic polynomials $e_2=\overline{\zeta_2}$ and $e_3=\zeta_2$ (with eigenvalue $-\frac{1}{4}$). In order to do so, we reformulate the above spectral study back to the setting of holomorphic functions of the upper complex plane.

For fractional Sobolev embeddings on $\R^n$ and fractional Folland-Stein embeddings on $\mathbb{H}^n$ (\cite{ChenFrankWeth2013} and \cite{LiuZhang2015}), the potential negative eigenvalues are naturally discarded by the orthogonality conditions, since they correspond to the tangent space to the manifold of functions equal, up to translation, dilation and multiplication by a non-zero constant, to the respective optimizers $U$ and $H$~:
$$
\mathcal{M}(\R^n)=\left\{c U\left(\frac{\cdot-x_0}{\varepsilon}\right);\quad c\in\R^*,x_0\in\R^n,\varepsilon>0\right\}
$$
resp.
$$
\mathcal{M}(\heis)=\left\{c H(\delta(u\cdot));\quad c\in\R^*,u\in\mathbb{H}^n,\delta>0\right\}.
$$

%\newpage
%%%%%%%%%%%%%%%%%%%%%%%%%%%%%%%%%%%%%%%%%%%%%%%%%%%%%%%%%%%%%%%%%%%%%%%%%%%%%%%%%%%%%%%%%%%%%%%%%%%%%%%%%%%%%%%%%%%%%%%%%%%%%%%%%%%%%%%%%%%%%%%%%
\subsection{Coercivity of the linearized operator}\label{subsection:coercivity}
%%%%%%%%%%%%%%%%%%%%%%%%%%%%%%%%%%%%%%%%%%%%%%%%%%%%%%%%%%%%%%%%%%%%%%%%%%%%%%%%%%%%%%%%%%%%%%%%%%%%%%%%%%%%%%%%%%%%%%%%%%%%%%%%%%%%%%%%%%%%%%%%%

In this part, we use the spectrum of $\Dlapl$ on the CR sphere in order to get a coercivity estimate on $\L$. The lowest eigenvalues of $\Dlapl-\id$ are, in increasing order,
$$
\lambda_{0,0}-1=-\frac{3}{4}
	< \lambda_{0,1}-1=\lambda_{1,0}-1=-\frac{1}{4}
	< \lambda_{0,2}-1=\lambda_{2,0}-1=\frac{1}{4}.
$$
The negative eigenfunctions are $e_1=\un$ (for $\lambda_{0,0}$), $e_2=\overline{\zeta_2}$ (for $\lambda_{0,1}$) and $e_3=\zeta_2$ (for $\lambda_{1,0}$).

Let $h\in \dot{H}^1(\heis)\cap V_0^+\cap\Vect_{\R}(Q_+,iQ_+,\partial_s Q_+,i\partial_sQ_+)^{\perp,\dot{H}^1(\heis)},$ $v$ as in \eqref{eq:vh}
$$
h(x,y,s)
	=\sqrt{2}|Q_+|(v\circ \cayley)(x+iy,s).
$$
Then decompose $v$ as~:
$$
v
	=v_+
	+\frac{\langle v,e_1\rangle_{L^2(\S^3)}}{\langle e_1,e_1\rangle_{L^2(\S^3)}}e_1
	+\frac{\langle v,e_2\rangle_{L^2(\S^3)}}{\langle e_2,e_2\rangle_{L^2(\S^3)}}e_2
	+\frac{\langle v,e_1\rangle_{L^2(\S^3)}}{\langle e_3,e_3\rangle_{L^2(\S^3)}}e_3,\quad
v_+\in\Vect_{\C}(e_1,e_2,e_3)^{\perp}.
$$
Remark that since $e_1\in \textnormal{Ha}_{0,0}$, $e_2\in \textnormal{Ha}_{0,1}$ and $e_3\in \textnormal{Ha}_{1,0}$, these three vectors are pairwise orthogonal in $L^2(\S^3)$, and they are orthogonal to $\bigoplus_{(j,k)\not\in \{(0,0),(0,1),(1,0)\}}\textnormal{Ha}_{j,k}$. The knowledge of the eigenvalues of $\Dlapl -\id$ enables us to say that
\begin{align*}
\half (\L h,h)_{\dot{H}^{-1}(\heis)\times\dot{H}^1(\heis)}
	&=\langle\Dlapl v,v\rangle_{L^2(\S^3)}-\langle v,v\rangle_{L^2(\S^3)}\\
	&\geq \frac{1}{4}\|v_+\|_{L^2(\S^3)}^2
	-\frac{1}{4}\frac{\left| \langle v,e_1\rangle_{L^2(\S^3)}\right|^2}{\langle e_1,e_1\rangle_{L^2(\S^3)}}
	-\frac{3}{4}\frac{\left| \langle v,e_2\rangle_{L^2(\S^3)}\right|^2}{\langle e_2,e_2\rangle_{L^2(\S^3)}}
	-\frac{3}{4}\frac{\left| \langle v,e_3\rangle_{L^2(\S^3)}\right|^2}{\langle e_3,e_3\rangle_{L^2(\S^3)}}.
\end{align*}
But
$$
\|v\|_{L^2(\S^3)}^2
	=\|v_+\|_{L^2(\S^3)}^2
	+\frac{\left| \langle v,e_1\rangle_{L^2(\S^3)}\right|^2}{\langle e_1,e_1\rangle_{L^2(\S^3)}}
	+\frac{\left| \langle v,e_2\rangle_{L^2(\S^3)}\right|^2}{\langle e_2,e_2\rangle_{L^2(\S^3)}}
	+\frac{\left| \langle v,e_3\rangle_{L^2(\S^3)}\right|^2}{\langle e_3,e_3\rangle_{L^2(\S^3)}},
$$
so
\begin{align*}
\half (\L h,h)_{\dot{H}^{-1}(\heis)\times\dot{H}^1(\heis)}
	&\geq \frac{1}{4}\|v\|_{L^2(\S^3)}^2
	-\frac{1}{2}\frac{\left| \langle v,e_1\rangle_{L^2(\S^3)}\right|^2}{\langle e_1,e_1\rangle_{L^2(\S^3)}}
	-\frac{\left| \langle v,e_2\rangle_{L^2(\S^3)}\right|^2}{\langle e_2,e_2\rangle_{L^2(\S^3)}}
	-\frac{\left| \langle v,e_3\rangle_{L^2(\S^3)}\right|^2}{\langle e_3,e_3\rangle_{L^2(\S^3)}}.
\end{align*}

Let us replace these last terms by their expression on the Heisenberg group. We define
$$
f_j=\sqrt{2}|Q_+|e_j\circ\cayley, \quad j=1,2,3.
$$
From the identity
\begin{align*}
\zeta_2\circ\cayley(w,s)
	=\frac{1-|w|^2-is}{1+|w|^2+is}
%	&=\frac{2}{1+|w|^2+is}-\frac{1+|w|^2+is}{1+|w|^2+is}
	=\sqrt{2}\overline{Q_+}(w,s)-\un,
\end{align*}
we get that
$$
f_1=\sqrt{2}|Q_+|,
$$
$$
f_2=\sqrt{2}|Q_+|(\sqrt{2}Q_+-1),
$$
and
$$
f_3=\sqrt{2}|Q_+|(\sqrt{2}\overline{Q_+}-1).
$$

Thanks to Proposition \ref{prop:lapl_on_S3}, one knows that
$$
\langle v,v\rangle_{L^2(\S^3)}=\langle hQ_+,hQ_+\rangle_{L^2(\heis)},
$$
so
\begin{multline*}
(\L h,h)_{\dot{H}^{-1}(\heis)\times\dot{H}^1(\heis)}
	\geq \frac{1}{2}\|hQ_+\|_{L^2(\heis)}^2
	-\frac{\left| \langle hQ_+,f_3Q_+\rangle_{L^2(\heis)}\right|^2}{\| f_3Q_+\|_{L^2(\heis)}^2}
	-\frac{\left| \langle hQ_+,f_2Q_+\rangle_{L^2(\heis)}\right|^2}{\| f_2Q_+\|_{L^2(\heis)}^2}\\
	-2\frac{\left| \langle hQ_+,f_1Q_+\rangle_{L^2(\heis)}\right|^2}{\| f_1Q_+\|_{L^2(\heis)}^2}.
\end{multline*}
For $h\in\dot{H}^1(\heis) \cap V_0^+ \cap \Vect_{\R}(Q_+,iQ_+,\partial_s Q_+,i\partial_sQ_+)^{\perp,\dot{H}^1(\heis)}$, let us consider the space in which $F_{hQ_+}$ lies.

Since $h\in\dot{H}^1(\heis)$, from the embedding $\dot{H}^1(\heis)\hookrightarrow L^4(\heis)$, one knows that $hQ_+$ is in $L^2(\heis)$ so $F_{hQ_+}$ belongs to $L^2(\C_+)$.

From part \ref{subsection:bergman_spaces}, $h$ being in $\dot{H}^1(\heis)\cap V_0^+$, $F_h$ (defined by $h(x,y,s)=F_h(s+i|x+iy|^2)$ for $(x,y,s)\in\heis$) is a holomorphic function ($F_h$ lies in the Hardy space $\hardy^2(\C_+)$). This implies that the function $F_{hQ_+}=F_hF_{Q_+}$ is holomorphic too~: we have shown that $F_h$ is in the Bergman space $A^2_1=L^2(\C_+)\cap \holo(\C_+)$.

Moreover, the fact that $h$ is orthogonal to $\Vect_{\R}(Q_+,iQ_+,\partial_sQ_+,i\partial_s Q_+)$ in $\dot{H}^1(\heis)$ is equivalent by Proposition \ref{prop:formula_orthogonal} to $F_h(i)=F_h'(i)=0$. But then, $F_{hQ_+}=F_hF_{Q_+}$ has a double zero at~$i$. Proposition \ref{prop:formula_orthogonal} again implies that
$$
\langle hQ_+,\partial_sQ_+\rangle_{L^2(\heis)}=\langle hQ_+,\partial_s^2Q_+\rangle_{L^2(\heis)}=0,
$$
which is equivalent to
$$
\langle F_{hQ_+},F_{Q_+}'\rangle_{L^2(\C_+)}=\langle F_{hQ_+},F_{Q_+}''\rangle_{L^2(\C_+)}=0.
$$

Now, define $W:=A^2_1\cap \Vect_{\C}(F_{Q_+}',F_{Q_+}'')^{\perp,L^2(\C_+)}$ and denote by $P_W$ the orthogonal projection from $L^2(\C_+)$ onto $W$. We have shown that if  $h\in\dot{H}^1(\heis)\cap V_0^+\cap\Vect_{\R}(Q_+,iQ_+,\partial_s Q_+,i\partial_sQ_+)^{\perp,\dot{H}^1(\heis)}$, then $F_{hQ_+}\in W$. In particular, for $u\in L^2(\heis)$,
\begin{align*}
\langle hQ_+,u\rangle_{L^2(\heis)}
	&=\pi\langle F_{hQ_+},F_u\rangle_{L^2(\C_+)}\\
	&=\pi\langle F_{hQ_+},P_W(F_u)\rangle_{L^2(\C_+)}.
\end{align*}

Back to the quadratic form, we deduce that
\begin{multline*}
(\L h,h)_{\dot{H}^{-1}(\heis)\times\dot{H}^1(\heis)}
	\geq
	\pi\Big(
	\frac{1}{2}\|F_{hQ_+}\|_{L^2(\C_+)}^2
	-\frac{\left| \langle F_{hQ_+},P_W(F_{f_3Q_+})\rangle_{L^2(\C_+)}\right|^2}{\| F_{f_3Q_+}\|_{L^2(\C_+)}^2}\\
	-\frac{\left| \langle F_{hQ_+},P_W(F_{f_2Q_+})\rangle_{L^2(\C_+)}\right|^2}{\| F_{f_2Q_+}\|_{L^2(\C_+)}^2}
	-2\frac{\left| \langle F_{hQ_+},P_W(F_{f_1Q_+})\rangle_{L^2(\C_+)}\right|^2}{\| F_{f_1Q_+}\|_{L^2(\C_+)}^2}
	\Big).
\end{multline*}
Let us denote
$$
X_j=\frac{P_W(F_{f_j Q_+})}{\| F_{f_j Q_+}\|_{L^2(\C_+)}}=\frac{P_W(F_{1})}{\| F_{1}\|_{L^2(\C_+)}}, \quad j=1,2,3,
$$
with
$$
F_1(z)=\frac{1}{|z+i|(z+i)},
$$
$$
F_2(z)=\frac{1}{|z+i|(z+i)}\Big(\frac{2i}{z+i}-1\Big),
$$
and
$$
F_3(z)=\frac{1}{|z+i|(z+i)}\Big(\frac{-2i}{\overline{z}-i}-1\Big).
$$

We try to find an upper bound on the quadratic form on $L^2(\C_+)$
$$
q(F):=
	2\left| \langle F,X_1\rangle_{L^2(\C_+)}\right|^2
	+\left| \langle F,X_2\rangle_{L^2(\C_+)}\right|^2
	+\left| \langle F,X_3\rangle_{L^2(\C_+)}\right|^2,
	\quad F\in L^2(\C_+).
$$
In particular, we want to show that this upper bound is strictly less than $\half$.

Let us first write explicitly the orthogonal projector $P_W$ from  $L^2(\C_+)$ onto the subspace $W=A^2_1\cap \Vect_{\C}(F_{Q_+}',F_{Q_+}'')^{\perp,L^2(\C_+)}$
We start by finding an orthogonal basis of $\Vect_{\C}(F_{Q_+}',F_{Q_+}'')$ for the scalar product on $L^2(\C_+)$. We know by Proposition \ref{prop:formula_orthogonal} that
$$
\langle u,\partial_sQ_+\rangle_{L^2(\heis)}
	=-i\sqrt{2}\pi^2F_u(i ), \quad u\in L^2(\heis),
$$
so
$$
\langle F,F_{Q_+}'\rangle_{L^2(\C_+)}
	=-i\sqrt{2}\pi F(i ), \quad F\in L^2(\C_+).
$$
Recall that
$$
F_{Q}(z)=\frac{i\sqrt{2}}{z+i},
\quad
F_{Q}'(z)=\frac{-i\sqrt{2}}{(z+i)^2},
\quad
F_{Q}''(z)=\frac{2i\sqrt{2}}{(z+i)^3},
\quad
\text{and}
\quad
F_{Q_+}'''(z)=-\frac{6i\sqrt{2}}{(z+i)^4},
$$
so
$$
F_{Q_+}(i)=\frac{1}{\sqrt{2}},
\quad
F_{Q_+}'(i)=\frac{i}{2\sqrt{2}},
\quad
F_{Q_+}''(i)=-\frac{1}{2\sqrt{2}}
\quad
\text{and}
\quad
F_{Q_+}'''(i)=\frac{3i}{4\sqrt{2}}.
$$
Therefore,
\begin{align*}
\langle F_{Q_+}'',F_{Q_+}'\rangle_{L^2(\C_+)}
%	&=-i\sqrt{2}\pi F_{\partial_s^2Q_+}(i)\\
	=-i\sqrt{2}\pi F_{Q_+}''(i)
	=i\frac{\pi}{2}.
\end{align*}
In the same way,
\begin{align*}
\langle F_{Q_+}',F_{Q_+}'\rangle_{L^2(\C_+)}
	=-i\sqrt{2}\pi F_{Q_+}'(i)
%	=-i\sqrt{2}\pi \frac{-i\sqrt{2}}{-4}
	=\frac{\pi}{2},
\end{align*}
so
$
\tilde{F}
	:=F_{Q_+}'-\frac{\langle F_{Q_+}', F_{Q_+}'\rangle_{L^2(\C_+)}
}{\langle F_{Q_+}'',F_{Q_+}'\rangle_{L^2(\C_+)}
}F_{Q_+}''
	=F_{Q_+}'+iF_{Q_+}''
$
is orthogonal to $F_{Q_+}'$~:
$$
\langle \tilde{F},F_{Q_+}'\rangle_{L^2(\C_+)}
	=0.
$$
Moreover,
\begin{align*}
\langle \tilde{F},\tilde{F}\rangle_{L^2(\C_+)}
	&=\langle \tilde{F},F_{Q_+}'\rangle_{L^2(\C_+)}
+\langle \tilde{F},iF_{Q_+}''\rangle_{L^2(\C_+)}\\
	&=0+\langle i\tilde{F}',F_{Q_+}'\rangle_{L^2(\C_+)}\\
	&=\sqrt{2}\pi\tilde{F}'(i).
\end{align*}
Since
$
\tilde{F}'(i)
	=F_{Q_+}''(i)+iF_{Q_+}'''(i)
%	=-\frac{1}{2\sqrt{2}}-i\frac{3i}{4\sqrt{2}}
	=\frac{1}{4\sqrt{2}},
$
 $\tilde{F}$ is of norm
$$
\langle \tilde{F},\tilde{F}\rangle_{L^2(\C_+)}
%	=\sqrt{2}\pi\frac{1}{4\sqrt{2}}
	=\frac{\pi}{4}.
$$

The orthogonal projection on $\Vect_{\C}(F_{Q_+}',F_{Q_+}'')^{\perp,L^2(\C_+)}$ in $L^2(\C_+)$ then writes
$$
F\in L^2(\C_+)
	\mapsto F
	-\frac{2}{\pi}\langle F,F_{Q_+}'\rangle_{L^2(\C_+)}F_{Q_+}'
	-\frac{4}{\pi}\langle F,F_{Q_+}'+iF_{Q_+}''\rangle_{L^2(\C_+)}(F_{Q_+}'+iF_{Q_+}'').
$$
Besides, from Proposition \ref{prop:formula_P0}, we know that the orthogonal projection $P_0$ from $\in L^2(\C_+)$ onto $A^2_1$ is given by
\begin{equation*}
P_0F(s+it)
	=-\frac{1}{\pi}\int_{\C_+}\frac{1}{(s-u+it+iv)^2}F(u+iv)\d u\d v,\quad F\in L^2(\C_+).
\end{equation*}
Therefore, the orthogonal projection $P_W$ on the space $W=A^2_1\cap \Vect_{\C}(F_{Q_+}',F_{Q_+}'')^{\perp,L^2(\C_+)}$ writes, for $F \in L^2(\C_+)$,
\begin{multline*}
P_WF(s+it)
	=-\frac{1}{\pi}\int_{\C_+}\frac{1}{(s-u+it+iv)^2}F(u+iv)\d u\d v
	-\frac{2}{\pi}\langle F,F_{Q_+}'\rangle_{L^2(\C_+)}F_{Q_+}'(s+it)\\
	-\frac{4}{\pi}\langle F,F_{Q_+}'+iF_{Q_+}''\rangle_{L^2(\C_+)}(F_{Q_+}'+iF_{Q_+}'')(s+it).
\end{multline*}

We use the following estimates of $\langle \pi P_0F_j,F_j\rangle_{L^2(\C_+)}$, $j=1,2,3$.

\begin{lem}\label{lem:calculation_proj}
Set $\varepsilon=10^{-10}$, then
$$
|\langle \pi P_0F_1,F_1\rangle_{L^2(\C_+)}-2|\leq \varepsilon,
$$
$$
|\langle \pi P_0F_2,F_2\rangle_{L^2(\C_+)}-\frac{10}{9}|\leq \varepsilon
$$
and
$$
|\langle \pi P_0F_3,F_3\rangle_{L^2(\C_+)}-0.1303955989|\leq \varepsilon.
$$
\end{lem}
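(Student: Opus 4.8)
The plan is to reduce each quantity $\langle\pi P_0 F_j,F_j\rangle_{L^2(\C_+)}$ to a one-dimensional integral of modified Bessel functions and then evaluate it. First, since $P_0$ is the orthogonal projection of $L^2(\C_+)$ onto $A^2_1$ (Proposition~\ref{prop:formula_P0}) and each $F_j$ lies in $L^2(\C_+)$ (no singularity in $\C_+$, decay $\gdO(|z|^{-2})$ at infinity), one has $\langle\pi P_0 F_j,F_j\rangle_{L^2(\C_+)}=\pi\|P_0 F_j\|_{L^2(\C_+)}^2$, so it suffices to compute the norm of the holomorphic projection. I would compute it in the frequency picture. Writing $z=s+it$ and $\tilde G(\xi,t)=\int_\R\e^{-is\xi}G(s+it)\d s$ for the partial Fourier transform in $s$, a residue computation shows that the horizontal Fourier transform of the Bergman kernel $-1/(\pi(z-\bar w)^2)$ is $-2\xi\,\e^{-(t+v)\xi}\un_{\xi>0}$. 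Hence $P_0 G$ is holomorphic with symbol $\phi(\xi)=\frac{\xi}{\pi}\int_0^\infty\tilde G(\xi,v)\e^{-v\xi}\d v$, and by Plancherel together with the Paley--Wiener identity \eqref{eq:paley2} one obtains the engine of the computation,
\[
\pi\|P_0 G\|_{L^2(\C_+)}^2=\int_0^\infty\xi\,\Big|\int_0^\infty\tilde G(\xi,v)\e^{-v\xi}\d v\Big|^2\d\xi .
\]

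Next I would feed the $F_j$ into this formula. Each $F_j$ is a finite combination of factors $(z+i)^{-a}(\bar z-i)^{-b}$ with $a,b\in\frac12\N$; using $(z+i)^{-a}=\frac{(-i)^a}{\Gamma(a)}\int_0^\infty\sigma^{a-1}\e^{i(z+i)\sigma}\d\sigma$ and the conjugate representation for $(\bar z-i)^{-b}$, such a factor becomes a double exponential integral separating a holomorphic phase $\e^{iz\sigma}$ from an antiholomorphic phase $\e^{-i\bar z\tau}$. The $s$-integral then produces $\delta(\xi-\sigma+\tau)$ and the $v$-integral produces $1/(\sigma+\tau+\xi)$; after the delta collapses $\sigma=\xi+\tau$, one is left with a single $\tau$-integral equal to a modified Bessel function of the second kind. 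Concretely one finds, with $\Phi_j(\xi):=\int_0^\infty\tilde F_j(\xi,v)\e^{-v\xi}\d v$, that $\Phi_1(\xi)=-2i\,K_0(\xi)$, $\Phi_2(\xi)=2i\big[(1-\tfrac{2\xi}{3})K_0(\xi)-\tfrac{2\xi}{3}K_1(\xi)\big]$ and $\Phi_3(\xi)=2i\big[(1+2\xi)K_0(\xi)-2\xi K_1(\xi)\big]$, the only analytic inputs being $\int_0^\infty(\tau(\xi+\tau))^{-1/2}\e^{-2\tau}\d\tau=\e^{\xi}K_0(\xi)$ and its $\xi$-derivative.

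It then remains to evaluate $\int_0^\infty\xi|\Phi_j(\xi)|^2\d\xi$. Expanding the squares produces the Bessel moments $\int_0^\infty\xi^{m}K_\mu K_\nu\d\xi$, all computable from the classical formula
\[
\int_0^\infty x^{s-1}K_\mu(x) K_\nu(x)\d x=\frac{2^{s-3}}{\Gamma(s)}\,\Gamma\Big(\tfrac{s+\mu+\nu}2\Big)\Gamma\Big(\tfrac{s+\mu-\nu}2\Big)\Gamma\Big(\tfrac{s-\mu+\nu}2\Big)\Gamma\Big(\tfrac{s-\mu-\nu}2\Big),
\]
for instance $\int_0^\infty\xi K_0^2=\tfrac12$, $\int_0^\infty\xi^2 K_0 K_1=\tfrac12$, $\int_0^\infty\xi^3 K_1^2=\tfrac23$. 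Carrying out the bookkeeping gives \emph{exactly} $\langle\pi P_0 F_1,F_1\rangle=2$, $\langle\pi P_0 F_2,F_2\rangle=\tfrac{10}{9}$, and $\langle\pi P_0 F_3,F_3\rangle=10-\pi^2=0.1303955989\ldots$: the transcendental $\pi^2$ contributions cancel for $F_1$ and $F_2$ and survive only in $F_3$. The statement with tolerance $\varepsilon=10^{-10}$ follows a fortiori, whether one recognizes these closed forms or merely evaluates the final one-dimensional integrals numerically.

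I expect the main obstacle to be the reduction itself rather than the final evaluation. One must handle the branch choices in the constants $(-i)^a i^b$, justify the distributional use of the $\delta$-function (which I would do by performing the $v$-integration first and appealing to the $L^2$ theory, the individual monomials being genuinely in $L^2(\C_+)$ since $-i\notin\C_+$), and check the convergence of each Bessel moment — note in particular that the borderline integral $\int_0^\infty\xi K_1^2\d\xi$, which diverges, never appears, precisely because the holomorphic projection forces enough decay. Once the explicit $\Phi_j$ are in hand the evaluation is a routine application of the moment formula.
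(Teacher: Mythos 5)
Your proposal is correct, and it takes a genuinely different route from the paper's. The paper stays in the holomorphic picture: after the changes of variables $u=(v+1)\sh(y)$ and $x=\e^{y}$, it applies residue calculus twice to produce closed formulas for $P_0F_1$, $P_0(F_1+F_2)$ and $P_0(F_1+F_3)$ on $\C_+$ (involving $\log_0(z\pm\sqrt{z^2+1})$), and only then evaluates the three inner products \emph{numerically} to the tolerance $\varepsilon$. You instead never compute $P_0F_j$ as a function: you use $\langle P_0F,F\rangle_{L^2(\C_+)}=\|P_0F\|_{L^2(\C_+)}^2$, diagonalize the Bergman projector by the partial Fourier transform in $s$, and reduce each quantity to a one-dimensional Bessel integral evaluated \emph{exactly} by the Mellin moment formula. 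I checked your reduction: the engine identity $\pi\|P_0G\|_{L^2(\C_+)}^2=\int_0^{\infty}\xi\,\big|\int_0^{\infty}\tilde G(\xi,v)\e^{-v\xi}\d v\big|^2\d\xi$ is correct, and so are $\Phi_1=-2iK_0$, $\Phi_2=2i\big[(1-\tfrac{2\xi}{3})K_0-\tfrac{2\xi}{3}K_1\big]$, $\Phi_3=2i\big[(1+2\xi)K_0-2\xi K_1\big]$; besides $\int_0^{\infty}(\tau(\xi+\tau))^{-1/2}\e^{-2\tau}\d\tau=\e^{\xi}K_0(\xi)$ one needs its companions $\int_0^{\infty}\tau^{-1/2}(\xi+\tau)^{1/2}\e^{-2\tau}\d\tau=\tfrac{\xi}{2}\e^{\xi}(K_0+K_1)$ and $\int_0^{\infty}\tau^{1/2}(\xi+\tau)^{-1/2}\e^{-2\tau}\d\tau=\tfrac{\xi}{2}\e^{\xi}(K_1-K_0)$, all of which come from the same substitution $t=1+2\tau/\xi$ in $K_0(\xi)=\int_1^{\infty}(t^2-1)^{-1/2}\e^{-\xi t}\d t$ and $K_1=-K_0'$. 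The moments then give exactly $\langle\pi P_0F_1,F_1\rangle_{L^2(\C_+)}=2$, $\langle\pi P_0F_2,F_2\rangle_{L^2(\C_+)}=\tfrac{10}{9}$ and $\langle\pi P_0F_3,F_3\rangle_{L^2(\C_+)}=10-\pi^2=0.130395598910\dots$, which is consistent with the lemma's tolerance. Your approach buys something real: it eliminates the numerical integration entirely, identifies the paper's unexplained decimal as $10-\pi^2$, and consequently turns the constant $C$ of part \ref{subsection:coercivity} (hence the coercivity constant $\delta$) into an explicit expression in $\pi$ rather than a numerically certified quantity; what the paper's route buys in exchange is the explicit form of the projected functions themselves, which your method bypasses.

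Two small repairs for a final write-up. First, with your stated convention $\tilde G(\xi,t)=\int_{\R}\e^{-is\xi}G(s+it)\d s$, the fibered kernel of $P_0$ is $+2\xi\e^{-(t+v)\xi}\un_{\xi>0}$, not $-2\xi\e^{-(t+v)\xi}\un_{\xi>0}$, and the corresponding density is $2\xi\int_0^{\infty}\tilde G(\xi,v)\e^{-v\xi}\d v$ rather than $\tfrac{\xi}{\pi}\int_0^{\infty}\tilde G(\xi,v)\e^{-v\xi}\d v$; neither slip affects your displayed engine identity, which is the one actually used. Second, the distributional $\delta$-function step can be avoided altogether by computing $\tilde F_j(\xi,t)$ directly as classical one-dimensional Fourier transforms — for instance $\tilde F_1(\xi,t)=-2i\xi\big[K_0((t+1)\xi)+K_1((t+1)\xi)\big]$ for $\xi>0$ — after which the $v$-integration and Fubini are elementary and give the same $\Phi_j$.
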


The proof of this lemma is rather technical and postponed to Appendix \ref{appendix:calculation_proj}. It involves simplifying the integrals defining $P_0F_j$, $j=1,2,3$ : we determine explicitly the holomorphic function which coincides with $P_0F_j$ on $\C_+$ thanks to a massive use of the residue formula. This part is necessary in order to compute numerically $\langle P_0F_j,F_j\rangle_{L^2(\C_+)}$. Without this preliminary work, there is a four-dimensional numerical integration to perform and the error estimate is big with a naive approach.

Moreover, a direct calculation gives
$$
\langle F_1,F_1\rangle_{L^2(\C_+)}=\frac{\pi}{4}, \quad
\langle F_2,F_2\rangle_{L^2(\C_+)}=\langle F_3,F_3\rangle_{L^2(\C_+)}=\frac{\pi}{8},
$$
$$
\langle F_1,F_{Q_+}'\rangle_{L^2(\C_+)}=-\frac{2 \sqrt{2}}{3}, \quad
\langle F_2,F_{Q_+}'\rangle_{L^2(\C_+)}=-\frac{2 \sqrt{2}}{9}, \quad
\langle F_3,F_{Q_+}'\rangle_{L^2(\C_+)}=\frac{2 \sqrt{2}}{15},
$$
and
$$
\langle F_1,\tilde{F}\rangle_{L^2(\C_+)}=-\frac{2 \sqrt{2}}{15},\quad
\langle F_2,\tilde{F}\rangle_{L^2(\C_+)}=\frac{14 \sqrt{2}}{45},\quad
\langle F_3,\tilde{F}\rangle_{L^2(\C_+)}=\frac{2 \sqrt{2}}{35}.
$$
We deduce that
\begin{align*}
\left|2\frac{\langle P_WF_1,F_1\rangle_{L^2(\C_+)}}{\langle F_1,F_1\rangle_{L^2(\C_+)}}
	+\frac{\langle P_WF_2,F_2\rangle_{L^2(\C_+)}}{\langle F_2,F_2\rangle_{L^2(\C_+)}}
	+\frac{\langle P_WF_3,F_3\rangle_{L^2(\C_+)}}{\langle F_3,F_3\rangle_{L^2(\C_+)}}
	-0.2046049976\right|
%	&\leq (2\times 4+8+8)\varepsilon\\
	&\leq 24\varepsilon.
\end{align*}

This enables us to get a sufficiently precise estimate for the quadratic form. Indeed, we want to show that the norm of the following quadratic form is smaller that $\half$
$$
q(F)=
	2\left| \langle F,X_1\rangle_{L^2(\C_+)}\right|^2
	+\left| \langle F,X_2\rangle_{L^2(\C_+)}\right|^2
	+\left| \langle F,X_3\rangle_{L^2(\C_+)}\right|^2,
	\quad F\in L^2(\C_+).
$$
Applying Cauchy-Schwarz's inequality, for $F\in W$,
\begin{align*}
q(F)
	&=2\left|\frac{ \langle F,F_1\rangle_{L^2(\C_+)}}{\|F_1\|_{L^2(\C_+)}}\right|^2
	+\left|\frac{ \langle F,F_2\rangle_{L^2(\C_+)}}{\|F_2\|_{L^2(\C_+)}}\right|^2
	+\left|\frac{ \langle F,F_3\rangle_{L^2(\C_+)}}{\|F_3\|_{L^2(\C_+)}}\right|^2\\
	&\leq \|F\|_{L^2(\C_+)}^2
	\left(2\frac{\langle P_WF_1,F_1\rangle_{L^2(\C_+)}}{\|F_1\|_{L^2(\C_+)}^2}
	+\frac{\langle P_WF_2,F_2\rangle_{L^2(\C_+)}}{\|F_2\|_{L^2(\C_+)}^2}
	+\frac{\langle P_WF_3,F_3\rangle_{L^2(\C_+)}}{\|F_3\|_{L^2(\C_+)}^2}\right).
\end{align*}
But we just estimated
$$
C:=2\frac{\langle P_WF_1,F_1\rangle_{L^2(\C_+)}}{\|F_1\|_{L^2(\C_+)}^2}
	+\frac{\langle P_WF_2,F_2\rangle_{L^2(\C_+)}}{\|F_2\|_{L^2(\C_+)}^2}
	+\frac{\langle P_WF_3,F_3\rangle_{L^2(\C_+)}}{\|F_3\|_{L^2(\C_+)}^2}
$$
as
$$
C\approx 0.2046049976<\half.
$$
Going back to $h$ in $\dot{H}^1(\heis)\cap V_0^+\cap\Vect_{\R}(Q_+,iQ_+,\partial_s Q_+,i\partial_sQ_+)^{\perp,\dot{H}^1(\heis)},$
\begin{align*}
(\L h,h)_{\dot{H}^{-1}(\heis)\times\dot{H}^1(\heis)}
	&\geq \pi\left(\frac{1}{2}\|F_{hQ_+}\|_{L^2(\C_+)}^2-q(F_{hQ_+})\right)\\
	&\geq \pi\left(\frac{1}{2}\|F_{hQ_+}\|_{L^2(\C_+)}^2-C\|F_{hQ_+}\|_{L^2(\C_+)}^2\right)\\
	&=\frac{1-2C}{2}\|hQ_+\|_{L^2(\heis)}^2\\
	&=\frac{1-2C}{2}\|v_h\|_{L^2(\S^3)}^2.
\end{align*}
But
$$
\half(\L h,h)_{\dot{H}^{-1}(\heis)\times\dot{H}^1(\heis)}
	=\langle\Dlapl v_h,v_h\rangle_{L^2(\S^3)}-\langle v_h,v_h\rangle_{L^2(\S^3)}
$$
so
$$
\langle\Dlapl v_h,v_h\rangle_{L^2(\S^3)}
	\geq (1+\frac{1-2C}{4})\langle v_h,v_h\rangle_{L^2(\S^3)}
$$
and
$$
\langle\Dlapl v_h,v_h\rangle_{L^2(\S^3)}-\langle v_h,v_h\rangle_{L^2(\S^3)}
	\geq (1-\frac{1}{1+(1-2C)/4})\langle\Dlapl v_h,v_h\rangle_{L^2(\S^3)}.
$$
Set $\delta =2(1-\frac{1}{1+(1-2C)/4})$. Since $ \langle\Dlapl v_h,v_h\rangle_{L^2(\S^3)}=\|h\|_{\dot{H}^1(\heis)}^2$, the following theorem holds.

\begin{thm}\label{thm:L_coercive}
The linearized operator $\L$ around $Q_+$
$$
\L h=-\hlapl h-2\Pi_0^+(|Q_+|^2h)-\Pi_0^+(Q_+^2\overline{h})
$$
is coercive outside the finite-dimensional subspace spanned by $Q_+$, $iQ_+$, $\partial_s Q_+$ and $i\partial_sQ_+$~: there exists $\delta>0$ such that for all $h$ in $\dot{H}^1(\heis)\cap V_0^+\cap (Q_+,iQ_+,\partial_s Q_+,i\partial_sQ_+)^{\perp,\dot{H}^1(\heis)},$ then
$$
(\L h,h)_{\dot{H}^{-1}(\heis)\times\dot{H}^1(\heis)}
	\geq \delta \|h\|_{\dot{H}^1(\heis)}^2.
$$
\end{thm}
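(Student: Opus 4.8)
The plan is to transport the whole estimate to the CR sphere $\S^3$, where $\Dlapl$ has an explicit discrete spectrum, and then to quantify how much the four orthogonality conditions of the statement suppress the finitely many negative eigenmodes. First I would use the hypothesis that $h$ is orthogonal to $Q_+$ and $iQ_+$ to invoke Proposition \ref{prop:L_on_S3}, replacing $\L$ by the simpler quadratic form $\langle -\hlapl h - 2|Q_+|^2 h, h\rangle_{\dot{H}^{-1}(\heis)\times\dot{H}^1(\heis)}$; the point is that the antiholomorphic term $\Pi_0^+(Q_+^2\overline h)$ drops out because $F_{h^2}(i) = F_h(i)^2 = 0$. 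Setting $v_h$ as in \eqref{eq:vh} and applying Proposition \ref{prop:lapl_on_S3}, this form becomes $2\big(\langle \Dlapl v_h, v_h\rangle_{L^2(\S^3)} - \langle v_h, v_h\rangle_{L^2(\S^3)}\big)$, so the entire question reduces to a lower bound for $\Dlapl - \id$ acting on the radial function $v_h$.

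Next I would exploit the decomposition $L^2(\S^3) = \bigoplus_{j,k}\harm_{j,k}$ with $\Dlapl Y_{j,k} = \lambda_j\lambda_k Y_{j,k}$. Radiality of $h$ forces $v_h$ to depend only on $|\zeta_1|$, $\zeta_2$, $\overline{\zeta_2}$, which discards $\zeta_1$ and $\overline{\zeta_1}$ and leaves exactly three eigendirections of $\Dlapl - \id$ with negative eigenvalue: the constant $e_1 = \un$ and the two first-order harmonics $e_2 = \overline{\zeta_2}$, $e_3 = \zeta_2$. Writing $v_h = v_+ + \sum_{j=1}^3 c_j e_j$ with $v_+$ orthogonal to $e_1, e_2, e_3$, the spectral gap $\lambda_{0,2} - 1 = \tfrac14$ controls the positive part, and after re-expressing $\|v_+\|^2$ through $\|v_h\|^2$ I am left with a bound of the shape $\tfrac14\|v_h\|_{L^2(\S^3)}^2$ minus a finite quadratic form in the three projections $\langle v_h, e_j\rangle_{L^2(\S^3)}$ weighted by the negative eigenvalues of $\Dlapl - \id$.

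The decisive step, and the one I expect to be the main obstacle, is showing that these negative contributions cannot consume the whole gap. I would translate everything back to the Bergman picture through $F_{hQ_+} = F_h F_{Q_+} \in A_1^2$: by Proposition \ref{prop:formula_orthogonal} the orthogonality conditions read $F_h(i) = F_h'(i) = 0$, so $F_{hQ_+}$ carries a double zero at $i$ and hence lies in $W = A_1^2 \cap \Vect_{\C}(F_{Q_+}', F_{Q_+}'')^{\perp, L^2(\C_+)}$. Each projection $\langle v_h, e_j\rangle$ then equals, up to a constant, $\langle F_{hQ_+}, P_W F_{f_j Q_+}\rangle_{L^2(\C_+)}$, and Cauchy-Schwarz bounds the total negative part by $C\,\|F_{hQ_+}\|_{L^2(\C_+)}^2$ with $C = 2\langle P_W F_1, F_1\rangle/\|F_1\|^2 + \langle P_W F_2, F_2\rangle/\|F_2\|^2 + \langle P_W F_3, F_3\rangle/\|F_3\|^2$. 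The crux is the strict inequality $C < \tfrac12$; securing it forces one to compute the Bergman projections $P_0 F_j$ explicitly, via the reproducing kernel of Proposition \ref{prop:formula_P0} together with repeated residue calculus (this is exactly the content of Lemma \ref{lem:calculation_proj}), so that an ill-conditioned four-dimensional numerical integration is replaced by a rigorously controlled one-dimensional computation; combined with the elementary scalar products $\langle F_j, F_{Q_+}'\rangle$ and $\langle F_j, \tilde F\rangle$ this yields $C \approx 0.205 < \tfrac12$. Once this is in hand, the bound $(\L h, h)_{\dot{H}^{-1}(\heis)\times\dot{H}^1(\heis)} \geq \tfrac{1-2C}{2}\|v_h\|_{L^2(\S^3)}^2$ combines with the identity of Proposition \ref{prop:lapl_on_S3} relating $\langle \Dlapl v_h, v_h\rangle_{L^2(\S^3)}$ to $\|h\|_{\dot{H}^1(\heis)}^2$, producing coercivity with $\delta = 2\big(1 - \tfrac{1}{1 + (1-2C)/4}\big)$.
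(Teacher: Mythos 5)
Your proposal follows exactly the paper's own proof: the reduction via Propositions \ref{prop:L_on_S3} and \ref{prop:lapl_on_S3} to the form $2(\langle \Dlapl v_h,v_h\rangle_{L^2(\S^3)}-\langle v_h,v_h\rangle_{L^2(\S^3)})$, the spectral analysis on the CR sphere with the three radial negative modes $\un,\zeta_2,\overline{\zeta_2}$, the passage to the Bergman space $W$ via the double zero of $F_{hQ_+}$ at $i$, the Cauchy--Schwarz bound by the constant $C\approx 0.205<\half$ resting on Lemma \ref{lem:calculation_proj}, and the same final $\delta=2\bigl(1-\tfrac{1}{1+(1-2C)/4}\bigr)$. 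The argument is correct and coincides with the paper's proof in both structure and detail.
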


For the Szeg\H{o} equation, Pocovnicu proved in \cite{Pocovnicu2012} that the linearized operator  is coercive in directions which are symplectically orthogonal to the manifold of solitons
$$
\left\{\frac{\alpha \mu \e^{i\theta}}{\mu(x-a)+i};\; \mu\in\R_+^*, \alpha\in\R_+^*,\theta\in\T,a\in\R\right\}.
$$

The non degeneracy follows from this theorem and the study of $\L$ on the finite-dimensional subspace $V=\dot{H}^1(\heis)\cap V_0^+\cap\Vect_{\R}(Q_+,iQ_+,\partial_sQ_+,i\partial_sQ_+)$ (part \ref{subsection:othogonality}).
\begin{cor}
The linearized operator $\L$ is non degenerate~:
$$
\textnormal{Ker}(\L)=\Vect_{\R}(\partial_s Q_+, i Q_+, Q_++2i\partial_sQ_+).
$$
\end{cor}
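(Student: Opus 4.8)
The plan is to read off the kernel by combining the explicit matrix of $\L_{|V}$ computed in the Proposition above on the four-dimensional space $V=\dot{H}^1(\heis)\cap V_0^+\cap\Vect_{\R}(Q_+,iQ_+,\partial_sQ_+,i\partial_sQ_+)$ with the coercivity estimate of Theorem \ref{thm:L_coercive} valid transversally to $V$.

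First I would establish the inclusion $\Vect_{\R}(\partial_sQ_+,iQ_+,Q_++2i\partial_sQ_+)\subseteq\textnormal{Ker}(\L)$. In the orthogonal basis $(\partial_sQ_+,iQ_+-\partial_sQ_+,Q_++2i\partial_sQ_+,Q_+)$ the first three columns of $\L_{|V}$ vanish, so $\L$ annihilates $\Vect_{\R}(\partial_sQ_+,iQ_+-\partial_sQ_+,Q_++2i\partial_sQ_+)$; since $\partial_sQ_+$ already belongs to this span, so does $iQ_+=(iQ_+-\partial_sQ_+)+\partial_sQ_+$, and the span coincides with $\Vect_{\R}(\partial_sQ_+,iQ_+,Q_++2i\partial_sQ_+)$. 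These are precisely the three infinitesimal generators of the symmetries (translation, phase, scaling).

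For the converse inclusion I would take $h\in\textnormal{Ker}(\L)\subseteq\dot{H}^1(\heis)\cap V_0^+$ and decompose it as $h=h_0+h_-+h_+$ along Corollary \ref{cor:decomposition_h+h-}, where $h_0\in\Vect_{\R}(\partial_sQ_+,iQ_+,Q_++2i\partial_sQ_+)$, $h_-\in\Vect_{\R}(Q_+)$ and $h_+\in\dot{H}^1(\heis)\cap V_0^+\cap(Q_+,iQ_+,\partial_sQ_+,i\partial_sQ_+)^{\perp,\dot{H}^1(\heis)}$. Since $\L h_0=0$ by the first step, one has $\L h=\L h_-+\L h_+$, and the orthogonal splitting
$$
\|\L h\|_{\dot{H}^{-1}(\heis)}^2=\|\L h_+\|_{\dot{H}^{-1}(\heis)}^2+\|\L h_-\|_{\dot{H}^{-1}(\heis)}^2
$$
from Corollary \ref{cor:decomposition_h+h-} forces $\L h_-=0$ and $\L h_+=0$ separately. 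Writing $h_-=\mu Q_+$ with $\mu\in\R$ and using $\L Q_+=2i\partial_sQ_+\neq 0$ yields $\mu=0$, hence $h_-=0$; applying Theorem \ref{thm:L_coercive} to $h_+$ gives $0=(\L h_+,h_+)_{\dot{H}^{-1}(\heis)\times\dot{H}^1(\heis)}\geq\delta\|h_+\|_{\dot{H}^1(\heis)}^2$, whence $h_+=0$. Thus $h=h_0$ lies in $\Vect_{\R}(\partial_sQ_+,iQ_+,Q_++2i\partial_sQ_+)$, which is the reverse inclusion.

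I do not expect a genuine obstacle, since all the analytic content is already carried by the matrix computation and by Theorem \ref{thm:L_coercive}. The one point to handle with care is that $\L$ is coercive only transversally to all four directions of $V$, whereas on the extra direction $\Vect_{\R}(Q_+)$ it is merely injective, because $\L Q_+=2i\partial_sQ_+$ is a nonzero multiple of $\hlapl Q_+$. The orthogonal decomposition of $\|\L h\|_{\dot{H}^{-1}(\heis)}^2$ supplied by Corollary \ref{cor:decomposition_h+h-} is exactly what lets me treat the coercive part $h_+$ and the injective part $h_-$ independently.
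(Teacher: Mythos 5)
Your proposal is correct and follows essentially the same route as the paper: the paper's (deliberately brief) justification is exactly the combination of the matrix of $\L_{|V}$ in the basis $(\partial_sQ_+,iQ_+-\partial_sQ_+,Q_++2i\partial_sQ_+,Q_+)$ with the coercivity of Theorem \ref{thm:coercivityL_intro}, and your use of the orthogonal splitting $\|\L h\|_{\dot{H}^{-1}(\heis)}^2=\|\L h_+\|_{\dot{H}^{-1}(\heis)}^2+\|\L h_-\|_{\dot{H}^{-1}(\heis)}^2$ from Corollary \ref{cor:decomposition_h+h-} (rather than the quadratic-form identity, which alone would not separate the two pieces) is precisely the mechanism the paper relies on in the neighbouring Corollary \ref{cor:estimates_invertibility_L}. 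Your handling of the direction $\Vect_{\R}(Q_+)$, where $\L$ is injective but not coercive since $\L Q_+=2i\partial_sQ_+\neq 0$, matches the paper's treatment as well.
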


%\newpage
%%%%%%%%%%%%%%%%%%%%%%%%%%%%%%%%%%%%%%%%%%%%%%%%%%%%%%%%%%%%%%%%%%%%%%%%%%%%%%%%%%%%%%%%%%%%%%%%%%%%%%%%%%%%%%%%%%%%%%%%%%%%%%%%%%%%%%%%%%%%%%%%%
\subsection{Invertibility of \texorpdfstring{$\L$}{L}}\label{subsection:L_invertible}
%%%%%%%%%%%%%%%%%%%%%%%%%%%%%%%%%%%%%%%%%%%%%%%%%%%%%%%%%%%%%%%%%%%%%%%%%%%%%%%%%%%%%%%%%%%%%%%%%%%%%%%%%%%%%%%%%%%%%%%%%%%%%%%%%%%%%%%%%%%%%%%%%

The following corollaries of Theorem \ref{thm:L_coercive} make precise the invertibility of $\L$ and the linear stability up to symmetries of the ground state $Q_+$. These estimates will be useful in order to prove the invertibility of the linearized operators $\L_{Q_\beta}$ around $Q_\beta$ in section \ref{section:uniqueness_schrodinger}.

\begin{cor}\label{cor:estimates_invertibility_L}
There exists $c>0$ such that for all $h\in\dot{H}^1(\heis)\cap V_0^+$,
$$
\|\L h\|_{\dot{H}^{-1}(\heis)}
	+|(h,\partial_sQ_+)_{\dot{H}^1(\heis)}|
	+|(h,iQ_+)_{\dot{H}^1(\heis)}|
	+|(h,Q_++2i\partial_sQ_+)_{\dot{H}^1(\heis)}|
	\geq c\|h\|_{\dot{H}^1(\heis)}.
$$
\end{cor}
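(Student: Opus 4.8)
The plan is to split $h$ into the kernel directions of $\L$, the single direction $Q_+$ on which $\L$ acts invertibly inside $V$, and the orthogonal complement of $V$, and to control each piece separately. Concretely, I would invoke Corollary~\ref{cor:decomposition_h+h-} to write $h=h_0+h_-+h_+$ as an $\dot{H}^1(\heis)$-orthogonal sum, with $h_0\in\Vect_{\R}(\partial_sQ_+,iQ_+,Q_++2i\partial_sQ_+)=\textnormal{Ker}(\L)$, with $h_-\in\Vect_{\R}(Q_+)$, and with $h_+\in V_0^+\cap(Q_+,iQ_+,\partial_sQ_+,i\partial_sQ_+)^{\perp,\dot{H}^1(\heis)}$. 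Since $\|h\|_{\dot{H}^1(\heis)}^2=\|h_0\|_{\dot{H}^1(\heis)}^2+\|h_-\|_{\dot{H}^1(\heis)}^2+\|h_+\|_{\dot{H}^1(\heis)}^2$, it suffices to bound each of the three norms on the right by the quantity appearing in the statement.

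The component $h_0$ is controlled by the three scalar products. Because $h_+$ is orthogonal to $V$ and $Q_+$ is $\dot{H}^1(\heis)$-orthogonal to each of $\partial_sQ_+$, $iQ_+$ and $Q_++2i\partial_sQ_+$ (the orthogonality of the basis of $V$ noted after Proposition~\ref{prop:formula_orthogonal}), each functional applied to $h$ collapses to its value on $h_0$, e.g. $(h,\partial_sQ_+)_{\dot{H}^1(\heis)}=(h_0,\partial_sQ_+)_{\dot{H}^1(\heis)}$, and similarly for the other two. The resulting linear map from the three-dimensional space $\textnormal{Ker}(\L)$ to $\R^3$ sending $h_0$ to the triple of these products is the Gram-type map of three vectors that \emph{span} $\textnormal{Ker}(\L)$, hence is invertible, giving a bound $\|h_0\|_{\dot{H}^1(\heis)}\lesssim |(h,\partial_sQ_+)_{\dot{H}^1(\heis)}|+|(h,iQ_+)_{\dot{H}^1(\heis)}|+|(h,Q_++2i\partial_sQ_+)_{\dot{H}^1(\heis)}|$ by finite-dimensional linear algebra.

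The pieces $h_-$ and $h_+$ are controlled by $\|\L h\|_{\dot{H}^{-1}(\heis)}$. By Corollary~\ref{cor:decomposition_h+h-} and $\L h_0=0$, the norm splits orthogonally as $\|\L h\|_{\dot{H}^{-1}(\heis)}^2=\|\L h_+\|_{\dot{H}^{-1}(\heis)}^2+\|\L h_-\|_{\dot{H}^{-1}(\heis)}^2$. For $h_+$, Theorem~\ref{thm:L_coercive} combined with Cauchy--Schwarz in the duality bracket gives $\delta\|h_+\|_{\dot{H}^1(\heis)}^2\leq(\L h_+,h_+)_{\dot{H}^{-1}(\heis)\times\dot{H}^1(\heis)}\leq\|\L h_+\|_{\dot{H}^{-1}(\heis)}\|h_+\|_{\dot{H}^1(\heis)}$, hence $\|\L h_+\|_{\dot{H}^{-1}(\heis)}\geq\delta\|h_+\|_{\dot{H}^1(\heis)}$. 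For $h_-=\mu Q_+$ with $\mu\in\R$, I use $\L Q_+=2i\partial_sQ_+$ together with the identity $-\hlapl=D_s$ on $V_0^+$, which gives $i\partial_sQ_+=-(-\hlapl)Q_+$; since $-\hlapl$ is an isometry from $\dot{H}^1(\heis)$ onto $\dot{H}^{-1}(\heis)$, we get $\|\L h_-\|_{\dot{H}^{-1}(\heis)}=2|\mu|\,\|(-\hlapl)Q_+\|_{\dot{H}^{-1}(\heis)}=2\|h_-\|_{\dot{H}^1(\heis)}$. Altogether $\|\L h\|_{\dot{H}^{-1}(\heis)}\gtrsim\|h_+\|_{\dot{H}^1(\heis)}+\|h_-\|_{\dot{H}^1(\heis)}$.

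Adding the three bounds and using the orthogonal splitting of $\|h\|_{\dot{H}^1(\heis)}$ yields the claimed inequality for a suitable $c>0$. The only step that is not pure bookkeeping is the invertibility of the system of three functionals on $\textnormal{Ker}(\L)$; this is the main (though mild) obstacle, and it is settled directly by the identification $\textnormal{Ker}(\L)=\Vect_{\R}(\partial_sQ_+,iQ_+,Q_++2i\partial_sQ_+)$ and the orthogonality relations of Proposition~\ref{prop:formula_orthogonal}.
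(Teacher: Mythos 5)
Your proposal is correct and takes essentially the same approach as the paper's own proof: the same orthogonal decomposition $h=h_0+h_-+h_+$, the coercivity of Theorem~\ref{thm:L_coercive} combined with duality Cauchy--Schwarz for $h_+$, the explicit relation $\L Q_+=2i\partial_sQ_+$ for $h_-$, the orthogonal splitting of $\|\L h\|_{\dot{H}^{-1}(\heis)}$ from Corollary~\ref{cor:decomposition_h+h-}, and finite-dimensional control of $h_0$ by the three scalar products. The only cosmetic difference is that you evaluate $\|\L h_-\|_{\dot{H}^{-1}(\heis)}=2\|h_-\|_{\dot{H}^1(\heis)}$ exactly via the isometry $-\hlapl:\dot{H}^1(\heis)\to\dot{H}^{-1}(\heis)$, whereas the paper obtains the same lower bound through the identity $(\L h_-,h_-)_{\dot{H}^{-1}(\heis)\times\dot{H}^1(\heis)}=-2\|h_-\|_{\dot{H}^1(\heis)}^2$ and Cauchy--Schwarz.
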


\begin{proof}
Let $h\in\dot{H}^1(\heis)\cap V_0^+$. We decompose $h$ into three orthogonal components 
$
h=h_0+h_-+h_+,
$
where $h_0\in \dot{H}^1(\heis)\cap V_0^+\cap\Vect_{\R}(\partial_sQ_+,iQ_+,Q_++2i\partial_sQ_+)$, $h_-\in\dot{H}^1(\heis)\cap V_0^+\cap\Vect_{\R}(Q_+)$ and $h_+\in \dot{H}^1(\heis)\cap V_0^+\cap \Vect_{\R}(Q_+,iQ_+,\partial_s Q_+,i\partial_sQ_+)^{\perp,\dot{H}^1(\heis)}$. Then
$\L h_0=0$, and $\L h_+$ satisfies the above coercivity estimate \ref{thm:L_coercive}~: for some $\delta>0$,
$$
\|\L h_+\|_{\dot{H}^{-1}(\heis)}\geq \delta\|h_+\|_{\dot{H}^1(\heis)}.
$$
Write $h_-=\lambda Q_+$ for some real number $\lambda$. Then $\L h_-=2\lambda i\partial_sQ_+$, so
$$
(\L h_-,h_-)_{\dot{H}^{-1}(\heis)\times\dot{H}^{1}(\heis)}
	=2\lambda^2(i\partial_sQ_+,Q_+)_{\dot{H}^{-1}(\heis)\times\dot{H}^{1}(\heis)}.
$$
But
$$
\|h_-\|_{\dot{H}^{1}(\heis)}^2
	=(-i\lambda \partial_sQ_+,\lambda Q_+)_{\dot{H}^{-1}(\heis)\times\dot{H}^{1}(\heis)},
$$
so
$(\L h_-,h)_{\dot{H}^{-1}(\heis)\times\dot{H}^{1}(\heis)}=-2\|h_-\|_{\dot{H}^{1}(\heis)}^2$. In particular, $\|\L h_-\|_{\dot{H}^{-1}(\heis)}\geq 2\|h_-\|_{\dot{H}^1(\heis)}$.

Thanks to Corollary \ref{cor:decomposition_h+h-}, we deduce that
\begin{align*}
\|\L h\|_{\dot{H}^{-1}(\heis)}^2
	&=\|\L h_-\|_{\dot{H}^{-1}(\heis)}^2+\|\L h_+\|_{H^{-1}(\heis)}^2\\
	&\geq  4\|h_-\|_{\dot{H}^1(\heis)}^2+\delta^2\| h_+\|_{H^1(\heis)}^2\\
	&\geq (\min(2,\delta))^2\|h_-+h_+\|_{\dot{H}^{1}(\heis)}^2.
\end{align*}
Moreover, since $h_0$ is in the space spanned by $\partial_sQ_+$, $iQ_+$ and $Q_++2iQ_+$, there exists some constant $0<c\leq \min(2,\delta)$ such that
$$
|(h,\partial_sQ_+)_{\dot{H}^1(\heis)}|
	+|(h,iQ_+)_{\dot{H}^1(\heis)}|
	+|(h,Q_++2i\partial_sQ_+)_{\dot{H}^1(\heis)}|
	\geq c\|h_0\|_{ \dot{H}^{1}(\heis)}.
$$
Therefore,
$$
\|\L h\|_{H^{-1}(\heis)}
	+|(h,\partial_sQ_+)_{\dot{H}^1(\heis)}|
	+|(h,iQ_+)_{\dot{H}^1(\heis)}|
	+|(h,Q_++2i\partial_sQ_+)_{\dot{H}^1(\heis)}|
	\geq c\|h\|_{\dot{H}^1(\heis)}.
$$
\end{proof}

Let us remind that for $h\in \dot{H}^1(\heis)\cap V_0^+$, we have set in Definition \ref{def:delta_u}
$$
\delta(u)=\left|\|u\|_{\dot{H}^{1}(\heis)}^2-\|Q_+\|_{\dot{H}^{1}(\heis)}^2\right|
	+\left|\|u\|_{L^4(\heis)}^4-\|Q_+\|_{L^4(\heis)}^4\right|.
$$

\begin{cor}\label{cor:estimates_delta}
There exists $\varepsilon_0>0$ and $c>0$ such that for all $u\in \dot{H}^1(\heis)\cap V_0^+$, if $\|u-Q_+\|_{\dot{H}^1(\heis)}\leq \varepsilon_0$, then
$$
\delta(u)
	+|(u,\partial_sQ_+)_{\dot{H}^1(\heis)}|
	+|(u,iQ_+)_{\dot{H}^1(\heis)}|
	+|(u,Q_++2i\partial_sQ_+)_{\dot{H}^1(\heis)}|
	\geq c\|u-Q_+\|_{\dot{H}^1(\heis)}^2.
$$
\end{cor}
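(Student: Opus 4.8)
The plan is to write $h:=u-Q_+$, so that $\|h\|_{\dot H^1(\heis)}\le\varepsilon_0$, and to expand to second order the two scalar quantities entering $\delta(u)$, namely $a:=\|u\|_{\dot H^1(\heis)}^2-I_+$ and $b:=\|u\|_{L^4(\heis)}^4-I_+$. Since $u\in V_0^+$ and $Q_+$ solves \eqref{eq:Hplus}, testing that equation against $h$ gives the Euler--Lagrange identity $(Q_+,h)_{\dot H^1(\heis)}=(|Q_+|^2Q_+,h)_{L^{4/3}(\heis)\times L^4(\heis)}$ for every $h\in\dot H^1(\heis)\cap V_0^+$. Writing $\Phi(u)=\|u\|_{L^4(\heis)}^4$, this says that the first variation of $\Phi$ at $Q_+$ equals \emph{twice} that of $\|\cdot\|_{\dot H^1(\heis)}^2$, so that in the combination $2a-b$ the linear terms cancel exactly. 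Computing the Hessian of $\Phi$ and comparing it with the definition of $\L$ then yields the identity
\begin{equation*}
2a-b=2(\L h,h)_{\dot H^{-1}(\heis)\times\dot H^1(\heis)}-\mathcal C(h),
\end{equation*}
where $\mathcal C(h)$ gathers the cubic and quartic terms and satisfies $\mathcal C(h)=\gdO(\|h\|_{\dot H^1(\heis)}^3)$ by the embedding $\dot H^1(\heis)\hookrightarrow L^4(\heis)$ together with $Q_+\in L^4(\heis)$. As $\tfrac12|2a-b|\le|a|+|b|=\delta(u)$, this produces the one-sided bound $(\L h,h)_{\dot H^{-1}(\heis)\times\dot H^1(\heis)}\le\delta(u)+\gdO(\|h\|_{\dot H^1(\heis)}^3)$.

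Next I would insert the orthogonal splitting $h=h_0+h_-+h_+$ of Corollary \ref{cor:decomposition_h+h-}, with $h_0$ in the kernel $\Vect_\R(\partial_sQ_+,iQ_+,Q_++2i\partial_sQ_+)$, $h_-\in\Vect_\R(Q_+)$ and $h_+$ in the $\dot H^1(\heis)$-orthogonal of $V=\Vect_\R(Q_+,iQ_+,\partial_sQ_+,i\partial_sQ_+)$. Using $(\L h,h)=(\L h_+,h_+)+(\L h_-,h_-)$, the coercivity of Theorem \ref{thm:L_coercive} on $h_+$, and the identity $(\L h_-,h_-)_{\dot H^{-1}(\heis)\times\dot H^1(\heis)}=-2\|h_-\|_{\dot H^1(\heis)}^2$ established in the proofs of Corollaries \ref{cor:decomposition_h+h-} and \ref{cor:estimates_invertibility_L}, the previous bound becomes $\delta_0\|h_+\|_{\dot H^1(\heis)}^2\le\delta(u)+2\|h_-\|_{\dot H^1(\heis)}^2+\gdO(\|h\|_{\dot H^1(\heis)}^3)$, where $\delta_0>0$ is the coercivity constant. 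It then remains to estimate the two components $h_0$ and $h_-$ that $\L$ does not control.

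For $h_0$, I would first check (via Proposition \ref{prop:formula_orthogonal}, i.e.\ from the values of $F_{Q_+}(i)$ and $F_{Q_+}'(i)$) that $Q_+$ is $\dot H^1(\heis)$-orthogonal to each of the three kernel vectors; hence $h_-$ and $h_+$ are invisible to the three orthogonality terms and $(u,g_j)_{\dot H^1(\heis)}=(h_0,g_j)_{\dot H^1(\heis)}$ for $g_j\in\{\partial_sQ_+,iQ_+,Q_++2i\partial_sQ_+\}$. Since these vectors form a basis of the kernel, their Gram matrix is invertible, which gives $\|h_0\|_{\dot H^1(\heis)}\le C\big(|(u,\partial_sQ_+)_{\dot H^1(\heis)}|+|(u,iQ_+)_{\dot H^1(\heis)}|+|(u,Q_++2i\partial_sQ_+)_{\dot H^1(\heis)}|\big)$. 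For $h_-=\lambda Q_+$ I would use only $|a|\le\delta(u)$: since $a=2\lambda I_++\|h\|_{\dot H^1(\heis)}^2$ and $\|h_-\|_{\dot H^1(\heis)}^2=\lambda^2 I_+$, this yields $\|h_-\|_{\dot H^1(\heis)}^2\lesssim\delta(u)^2+\|h\|_{\dot H^1(\heis)}^4$, and for $\varepsilon_0$ small (so that $\delta(u)\lesssim\varepsilon_0$) this is $\lesssim\delta(u)+\|h\|_{\dot H^1(\heis)}^4$.

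Summing the three contributions and writing $N$ for the right-hand side of the claimed inequality, I reach $\|h\|_{\dot H^1(\heis)}^2=\|h_0\|^2+\|h_-\|^2+\|h_+\|^2\le C\,N+\gdO(\|h\|_{\dot H^1(\heis)}^3)$. Because $\|h\|_{\dot H^1(\heis)}^3\le\varepsilon_0\|h\|_{\dot H^1(\heis)}^2$, choosing $\varepsilon_0$ small absorbs the remainder into the left-hand side and leaves $\tfrac12\|h\|_{\dot H^1(\heis)}^2\le C'N$, which is the desired bound with $c=1/(2C')$. The main obstacle is exactly the negative direction $h_-=\lambda Q_+$: it is orthogonal to all the tested directions, hence invisible to the orthogonality terms, so it can be controlled only through $\delta(u)$, and the delicate point is to confirm that the discrepancy between $\|h_-\|^2$ and $\delta(u)$ is genuinely higher order and can be absorbed by shrinking $\varepsilon_0$.
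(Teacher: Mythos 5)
Your proposal is correct and follows essentially the same route as the paper: the combination $2a-b$ is exactly (twice) the paper's functional $E(u)=\|u\|_{\dot{H}^1(\heis)}^2-\half\|u\|_{L^4(\heis)}^4$, and you use the same decomposition $h=h_0+h_-+h_+$, the same coercivity and kernel-direction identities, the same control of $h_-$ through the expansion of $\|u\|_{\dot{H}^1(\heis)}^2$ with absorption of the higher-order terms, and the same Gram-matrix bound for $h_0$. The paper's own proof differs only cosmetically (it bounds $\|h_-\|$ via $\delta(u)\geq 2\|Q_+\|_{\dot{H}^1(\heis)}\|h_-\|_{\dot{H}^1(\heis)}-\|h\|_{\dot{H}^1(\heis)}^2$ rather than solving for $\lambda$ exactly).
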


\begin{proof}
Let $u\in \dot{H}^1(\heis)\cap V_0^+$ and set $h=u-Q_+$. We decompose $h$ as above in three orthogonal parts
$
h=h_0+h_-+h_+,
$
where $h_0\in \dot{H}^1(\heis)\cap V_0^+\cap\Vect_{\R}(\partial_sQ_+,iQ_+,Q_++2i\partial_sQ_+)$, $h_-\in\dot{H}^1(\heis)\cap V_0^+\cap\Vect_{\R}(Q_+)$ and $h_+\in \dot{H}^1(\heis)\cap V_0^+\cap (Q_+,iQ_+,\partial_s Q_+,i\partial_sQ_+)^{\perp,\dot{H}^1(\heis)}$.

The link between $\delta(u)$ and the linearized operator $\L$ appears through the functional
$$
E(u):=\|u\|_{\dot{H}^{1}(\heis)}^2-\half\|u\|_{L^4(\heis)}^4.
$$
Indeed,
$$
|E(u)-E(Q_+)|
	\leq\delta(u),
$$
but since $Q_+$ is a solution to $D_sQ_+=\Pi_0^+(|Q_+|^2Q_+)$ and $h$ belongs to $\dot{H}^1(\heis)\cap V_0^+$, we have the Taylor expansion
$$
E(u)-E(Q_+)
	=(\L h,h)_{\dot{H}^{-1}(\heis)\times\dot{H}^1(\heis)}+\gdO(\|h\|_{\dot{H}^1(\heis)}^3).
$$

%\begin{align*}
%\d E (Q_+).h
%	&=2(D_sQ_+,h)-2\Re\Big(\int_{\heis}|Q_+|^2Q_+\overline{h}\Big)\\
%	&=2(D_sQ_+-\Pi_0^+(|Q_+|^2Q_+),h)_{\dot{H}^{-1}(\heis)\times\dot{H}^1(\heis)}\\
%	&=0.
%\end{align*}

Therefore,
$$
\delta(u)
	\geq (\L h,h)_{\dot{H}^{-1}(\heis)\times\dot{H}^1(\heis)}-\gdO(\|h\|_{\dot{H}^1(\heis)}^3).
$$
From Corollary \ref{cor:decomposition_h+h-}, we know that
$$
(\L h,h)_{\dot{H}^{-1}(\heis)\times\dot{H}^1(\heis)}
	=(\L h_+,h_+)_{\dot{H}^{-1}(\heis)\times\dot{H}^1(\heis)}
	+(\L h_-,h_-)_{\dot{H}^{-1}(\heis)\times\dot{H}^1(\heis)}.
$$
Consequently, the coercivity estimate on $\L$ implies that for some constants $c_1, C_1>0$,
\begin{equation}\label{eq:ineg1}
\delta(u)
	\geq c_1 \|h_+\|_{\dot{H}^1(\heis)}^2-C_1(\|h_-\|_{\dot{H}^1(\heis)}^2+\|h\|_{\dot{H}^1(\heis)}^3).
\end{equation}

Let us focus on the term $\|h_-\|_{\dot{H}^1(\heis)}^2$. We use the fact that
\begin{align*}
\delta(u)
	&\geq \big|(Q_++h,Q_++h)_{\dot{H}^1(\heis)}-(Q_+,Q_+)_{\dot{H}^1(\heis)}\big|\\
	&\geq 2\big|(Q_+,h)_{\dot{H}^1(\heis)}\big|-\|h\|_{\dot{H}^1(\heis)}^2\\
	&= 2\|Q_+\|_{\dot{H}^1(\heis)}\|h_-\|_{\dot{H}^1(\heis)}-\|h\|_{\dot{H}^1(\heis)}^2,
\end{align*}
so
$$
\delta(u)^2\geq 4\|Q_+\|_{\dot{H}^1(\heis)}^2\|h_-\|_{\dot{H}^1(\heis)}^2-\gdO(\|h\|_{\dot{H}^1(\heis)}^3).
$$
We use this estimate to control $\|h_-\|_{\dot{H}^1(\heis)}^2$ in the lower bound \eqref{eq:ineg1} of $\delta(u)$. Up to decreasing $\varepsilon_0$, one can absorb the term $\delta(u)^2$ into the term $\delta(u)$ : there exist $c_2,C_2>0$ and $\varepsilon_0>0$ such that if $\|h\|_{\dot{H}^1(\heis)}=\|u-Q_+\|_{\dot{H}^1(\heis)}\leq \varepsilon_0$,
$$
2\delta(u)
	\geq\delta(u)+C_2\delta(u)^2
	\geq c_2 \|h_+\|_{\dot{H}^1(\heis)}^2+c_2\|h_-\|_{\dot{H}^1(\heis)}^2-C_2\|h\|_{\dot{H}^1(\heis)}^3.
$$

We now control $\|h_0\|_{\dot{H}^1(\heis)}^2$. If $\varepsilon_0\leq 1$, we have an upper bound
$$
\|h_0\|_{\dot{H}^1(\heis)}^2\leq \|h_0\|_{\dot{H}^1(\heis)}\leq C(|(h,\partial_sQ_+)_{\dot{H}^1(\heis)}|
	+|(h,iQ_+)_{\dot{H}^1(\heis)}|
	+|(h,Q_++2i\partial_sQ_+)_{\dot{H}^1(\heis)}|).
$$

In the end, there exist $c_3>0$ and $C_3>0$ such that for all $u\in\dot{H}^1(\heis)\cap V_0^+$,
\begin{align*}
\delta(u)
	+|(h,\partial_sQ_+)_{\dot{H}^1(\heis)}|
	&+|(h,iQ_+)_{\dot{H}^1(\heis)}|
	+|(h,Q_++2i\partial_sQ_+)_{\dot{H}^1(\heis)}|\\
	&\geq c_3 ( \|h_+\|_{\dot{H}^1(\heis)}^2+\|h_-\|_{\dot{H}^1(\heis)}^2+\|h_0\|_{\dot{H}^1(\heis)}^2)
	-C_3\|h\|_{\dot{H}^1(\heis)}^3\\
	&= c_3 \|h\|_{\dot{H}^1(\heis)}^2	-C_3\|h\|_{\dot{H}^1(\heis)}^3.
\end{align*}
Up to decreasing $\varepsilon_0$ again, we can absorb the term $\|h\|_{\dot{H}^1(\heis)}^3$ into the term $\|h\|_{\dot{H}^1(\heis)}^2$. Note that $Q_+$ is orthogonal in $\dot{H}^1(\heis)$ to $\partial_sQ_+,iQ_+$ and $Q_++2i\partial_sQ_+$, therefore 
$(h,\partial_sQ_+)_{\dot{H}^1(\heis)}=(u,\partial_sQ_+)_{\dot{H}^1(\heis)}$,
$(h,iQ_+)_{\dot{H}^1(\heis)}=(u,iQ_+)_{\dot{H}^1(\heis)}$
and
$(h,Q_++2i\partial_sQ_+)_{\dot{H}^1(\heis)}=(u,Q_++2i\partial_sQ_+)_{\dot{H}^1(\heis)}$.
\end{proof}

We now control the distance of a function $u\in\dot{H}^1(\heis)\cap V_0^+$ to the profile $Q_+$ up to symmetries by the difference of their norms $\delta(u)$.
\begin{mydef}
Fix $h\in\dot{H}^1(\heis)\cap V_0^+$, $s_0\in\R$, $\theta\in\T$ and $\alpha\in\R_+^*$. We denote by $T_{s_0,\theta,\alpha}h$ the function in $\dot{H}^1(\heis)\cap V_0^+$ defined by
$$
T_{s_0,\theta,\alpha}h(x,y,s):=\e^{i\theta}\alpha h\big(\alpha x,\alpha y,\alpha^2(s+s_0)\big), \quad (x,y,s)\in\heis.
$$
\end{mydef}

\begin{cor}\label{cor:estimates_stability}
There exist $\delta_0>0$ and $C>0$ such that for all $u\in\dot{H}^1(\heis)\cap V_0^+$, if $\delta(u)\leq \delta_0$, then
$$
\inf_{(s_0,\theta,\alpha)\in \R\times\T\times\R_+^*}\|T_{s_0,\theta,\alpha}u-Q_+\|_{\dot{H}^1(\heis)}^2
	\leq C\delta(u).
$$
\end{cor}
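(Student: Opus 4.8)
The plan is to run a modulation argument: I would select the symmetry parameters putting $u$ closest to $Q_+$, extract \emph{exact} orthogonality conditions from the resulting minimality, and feed them into Corollary \ref{cor:estimates_delta}. Throughout I use that $\delta$ is invariant under the action of $T_{s_0,\theta,\alpha}$, since these maps are isometries of both $\dot{H}^1(\heis)$ and $L^4(\heis)$, and that they preserve $V_0^+$.

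First I would reduce to a small neighbourhood of $Q_+$. I claim there is $\delta_0>0$ such that $\delta(u)\leq\delta_0$ forces $\inf_{(s_0,\theta,\alpha)}\|T_{s_0,\theta,\alpha}u-Q_+\|_{\dot{H}^1(\heis)}\leq\varepsilon_0$, where $\varepsilon_0$ is the radius from Corollary \ref{cor:estimates_delta}. If this failed, there would be a sequence $u_n\in\dot{H}^1(\heis)\cap V_0^+$ with $\delta(u_n)\to 0$ staying at distance $>\varepsilon_0$ from the orbit of $Q_+$; Proposition \ref{prop:stability1} brings a subsequence, after symmetries, to some element of $\Qplus$, and since every element of $\Qplus$ is a symmetry image of $Q_+$ (the explicit description of $\Qplus$ obtained above) and the symmetries are $\dot{H}^1(\heis)$-isometries, this contradicts the lower bound. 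Assuming now $\delta(u)\leq\delta_0$, I would let $v=T_{s_0^*,\theta^*,\alpha^*}u$ realise $d(u)^2:=\inf\|T_{s_0,\theta,\alpha}u-Q_+\|_{\dot{H}^1(\heis)}^2$; the infimum is attained because, as the parameters degenerate, $T_{s_0,\theta,\alpha}u\rightharpoonup 0$ and the distance to $Q_+$ tends to $(\|u\|_{\dot{H}^1(\heis)}^2+\|Q_+\|_{\dot{H}^1(\heis)}^2)^{1/2}$, which exceeds $\varepsilon_0$ once $\delta(u)$ is small (then $\|u\|_{\dot{H}^1(\heis)}^2\approx I_+$), so a minimising sequence of parameters stays in a compact region. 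In particular $\|v-Q_+\|_{\dot{H}^1(\heis)}=d(u)\leq\varepsilon_0$.

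At this minimiser, writing $w:=v-Q_+$, minimality says that $w$ is orthogonal in $\dot{H}^1(\heis)$ to the tangent space of the orbit $\{T_{s_0,\theta,\alpha}v\}$ at $v$, which is spanned by $\partial_s v$, $iv$ and the scaling vector $\Lambda v:=v+(x\partial_x+y\partial_y+2s\partial_s)v$. The crucial step, and the one I expect to matter most, is to turn these three conditions — a priori only orthogonality to the tangent space at $v$ — into exact orthogonality to the kernel directions at $Q_+$. This works because each of the three symmetries acts isometrically on $\dot{H}^1(\heis)$, so its generator $G\in\{\partial_s,\,i\cdot,\,\Lambda\}$ is skew-symmetric for the real scalar product, whence $(Gw,w)_{\dot{H}^1(\heis)}=0$. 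Expanding $(Gv,w)=(GQ_+,w)+(Gw,w)=(GQ_+,w)$, the optimality conditions reduce to $(\partial_s Q_+,w)_{\dot{H}^1(\heis)}=(iQ_+,w)_{\dot{H}^1(\heis)}=(\Lambda Q_+,w)_{\dot{H}^1(\heis)}=0$. Using $\Lambda Q_+=-(Q_++2i\partial_s Q_+)$ (the identity $zF_{Q_+}'=-F_{Q_+}-iF_{Q_+}'$ used in the computation of $\L_{|V}$) and the fact that $Q_+$ is orthogonal in $\dot{H}^1(\heis)$ to $\partial_s Q_+$, $iQ_+$ and $Q_++2i\partial_s Q_+$ (orthogonality of the basis of $V$, via Proposition \ref{prop:formula_orthogonal}), I obtain
$$
(v,\partial_s Q_+)_{\dot{H}^1(\heis)}=(v,iQ_+)_{\dot{H}^1(\heis)}=(v,Q_++2i\partial_s Q_+)_{\dot{H}^1(\heis)}=0.
$$

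Finally, since $\|v-Q_+\|_{\dot{H}^1(\heis)}\leq\varepsilon_0$ and all three bracket terms vanish, Corollary \ref{cor:estimates_delta} applied to $v$ yields $\delta(v)\geq c\|v-Q_+\|_{\dot{H}^1(\heis)}^2$. Invoking $\delta(v)=\delta(u)$ and $\|v-Q_+\|_{\dot{H}^1(\heis)}^2=d(u)^2$ gives
$$
\inf_{(s_0,\theta,\alpha)\in\R\times\T\times\R_+^*}\|T_{s_0,\theta,\alpha}u-Q_+\|_{\dot{H}^1(\heis)}^2=d(u)^2\leq\tfrac1c\,\delta(u),
$$
which is the claim with $C=1/c$. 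The only genuinely delicate points are the attainment of the infimum (handled by the coercivity-at-infinity estimate above) and the passage from tangent-space orthogonality at $v$ to orthogonality at $Q_+$, which the isometric character of the symmetry group makes exact rather than merely approximate.
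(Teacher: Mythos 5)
Your overall strategy is sound and genuinely different from the paper's: where the paper argues by contradiction and uses the implicit function theorem to construct modulation parameters $(S_0(u),\Theta(u),A(u))$ near $Q_+$ achieving the three orthogonality conditions, you minimize $\|T_{s_0,\theta,\alpha}u-Q_+\|_{\dot{H}^1(\heis)}$ over the symmetry group and extract the orthogonality conditions from first-order optimality at the minimizer. Both routes feed the same three conditions into Corollary \ref{cor:estimates_delta}, and both use Proposition \ref{prop:stability1} (plus the explicit description of $\Qplus$ as the orbit of $Q_+$) as the compactness input. Your variant avoids the Jacobian computation and gives a direct rather than contradiction-based bound, at the price of having to prove attainment of the infimum; your argument for that is correct, since for fixed $u\neq 0$ a degenerating sequence of parameters ($\alpha_n\to 0$, $\alpha_n\to\infty$ or $|s_n|\to\infty$) gives $T_{s_n,\theta_n,\alpha_n}u\rightharpoonup 0$ in $\dot{H}^1(\heis)$, so the distance to $Q_+$ cannot drop below $\|Q_+\|_{\dot{H}^1(\heis)}$ along it. The paper's route, by contrast, produces parameters depending smoothly on $u$, which it reuses in the proof of Theorem \ref{thm:description_Qbeta}.

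The one step that does not survive scrutiny as written is your derivation of the first-order conditions. You differentiate the orbit at $v$, describing its tangent space as $\Vect_{\R}(\partial_sv,\,iv,\,\Lambda v)$, and then invoke skew-symmetry of the generators to get $(Gw,w)_{\dot{H}^1(\heis)}=0$. But $v=T_{s_0^*,\theta^*,\alpha^*}u$ is merely an element of $\dot{H}^1(\heis)\cap V_0^+$: the functions $\partial_s v$ and $\Lambda v$ need not lie in $\dot{H}^1(\heis)$, so neither the pairing $(Gv,w)_{\dot{H}^1(\heis)}$ nor $(Gw,w)_{\dot{H}^1(\heis)}$ is defined, and the curve $t\mapsto h_tv$ need not be differentiable in $\dot{H}^1(\heis)$. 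The repair is short and stays inside your scheme: since each $h_t$ acts isometrically (orthogonally for the real scalar product), expand
$$
\|h_tv-Q_+\|_{\dot{H}^1(\heis)}^2
	=\|v\|_{\dot{H}^1(\heis)}^2+\|Q_+\|_{\dot{H}^1(\heis)}^2-2\,(v,h_{-t}Q_+)_{\dot{H}^1(\heis)},
$$
so that minimality at $t=0$ reads $\frac{\d}{\d t}\big|_{t=0}(v,h_{-t}Q_+)_{\dot{H}^1(\heis)}=0$. The derivative now falls on $Q_+$, which is explicit and smooth, and $\partial_sQ_+$, $iQ_+$ and $\Lambda Q_+=-(Q_++2i\partial_sQ_+)$ all belong to $\dot{H}^1(\heis)$; hence $t\mapsto h_{-t}Q_+$ is $\dot{H}^1$-differentiable and you obtain directly $(v,\partial_sQ_+)_{\dot{H}^1(\heis)}=(v,iQ_+)_{\dot{H}^1(\heis)}=(v,Q_++2i\partial_sQ_+)_{\dot{H}^1(\heis)}=0$, which are exactly the conditions required by Corollary \ref{cor:estimates_delta}, with no reference to $Gv$ or $Gw$. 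With this modification your proof is complete and correct.
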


\begin{proof}
Assume by contradiction that there exists a sequence $(u_n)_{n\in\N}\subset \dot{H}^1(\heis)\cap V_0^+$ such that $\delta(u_n)\to 0$, but
$$
\frac{1}{\delta(u_n)}\inf_{(s_0,\theta,\alpha)\in \R\times\T\times\R_+^*}\|T_{s_0,\theta,\alpha}u_n-Q_+\|_{\dot{H}^1(\heis)}^2
	\longrightarroww{n\to+\infty}{} +\infty.
$$
According to the consequence of the profile decomposition theorem stated in Proposition \ref{prop:stability1}, since $\delta(u_n)\to 0$, then, up to a subsequence, there exist cores $(s_n)_{n\in\N}\in\R^{\N}$, an angle $\theta_0\in\T$, and scalings $(\alpha_n)_{n\in\N}\in (\R_+^*)^{\N}$ such that
$$
\|T_{s_n,\theta_0,\alpha_n}u_n-Q_+\|_{\dot{H}^{1}(\heis)}\longrightarroww{n\to+\infty}{} 0.
$$

We will make use of the implicit function theorem in order to apply Corollary \ref{cor:estimates_delta} with some functions $T_{s_n,\theta_n,\alpha_n}u_n$ orthogonal to $\partial_sQ_+,iQ_+$ and $Q_++2i\partial_sQ_+$ and get a contradiction. Consider the maps
\begin{align*}
F:\dot{H}^1(\heis)\cap V_0^+ &\rightarrow \R^3\\
	 u & \mapsto \big((u,\partial_sQ_+)_{\dot{H}^1(\heis)},(u,iQ_+)_{\dot{H}^1(\heis)},(u,Q_++2i\partial_sQ_+)_{\dot{H}^1(\heis)}\big),
\end{align*}
and
\begin{align*}
G: \R\times\T \times \R_+^* \times (\dot{H}^1(\heis)\cap V_0^+)&\rightarrow \R^3\\
	 (s,\theta,\alpha,u) & \mapsto F(T_{s,\theta,\alpha}u).
\end{align*}
Then $F(Q_+)=0$ so $G(0,0,1,Q_+)=0$. Moreover, $G$ is smooth in $(s,\theta,\alpha)$ and the Jacobian $\d_{s,\theta,\alpha} G(0,0,1,Q_+)$ of this application along $(s,\theta,\alpha)$ at $(s,\theta,\alpha,u)=(0,0,1,Q_+)$ is equal to
$$
\left(\begin{matrix}
\|\partial_sQ_+\|_{\dot{H}^1(\heis)}^2 & (iQ_+,\partial_sQ_+)_{\dot{H}^1(\heis)} & (Q_++2i\partial_sQ_+,\partial_sQ_+)_{\dot{H}^1(\heis)}\\
(\partial_sQ_+,iQ_+)_{\dot{H}^1(\heis)} & \|iQ_+\|_{\dot{H}^1(\heis)}^2 & (Q_++2i\partial_sQ_+,iQ_+)_{\dot{H}^1(\heis)}\\
(\partial_sQ_+,Q_++2i\partial_sQ_+)_{\dot{H}^1(\heis)} & (iQ_+,Q_++2i\partial_sQ_+)_{\dot{H}^1(\heis)} & \|Q_++2i\partial_sQ_+\|_{\dot{H}^1(\heis)}^2
\end{matrix}\right).
$$
Replacing all the terms by their values, we get
$$
\d_{s,\theta,\alpha} G(0,0,1,Q_+)=
\left(\begin{matrix}
\frac{\pi^2}{2} & \frac{\pi^2}{2} & 0\\
0 & \frac{\pi^2}{2} & 0\\
0 & 0 & \pi^2
\end{matrix}\right),
$$
which is invertible. By the implicit function theorem, we get continuously differentiable functions $S_0(u)$, $\Theta(u)$ and $A(u)$, defined in a neighbourhood $\mathcal{V}$ of $Q_+$ and valued in a neighbourhood of $(0,0,1)$ : if $u\in \mathcal{V}$, then $\|T_{S_0(u),\Theta(u),A(u)}u-Q_+\|_{\dot{H}^1(\heis)}\leq \varepsilon_0$ (where $\varepsilon_0$ is taken from Corollary \ref{cor:estimates_delta}). These functions satisfy $(S_0(Q_+),\Theta(Q_+),A(Q_+))=(0,0,1)$ and
$$
G(S_0(u),\Theta(u),A(u),u)=0.
$$
Now, since $\|T_{s_n,\theta_0,\alpha_n}u_n-Q_+\|_{\dot{H}^{1}(\heis)}\longrightarroww{n\to+\infty}{} 0$, there exists $N\in\N$ such that for all $n\geq N$, $T_{s_n,\theta_0,\alpha_n}u_n\in\mathcal{V}$.
Therefore, defining $s_n'=s_n+S_0(T_{s_n,\theta_0,\alpha_n}u_n)$, $\theta_n'=\theta_0+\Theta(T_{s_n,\theta_0,\alpha_n}u_n)$ and $\alpha_n'=\alpha_n+A(T_{s_n,\theta_0,\alpha_n}u_n)$, we get $\tilde{u_n}:=T_{s_n',\theta_n',\alpha_n'}u_n\in\dot{H}^1(\heis)\cap V_0^+$ such that $\|\tilde{u_n}-Q_+\|_{\dot{H}^1(\heis)}\leq \varepsilon_0$ and
$$
(\tilde{u_n},\partial_sQ_+)_{\dot{H}^1(\heis)}=(\tilde{u_n},iQ_+)_{\dot{H}^1(\heis)}=(\tilde{u_n},Q_++2i\partial_sQ_+)_{\dot{H}^1(\heis)}=0.
$$ Moreover, by invariance under symmetries,
$$
\delta(\tilde{u_n})=\delta(u_n),
$$
so applying Corollary \ref{cor:estimates_delta} to $\tilde{u_n}=T_{s_n',\theta_n',\alpha_n'}u_n$, we get that for some constant $C>0$,
$$
\|T_{s_n',\theta_n',\alpha_n'}u_n-Q_+\|_{\dot{H}^1(\heis)}^2\leq C \delta(u_n).
$$
This is a contradiction with the assumption that
$$
\frac{1}{\delta(u_n)}\inf_{(s_0,\theta,\alpha)\in \R\times\T\times\R_+^*}\|T_{s_0,\theta,\alpha}u_n-Q_+\|_{\dot{H}^1(\heis)}^2
	\longrightarroww{n\to+\infty}{} +\infty.
$$
\end{proof}

%\newpage
%%%%%%%%%%%%%%%%%%%%%%%%%%%%%%%%%%%%%%%%%%%%%%%%%%%%%%%%%%%%%%%%%%%%%%%%%%%%%%%%%%%%%%%%%%%%%%%%%%%%%%%%%%%%%%%%%%%%%%%%%%%%%%%%%%%%%%%%%%%%%%%%%
\section{Uniqueness of traveling waves for the Schrödinger equation}\label{section:uniqueness_schrodinger}
%%%%%%%%%%%%%%%%%%%%%%%%%%%%%%%%%%%%%%%%%%%%%%%%%%%%%%%%%%%%%%%%%%%%%%%%%%%%%%%%%%%%%%%%%%%%%%%%%%%%%%%%%%%%%%%%%%%%%%%%%%%%%%%%%%%%%%%%%%%%%%%%%

In this section, we show that the study of the limiting profile $Q_+$, and in particular the linear stability, enables us to prove some uniqueness results about the sequence of traveling waves $Q_\beta$ with speed $\beta$ sufficiently close to $1$. The argument is similar as in \cite{GerardLenzmannPocovnicuRaphael2018} for the half-wave equation : for $\beta$ close to $1$, $Q_\beta$ is close to $Q_+$ so we can make a link between the respective linearized operators. 

In order to do so, we first need to show some regularity properties and decay estimates on the profiles $Q_\beta$ (part \ref{subsection:regularity_decay}). For the half-wave equation, these estimates came from the Sobolev embedding $H^{\half}(\R)\hookrightarrow L^p(\R)$, $2 \leq p<+\infty$ and the convergence in $H^{\half}(\R)$. 

Recall that from Definition \ref{def:Qbeta}, $\Qbeta$ denotes the set of ground states $Q_\beta$ satisfying \eqref{eq:Hbeta}
\begin{equation*}
-\frac{\hlapl +\beta D_s}{1-\beta}Q_\beta=|Q_\beta|^2Q_\beta.
\end{equation*}

One can summarize the convergence of $(Q_\beta)_\beta$ from part \ref{subsection:limit} combined with the uniqueness result for $Q_+$ from section \ref{subsection:ground_state_solutions_limit} as follows.

\begin{prop}\label{prop:convergence_Qbeta}
For all $\beta\in(-1,1)$, fix a ground state $Q_\beta^0\in\Qbeta$ of speed $\beta$. Then there exist scalings $(\alpha_\beta)_\beta$ in $\R_+^*$, cores $(s_\beta)_\beta$ in $\R$, and an angle $\theta$ in $\T$ such that after a change of functions $Q_\beta:= \e^{i\theta} \alpha_\beta Q_\beta(\alpha_\beta\cdot,\alpha_\beta\cdot,\alpha_\beta^2(\cdot+s_\beta))$, the sequence $(Q_\beta )_\beta$ of solutions to \eqref{eq:Hbeta}
$$
-\frac{\hlapl +\beta D_s}{1-\beta}Q_\beta=|Q_\beta|^2Q_\beta
$$
converges as $\beta \to 1$ in $\dot{H}^1(\heis)$ to the unique (up to symmetries) ground state solution to \eqref{eq:Hplus}
$$
D_sQ_+=\Pi_0^+(|Q_+|^2Q_+),
$$
which writes
$$
Q_+(x,y,s)=\frac{i\sqrt{2}}{s+i(x^2+y^2)+i}.
$$
\end{prop}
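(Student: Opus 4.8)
The plan is to assemble this proposition out of the stability estimate of Corollary~\ref{cor:estimates_stability} together with the smallness of $\delta(Q_\beta)$ furnished by Lemma~\ref{lem:delta_Qbeta}, thereby upgrading the merely subsequential convergence of Theorem~\ref{thm:limit_beta1} to convergence of the whole family as $\beta\to 1^-$, and pinning down the limit as the explicit canonical profile. Fix $Q_\beta^0\in\Qbeta$ and decompose it along $V_0^+$ as in Lemma~\ref{lem:delta_Qbeta}, writing $Q_\beta^0=(Q_\beta^0)^++R_\beta$ with $(Q_\beta^0)^+\in\dot{H}^1(\heis)\cap V_0^+$ its $V_0^+$-component. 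That lemma yields $\|R_\beta\|_{\dot{H}^1(\heis)}=\gdO((1-\beta)^\half)$ and $\delta((Q_\beta^0)^+)=\gdO((1-\beta)^\half)$, so both quantities tend to $0$ as $\beta\to 1^-$.

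First I would apply Corollary~\ref{cor:estimates_stability} to $(Q_\beta^0)^+\in\dot{H}^1(\heis)\cap V_0^+$: for $\beta$ close enough to $1$ one has $\delta((Q_\beta^0)^+)\le\delta_0$, so there exist parameters $(s_\beta,\theta_\beta,\alpha_\beta)\in\R\times\T\times\R_+^*$ nearly attaining the infimum, with
$$
\big\|T_{s_\beta,\theta_\beta,\alpha_\beta}(Q_\beta^0)^+-Q_+\big\|_{\dot{H}^1(\heis)}^2\le 2C\,\delta((Q_\beta^0)^+)=\gdO((1-\beta)^\half).
$$
The crucial feature is that the constant $C$ is independent of $\beta$; this uniformity is precisely what converts subsequential convergence into convergence of the entire family. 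Here $Q_+$ is the explicit canonical profile, so the limit is automatically identified with the function in the statement: the fact that no other ground state of \eqref{eq:Hplus} can occur is exactly the uniqueness up to symmetry established in part~\ref{subsection:ground_state_solutions_limit}, which is already built into Corollary~\ref{cor:estimates_stability}.

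Next I would transfer the estimate to the full profile. Each of the three symmetries maps solutions of \eqref{eq:Hbeta} to solutions and preserves the minimizing property, so after the change of functions $Q_\beta:=T_{s_\beta,\theta_\beta,\alpha_\beta}Q_\beta^0$ one still has $Q_\beta\in\Qbeta$. Since $T_{s_\beta,\theta_\beta,\alpha_\beta}$ is linear and preserves the $\dot{H}^1(\heis)$ norm, applying it to $Q_\beta^0=(Q_\beta^0)^++R_\beta$ and using the triangle inequality gives
$$
\big\|Q_\beta-Q_+\big\|_{\dot{H}^1(\heis)}\le\big\|T_{s_\beta,\theta_\beta,\alpha_\beta}(Q_\beta^0)^+-Q_+\big\|_{\dot{H}^1(\heis)}+\|R_\beta\|_{\dot{H}^1(\heis)}=\gdO((1-\beta)^{\frac14}),
$$
which tends to $0$ as $\beta\to 1^-$. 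This is the claimed convergence of the full family to the canonical $Q_+$.

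The genuine difficulty has already been paid for upstream: the entire weight of the argument rests on Corollary~\ref{cor:estimates_stability}, hence ultimately on the coercivity of $\L$ (Theorem~\ref{thm:L_coercive}) obtained through the Cayley transform. Given that estimate, the remaining points are purely bookkeeping: that the symmetry group preserves both $V_0^+$ and the $\dot{H}^1(\heis)$ norm of $R_\beta$, that the ground-state property survives the change of functions, and that the constant in the stability estimate is uniform in $\beta$. As for the phase, the argument naturally produces a $\beta$-dependent $\theta_\beta$ alongside $s_\beta$ and $\alpha_\beta$; if a single fixed angle $\theta$ is desired, one normalizes the phase of $Q_\beta^0$ beforehand so that $\theta_\beta$ converges, and absorbs the limiting rotation into $\theta$.
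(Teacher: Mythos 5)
Your proof is correct, but it does not follow the route the paper uses for this particular proposition. The paper offers essentially no argument here: it presents Proposition \ref{prop:convergence_Qbeta} as a summary of Theorem \ref{thm:limit_beta1} (subsequential convergence, obtained from Lemma \ref{lem:delta_Qbeta} and the profile decomposition in Proposition \ref{prop:stability1}) combined with the classification of $\Qplus$ from part \ref{subsection:ground_state_solutions_limit}; taken literally this gives convergence only along a subsequence, and upgrading to the whole family requires the standard ``every subsequence has a sub-subsequence converging, modulo the symmetry group, to the same canonical $Q_+$'' argument, which the paper leaves implicit. You instead bypass Theorem \ref{thm:limit_beta1} entirely and combine Lemma \ref{lem:delta_Qbeta} with the quantitative stability estimate of Corollary \ref{cor:estimates_stability}; this is precisely the argument the paper itself runs later, in the first bullet of the proof of Theorem \ref{thm:description_Qbeta}, where it gets $\inf_{(s_0,\theta,\alpha)}\|T_{s_0,\theta,\alpha}Q_\beta-Q_+\|_{\dot{H}^1(\heis)}=\gdO((1-\beta)^{1/4})$. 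What your route buys is genuine full-family convergence with an explicit rate and no subsequence extraction; what it costs is that Corollary \ref{cor:estimates_stability} rests on the coercivity of $\L$ (Theorem \ref{thm:L_coercive}), i.e.\ on the whole Cayley-transform machinery, whereas the summary route needs only the profile decomposition and the explicit determination of $\Qplus$. Two details in your write-up are in fact more careful than the paper's later usage: you apply the stability estimate to the $V_0^+$-component $(Q_\beta^0)^+$ (Corollary \ref{cor:estimates_stability} is stated only on $\dot{H}^1(\heis)\cap V_0^+$) and reinstate $R_\beta$ by the triangle inequality, rather than applying it to $Q_\beta$ directly. Finally, your remark on the phase is apt: for an adversarial, wildly oscillating phase choice of $Q_\beta^0$, no \emph{fixed} $\theta$ can work, so the single angle in the statement is an imprecision of the paper rather than a gap in your argument; the honest conclusion (yours) has $\theta_\beta$ depending on $\beta$, and that is all that is used downstream.
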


%\newpage
%%%%%%%%%%%%%%%%%%%%%%%%%%%%%%%%%%%%%%%%%%%%%%%%%%%%%%%%%%%%%%%%%%%%%%%%%%%%%%%%%%%%%%%%%%%%%%%%%%%%%%%%%%%%%%%%%%%%%%%%%%%%%%%%%%%%%%%%%%%%%%%%%
\subsection{Regularity and decay of the traveling waves \texorpdfstring{$Q_{\beta}$}{Q_beta}}\label{subsection:regularity_decay}
%%%%%%%%%%%%%%%%%%%%%%%%%%%%%%%%%%%%%%%%%%%%%%%%%%%%%%%%%%%%%%%%%%%%%%%%%%%%%%%%%%%%%%%%%%%%%%%%%%%%%%%%%%%%%%%%%%%%%%%%%%%%%%%%%%%%%%%%%%%%%%%%%

In this part, we collect information on the regularity of the profiles $Q_\beta$. We show that after the transformations from Proposition \ref{prop:convergence_Qbeta}, they are uniformly bounded in $L^p(\heis)$ for all $p>2$ when $\beta$ is close to $1$. We deduce an uniform bound in $L^{\infty}(\heis)$, from which we estimate the decay of these profiles when the variable $(x,y,s)\in\heis$ tends to infinity. Finally, we show that $(Q_\beta)_{\beta}$ is bounded in $\dot{H}^k(\heis)$ for $\beta$ close to $1$ and fixed $k\geq 1$.

The operator $-\frac{\hlapl +\beta D_s}{1-\beta}$ admits an explicit fundamental solution \cite{stein_harmonic}.

\begin{thm}\label{thm:fundamental_sol}
Let
$$
m_\beta(x,y,s)=-\frac{1-\beta}{2\pi^2}\Gamma\left(\frac{1-\beta}{2}\right)\Gamma\left(\frac{1+\beta}{2}\right)
	\frac{1}{(x^2+y^2-is)^{\frac{1-\beta}{2}}(x^2+y^2+is)^{\frac{1+\beta}{2}}}.
$$
Then $m_\beta$ is a fundamental solution for $-\frac{\hlapl +\beta D_s}{1-\beta}$ : in the sense of distributions,
$$
-\frac{\hlapl +\beta D_s}{1-\beta}m_\beta=\delta_0.
$$
\end{thm}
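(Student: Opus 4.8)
The plan is to show that $m_\beta$ defines a tempered distribution on $\heis$ and that $-\frac{\hlapl+\beta D_s}{1-\beta}m_\beta=\delta_0$. Local integrability is the first point: near the origin $|m_\beta|\sim\rho^{-2}$, where $\rho=((x^2+y^2)^2+s^2)^{1/4}$ is the homogeneous norm, and since $-2>-Q$ with $Q=4$ the homogeneous dimension of $\heis$, we have $m_\beta\in L^1_{\mathrm{loc}}$, so the pairing with test functions makes sense. The structural observation that drives everything is homogeneity: under the anisotropic dilations $\delta_\lambda(x,y,s)=(\lambda x,\lambda y,\lambda^2 s)$ the profile $m_\beta$ is homogeneous of degree $-2$, and both $\hlapl$ and $D_s$ lower the degree of homogeneity by $2$. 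Hence the distribution $T:=-(\hlapl+\beta D_s)m_\beta$ is homogeneous of degree $-4=-Q$, and the argument splits into showing that $T$ is supported at the origin and that the resulting constant is the announced one.

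For the support statement I would verify that $m_\beta$ solves the homogeneous equation $(\hlapl+\beta D_s)m_\beta=0$ on $\heis\setminus\{0\}$ by direct differentiation. Setting $\rho=x^2+y^2$ and $w_\pm=\rho\pm is$, radial functions satisfy $\hlapl=\partial_\rho+\rho(\partial_\rho^2+\partial_s^2)$, together with $\partial_\rho^2+\partial_s^2=4\partial_{w_+}\partial_{w_-}$ and $D_s=\partial_{w_+}-\partial_{w_-}$. Applying $\hlapl+\beta D_s$ to $w_-^{-a}w_+^{-b}$ with $a,b=\tfrac{1\mp\beta}{2}$ produces only three monomials in $w_\pm$, and their cancellation reduces to an elementary algebraic identity in the exponents (using $a+b=1$). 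Once this is established, $T$ is a distribution supported at $\{0\}$; being homogeneous of degree exactly $-Q$, and since every derivative $\partial^\alpha\delta_0$ is homogeneous of strictly lower degree, $T$ is forced to be a constant multiple of the Dirac mass, $T=c\,\delta_0$.

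The remaining and genuinely technical step is to compute $c$ and check that it equals $1-\beta$, so that division by $1-\beta$ yields $\delta_0$; the $\Gamma$-factors in the normalization of $m_\beta$ are precisely what this computation produces. The cleanest route is to take the Fourier transform in $s$, turning the equation into a family of two-dimensional shifted harmonic-oscillator problems $[\mathcal H_\sigma-\beta\sigma]\,\widehat{m_\beta}(\cdot,\sigma)=\tfrac{1}{\sqrt{2\pi}}\,\delta_{(x,y)=0}$, where $\mathcal H_\sigma=-\tfrac14(\partial_x^2+\partial_y^2)+\sigma^2(x^2+y^2)$ has spectrum $\{(n+1)|\sigma|\}_{n\ge 0}$; since $(n+1)|\sigma|-\beta\sigma>0$ for $\beta\in(-1,1)$, the resolvent is given by the subordination formula $\int_0^{+\infty}e^{u\beta\sigma}e^{-u\mathcal H_\sigma}\,du$ with the explicit Mehler kernel, and inverting the $s$-transform recovers the closed form. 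Equivalently, and more directly tied to the Gamma factors, one writes $(r^2\mp is)^{-\nu}=\Gamma(\nu)^{-1}\int_0^{+\infty}t^{\nu-1}e^{-t(r^2\mp is)}\,dt$ to express $m_\beta$ as a superposition of the Gaussian modes $e^{-(t+\tau)(x^2+y^2)}e^{is(t-\tau)}$, on which $\hlapl+\beta D_s$ acts explicitly; integrating by parts in $(t,\tau)$ via $r^2e^{-(t+\tau)r^2}=-\partial_t e^{-(t+\tau)r^2}$ collapses the integral to boundary contributions localized at the origin, and the prefactor $-\tfrac{1-\beta}{2\pi^2}\Gamma(\tfrac{1-\beta}{2})\Gamma(\tfrac{1+\beta}{2})$ is exactly what normalizes the coefficient to one. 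This normalization is the main obstacle; the homogeneity and support arguments are routine, and as a fallback the explicit fundamental solution can be invoked from \cite{stein_harmonic}.
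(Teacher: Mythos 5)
Your skeleton — local integrability, homogeneity of degree $-Q=-4$, support at the origin, then identification of the constant — is a sound and genuinely self-contained strategy, and it is in fact more than the paper offers: the paper gives no proof of this theorem, it simply quotes the formula from \cite{stein_harmonic}. But the step your whole argument rests on, namely that $(\hlapl+\beta D_s)m_\beta=0$ on $\heis\setminus\{0\}$, is asserted rather than computed, and it is \emph{false} for the formula as stated. Carry out the computation you describe: with $\rho=x^2+y^2$ and $w_\pm=\rho\pm is$, on radial functions
$$
\hlapl+\beta D_s=(1+\beta)\,\partial_{w_+}+(1-\beta)\,\partial_{w_-}+2(w_++w_-)\,\partial_{w_+}\partial_{w_-},
$$
so that
$$
(\hlapl+\beta D_s)\bigl(w_+^{-p}w_-^{-q}\bigr)
=p\,\bigl(2q-1-\beta\bigr)\,w_+^{-p-1}w_-^{-q}
+q\,\bigl(2p-1+\beta\bigr)\,w_+^{-p}w_-^{-q-1}.
$$
For a nontrivial pair of exponents both coefficients vanish if and only if $p=\frac{1-\beta}{2}$ and $q=\frac{1+\beta}{2}$; the relation $p+q=1$ that you invoke is necessary but not sufficient, so the ``elementary algebraic identity'' you appeal to does not exist. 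The statement's $m_\beta$ carries the exponents the other way around ($\frac{1-\beta}{2}$ on $w_-=x^2+y^2-is$ and $\frac{1+\beta}{2}$ on $w_+$), and for that choice the right-hand side above equals $-\beta(1+\beta)\,w_+^{-p-1}w_-^{-q}+\beta(1-\beta)\,w_+^{-p}w_-^{-q-1}$, a nonzero function on $\heis\setminus\{0\}$ whenever $\beta\neq0$. Hence $T=-(\hlapl+\beta D_s)m_\beta$ is not supported at the origin, and the homogeneity argument never gets started.

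What an honest execution of your plan would have revealed is that the displayed formula is mis-transcribed from the reference; the rest of the paper is unaffected, since only $|m_\beta|$ is ever used afterwards. The function annihilated off the origin is $(x^2+y^2+is)^{-\frac{1-\beta}{2}}(x^2+y^2-is)^{-\frac{1+\beta}{2}}$ — consistent with the paper's own identity $-\hlapl=D_s$ on $V_0^+$ (the functions of $w_-$), the component on which the operator degenerates as $\beta\to1$ and along which its Green's function must concentrate. Your second gap is the normalization, which you sketch twice but never perform; doing it via your own $\Gamma$-integral route (regularize by $w_\pm\mapsto w_\pm+\varepsilon^2$, use $\int_{\R}(A+is)^{-\mu}(A-is)^{-\nu}\d s=2\pi\Gamma(\mu+\nu-1)/\bigl(\Gamma(\mu)\Gamma(\nu)(2A)^{\mu+\nu-1}\bigr)$ and $\d x\d y=\pi\d\rho$) gives
$$
-(\hlapl+\beta D_s)\Bigl[(x^2+y^2+is)^{-\frac{1-\beta}{2}}(x^2+y^2-is)^{-\frac{1+\beta}{2}}\Bigr]
=\frac{2\pi^2}{\Gamma\bigl(\frac{1-\beta}{2}\bigr)\Gamma\bigl(\frac{1+\beta}{2}\bigr)}\,\delta_0,
$$
so the correctly normalized fundamental solution of $-\frac{\hlapl+\beta D_s}{1-\beta}$ is $+\frac{1-\beta}{2\pi^2}\Gamma(\frac{1-\beta}{2})\Gamma(\frac{1+\beta}{2})(x^2+y^2+is)^{-\frac{1-\beta}{2}}(x^2+y^2-is)^{-\frac{1+\beta}{2}}$: the statement's formula with the power factors conjugated \emph{and} the overall sign flipped. (Already at $\beta=0$ the sign cannot be right: the statement would make the Green's function of the positive operator $-\hlapl$ equal to the negative function $-\frac{1}{2\pi}((x^2+y^2)^2+s^2)^{-1/2}$.) In short, your approach, carried out, would prove a corrected version of the theorem and would have caught the transcription error; but as written, the proposal's central cancellation is false, its constant is never computed, and its only complete path is the fallback citation — which is exactly what the paper does.
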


The proof of regularity for the $Q_\beta$ relies on the use of generalized Hölder's and Young's inequalities in weak Lebesgue spaces (see \cite{Vetois2016} for the strategy). We define the Lorentz spaces as follows.

\begin{mydef}[Lorentz spaces]
Fix $p\in[1,\infty)$ and $q\in[1,\infty]$. The Lorentz space $L^{p,q}(\heis)$ is the set of all functions $f:\heis\to\C$ with finite $L^{p,q}(\heis)$ norm, where
\begin{equation*}
\|f\|_{L^{p,q}}(\heis)
	:=
\begin{cases}
\left(p\int_0^{+\infty}R^{q-1}\lambda_3(\{u\in\heis; |f(u)|\geq R\})^{\frac{q}{p}}\d R\right)^{\frac{1}{q}} &\textnormal{ if } q<\infty
\\
\sup_{R>0}\left(R^p\lambda_3(\{u\in\heis; |f(u)|\geq R\})\right) &\textnormal{ if } q=\infty
\end{cases}.
\end{equation*}
\end{mydef}

The usual $L^p(\heis)$ spaces coincide with the $L^{p,p}(\heis)$ spaces. In general, $\|\cdot\|_{L^{p,q}(\heis)}$ is not a norm since the Minkowski inequality may fail. The following inclusion relations are true \cite{Stein1971}.

\begin{prop}[Growth of $L^{p,q}$ spaces]
Let $p\in[1,\infty)$ and $q_1,q_2\in[1,\infty]$ such that $q_1\leq q_2$. Then $L^{p,q_1}(\heis)\subset L^{p,q_2}(\heis)$.
\end{prop}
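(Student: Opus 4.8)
The plan is to reduce everything to a single monotonicity property of the distribution function
$$
d_f(R):=\lambda_3(\{u\in\heis;\ |f(u)|\geq R\}),
$$
which is nonincreasing in $R$, and then to show that finiteness of the $L^{p,q_1}$ quasinorm is preserved, which yields the set inclusion. Since the claim is vacuous when $q_1=q_2$, and $q_1=\infty$ forces $q_2=\infty$, I may assume $1\leq q_1<q_2\leq\infty$ with $q_1<\infty$.

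First I would extract a weak-type bound from finiteness of $\|f\|_{L^{p,q_1}}$. Fixing $R>0$ and using that $d_f$ is nonincreasing, I restrict the defining integral to $(0,R)$ and bound $d_f(r)\geq d_f(R)$ there, so that
$$
\|f\|_{L^{p,q_1}}^{q_1}\geq p\int_0^R r^{q_1-1}d_f(r)^{q_1/p}\d r\geq \frac{p}{q_1}\,d_f(R)^{q_1/p}\,R^{q_1}.
$$
Raising this to the power $p/q_1$ gives the pointwise estimate $d_f(R)\leq B\,R^{-p}$ for all $R>0$, with $B=\big(\tfrac{q_1}{p}\big)^{p/q_1}\|f\|_{L^{p,q_1}}^{p}$ depending only on $p,q_1$ and $\|f\|_{L^{p,q_1}}$. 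In the case $q_2=\infty$ this is exactly the bound $\sup_{R>0}R^{p}d_f(R)<\infty$ required by the definition of the $L^{p,\infty}$ quasinorm, so $f\in L^{p,\infty}(\heis)$ and the inclusion follows.

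For the remaining case $q_1<q_2<\infty$, I would feed this weak-type bound back into the defining integral for the $L^{p,q_2}$ quasinorm. Splitting $d_f(R)^{q_2/p}=d_f(R)^{q_1/p}\,d_f(R)^{(q_2-q_1)/p}$ and inserting $d_f(R)^{(q_2-q_1)/p}\leq B^{(q_2-q_1)/p}R^{-(q_2-q_1)}$ collapses the power of $R$ back to $q_1-1$:
$$
\|f\|_{L^{p,q_2}}^{q_2}=p\int_0^{+\infty}R^{q_2-1}d_f(R)^{q_2/p}\d R\leq B^{\frac{q_2-q_1}{p}}\,p\int_0^{+\infty}R^{q_1-1}d_f(R)^{q_1/p}\d R=B^{\frac{q_2-q_1}{p}}\|f\|_{L^{p,q_1}}^{q_1},
$$
which is finite, so $f\in L^{p,q_2}(\heis)$. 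There is no genuine obstacle here: the only ingredient is the monotonicity of $d_f$, used once to produce the weak bound and once more to interpolate the integrand, and the constants are explicit and play no further role.
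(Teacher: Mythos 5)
Your proof is correct and complete. For comparison: the paper itself gives no proof of this proposition at all — it is simply quoted from Stein's book \cite{Stein1971} — so there is no internal argument to match yours against; what you wrote is the classical proof behind that citation. Monotonicity of the distribution function $d_f$ first yields the weak-type estimate $R^{p}d_f(R)\leq (q_1/p)^{p/q_1}\|f\|_{L^{p,q_1}(\heis)}^{p}$, which under the paper's convention for the $L^{p,\infty}$ quasinorm (it uses $\sup_{R>0}R^{p}d_f(R)$, without the $p$-th root) is exactly the case $q_2=\infty$; then the splitting $d_f^{q_2/p}=d_f^{q_1/p}\,d_f^{(q_2-q_1)/p}$ together with that weak bound collapses the $q_2$-integral to the $q_1$-integral. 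Both constants check out ($\int_0^R r^{q_1-1}\d r=R^{q_1}/q_1$ in the first step, and the exponent of $R$ indeed drops from $q_2-1$ to $q_1-1$ in the second), the preliminary reduction to $1\leq q_1<q_2\leq\infty$ with $q_1<\infty$ is legitimate, and your argument in fact delivers a quantitative quasinorm inequality $\|f\|_{L^{p,q_2}(\heis)}\leq C(p,q_1,q_2)\|f\|_{L^{p,q_1}(\heis)}$, which is slightly more than the bare set inclusion the proposition asserts.
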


Note that the functions $m_\beta$, $\beta\in[0,1)$, are uniformly bounded in $L^{2,\infty}$. Indeed, let $R>0$, then
$$
\lambda_3(\{(x,y,s)\in\heis;\; |x|^2+|y|^2+|s|\leq R\})
	=R^2\lambda_3(\{(x',y',s')\in\heis;\; |x'|^2+|y'|^2+|s'|\leq 1\}),
$$
moreover, the constants
$$
c_\beta:=-\frac{1-\beta}{2\pi^2}\Gamma\left(\frac{1-\beta}{2}\right)\Gamma\left(\frac{1+\beta}{2}\right)
$$
are bounded for $\beta\in[0,1).$

\begin{mydef}[Convolution] The convolution product of two functions $f$ and $g$ on $\heis$ is defined by
$$
f\star g(u)=\int_{\heis}f(v)g(v^{-1}u)\d\lambda_3(v)=\int_{\heis}f(uv^{-1})g(v)\d\lambda_3(v).
$$
\end{mydef}

Note that the convolution in $\heis$ is not commutative, and that the relation
$$
P(f\star g)=f\star Pg
$$
holds for every left-invariant vector field $P$ in $\heis$ (for example, $P=-\frac{\hlapl +\beta D_s}{1-\beta}$), whereas in general $P(f\star g)\neq Pf\star g$.

Let us recall the generalizations of Hölder's and Young inequalities for Lorentz spaces.
%TODO ref

\begin{lem}[Hölder]\label{holder}
Let $p_1,p_2,p\in(0,\infty)$ and $q_1,q_2,q\in(0,\infty]$ such that
$$
\frac{1}{p_1}+\frac{1}{p_2}=\frac{1}{p} \text{ and } \frac{1}{q_1}+\frac{1}{q_2}\geq\frac{1}{q}
$$
with the convention $1/\infty=0$. Then there exists $C=C(p_1,p_2,p,q_1,q_2,q)$ such that for any $f\in L^{p_1,q_1}(\heis)$ and any $g\in L^{p_2,q_2}(\heis)$, we have $fg\in L^{p,q}(\heis)$ and
$$
\|fg\|_{L^{p,q}(\heis)}
	\leq C \|f\|_{L^{p_1,q_1}(\heis)}\|g\|_{L^{p_2,q_2}(\heis)}.
$$
\end{lem}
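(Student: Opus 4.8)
The plan is to reduce everything to the decreasing rearrangement, where the Lorentz quasi-norms linearize and products are easy to control, and then to invoke the ordinary Hölder inequality on the half-line $(0,+\infty)$.

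First I would record that the distribution-function definition of $\|\cdot\|_{L^{p,q}(\heis)}$ agrees with the rearrangement description. Writing $\mu_f(R)=\lambda_3(\{|f|\ge R\})$ for the distribution function and $f^*(t)=\inf\{R>0;\ \mu_f(R)\le t\}$ for the decreasing rearrangement on $(0,+\infty)$, the standard duality $f^*(t)>R\iff t<\mu_f(R)$ together with Fubini's theorem yields, for $q<\infty$,
$$
\int_0^{+\infty}\big(t^{1/p}f^*(t)\big)^q\,\frac{\d t}{t}=p\int_0^{+\infty}R^{q-1}\mu_f(R)^{q/p}\,\d R=\|f\|_{L^{p,q}(\heis)}^q,
$$
and similarly $\|f\|_{L^{p,\infty}(\heis)}$ is comparable to $\sup_{t>0}t^{1/p}f^*(t)$. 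Hence I may work exclusively with the rearrangement integrals from now on.

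The key pointwise input is the submultiplicativity of the rearrangement, $(fg)^*(t_1+t_2)\le f^*(t_1)\,g^*(t_2)$, which holds because $|f|\le f^*(t_1)$ off a set of measure at most $t_1$ and $|g|\le g^*(t_2)$ off a set of measure at most $t_2$; taking $t_1=t_2=t$ gives $(fg)^*(2t)\le f^*(t)g^*(t)$. I would then treat first the borderline case $1/q_1+1/q_2=1/q$ with $q<\infty$: after the substitution $t\mapsto 2s$ and this bound, the integrand splits as $(s^{1/p_1}f^*(s))^q(s^{1/p_2}g^*(s))^q$ because $1/p=1/p_1+1/p_2$, and the ordinary Hölder inequality on $(0,+\infty)$ with the conjugate exponents $q_1/q$ and $q_2/q$ (conjugate precisely because $q/q_1+q/q_2=1$) produces $\|f\|_{L^{p_1,q_1}(\heis)}^q\,\|g\|_{L^{p_2,q_2}(\heis)}^q$. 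Taking $q$-th roots gives the claim with $C=2^{1/p}$.

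Finally, for the general hypothesis $1/q_1+1/q_2\ge 1/q$ I would reduce to the borderline case: setting $1/\tilde q:=1/q_1+1/q_2$ one has $\tilde q\le q$, so the embedding $L^{p,\tilde q}(\heis)\subset L^{p,q}(\heis)$ from the growth proposition gives $\|fg\|_{L^{p,q}(\heis)}\le C\|fg\|_{L^{p,\tilde q}(\heis)}$, and the borderline estimate applied with $\tilde q$ finishes the proof; the cases where an exponent equals $\infty$ (in particular $q=\infty$) follow directly from $(fg)^*(2t)\le f^*(t)g^*(t)$. The only genuinely delicate points are the two preliminary facts, namely the identity between the two forms of the quasi-norm and the submultiplicativity of $(\cdot)^*$, both of which rest on the right-continuity and the infimum characterization of $\mu_f$ and $f^*$; once these are in place the remainder is a routine application of Hölder on the half-line.
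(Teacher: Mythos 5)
Your proof is correct, but there is nothing in the paper to compare it with: the paper states this lemma without proof, quoting it (together with the companion Young inequality) as a classical fact about Lorentz spaces, essentially O'Neil's inequality. Your argument is the standard self-contained proof of that fact, and each step is sound: the identity $\int_0^{+\infty}\bigl(t^{1/p}f^*(t)\bigr)^q\,\frac{\mathrm{d}t}{t}=p\int_0^{+\infty}R^{q-1}\mu_f(R)^{q/p}\,\mathrm{d}R$ follows from Fubini exactly as you indicate (the paper's use of $\{|f|\ge R\}$ rather than $\{|f|>R\}$ changes the distribution function only for countably many $R$, hence not the integral); the submultiplicativity $(fg)^*(t_1+t_2)\le f^*(t_1)g^*(t_2)$ is proved correctly by the union bound on superlevel sets; in the borderline case $1/q_1+1/q_2=1/q$ the exponents $q_1/q$ and $q_2/q$ are indeed at least $1$ and conjugate, so Hölder on $(0,+\infty)$ applies and yields the constant $2^{1/p}$; and the reduction of the general case via $L^{p,\tilde q}\subset L^{p,q}$ with $1/\tilde q=1/q_1+1/q_2$ is legitimate. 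Two small points deserve care. First, the paper's definition in the case $q=\infty$ reads $\sup_{R>0}R^{p}\lambda_3(\{|f|\ge R\})$, which is the $p$-th power of the usual weak quasi-norm $\sup_{t>0}t^{1/p}f^*(t)$ rather than a constant multiple of it; with that literal (non-homogeneous, evidently misprinted) normalization the product estimate is not of the stated multiplicative form, so your reading via the standard normalization is the right one, but it is worth flagging. Second, the growth proposition you invoke is stated in the paper only for $p\in[1,\infty)$ and $q_1,q_2\in[1,\infty]$, whereas the lemma allows all exponents in $(0,\infty)$ resp. $(0,\infty]$; the embedding does hold in that generality, by the same one-line argument comparing $t^{1/p}f^*(t)$ with $\bigl(\int_0^t(s^{1/p}f^*(s))^{q_1}\frac{\mathrm{d}s}{s}\bigr)^{1/q_1}$, but you should prove it in that range rather than cite the proposition as stated.
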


\begin{lem}[Young]\label{young}
Let $p_1,p_2,p\in(1,\infty)$ and $q_1,q_2,q\in(0,\infty]$ such that
$$
\frac{1}{p_1}+\frac{1}{p_2}=\frac{1}{p}+1 \text{ and } \frac{1}{q_1}+\frac{1}{q_2}\geq\frac{1}{q}
$$
with the convention $1/\infty=0$. Then there exists $C=C(p_1,p_2,p,q_1,q_2,q)$ such that for any $f\in L^{p_1,q_1}(\heis)$ and any $g\in L^{p_2,q_2}(\heis)$, we have $f\star g\in L^{p,q}(\heis)$ and
$$
\|f\star g\|_{L^{p,q}(\heis)}
	\leq C \|f\|_{L^{p_1,q_1}(\heis)}\|g\|_{L^{p_2,q_2}(\heis)}.
$$
\end{lem}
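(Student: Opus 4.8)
The plan is to reduce the estimate to one–dimensional inequalities for decreasing rearrangements, following O'Neil's classical treatment of convolution on $L^{p,q}$ spaces; the only feature of $\heis$ that will enter is its \emph{unimodularity}, the Haar measure $\lambda_3$ being bi-invariant and preserved by inversion. Write $f^*$ for the decreasing rearrangement of $f$ on $(0,+\infty)$ and $f^{**}(t)=\frac1t\int_0^t f^*$. The quasinorm of the statement is equivalent to $\|f\|_{L^{p,q}(\heis)}\approx\big(\int_0^{+\infty}(t^{1/p}f^*(t))^q\,\tfrac{dt}{t}\big)^{1/q}$ and, because $p>1$, also to the same quantity with $f^{**}$ replacing $f^*$; this last equivalence is Hardy's inequality and is exactly where $p>1$ is needed. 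Replacing $f,g$ by $|f|,|g|$ and using $|f\star g|\le|f|\star|g|$, I may assume $f,g\ge 0$.

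First I would prove the pointwise rearrangement inequality of O'Neil,
\[
(f\star g)^{**}(t)\ \le\ t\,f^{**}(t)\,g^{**}(t)\ +\ \int_t^{+\infty}f^*(s)\,g^*(s)\,ds .
\]
Its two ingredients are both available on $\heis$: the Hardy–Littlewood inequality $\int_\heis |f|\,|g|\le\int_0^{+\infty}f^*g^*$ (valid on any measure space), the $L^1$ bound $\|f\star g\|_{L^1}\le\|f\|_{L^1}\|g\|_{L^1}$, and the endpoint $\|f\star g\|_{L^\infty}\le\int_0^{+\infty}f^*g^*$. The last holds because, for fixed $u$, the map $v\mapsto v^{-1}u$ preserves $\lambda_3$, so $v\mapsto g(v^{-1}u)$ is equimeasurable with $g$ and Hardy–Littlewood applies to $\int_\heis f(v)g(v^{-1}u)\,dv$. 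O'Neil's level–set truncation argument (split $f$ and $g$ at heights $f^*(t),g^*(t)$ and estimate the four resulting convolutions by the $L^1$ and $L^\infty$ bounds) then goes through verbatim; it uses only left-invariance of the norm and inversion-invariance of the measure, so the non-commutativity of $\heis$ is harmless.

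It remains to feed this into the Lorentz quasinorm. By the nesting $L^{p,r}(\heis)\subset L^{p,q}(\heis)$ for $r\le q$ (the growth proposition above), it suffices to treat the endpoint case $\frac1q=\frac1{q_1}+\frac1{q_2}$, the general hypothesis $\frac1{q_1}+\frac1{q_2}\ge\frac1q$ following by inclusion. Taking the $L^q(\tfrac{dt}{t})$ quasinorm of $t^{1/p}(f\star g)^{**}$ and inserting the displayed inequality splits the bound into two terms. For the first, the exponent relation $\frac1p+1=\frac1{p_1}+\frac1{p_2}$ gives $t^{1/p}\cdot t=t^{1/p_1}t^{1/p_2}$, and generalized Hölder on $(0,+\infty)$ bounds it by $\|t^{1/p_1}f^{**}\|_{L^{q_1}(dt/t)}\|t^{1/p_2}g^{**}\|_{L^{q_2}(dt/t)}\approx\|f\|_{L^{p_1,q_1}}\|g\|_{L^{p_2,q_2}}$. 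For the second term the tail operator $h\mapsto\int_t^{+\infty}h$ is controlled by the weighted Hardy inequality (valid since $\frac1p>0$), reducing it to $\|t^{1+1/p}f^*g^*\|_{L^q(dt/t)}$; the same exponent splitting and Hölder yield the same product of Lorentz norms.

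The step requiring the most care, and the only genuinely group-dependent one, is the pointwise O'Neil inequality: I must verify that the rearrangement estimates behind it are insensitive to non-commutativity, which is precisely what the unimodularity of $\heis$ guarantees, since then translations and inversion are measure-preserving and both the Hardy–Littlewood and classical Young inequalities hold. Once that is secured, everything else is one-dimensional calculus on $(0,+\infty)$—Hölder, the nesting of Lorentz spaces in the second index, and the two weighted Hardy inequalities—with the hypotheses $p_1,p_2,p>1$ entering only through the $f^{**}\!\leftrightarrow f^*$ equivalence and the Hardy bound for the tail integral.
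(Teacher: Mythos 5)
Your argument is correct, but note that the paper itself does not prove this lemma: it is \emph{recalled} as a known generalization of Young's inequality to Lorentz spaces (with references in the spirit of O'Neil's theorem and Stein--Weiss), so there is no internal proof to compare against. What you supply is precisely the classical O'Neil argument, and your identification of the only group-theoretic input is accurate: since $\heis$ is unimodular (Lebesgue measure is bi-invariant and is preserved by the inversion $(x,y,s)^{-1}=(-x,-y,-s)$), the maps $v\mapsto v^{-1}u$ are measure-preserving, so the endpoint bounds $\|f\star g\|_{L^\infty(\heis)}\le\int_0^{+\infty}f^*g^*$ and $\|f\star g\|_{L^1(\heis)}\le\|f\|_{L^1(\heis)}\|g\|_{L^1(\heis)}$, hence O'Neil's rearrangement inequality, go through verbatim; after that everything is indeed one-dimensional calculus. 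Two small points deserve attention. First, since the statement allows $q_1,q_2,q\in(0,\infty]$, the Hardy inequalities you invoke (both the equivalence $f^{**}\leftrightarrow f^*$ and the bound for the tail operator $h\mapsto\int_t^{+\infty}h$) fail for general functions when $q<1$; you must use their versions for \emph{non-increasing} functions, which is legitimate here because $f^*$, $g^*$ and $f^*g^*$ are non-increasing, and it is this monotone Hardy inequality that again requires $p_1,p_2,p>1$ (and $p<\infty$ for the tail operator). Second, your reduction to the endpoint case $\frac{1}{q}=\frac{1}{q_1}+\frac{1}{q_2}$ via the nesting $L^{p,r}(\heis)\subset L^{p,q}(\heis)$ for $r\le q$ is fine and matches the growth proposition stated just before the lemma. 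With those provisos, the proof is complete and is the standard route one would take to justify the paper's citation.
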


Theorem \ref{thm:fundamental_sol} implies the following formula for $Q_\beta$.% It will be proved in Appendix \ref{appendice1}.

\begin{cor}
For all $\beta\in(-1,1)$,
$$
Q_\beta=(|Q_\beta|^2Q_\beta)\star m_\beta.
$$
\end{cor}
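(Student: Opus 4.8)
The plan is to read off the identity from the defining equation \eqref{eq:Hbeta} for $Q_\beta$ together with the fundamental solution property of $m_\beta$ (Theorem \ref{thm:fundamental_sol}), and then to use the injectivity of the operator $P := -\frac{\hlapl + \beta D_s}{1-\beta}$ to identify the two expressions.

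First I would make sense of the right-hand side as a genuine function. Set $g := |Q_\beta|^2 Q_\beta$. Since $Q_\beta \in \dot H^1(\heis) \hookrightarrow L^4(\heis)$, Hölder's inequality gives $g \in L^{4/3}(\heis)$. Combining $g \in L^{4/3}(\heis) = L^{4/3,4/3}(\heis)$ with the bound $m_\beta \in L^{2,\infty}(\heis)$ noted above and the Young inequality in Lorentz spaces (Lemma \ref{young}) --- with exponents $p_1 = 4/3$, $p_2 = 2$, so that $\frac{1}{p} = \frac{1}{4}$ --- shows that $g \star m_\beta$ is well defined and belongs to $L^{4,4/3}(\heis) \subset L^4(\heis)$. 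Thus both sides of the claimed equality are elements of $L^4(\heis)$.

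The computational core is then immediate. As $P$ is a left-invariant operator, the identity $P(f \star h) = f \star Ph$ recalled above applies, and Theorem \ref{thm:fundamental_sol} yields
\[
P(g \star m_\beta) = g \star (P m_\beta) = g \star \delta_0 = g .
\]
Since \eqref{eq:Hbeta} reads $P Q_\beta = g$, the difference $w := (g \star m_\beta) - Q_\beta \in L^4(\heis)$ satisfies $P w = 0$.

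It remains to show $w = 0$, and this is where I expect the only real care to be needed. One route is to observe that $P$ is coercive on $\dot H^1(\heis)$ by the equivalence of norms \eqref{eq:equivalence_norms}, so by Lax--Milgram it is an isomorphism $\dot H^1(\heis) \to \dot H^{-1}(\heis)$; hence $Q_\beta$ is the unique $\dot H^1(\heis)$ solution of $Pu = g \in \dot H^{-1}(\heis)$. To bridge the gap between $L^4$ and $\dot H^1$, I would argue directly on the components: after a Fourier transform in $s$ and the Hermite decomposition of part \ref{subsection:hermite}, $P$ acts on $V_n^\pm$ as multiplication by $\frac{(n+1)|\sigma| - \beta\sigma}{1-\beta}$, which is strictly positive for $\sigma \neq 0$ and every $n \geq 0$ when $\beta \in (-1,1)$. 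Therefore $P w = 0$ forces each component $w_n^\pm$ to vanish almost everywhere, whence $w = 0$ and $Q_\beta = (|Q_\beta|^2 Q_\beta) \star m_\beta$. The main obstacle is thus not the formal identity but justifying this injectivity step for the distribution $w$ at the regularity actually available (here $L^4$), rather than assuming a priori that $g \star m_\beta \in \dot H^1(\heis)$.
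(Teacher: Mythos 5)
Your proposal is correct and follows the same route the paper intends: the corollary is stated there without proof, as an immediate consequence of Theorem \ref{thm:fundamental_sol} together with the left-invariance identity $P(f\star g)=f\star Pg$ recalled just before it, which is exactly your core computation, and your additions (well-definedness of the convolution via Lemma \ref{young} with the uniform $L^{2,\infty}(\heis)$ bound on $m_\beta$, and the injectivity of $P=-\frac{\hlapl+\beta D_s}{1-\beta}$) supply precisely the details the paper leaves implicit. One point to make explicit in your injectivity step: since the symbol $\frac{(n+1)|\sigma|-\beta\sigma}{1-\beta}$ vanishes at $\sigma=0$, the equation $Pw=0$ for the tempered distribution $w\in L^4(\heis)$ only forces the partial Fourier transform $\widehat{w}$ to be supported in $\{\sigma=0\}$, so that $w$ is a polynomial in $s$ with distributional coefficients in $(x,y)$; one then uses $w\in L^4(\heis)$ to rule out any nonzero such polynomial, a one-line addition that completes your argument.
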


Let us now prove the boundedness of $Q_\beta$ in $L^p(\heis)$, $p>2$.
\begin{thm}\label{thm:bound_Lp}
For all $p>2$, there exist $C_p>0$ and $\beta_*(p)\in(0,1)$ such that for all $\beta\in(\beta_*(p),1)$, $\|Q_\beta\|_{L^p(\heis)}\leq C_p$.
\end{thm}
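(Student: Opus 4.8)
The plan is to run a bootstrap on the integrability of $Q_\beta$ from the convolution representation $Q_\beta=(|Q_\beta|^2Q_\beta)\star m_\beta$, using the uniform bound $\sup_{\beta}\|m_\beta\|_{L^{2,\infty}(\heis)}<\infty$ established above, the pointwise domination $|f\star g|\le |f|\star|g|$, and the Lorentz Hölder and Young inequalities (Lemmas \ref{holder} and \ref{young}). The starting point is a uniform $L^4$ bound: by Proposition \ref{prop:convergence_Qbeta}, $Q_\beta\to Q_+$ in $\dot H^1(\heis)\hookrightarrow L^4(\heis)$, so $K:=\sup_{\beta}\|Q_\beta\|_{L^4(\heis)}<\infty$ for $\beta$ close to $1$.

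The naive iteration fails, and this is the main obstacle: the kernel sits exactly at the critical endpoint. From $Q_\beta\in L^4$ one gets $|Q_\beta|^2Q_\beta\in L^{4/3}$ by Hölder, and Young against $m_\beta\in L^{2,\infty}$ returns only $L^4$, since $\tfrac34+\tfrac12-1=\tfrac14$. Tracking Lorentz second indices refines $L^{4,4}$ down to $L^{4,1}$ but never moves the first index off $4$, which is a fixed point of the convolution scaling. I expect to break this criticality by a Brezis--Kato truncation that converts the $\dot H^1$-convergence into genuine smallness.

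Concretely, since $Q_\beta\to Q_+$ in $L^4$, the potentials $V_\beta:=|Q_\beta|^2$ converge in $L^2(\heis)$ and are therefore uniformly integrable for $\beta$ near $1$: for each $\varepsilon>0$ there is a level $M=M(\varepsilon)$, independent of $\beta$, with $V_\beta=V_\beta^{(1)}+V_\beta^{(2)}$, where $V_\beta^{(1)}:=V_\beta\un_{V_\beta\le M}\in L^\infty\cap L^2$ satisfies $\|V_\beta^{(1)}\|_{L^\infty(\heis)}\le M$, and $\|V_\beta^{(2)}\|_{L^2(\heis)}\le\varepsilon$. Writing $\mathcal T^{(i)}_\beta f:=(V_\beta^{(i)}f)\star m_\beta$, Lemmas \ref{holder} and \ref{young} give, for every fixed $p\in(2,\infty)$, that $\mathcal T^{(2)}_\beta$ is bounded on $L^p(\heis)$ with norm $\lesssim_p\|V_\beta^{(2)}\|_{L^2}\le\varepsilon$; choosing $\varepsilon$ small (which fixes $\beta_*(p)$) makes $I-\mathcal T^{(2)}_\beta$ invertible on $L^p$ with uniformly bounded inverse. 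On the other hand, using $V^{(1)}_\beta\in L^a$ for $a\in[2,4)$ (with $\|V^{(1)}_\beta\|_{L^a}\lesssim M^{(a-2)/a}K^{4/a}$), the operator $\mathcal T^{(1)}_\beta$ maps $L^4(\heis)$ into $L^p(\heis)$ for every $p\in(4,\infty)$ with norm controlled by $M$ and $K$. Rewriting the representation as $(I-\mathcal T^{(2)}_\beta)Q_\beta=\mathcal T^{(1)}_\beta Q_\beta$, the right-hand side lies in $L^4\cap L^p$; since $I-\mathcal T^{(2)}_\beta$ is invertible on $L^4$, on $L^p$, and on $L^4\cap L^p$, the unique $L^4$-solution $Q_\beta$ coincides with the $L^4\cap L^p$-solution, whence $Q_\beta\in L^p(\heis)$ and $\|Q_\beta\|_{L^p}\le C_p$ uniformly. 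Pushing the bootstrap to large $p$ then yields a uniform $L^\infty$ bound through the endpoint of the convolution estimate.

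Two points deserve care. The uniformity in $\beta$ rests entirely on the uniform integrability of $(V_\beta)_\beta$ in $L^2$, itself a consequence of the $\dot H^1$-convergence $Q_\beta\to Q_+$; this is what lets a single truncation level $M$ work for all $\beta$ near $1$. Secondly, the convolution cannot lower the exponent below $4$, so the range $2<p<4$ is not reached by this scheme directly; it is recovered \emph{a posteriori} by interpolating the uniform $L^4$ and $L^\infty$ bounds against the weak-$L^2$ integrability of the limit $Q_+\in L^{2,\infty}(\heis)$, which matches the pointwise decay $|Q_\beta(u)|\lesssim\rho(u)^{-2}$ proved in the next part, after which $L^{2,\infty}\cap L^\infty\subset L^p(\heis)$ for all $p>2$.
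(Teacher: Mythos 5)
Your truncation-and-Neumann-series scheme is genuinely different from the paper's argument (the paper proceeds by contradiction, with near-extremal dual test functions $\varphi_n$ and a maximality trick to absorb the bad term), and for $p\geq 4$ your proof is correct: the uniform smallness of $\|V_\beta\un_{V_\beta>M}\|_{L^2(\heis)}$ for a single level $M$ does follow from the $L^2$-convergence $|Q_\beta|^2\to|Q_+|^2$ (Proposition \ref{prop:convergence_Qbeta} plus the Folland--Stein embedding), the bounds $\|\mathcal{T}^{(2)}_\beta\|_{L^p\to L^p}\lesssim_p\|V^{(2)}_\beta\|_{L^2}$ and $\mathcal{T}^{(1)}_\beta:L^4(\heis)\to L^p(\heis)$ for $p\in[4,\infty)$ are legitimate applications of Lemmas \ref{holder} and \ref{young}, and the identification of $Q_\beta$ with the unique solution of $(I-\mathcal{T}^{(2)}_\beta)u=\mathcal{T}^{(1)}_\beta Q_\beta$ in $L^4\cap L^p$ is sound and gives bounds uniform in $\beta$.

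The genuine gap is the range $2<p<4$, where your argument is circular. You recover this range from the pointwise decay $|Q_\beta(u)|\lesssim(1+\rho(u)^2)^{-1}$ ``proved in the next part''; but in the paper that decay theorem is itself a consequence of the present theorem in exactly the range you are missing: its near-region term is bounded by $\frac{|c_\beta|}{R^2}\|Q_\beta\|_{L^3(\heis)}^3$, i.e.\ the case $p=3$, and its far-region term uses the uniform $L^{4,4/3}(\heis)$ bound of Corollary \ref{cor:bound_Lpq}, which by real interpolation requires uniform $L^p$ bounds with exponents on \emph{both} sides of $4$. Moreover, as your own fixed-point discussion shows, no iteration of your operators can cross $p=4$ from above, because the amplitude truncation $V_\beta\un_{V_\beta\leq M}$ lies only in $L^2\cap L^\infty$ and never in $L^\sigma$ for $\sigma<2$, so $\mathcal{T}^{(1)}_\beta$ cannot land below $L^4$. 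The missing idea --- and the one the paper's proof is actually built on --- is to make the good part of the potential integrable at \emph{low} exponents by localizing it in space, not just in amplitude: take $V^{(1)}_\beta=V_\beta\un_{V_\beta\leq M}\un_{B_R}$ (or, as the paper does, a fixed $f_k\in\classeC_c^{\infty}(\heis)$ approximating $|Q_+|^2$ in $L^2$), keeping $\|V^{(2)}_\beta\|_{L^2}$ small by tightness plus uniform integrability. Then $V^{(1)}_\beta\in L^\sigma(\heis)$ for every $\sigma>1$, and Lemmas \ref{holder} and \ref{young} give $\mathcal{T}^{(1)}_\beta:L^4(\heis)\to L^p(\heis)$ with $\frac{1}{p}=\frac{1}{\sigma}-\frac{1}{4}$, which covers all $p>2$ as $\sigma\downarrow\frac43$. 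With that single change your argument closes the gap and becomes a complete, arguably more direct, alternative to the paper's proof.
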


\begin{proof}
We proceed by contradiction. Fix $p>2$. Assume that there exists a sequence $(\beta_n)_{n\in\N}$ in $(0,1)$ converging to $1$ and such that $\|Q_{\beta_n}\|_{L^p(\heis)}\in [n,+\infty]$ for all $n\in\N$. By duality and density of $\classeC_c^{\infty}(\heis)$ in $L^q(\heis)$, $\frac{1}{p}+\frac{1}{q}=1$, there exists a sequence $(\varphi_n)_{n\in\N}$ in $ L^q(\heis)\cap L^{\frac{4}{3}}(\heis)$ such that $\|\varphi_n\|_{L^q(\heis)}\leq 1$ for all $n$ and
$$
\left|\int_{\heis}Q_{\beta_n}\varphi_n\d\lambda_3\right|
	\longrightarroww{n\to+\infty}{} +\infty.
$$
Let us define
$$
K_n:=\big\{\varphi\in L^q(\heis)\cap L^{\frac{4}{3}}(\heis);\; \|\varphi\|_{L^q(\heis)}\leq \|\varphi_n\|_{L^q(\heis)}\textnormal{ and }\|\varphi\|_{L^{\frac{4}{3}}(\heis)}\leq \|\varphi_n\|_{L^{\frac{4}{3}}(\heis)}\big\}.
$$
Since $Q_{\beta_n}\in L^4(\heis)$, the supremum over functions $\varphi\in K_n$ of $\int_{\heis} Q_{\beta_n}\varphi\d\lambda_3$ is finite. Thus,  if we change $\varphi_n$ to an other function $\varphi$ from $K_n$ where $\int_{\heis}Q_{\beta_n}\varphi\d\lambda_3$ is closer to this supremum, the $K_n$ corresponding to $\varphi$ and thus the new supremum will decrease. We can therefore assume up to changing $\varphi_n$ that
$$
2\left|\int_{\heis} Q_{\beta_n}\varphi_n\d\lambda_3\right|
	\geq \sup_{\varphi\in K_n} \left|\int_{\heis} Q_{\beta_n}\varphi\d\lambda_3\right|.
$$

By density, let $(f_k)_{k\in\N}$ be a sequence in $\classeC_c^{\infty}(\heis)$ such that $\||Q_+|^2-f_k\|_{L^2(\heis)}\longrightarroww{k\to+\infty}{} 0$. Denote, for $k,n\in\N$, $g_{n,k}:=|Q_{\beta_n}|^2-f_k$. We will use the fact that the functions $g_{n,k}$ have a small norm in $L^2(\heis)$ when $k$ and $n$ are large enough thanks to Proposition \ref{prop:convergence_Qbeta}.
Let us cut
\begin{align*}
\int_{\heis}Q_{\beta_n}\varphi_n\d\lambda_3
	&=\int_{\heis}\big((|Q_{\beta_n}|^2Q_{\beta_n})\star m_{\beta_n}\big)\varphi_n\d\lambda_3\\
	&=\int_{\heis}\big((f_kQ_{\beta_n})\star m_{\beta_n}\big)\varphi_n\d\lambda_3
	 +\int_{\heis}\big((g_{n,k}Q_{\beta_n})\star m_{\beta_n}\big)\varphi_n\d\lambda_3
\end{align*}
in order to evaluate these terms separately.

Concerning the first term in the right hand side, using Lemmas \ref{holder} and \ref{young},
\begin{align*}
\left|
\int_{\heis}\big((f_kQ_{\beta_n})\star m_{\beta_n}\big)\varphi_n\d\lambda_3
\right|
	&\leq \|\big((f_kQ_{\beta_n})\star m_{\beta_n}\big)\varphi_n\|_{L^{1,1}(\heis)}\\
	&\leq C_1(p) \|(f_kQ_{\beta_n})\star m_{\beta_n}\|_{L^{p,p}(\heis)}\|\varphi_n\|_{L^{q,q}(\heis)}\\
	&\leq C_2(p) \|f_kQ_{\beta_n}\|_{L^{\frac{2p}{2+p},p}(\heis)}\|m_{\beta_n}\|_{L^{2,\infty}(\heis)}\|\varphi_n\|_{L^q(\heis)}
\end{align*}
(we used that $\frac{2p}{2+p}>1$ since $p>2$).
Using again Lemma \ref{holder}, choosing any $\tau\in(0,+\infty)$ such that $\frac{1}{\tau}\geq\frac{4-p}{4p}$ and $\sigma=\frac{4p}{4+p}>1$, we get
$$
\left|
\int_{\heis}\big((f_kQ_{\beta_n})\star m_{\beta_n}\big)\varphi_n\d\lambda_3
\right|
	\leq C_3(p) \|f_k\|_{L^{\sigma,\tau}(\heis)}\|Q_{\beta_n}\|_{L^{4,4}(\heis)}\|m_{\beta_n}\|_{L^{2,\infty}(\heis)}\|\varphi_n\|_{L^q(\heis)}.
$$
We know that $\|\varphi_n\|_{L^q(\heis)}\leq 1$ for all $n$, that $\|m_{\beta_n}\|_{L^{2,\infty}(\heis)}$ is bounded independently of $n$ and that $(Q_{\beta})_{\beta\in[0,1)}$ is bounded in $L^4(\heis)$, so there exists $C_4(p)>0$ such that for all $k,n\in\N$,
$$
\int_{\heis}\big((f_kQ_{\beta_n})\star m_{\beta_n}\big)\varphi_n\d\lambda_3
	\leq C_4(p)\|f_k\|_{L^{\sigma,\tau}(\heis)}.
$$

Applying Fubini's theorem to the second term in the right hand side,
\begin{align*}
\int_{\heis}\big((g_{n,k}Q_{\beta_n})\star m_{\beta_n}\big)\varphi_n\d\lambda_3
	&=\int_{\heis}\int_{\heis}(g_{n,k}Q_{\beta_n})(v) m_{\beta_n}(v^{-1}u)\varphi_n(u) \d\lambda_3(v)\d\lambda_3(u)\\
	&=\int_{\heis}\int_{\heis}(g_{n,k}Q_{\beta_n})(v) m_{\beta_n}(v^{-1}u)\varphi_n(u) \d\lambda_3(u)\d\lambda_3(v)\\
	&=\int_{\heis}\int_{\heis}(g_{n,k}Q_{\beta_n})(v) \overset{\vee}{m}_{\beta_n}(u^{-1}v)\varphi_n(u) \d\lambda_3(u)\d\lambda_3(v)\\
	&=\int_{\heis}(g_{n,k}Q_{\beta_n})(v) (\varphi_n\star\overset{\vee}{m}_{\beta_n})(v)\d\lambda_3(v),
\end{align*}
where
\begin{align*}
\overset{\vee}{m}_{\beta}(x,y,s)
	&=m_{\beta}((x,y,s)^{-1})\\
	&=-\frac{1-\beta}{2\pi^2}\Gamma\left(\frac{1-\beta}{2}\right)\Gamma\left(\frac{1+\beta}{2}\right)
	\frac{1}{(x^2+y^2+is)^{\frac{1-\beta}{2}}(x^2+y^2-is)^{\frac{1+\beta}{2}}}
\end{align*}
has the same bounds in $L^{2,\infty}(\heis)$ as $m_{\beta}.$

But thanks to Lemmas \ref{holder} and \ref{young},
\begin{align*}
\|g_{n,k}(\varphi_n\star\overset{\vee}{m}_{\beta_n})\|_{L^q(\heis)}
	&\leq C'_1(p) \|g_{n,k}\|_{L^{2,\infty}(\heis)}\|\varphi_n\star\overset{\vee}{m}_{\beta_n}\|_{L^{\frac{2p}{p-2},q}(\heis)}\\
	&\leq C'_2(p) \|g_{n,k}\|_{L^{2,\infty}(\heis)}\|\varphi_n\|_{L^{q,q}(\heis)}\|\overset{\vee}{m}_{\beta_n}\|_{L^{2,\infty}(\heis)}.
\end{align*}
Note that the assumption $p>2$ ensures that $\frac{2p}{p-2}\in(1,\infty).$

Moreover, this last inequality still holds with the same reasoning when replacing $p$ by $4$ and its conjugate exponent $q$ by $\frac{4}{3}.$ Fix
$$
C=\max\big(C'_2(p),C'_2(4)\big)\times\sup_{\beta\in[0,1)}\|\overset{\vee}{m}_{\beta}\|_{L^{2,\infty}(\heis)}.
$$
Then, when $g_{n,k}$ is non-zero in $L^2(\heis)$, the function
$$
\psi_{n,k}:=\frac{1}{C\|g_{n,k}\|_{L^{2,\infty}(\heis)}}g_{n,k}(\varphi_n\star\overset{\vee}{m}_{\beta_n})
$$
belongs to $K_n$.
Therefore by definition of $\varphi_n$, for all $k,n\in\N$,
$$
\left|
\int_{\heis}Q_{\beta_n} g_{n,k}(\varphi_n\star\overset{\vee}{m}_{\beta_n})\d\lambda_3
\right|
	\leq 2C\|g_{n,k}\|_{L^{2,\infty}(\heis)}\left|\int_{\heis}Q_{\beta_n}\varphi_n\d\lambda_3\right|.
$$

But
\begin{align*}
\|g_{n,k}\|_{L^{2,\infty}(\heis)}
	&\leq \||Q_{\beta_n}|^2-f_k\|_{L^{2}(\heis)}\\
	&\leq\||Q_{\beta_n}|^2-|Q_+|^2\|_{L^{2}(\heis)}+\||Q_+|^2-f_k\|_{L^{2}(\heis)},
\end{align*}
and this quantity converges to $0$ as $\min(n,k)$ goes to $+\infty$ thanks to Proposition \ref{prop:convergence_Qbeta} and the construction of $(f_k)_{k\in\N}$. Therefore, there exists $n_0$ such that, for all $k\geq n_0$ and $n\geq n_0$, $2C\|g_{n,k}\|_{L^{2,\infty}(\heis)}\leq \half$, or in other words,
$$
\left|
\int_{\heis}Q_{\beta_n} g_{n,k}(\varphi_n\star\overset{\vee}{m}_{\beta_n})\d\lambda_3
\right|
	\leq \half\left|\int_{\heis}Q_{\beta_n}\varphi_n\d\lambda_3\right|.
$$ Since
$$
\int_{\heis}Q_{\beta_n}\varphi_n\d\lambda_3=
	\int_{\heis}\big((f_kQ_{\beta_n})\star m_{\beta_n}\big)\varphi_n\d\lambda_3
	 +\int_{\heis}Q_{\beta_n} g_{n,k}(\varphi_n\star\overset{\vee}{m}_{\beta_n})\d\lambda_3,
$$
we get that for all $k\geq n_0$ and $n\geq n_0$,
$$
\left|\int_{\heis}Q_{\beta_n}\varphi_n\d\lambda_3\right|
	\leq 2\left|\int_{\heis}\big((f_kQ_{\beta_n})\star m_{\beta_n}\big)\varphi_n\d\lambda_3\right|.
$$

Fix $k\geq n_0$ and consider this inequality. There is a contradiction when $n$ goes to $+\infty$, since the right-hand side $2\left|\int_{\heis}\big((f_kQ_{\beta_n})\star m_{\beta_n}\big)\varphi_n\d\lambda_3\right|$ remains bounded by $C_4(p)\|f_k\|_{L^{\sigma,\tau}(\heis)}$, whereas the left-hand side $\left|\int_{\heis}Q_{\beta_n}\varphi_n\d\lambda_3\right|$ tends to $+\infty$.
\end{proof}

\begin{cor}\label{cor:bound_Lpq}
For all $p\in(2,\infty)$ and $q\in(1,\infty)$, there exist $C_{p,q}>0$ and $\beta_*(p,q)\in(0,1)$ such that for all $\beta\in(\beta_*(p,q),1)$, $\|Q_\beta\|_{L^{p,q}(\heis)}\leq C_{p,q}$.
\end{cor}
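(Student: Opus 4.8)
The plan is to derive these Lorentz bounds from the strong $L^p$ estimates of Theorem \ref{thm:bound_Lp}, applied at two exponents straddling $p$, via the distribution-function formula for the Lorentz quasinorm. (For the subrange $q\geq p$ one could argue immediately from the growth inclusion $L^{p,p}(\heis)\subset L^{p,q}(\heis)$ recalled above together with the uniform $L^p=L^{p,p}$ bound of Theorem \ref{thm:bound_Lp}; but the argument below handles every $q\in(1,\infty)$ at once, so I do not treat this case separately.) The point is that a single $L^p$ bound is not enough when $q<p$, where the inclusion runs the wrong way, so one needs an interpolation-type argument fed by two strong bounds.

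Concretely, I would fix exponents $2<p_1<p<p_2<\infty$, which is possible precisely because $p>2$. Setting $\beta_*(p,q):=\max(\beta_*(p_1),\beta_*(p_2))$ with the thresholds furnished by Theorem \ref{thm:bound_Lp}, there is a constant $M>0$ such that $\|Q_\beta\|_{L^{p_1}(\heis)}\leq M$ and $\|Q_\beta\|_{L^{p_2}(\heis)}\leq M$ for all $\beta\in(\beta_*(p,q),1)$. Writing $d_\beta(R):=\lambda_3(\{u\in\heis;\ |Q_\beta(u)|\geq R\})$ for the distribution function, Chebyshev's inequality yields the two-sided control
\begin{equation*}
d_\beta(R)\leq M^{p_1}R^{-p_1}\quad\text{and}\quad d_\beta(R)\leq M^{p_2}R^{-p_2},\qquad R>0.
\end{equation*}

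Inserting these into the definition of the $L^{p,q}$ quasinorm and splitting the $R$-integral at $R=1$, I would estimate
\begin{equation*}
\|Q_\beta\|_{L^{p,q}(\heis)}^q
=p\int_0^{+\infty}R^{q-1}d_\beta(R)^{q/p}\d R
\leq pM^{p_1q/p}\int_0^1 R^{\,q\frac{p-p_1}{p}-1}\d R
+pM^{p_2q/p}\int_1^{+\infty}R^{\,q\frac{p-p_2}{p}-1}\d R .
\end{equation*}
The first integral converges because $q(p-p_1)/p>0$ (the exponent exceeds $-1$), and the second because $q(p-p_2)/p<0$; both are finite constants depending only on $p,p_1,p_2,q$, so the right-hand side is bounded by a constant $C_{p,q}$ independent of $\beta$, as required. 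The only delicate point is the behaviour near $R=0$: since $\heis$ has infinite measure, $d_\beta(R)$ need not remain bounded as $R\to0^+$, and the near-origin integrability is secured exactly by using the weaker exponent $p_1<p$ there. This is why the single bound of Theorem \ref{thm:bound_Lp} does not suffice when $q<p$ and one must control $Q_\beta$ on both sides of $p$.
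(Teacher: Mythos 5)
Your proposal is correct and is exactly the argument the paper leaves implicit: the corollary is meant to follow from Theorem \ref{thm:bound_Lp} by the standard fact that uniform bounds in $L^{p_1}(\heis)$ and $L^{p_2}(\heis)$ with $2<p_1<p<p_2$ yield a uniform bound in $L^{p,q}(\heis)$ for every $q$, which your distribution-function splitting verifies cleanly (including the correct observation that a single $L^p$ bound only suffices when $q\geq p$).
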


We now collect some estimates on the decay of $Q_{\beta}$ when $\beta$ is close to $1$.

\begin{thm}
There exist $C>0$ and $\beta_*\in(0,1)$ such that, for all $\beta\in(\beta_*,1)$ and all $(x,y,s)\in\heis$,
$$
|Q_{\beta}(x,y,s)|\leq \frac{C}{\rho(x,y,s)^2+1},
$$
where
$
\rho(x,y,s)=((x^2+y^2)^2+s^2)^{\frac{1}{4}}
$
is the distance from $(x,y,s)\in\heis$ to the origin.
\end{thm}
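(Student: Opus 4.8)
The plan is to combine the integral representation $Q_\beta=(|Q_\beta|^2Q_\beta)\star m_\beta$ with the pointwise identity $|m_\beta(x,y,s)|=|c_\beta|\,\rho(x,y,s)^{-2}$, where $c_\beta=-\frac{1-\beta}{2\pi^2}\Gamma(\frac{1-\beta}{2})\Gamma(\frac{1+\beta}{2})$. This identity is immediate since $|(x^2+y^2-is)^{\frac{1-\beta}{2}}(x^2+y^2+is)^{\frac{1+\beta}{2}}|=(\rho^2)^{\frac{1-\beta}{2}}(\rho^2)^{\frac{1+\beta}{2}}=\rho^2$, and $c_\beta$ stays bounded for $\beta\in[0,1)$. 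Hence $|Q_\beta(u)|\le C\int_\heis|Q_\beta(v)|^3\rho(v^{-1}u)^{-2}\d\lambda_3(v)$ with $C$ independent of $\beta$, and $\rho^{-2}\in L^{2,\infty}(\heis)$ uniformly. Since $Q_\beta\in L^{6,3}(\heis)$ uniformly (Corollary \ref{cor:bound_Lpq}), so that $|Q_\beta|^3\in L^{2,1}(\heis)$ uniformly, the generalized Hölder inequality (Lemma \ref{holder}) pairing $L^{2,1}$ with $L^{2,\infty}$ lands in $L^1$ and already gives the uniform bound $\|Q_\beta\|_{L^\infty(\heis)}\le C$.

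Next I set $R=\rho(u)$ and split the convolution at $\rho(v)=cR$, with $c$ small and adapted to the quasi-triangle constant of the Korányi gauge $\rho$. On $\{\rho(v)\le cR\}$ the quasi-triangle inequality forces $\rho(v^{-1}u)\gtrsim R$, so that part is $\lesssim R^{-2}\|Q_\beta\|_{L^3(\heis)}^3\lesssim R^{-2}$ uniformly (Theorem \ref{thm:bound_Lp}). On $\{\rho(v)>cR\}$ the same Hölder estimate, now localised, is bounded by $C\,\nu(cR)^3$ where $\nu(R):=\sup_\beta\|Q_\beta\un_{\rho>R}\|_{L^{6,3}(\heis)}$. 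Writing $\omega(R):=\sup_\beta\sup_{\rho(u)\ge R}|Q_\beta(u)|$, this yields the basic recursion $\omega(R)\le CR^{-2}+C\,\nu(cR)^3$. The family $(Q_\beta)_\beta$ converges to $Q_+$ in $L^{6,3}(\heis)$ — interpolating the $L^4$ convergence of Proposition \ref{prop:convergence_Qbeta} against the uniform $L^{p,q}$ bounds — and $Q_+\in L^{6,3}(\heis)$ with $\|Q_+\un_{\rho>R}\|_{L^{6,3}}\sim R^{-4/3}$; hence the family is uniformly tight, $\nu(R)\to0$, and therefore $\omega(R)\to0$. At this stage one only knows that $Q_\beta\to0$ at infinity, uniformly in $\beta$, but with \emph{no rate}.

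The heart of the argument is to upgrade this qualitative decay to the sharp rate $R^{-2}$, and this is where I expect the main obstacle. The crucial extra input is an interpolated tail bound: for $g=Q_\beta\un_{\rho>R}$, interpolating $L^{6,3}$ between the uniformly bounded $L^{q,3}$ (with $q\in(2,6)$) and $L^\infty$ gives $\nu(R)\lesssim\|Q_\beta\|_{L^{q,3}}^{q/6}\,\omega(R)^{1-q/6}\lesssim\omega(R)^{2/3-\delta}$ for a small $\delta>0$. Injecting this into the recursion produces $\omega(R)\le CR^{-2}+C\,\omega(cR)^{2-3\delta}$. The decisive feature is that the exponent $\kappa=2-3\delta$ is strictly larger than $1$: the recursion is \emph{superlinear}. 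Contrast this with the affine recursion $a\mapsto3a-2$ obtained from the cruder estimate $|Q_\beta|^3\le\|Q_\beta\|_{L^\infty}^2|Q_\beta|$ over a ball of radius $\sim R$, whose fixed point $a=1$ is a repelling threshold one cannot cross from below. Keeping all three powers of $Q_\beta$ inside the Lorentz–Hölder estimate, so that the localised kernel $\rho^{-2}\un_{\rho(v^{-1}u)<R}$ retains its $L^{2,\infty}$ norm instead of generating a compensating volume factor $R^{2}$, is precisely what removes the threshold.

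Finally I run the bootstrap on $\omega(R)\le CR^{-2}+C\,\omega(cR)^{2-3\delta}$ along a geometric sequence $R_n=c^{-n}R_0$. Since $\omega\to0$ and $\kappa>1$, once $\omega(cR)$ is small the nonlinear term satisfies $C\,\omega(cR)^{2-3\delta}\le\tfrac12\,\omega(cR)$, so a first contraction step yields some genuine polynomial rate $\omega(R)\lesssim R^{-a_0}$ with $a_0>0$; then iterating the gain $a\mapsto\min(2,(2-3\delta)a)$ drives the exponent up to $2$ in finitely many steps. This gives $|Q_\beta(u)|\le C\rho(u)^{-2}$ for $\rho(u)$ large, uniformly in $\beta$, and together with the uniform $L^\infty$ bound on the bounded region this is exactly $|Q_\beta(x,y,s)|\le C(\rho(x,y,s)^2+1)^{-1}$. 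Uniformity in $\beta$ is inherited at every stage from the uniform $L^{p,q}$ bounds of Corollary \ref{cor:bound_Lpq} and the uniform tightness of $(Q_\beta)_\beta$ near $Q_+$.
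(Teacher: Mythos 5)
Your proof is correct, and it shares the paper's skeleton --- the representation $Q_\beta=(|Q_\beta|^2Q_\beta)\star m_\beta$, the pointwise identity $|m_\beta|=|c_\beta|\rho^{-2}$ giving a uniform $L^{2,\infty}$ bound, the Lorentz--H\"older argument for the uniform $L^\infty$ bound, and a splitting of the convolution at a fraction of $R=\rho(u)$ with the near part contributing $CR^{-2}$ --- but it closes the decay recursion by a genuinely different mechanism. The paper bounds the far part by $\|Q_\beta\|_{L^{4,4}(B_{R/2}^c)}\|Q_\beta\|_{L^{4,4/3}(B_{R/2}^c)}\|m_\beta\|_{L^{2,\infty}}\,M(R/2)$, i.e.\ it keeps only two factors of $Q_\beta$ in tail Lorentz norms and pulls the third out as the sup $M(R/2)$; tightness in $L^4$ makes the coefficient at most $\tfrac18<\tfrac14$, and the resulting \emph{affine} recursion $M(2^n)\le C4^{-n}+\tfrac18 M(2^{n-1})$ propagates the full rate $4^{-n}$ in a single iteration pass. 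You instead keep all three factors in Lorentz norms, so the far part is $\le C\nu(cR)^3$ with no sup at all, and you convert $\nu$ back into $\omega$ by interpolating the tail between the uniformly bounded $L^{q,3}$ and $L^\infty$; this produces the \emph{superlinear} recursion $\omega(R)\le CR^{-2}+C\,\omega(cR)^{\kappa}$, $\kappa>1$, closed by a contraction step followed by the exponent ladder $a\mapsto\min(2,\kappa a)$. Both routes work: the paper's is shorter (no interpolation lemma, no multi-stage bootstrap, full rate at once), while yours is more self-propelling, since the smallness needed to start is a structural threshold ($C\omega^{\kappa-1}\le\tfrac12$) rather than a numerical margin calibrated against the dyadic decay rate ($\tfrac18$ versus $\tfrac14$). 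One point you should phrase more carefully: for a \emph{fixed} $\beta_*$, the family $(Q_\beta)_{\beta\in(\beta_*,1)}$ is not known to be tight, because no continuity of $\beta\mapsto Q_\beta$ is available at this stage away from $\beta=1$; the convergence of Proposition \ref{prop:convergence_Qbeta} only yields, for each $\varepsilon>0$, some $\beta_*(\varepsilon)$ and $R(\varepsilon)$ with $\sup_{\beta>\beta_*(\varepsilon)}\|Q_\beta\un_{\rho>R(\varepsilon)}\|_{L^{6,3}(\heis)}\le\varepsilon$. So your statements ``$\nu(R)\to0$'' and ``$\omega(R)\to0$ uniformly in $\beta$'' should be read as one-shot smallness obtained after increasing $\beta_*$ --- which is all your bootstrap actually uses, and is exactly how the paper itself handles the same point when it fixes $R_0$ so that the far-part coefficient is $\le\tfrac18$.
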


\begin{proof}
Let us first show that the $Q_{\beta}$ are uniformly bounded in $L^{\infty}(\heis)$ for $\beta\in(\beta_*,1)$, where $\beta_*$ is large enough.

Let $u\in\heis$. Applying Hölder's inequality \ref{holder} to the right hand side term,
\begin{align*}
|Q_{\beta}(u)|
	&=\left|\int_{v\in \heis}|Q_{\beta}|^2Q_{\beta}(v)m_{\beta}(v^{-1}u)\d\lambda_3(v)\right|\\
	&\leq \||Q_{\beta}|^2Q_{\beta} m_{\beta}(\cdot^{-1}u)\|_{L^1(\heis)}\\
	&\leq \||Q_{\beta}|^2Q_{\beta} \|_{L^{2,1}(\heis)}\|m_{\beta}\|_{L^{2,\infty}(\heis)}\\
	&\leq \|Q_{\beta} \|_{L^{6,3}(\heis)}^3\|m_{\beta}\|_{L^{2,\infty}(\heis)}.
\end{align*}
The conclusion follows from Corollary \ref{cor:bound_Lpq}.

For every $R>0$, we set $B_R=\{(x,y,s)\in\heis;\; \rho(x,y,s)\leq R\}$ and
$$
M(R)=\sup_{(x,y,s)\in B_R^c}|Q_{\beta}(x,y,s)|.
$$

Let $R>0$, $u\in B_R^c$. We cut
\begin{multline*}
|(|Q_{\beta}|^2Q_{\beta})\star m(u)|
	\leq 
	\Big|\int_{v\in B_{R/2}}|Q_{\beta}|^2Q_{\beta}(v)m_{\beta}(v^{-1}u)\d\lambda_3(v)\Big|\\
	+\Big|\int_{v\in B_{R/2}^c}|Q_{\beta}|^2Q_{\beta}(v)m_{\beta}(v^{-1}u)\d\lambda_3(v)\Big|.
\end{multline*}

On the one hand, if $v\in B_{R/2}$ then $uv^{-1}\in B_{R/2}^c$, so
$$
\left|\int_{v\in B_{R/2}}|Q_{\beta}|^2Q_{\beta}(v)m_{\beta}(v^{-1}u)\d\lambda_3(v)\right|
	\leq \frac{|c_\beta|}{R^2}\|Q_\beta\|_{L^{3}(\heis)}^3.
$$
Thanks to Theorem \ref{thm:bound_Lp}, one knows that up to increasing $\beta_*$, there exists some constant $C$ such that $|c_\beta|\|Q_\beta\|_{L^{3}(\heis)}^3\leq C$ for all $\beta\in(\beta_*,1)$.

On the other hand, applying Hölder's inequality \ref{holder} to the right hand side term,
\begin{align*}
\left|\int_{v\in B_{R/2}^c}|Q_{\beta}|^2Q_{\beta}(v)m_{\beta}(v^{-1}u)\d\lambda_3(v)\right|
	&\leq \||Q_{\beta}|^2 m_{\beta}(\cdot^{-1}u)\|_{L^1(B_{R/2}^c)}M\left(\frac{R}{2}\right)\\
	&\leq \||Q_{\beta}|^2 \|_{L^{2,1}(B_{R/2}^c)}\|m_{\beta}(\cdot^{-1}u)\|_{L^{2,\infty}(B_{R/2}^c)}M\left(\frac{R}{2}\right)\\
	&\leq \|Q_{\beta} \|_{L^{4,4}(B_{R/2}^c)}\|Q_{\beta} \|_{L^{4,4/3}(B_{R/2}^c)}\|m_{\beta}\|_{L^{2,\infty}(\heis)}M\left(\frac{R}{2}\right).
\end{align*}

Thanks to the convergence of $(Q_\beta)_\beta$ to $Q_+$ in $\dot{H}^1(\heis)$ as $\beta$ tends to $1$ and the Folland-Stein embedding $\dot{H}^1(\heis)\hookrightarrow L^4(\heis)$, the sequence $(Q_\beta)_\beta$ converges to $Q_+$ in $L^4(\heis)$ and therefore is tight in $L^4(\heis)$. Moreover, the norms $\|Q_{\beta}\|_{L^{4,4/3}(\heis)}$, for $\beta$ close to $1$, are bounded. Therefore, up to increasing $\beta_*$ again, one can choose $R_0>0$ such that
$$
\sup_{\beta\in(\beta_*,1)}\left(\|Q_{\beta} \|_{L^{4,4/3}(B_{R_0/2}^c)}\|m_{\beta}\|_{L^{2,\infty}(\heis)}\right)\times \|Q_{\beta} \|_{L^{4,4}(B_{R_0/2}^c)}\leq \frac{1}{8}.
$$

Then, for every $R\geq R_0$,
$$
\left|\int_{v\in B_{R/2}^c}|Q_{\beta}|^2Q_{\beta}(v)m_{\beta}(v^{-1}u)\d\lambda_3(v)\right|
	\leq \frac{1}{8}M\left(\frac{R}{2}\right).
$$

Combining the two estimates and applying them to $R=2^n$, $n\geq n_0$ so that $2^{n_0}\geq R_0$, we get
$$
M(2^n)\leq \frac{C}{4^n}+\frac{1}{8}M(2^{n-1}).
$$
Iterating, one knows that for all $n\geq n_0$,
\begin{align*}
M(2^n)
	&\leq C\sum_{k=0}^{n-n_0}\frac{1}{4^{n-k}}\frac{1}{8^k}+\frac{1}{8^{n-n_0+1}}M(2^{n_0-1})\\
	&\leq C4^{-n}\sum_{k=0}^{n-n_0}4^{-k}+8^{n_0+1}M(2^{n_0-1})8^{-n}\\
	&\leq (2C+8^{n_0+1}M(2^{n_0-1}))4^{-n}.
\end{align*}
Since $\rho(u)\sim 2^n$ for $2^n\leq \rho(u)\leq 2^{n+1}$, this completes the proof of the result.
\end{proof}

\begin{cor}\label{cor:bound_Hk}
For some  $\beta_*\in(0,1)$, for all $k\geq 1$, there exists $C_k>0$ such that for all $\beta\in(\beta_*,1)$,
$$
\|Q_\beta\|_{\dot{H}^k(\heis)}\leq C_k.
$$
\end{cor}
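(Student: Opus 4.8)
The plan is to bootstrap on the stationary equation \eqref{eq:Hbeta}, gaining two derivatives per step, and to close the induction with the uniform $L^\infty(\heis)$ bound obtained above together with a tame product estimate. Throughout, write $P_\beta:=-\frac{\hlapl+\beta D_s}{1-\beta}$, so that $P_\beta Q_\beta=|Q_\beta|^2Q_\beta$.

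The first ingredient is a gain of two derivatives at the level of the homogeneous norms. On the component $V_n^{\pm}$ the operator $-\hlapl$ acts as multiplication by $(n+1)|\sigma|$ and $P_\beta$ as multiplication by $\frac{(n+1)|\sigma|-\beta\sigma}{1-\beta}$. For every $\beta\in[0,1)$, every $n\in\N$, and $\sigma\gtrless 0$ on $V_n^{\pm}$, one checks the elementary inequality
$$
(1-\beta)(n+1)|\sigma|\leq (n+1)|\sigma|-\beta\sigma,
$$
with equality exactly on $V_0^+$; in other words the symbol of $-\hlapl$ is pointwise dominated by that of $P_\beta$. Inserting the relation $P_\beta Q_\beta=|Q_\beta|^2Q_\beta$ into the spectral expression of the $\dot{H}^{k+2}$ norm then gives, for every $k\geq 0$ and every $\beta\in[0,1)$,
$$
\|Q_\beta\|_{\dot{H}^{k+2}(\heis)}\leq \big\||Q_\beta|^2Q_\beta\big\|_{\dot{H}^{k}(\heis)}.
$$

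The second ingredient, and the step I expect to be the main obstacle, is a tame estimate for the cubic nonlinearity. I would restrict to even orders $k=2m$, where $(-\hlapl)^m=\big(-\tfrac14(X^2+Y^2)\big)^m$ is a genuine left-invariant differential operator on radial functions. Expanding $(-\hlapl)^m(|Q_\beta|^2Q_\beta)$ by the Leibniz rule for the vector fields $X$ and $Y$, and distributing the derivatives among the three factors, each resulting term is estimated by Hölder combined with the Gagliardo-Nirenberg interpolation inequalities on $\heis$, placing $L^\infty$ on the factors carrying few derivatives and the top-order norm on the remaining one. Iterating the resulting two-factor Moser inequality yields
$$
\big\||Q_\beta|^2Q_\beta\big\|_{\dot{H}^{2m}(\heis)}\leq C_m\,\|Q_\beta\|_{L^\infty(\heis)}^2\,\|Q_\beta\|_{\dot{H}^{2m}(\heis)}.
$$
The delicate part is the uniform (in $\beta$) control of the mixed terms, for which the homogeneous interpolation inequalities and the uniform $L^\infty$ bound from the previous theorem are exactly what is needed.

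With these two estimates in hand the induction closes quickly. For the base cases, $\|Q_\beta\|_{\dot{H}^1(\heis)}$ is bounded because $Q_\beta\to Q_+$ in $\dot{H}^1(\heis)$ (Proposition \ref{prop:convergence_Qbeta}), while the instance $k=0$ of the two-derivative gain gives
$$
\|Q_\beta\|_{\dot{H}^2(\heis)}\leq \big\||Q_\beta|^2Q_\beta\big\|_{L^2(\heis)}=\|Q_\beta\|_{L^6(\heis)}^3,
$$
which is uniformly bounded for $\beta$ close to $1$ by Theorem \ref{thm:bound_Lp}. Combining the two ingredients yields, for $\beta$ close to $1$,
$$
\|Q_\beta\|_{\dot{H}^{2m+2}(\heis)}\leq C_m\,\|Q_\beta\|_{L^\infty(\heis)}^2\,\|Q_\beta\|_{\dot{H}^{2m}(\heis)},
$$
so by induction $\|Q_\beta\|_{\dot{H}^{2m}(\heis)}\leq C_{2m}$ for every $m\geq 1$. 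Finally, the interpolation inequality $\|Q_\beta\|_{\dot{H}^{k}}\leq \|Q_\beta\|_{\dot{H}^{k_0}}^{1-\theta}\|Q_\beta\|_{\dot{H}^{k_1}}^{\theta}$ for $k=(1-\theta)k_0+\theta k_1$, which follows from Hölder's inequality on the spectral measure, interpolates between the bounds at $\dot{H}^1,\dot{H}^2,\dot{H}^4,\dots$ and covers every real $k\geq 1$, which is the claim.
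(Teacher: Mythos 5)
Your proposal is correct in substance but handles the nonlinearity by a genuinely different route than the paper. The energy step is in fact the same in both: the paper's chain $((-\hlapl)^{k+1}Q_\beta,Q_\beta)=((-\hlapl)^{k}Q_\beta,-\hlapl Q_\beta)\leq ((-\hlapl)^{k-1}P_\beta Q_\beta,P_\beta Q_\beta)$ is exactly your pointwise symbol domination $(n+1)|\sigma|\leq \frac{(n+1)|\sigma|-\beta\sigma}{1-\beta}$ used twice, i.e.\ the two-derivative gain $\|Q_\beta\|_{\dot{H}^{k+2}(\heis)}\leq \||Q_\beta|^2Q_\beta\|_{\dot{H}^{k}(\heis)}$. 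The divergence is downstream. The paper increments $k\to k+1$: after pairing $(-\hlapl)^{k-1}$ of the cubic term against the cubic term, it integrates by parts so that \emph{every} factor in the resulting six-fold products carries at most $k-1$ derivatives, and then closes the induction with H\"older plus the complex interpolation $(L^{q},\dot{H}^{k}(\heis))_{\theta}=\dot{W}^{m,p}(\heis)$ at \emph{finite} exponents $q\in(2,\infty)$, using only the uniform $L^6$, $L^\infty$ and inductive $\dot{H}^k$ bounds. You instead step by two (even orders) and invoke a Moser-type tame estimate $\||Q_\beta|^2Q_\beta\|_{\dot{H}^{2m}(\heis)}\lesssim \|Q_\beta\|_{L^\infty(\heis)}^2\|Q_\beta\|_{\dot{H}^{2m}(\heis)}$, then interpolate spectrally to cover all real $k\geq 1$ (a point the paper glosses over with ``it is enough to prove the claim for $k\in\N$''). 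Your structure is cleaner --- three factors instead of six, no rebalancing of derivative indices --- but it shifts the burden onto Gagliardo--Nirenberg inequalities with $L^\infty$ endpoint on $\heis$, of the form $\|\partial^{\lambda}u\|_{L^{4m/|\lambda|}(\heis)}\lesssim \|u\|_{L^\infty(\heis)}^{1-|\lambda|/2m}\|u\|_{\dot{H}^{2m}(\heis)}^{|\lambda|/2m}$, together with the Folland-type equivalence between $\sum_{|\lambda|=2m}\|\partial^{\lambda}u\|_{L^2(\heis)}$ and $\|(-\hlapl)^{m}u\|_{L^2(\heis)}$ for radial $u$; these are known on stratified groups but need to be cited or proved, and avoiding the $L^\infty$ endpoint is precisely what the paper's integration-by-parts rebalancing buys (the paper's own reliance on Bergh--L\"ofstr\"om transplanted to $\heis$ is a comparable, though milder, leap). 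Your base cases ($\dot{H}^1$ via convergence to $Q_+$, and $\|Q_\beta\|_{\dot{H}^2(\heis)}\leq \|Q_\beta\|_{L^6(\heis)}^3$ via Theorem \ref{thm:bound_Lp}) and the final interpolation argument are correct and consistent with the uniform choice of $\beta_*$.
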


\begin{proof}
It is enough prove the first part of the claim for $k\in \N$. We proceed by induction on $k$. We already know that it is true for $k=1$ because
$$
\|Q_\beta\|_{\dot{H}^1(\heis)}^2\leq \frac{(-(\hlapl+\beta D_s)Q_\beta,Q_\beta)_{\dot{H}^{-1}(\heis)\times\dot{H}^1(\heis)}}{1-\beta}=\frac{I_\beta}{(1-\beta)^2},
$$
and $(\frac{I_\beta}{(1-\beta)^2})_\beta$ is bounded (cf. \ref{subsection:limit}).

The following additional assumption will be useful in the induction step. Up to increasing $\beta_*$, we can assume that the $Q_\beta$ are bounded in $L^6(\heis)$ and in $L^{\infty}(\heis)$ for $\beta\in(\beta_*,1)$.

Suppose now that the $Q_\beta$ are bounded in $\dot{H}^k(\heis)$ for 	an integer $k\geq 1$. Then by Leibniz' rule,  since $\hlapl =\frac{1}{4}(X^2+Y^2)$ for radial functions, with  $X=\partial_x+2y\partial_s$ and $Y=\partial_y-2x\partial_s$, there exist some coefficients $c_{\lambda}$ such that
\begin{align*}
-\hlapl^{k-1}(-\frac{\hlapl+\beta D_s}{1-\beta}Q_\beta)
	&=-\hlapl^{k-1}(|Q_\beta|^2Q_\beta)\\
	&=\sum_{|\lambda_1|+|\lambda_2|+|\lambda_3|= 2k-2}c_{\lambda}\partial^{\lambda_1}(Q_\beta) \partial^{\lambda_2}(Q_\beta) \partial^{\lambda_3}(\overline{Q_\beta}).
\end{align*}
The notation is similar as in $\R^N$, $\lambda_j$ being a finite sequence of letters $X$ and $Y$ of length $|\lambda_j|$, $\partial^{X}:=X$, $\partial^{Y}:=Y$. The following inequality can be easily proven via the Fourier transform~:
\begin{align*}
(-\hlapl^{k+1}Q_\beta,Q_\beta)_{\dot{H}^{-1}(\heis)\times\dot{H}^1(\heis)}
	&=(-\hlapl^{k}Q_\beta,-\hlapl Q_\beta)_{\dot{H}^{1}(\heis)\times\dot{H}^{-1}(\heis)}\\
	&\leq(-\hlapl^{k-1}(-\frac{\hlapl+\beta D_s}{1-\beta}Q_\beta),-\frac{\hlapl+\beta D_s}{1-\beta}Q_\beta)_{\dot{H}^{1}(\heis)\times\dot{H}^{-1}(\heis)}\\
	&\leq(-\hlapl^{k-1}(-\frac{\hlapl+\beta D_s}{1-\beta}Q_\beta),
	|Q_\beta|^2Q_\beta)_{\dot{H}^{1}(\heis)\times\dot{H}^{-1}(\heis)}.
\end{align*}
We replace the term on the left by the above sum. By integration by parts and Leibniz' rule again, we can manage so that the following indexes of derivation $\mu_i$ all have length less or equal than $(k-1)$~:
\begin{align*}
(-\hlapl^{k+1}Q_\beta,Q_\beta)_{\dot{H}^{-1}(\heis)\times\dot{H}^1(\heis)}
	&=\sum_{\substack{|\mu_1|+\dots+|\mu_6|= 2k-2,
	\\|\mu_1|,\dots,|\mu_6|\leq k-1}}
	c'_{\mu}\int_{\heis}\partial^{\mu_1}(Q_\beta)\dots \partial^{\mu_4}(Q_\beta) \partial^{\mu_5}(\overline{Q_\beta})\partial^{\mu_6}(\overline{Q_\beta}).
\end{align*}
We now apply Hölder's inequality with exponents $p_1,\dots,p_6\in(2,\infty)$ satisfying $\frac{1}{p_1}+\dots+\frac{1}{p_6}=1$, to be chosen later. Then, denoting $m_j=|\mu_j|$,
$$
\left|\int_{\heis}\partial^{\mu_1}(Q_\beta) \dots \partial^{\mu_4}(Q_\beta) \partial^{\mu_5}(\overline{Q_\beta})\partial^{\mu_6}(\overline{Q_\beta})\right|
	\leq \|Q_\beta\|_{\dot{W}^{m_1,p_1}(\heis)}\dots \|Q_\beta\|_{\dot{W}^{m_6,p_6}(\heis)}.
$$
Let us choose the $p_i$ appropriately. The aim is to use complex interpolation, and in particular the following relation between homogeneous Sobolev spaces (see e.g. \cite{Bergh1976}, Theorem 6.4.5, assertion (7))
$$
(L^q(\heis),\dot{H}^k(\heis))_{\theta}=\dot{W}^{m,p}(\heis)
$$
where $p,q\in(2,\infty)$,
$
m=(1-\theta)0+\theta k
$
and
$$
\frac{1}{p}=\frac{1-\theta}{q}+\frac{\theta}{2}.
$$
For example, we choose $\theta_i=\frac{m_i}{k}$ and $p_i$ such that
$$
\frac{1}{p_i}=\frac{1}{6k}+\frac{m_i}{2k}.
$$
Then
$$
0<\frac{1}{p_i}\leq\frac{1}{6k}+\frac{k-1}{2k}=\frac{1+3k-3}{6k}<\half
$$
so $p_i\in(2,\infty)$, and
$$
\frac{1}{p_1}+\dots+\frac{1}{p_6}=\frac{1}{k}+\frac{2k-2}{2k}=1.
$$
Moreover, this choice leads to the exponents
$$
q_i=\frac{6k}{1-m_i/k}.
$$
Since $0\leq m_i\leq k-1$,
$$
2<6k\leq q_i\leq 6k^2<\infty,
$$
we can therefore apply the interpolation result.

Since there is a finite number of terms in the sum, the boundedness of $Q_\beta$ in $L^6(\heis)$, in $L^{\infty}(\heis)$ and in $\dot{H}^k(\heis)$ for $\beta>\beta_*$ ensures that there exists $C_{k+1}>0$ such that for $\beta>\beta_*$,
$$
\|(-\hlapl)^{\frac{k+1}{2}}Q_\beta\|_{L^2(\heis)}\leq C_{k+1},
$$
so the $Q_\beta$ are bounded in $\dot{H}^{k+1}(\heis)$.
\end{proof}

%\newpage
%%%%%%%%%%%%%%%%%%%%%%%%%%%%%%%%%%%%%%%%%%%%%%%%%%%%%%%%%%%%%%%%%%%%%%%%%%%%%%%%%%%%%%%%%%%%%%%%%%%%%%%%%%%%%%%%%%%%%%%%%%%%%%%%%%%%%%%%%%%%%%%%%
\subsection{Invertibility of \texorpdfstring{$\L_{Q_\beta}$}{L_beta}}
%%%%%%%%%%%%%%%%%%%%%%%%%%%%%%%%%%%%%%%%%%%%%%%%%%%%%%%%%%%%%%%%%%%%%%%%%%%%%%%%%%%%%%%%%%%%%%%%%%%%%%%%%%%%%%%%%%%%%%%%%%%%%%%%%%%%%%%%%%%%%%%%%

For $\beta\in(-1,1)$ the linearized operator around $Q_\beta$ for the Schrödinger equation is
$$
\L_{Q_\beta} h=-\frac{\hlapl +\beta D_s}{1-\beta}h-2|Q_\beta|^2h-Q_\beta^2\overline{h},\quad h\in\dot{H}^1(\heis).
$$
We prove the invertibility of this operator on a space of finite co-dimension.

\begin{prop}\label{prop:estimates_Lbeta_invertible}
There exist a neighbourhood $\V$ of $Q_+$, $\beta_*\in(0,1)$ and some constant $c>0$ such that for all $\beta\in (\beta_*,1)$, for all $Q_\beta\in\Qbeta\cap\V$, and for all $h\in\dot{H}^1(\heis)$,
$$
\|\L_{Q_\beta} h\|_{\dot{H}^{-1}(\heis)}+|(h,\partial_sQ_+)_{\dot{H}^1(\heis)}|
	+|(h,iQ_+)_{\dot{H}^1(\heis)}|
	+|(h,Q_++2i\partial_sQ_+)_{\dot{H}^1(\heis)}|
	\geq c \|h\|_{\dot{H}^1(\heis)}.
$$
\end{prop}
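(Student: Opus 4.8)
The plan is to argue by contradiction, transferring the coercivity of the limiting operator $\L$ established in Corollary~\ref{cor:estimates_invertibility_L} to the operators $\L_{Q_\beta}$ by a perturbation argument. The mechanism making the perturbation work is that, as $\beta\to1^-$, the free operator $A_\beta:=-\frac{\hlapl+\beta D_s}{1-\beta}$ degenerates to $D_s$ on $V_0^+$ (where $-\hlapl=D_s$) while staying uniformly large on its orthogonal complement. Concretely, suppose the statement were false; negating the existence of $(\V,\beta_*,c)$ and testing against $\V=\{u:\|u-Q_+\|_{\dot{H}^1(\heis)}<1/n\}$, $\beta_*=1-1/n$, $c=1/n$, I would extract sequences $\beta_n\to1^-$, $Q_{\beta_n}\in\Qbeta$ with $\|Q_{\beta_n}-Q_+\|_{\dot{H}^1(\heis)}\to0$, and $h_n\in\dot{H}^1(\heis)$ with $\|h_n\|_{\dot{H}^1(\heis)}=1$ such that
$$\|\L_{Q_{\beta_n}}h_n\|_{\dot{H}^{-1}(\heis)}+|(h_n,\partial_sQ_+)_{\dot{H}^1(\heis)}|+|(h_n,iQ_+)_{\dot{H}^1(\heis)}|+|(h_n,Q_++2i\partial_sQ_+)_{\dot{H}^1(\heis)}|\longrightarroww{n\to+\infty}{}0.$$
By Folland--Stein, $Q_{\beta_n}\to Q_+$ in $L^4(\heis)$ and $\sup_n\|Q_{\beta_n}\|_{L^4(\heis)}<\infty$ (Proposition~\ref{prop:convergence_Qbeta}).

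First I would show that the off-$V_0^+$ part of $h_n$ vanishes. Write $h_n=h_n^++r_n$ with $h_n^+=\Pi_0^+h_n\in\dot{H}^1(\heis)\cap V_0^+$ and $r_n\in\dot{H}^1(\heis)\cap\bigoplus_{(k,\pm)\neq(0,+)}V_k^\pm$, and split $\L_{Q_{\beta_n}}h_n=A_{\beta_n}h_n-P_n$ with $P_n:=2|Q_{\beta_n}|^2h_n+Q_{\beta_n}^2\overline{h_n}$. Hölder's inequality and the dual embedding $L^{4/3}(\heis)\hookrightarrow\dot{H}^{-1}(\heis)$ give $\|P_n\|_{\dot{H}^{-1}(\heis)}\leq C\|Q_{\beta_n}\|_{L^4(\heis)}^2\|h_n\|_{L^4(\heis)}=\gdO(1)$, so $\|A_{\beta_n}h_n\|_{\dot{H}^{-1}(\heis)}$ is bounded. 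Since $A_{\beta_n}$ leaves each $V_k^\pm$ invariant, the Pythagorean identity in $\dot{H}^{-1}(\heis)$ together with the bound $(k+1)|\sigma|-\beta\sigma\geq\frac12(k+1)|\sigma|$ valid on $(k,\pm)\neq(0,+)$ (the estimate behind \eqref{eq:normesV0perp}) yields $\|A_{\beta_n}r_n\|_{\dot{H}^{-1}(\heis)}\geq\frac{1}{2(1-\beta_n)}\|r_n\|_{\dot{H}^1(\heis)}$. Combining the two, $\|r_n\|_{\dot{H}^1(\heis)}\leq2(1-\beta_n)\|A_{\beta_n}h_n\|_{\dot{H}^{-1}(\heis)}\to0$, and hence $\|h_n^+\|_{\dot{H}^1(\heis)}^2=1-\|r_n\|_{\dot{H}^1(\heis)}^2\to1$.

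Next I would transfer the estimate to the limiting operator. Because $A_{\beta_n}=D_s$ on $V_0^+$ and commutes with $\Pi_0^+$, applying $\Pi_0^+$ gives $\Pi_0^+\L_{Q_{\beta_n}}h_n=D_sh_n^+-\Pi_0^+P_n$, so that
$$\L h_n^+-\Pi_0^+\L_{Q_{\beta_n}}h_n=\Pi_0^+\big(P_n-2|Q_+|^2h_n^+-Q_+^2\overline{h_n^+}\big).$$
The right-hand bracket equals $2(|Q_{\beta_n}|^2-|Q_+|^2)h_n+2|Q_+|^2r_n$ plus the analogous antilinear terms; using $\||Q_{\beta_n}|^2-|Q_+|^2\|_{L^2(\heis)}\leq C\|Q_{\beta_n}-Q_+\|_{L^4(\heis)}\to0$, the fact that $|Q_+|^2\in L^2(\heis)$, and $\|r_n\|_{L^4(\heis)}\leq C\|r_n\|_{\dot{H}^1(\heis)}\to0$, this tends to $0$ in $\dot{H}^{-1}(\heis)$. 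As $\Pi_0^+$ is a contraction on $\dot{H}^{-1}(\heis)$ and $\|\L_{Q_{\beta_n}}h_n\|_{\dot{H}^{-1}(\heis)}\to0$, I conclude $\|\L h_n^+\|_{\dot{H}^{-1}(\heis)}\to0$. The three directions $\partial_sQ_+,iQ_+,Q_++2i\partial_sQ_+$ lie in $V_0^+$, while $r_n$ is $\dot{H}^1(\heis)$-orthogonal to $V_0^+$, so $(h_n^+,\cdot)_{\dot{H}^1(\heis)}=(h_n,\cdot)_{\dot{H}^1(\heis)}\to0$ for each of them. Corollary~\ref{cor:estimates_invertibility_L} applied to $h_n^+\in\dot{H}^1(\heis)\cap V_0^+$ then forces $\|h_n^+\|_{\dot{H}^1(\heis)}\to0$, contradicting $\|h_n^+\|_{\dot{H}^1(\heis)}\to1$.

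The hard part is the first step, the vanishing of $r_n$: everything rests on the precise degeneration of $A_\beta$, whose coercivity constant off $V_0^+$ blows up like $(1-\beta)^{-1}$ and therefore dominates the uniformly bounded cubic perturbation $P_n$, pinning all the mass of $h_n$ onto $V_0^+$. Once this is secured, the remaining steps are continuity estimates that only require the $\dot{H}^1(\heis)$, hence $L^4(\heis)$, convergence $Q_{\beta_n}\to Q_+$ together with the already established coercivity of $\L$.
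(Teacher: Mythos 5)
Your proof is correct, and it rests on exactly the same three pillars as the paper's: the decomposition $h=h^++h_\perp$ relative to $V_0^+$, the coercivity of the limiting operator $\L$ on $V_0^+$ from Corollary \ref{cor:estimates_invertibility_L}, and the bound behind \eqref{eq:normesV0perp}, namely that $-\frac{\hlapl+\beta D_s}{1-\beta}$ is coercive with constant of order $(1-\beta)^{-1}$ off $V_0^+$, together with H\"older and the dual Folland--Stein embedding $L^{4/3}(\heis)\hookrightarrow\dot{H}^{-1}(\heis)$ for the cubic terms. The difference is the packaging: the paper argues directly, splitting $\L_{Q_\beta}h=\L h^+-r_+-r_-+\L^-_{Q_\beta}h$, bounding each piece with explicit constants and choosing first $\varepsilon$ and then $\beta_*$; you argue by contradiction along sequences $\beta_n\to1$, $\|h_n\|_{\dot{H}^1(\heis)}=1$. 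Your sequential framing buys one genuine simplification: since $\|\L_{Q_{\beta_n}}h_n\|_{\dot{H}^{-1}(\heis)}\to0$ and the cubic term $P_n$ is merely \emph{bounded} in $\dot{H}^{-1}(\heis)$, the free part $A_{\beta_n}h_n$ is bounded there, and the $(1-\beta_n)^{-1}$ coercivity immediately gives $\|r_n\|_{\dot{H}^1(\heis)}=\gdO(1-\beta_n)\to0$; this bypasses the paper's more delicate lower bound on $\L^-_{Q_\beta}h$ (the $\tfrac12 a^2+\tfrac12 b^2\geq ab$ trick producing the $\varepsilon$-dependent term $\sqrt{3C_1^2/(2\varepsilon^2)}\,K\|h_\perp\|_{\dot{H}^1(\heis)}$). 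What you lose is quantitativeness: the paper's direct proof tracks, in principle, how $c$ and $\beta_*$ depend on the data, whereas the contradiction argument only yields their existence. After that both proofs conclude identically — the orthogonality conditions pass from $h$ to its $V_0^+$ component, the perturbation $\L h_n^+-\Pi_0^+\L_{Q_{\beta_n}}h_n$ vanishes thanks to the $L^4(\heis)$ convergence $Q_{\beta_n}\to Q_+$, and Corollary \ref{cor:estimates_invertibility_L} forces $\|h_n^+\|_{\dot{H}^1(\heis)}\to0$, contradicting $\|h_n^+\|_{\dot{H}^1(\heis)}\to1$.
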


\begin{proof}
Let $\beta\in(0,1)$ and $Q_\beta\in \Qbeta$. Let $h\in\dot{H}^1(\heis)$. We decompose $h=h^++h_{\perp}$ where $h^+\in \dot{H}^1(\heis)\cap V_0^+$ and $h_\perp=h-h^+\in \dot{H}^1(\heis)\cap \bigoplus_{(n,\pm)\neq (0,+)} V_n^{\pm}$.

We split $\L_{Q_\beta} h$ as
$$
\L_{Q_\beta} h=\L h^+-r_+-r_-+\L^-_{Q_\beta}h,
$$
where
$$
\L h^+=-\hlapl h^+-2\Pi_0^+(|Q_+|^2h^+)-\Pi_0^+(Q_+^2\overline{h^+}),
$$
$$
r_+=2\Pi_0^+((|Q_\beta|^2-|Q_+|^2)h^+)+\Pi_0^+((Q_\beta^2-Q_+^2)\overline{h^+}),
$$
$$
r_-=2\Pi_0^+(|Q_\beta|^2h_\perp)+\Pi_0^+(Q_\beta^2\overline{h_\perp}),
$$
and
$$
\L^-_{Q_\beta}h=-\frac{\hlapl +\beta D_s}{1-\beta}h_\perp
	-2(\id-\Pi_0^+)(|Q_\beta|^2h)
	-(\id-\Pi_0^+)(Q_\beta^2\overline{h}).
$$
We treat each term separately.

$\bullet$ Concerning $\L h^+$, thanks to Corollary \ref{cor:estimates_invertibility_L},
\begin{multline*}
\|\L h^+\|_{\dot{H}^{-1}(\heis)}
	+|(h^+,\partial_sQ_+)_{\dot{H}^1(\heis)}|
	+|(h^+,iQ_+)_{\dot{H}^1(\heis)}|
	+|(h^+,Q_++2i\partial_sQ_+)_{\dot{H}^1(\heis)}|\\
	\geq c\|h^+\|_{\dot{H}^1(\heis)}.
\end{multline*}
Since $\partial_sQ_+$, $iQ_+$ and $(Q_++2i\partial_sQ_+)$ are in $V_0^+$, we know that $(h^+,\partial_sQ_+)_{\dot{H}^1(\heis)}=(h,\partial_sQ_+)_{\dot{H}^1(\heis)}$, $(h^+,iQ_+)_{\dot{H}^1(\heis)}=(h,iQ_+)_{\dot{H}^1(\heis)}$ and $(h^+,Q_++2i\partial_sQ_+)_{\dot{H}^1(\heis)}=(h,Q_++2i\partial_sQ_+)_{\dot{H}^1(\heis)}$.

$\bullet$ Consider now $r_+$ and $r_-$. Let $K$ be the constant in the Folland-Stein embedding $\dot{H}^1(\heis)\hookrightarrow L^4(\heis)$
$$
\|g\|_{L^4(\heis)}\leq K\|g\|_{\dot{H}^1(\heis)}, \quad g\in \dot{H}^1(\heis).
$$

Since the sequence $(\|Q_\beta\|_{L^4(\heis)})_\beta$ is bounded by some constant $C_1$,
\begin{align*}
\|r_-\|_{L^{\frac{4}{3}}(\heis)}
	&\leq 3C_1^2\|h_{\perp}\|_{L^4(\heis)}\\
	&\leq 3KC_1^2\|h_{\perp}\|_{\dot{H}^1(\heis)}.
\end{align*}
and
\begin{align*}
\|r_+\|_{L^{\frac{4}{3}}(\heis)}
	&\leq 3\|Q_{\beta}-Q_+\|_{L^{4}(\heis)}(\|Q_\beta\|_{L^{4}(\heis)}+\|Q_+\|_{L^{4}(\heis)})\|h^+\|_{L^{4}(\heis)}\\
	&\leq 6C_1\|Q_{\beta}-Q_+\|_{L^{4}(\heis)}\|h^+\|_{L^{4}(\heis)}\\
	&\leq 6KC_1\|Q_{\beta}-Q_+\|_{L^{4}(\heis)}\|h^+\|_{\dot{H}^1(\heis)}.
\end{align*}
Let $\varepsilon>0$ to be determined later. There exists $\beta_*(\varepsilon)$ such that for $\beta>\beta_*(\varepsilon)$,
$$
\|Q_{\beta}-Q_+\|_{L^{4}(\heis)}\leq\varepsilon.
$$

We conclude by the dual embedding $L^{\frac{4}{3}}(\heis)\hookrightarrow \dot{H}^{-1}(\heis)$ that there exists a constant $C_2$ (independent of $\varepsilon)$) such that for all $\beta\in(\beta_*(\varepsilon),1)$,
\begin{align*}
\|r_+\|_{\dot{H}^{-1}(\heis)}+\|r_-\|_{\dot{H}^{-1}(\heis)}
	\leq C_2\|h_{\perp}\|_{\dot{H}^1(\heis)}+C_2\varepsilon\|h^+\|_{\dot{H}^1(\heis)}.
\end{align*}

$\bullet$ Finally, we focus on
$$
\L^-_{Q_\beta} h
	=-\frac{\hlapl +\beta D_s}{1-\beta}h_\perp
	-2(\id-\Pi_0^+)(|Q_\beta|^2h)
	-(\id-\Pi_0^+)(Q_\beta^2\overline{h}).
$$
In order to bound the $\dot{H}^{-1}$ norm of this term, we will use the fact that
\begin{align*}
\half\|\L^-_{Q_\beta} h\|_{\dot{H}^{-1}(\heis)}^2+\half\|h_\perp\|_{\dot{H}^{1}(\heis)}^2
	&\geq\|\L^-_{Q_\beta} h\|_{\dot{H}^{-1}(\heis)}\|h_\perp\|_{\dot{H}^{1}(\heis)}\\
	&\geq (\L^-_{Q_\beta} h,h_\perp)_{\dot{H}^{-1}(\heis)\times\dot{H}^{1}(\heis)}.
\end{align*}
On the one hand, by inequality \eqref{eq:normesV0perp},
$$
(-\frac{\hlapl +\beta D_s}{1-\beta}h_\perp,h_\perp)_{\dot{H}^{-1}(\heis)\times\dot{H}^{1}(\heis)}
	\geq \half\frac{1}{1-\beta}\|h_\perp\|_{\dot{H}^{1}(\heis)}^2.
$$
On the other hand,
\begin{align*}
\left|\left(2(\id-\Pi_0^+)(|Q_\beta|^2h)+(\id-\Pi_0^+)(Q_\beta^2\overline{h}),h_\perp\right)_{\dot{H}^{-1}(\heis)\times\dot{H}^{1}(\heis)}\right|
	&\leq 3C_1^2\|h\|_{L^4(\heis)}\|h_\perp\|_{L^4(\heis)}\\
	&\leq \varepsilon^2 \|h\|_{L^4(\heis)}^2+\frac{3C_1^2}{4\varepsilon^2}\|h_\perp\|_{L^4(\heis)}^2.
\end{align*}
To summarize,
$$
\half\|\L^-_{Q_\beta} h\|_{\dot{H}^{-1}(\heis)}^2+\half\|h_\perp\|_{\dot{H}^{1}(\heis)}^2
	\geq \half\frac{1}{1-\beta}\|h_\perp\|_{\dot{H}^{1}(\heis)}^2-\varepsilon^2 \|h\|_{L^4(\heis)}^2-\frac{3C_1^2}{4\varepsilon^2}\|h_\perp\|_{L^4(\heis)}^2,
$$
and by removing the squares appropriately,
\begin{align*}
\|\L^-_{Q_\beta} h\|_{\dot{H}^{-1}(\heis)}
	&\geq \sqrt{\frac{\beta}{1-\beta}}\|h_\perp\|_{\dot{H}^{1}(\heis)}-\sqrt{2}\varepsilon \|h\|_{L^4(\heis)}-\sqrt{\frac{3C_1^2}{2\varepsilon^2}}\|h_\perp\|_{L^4(\heis)}\\
	&\geq \sqrt{\frac{\beta}{1-\beta}}\|h_\perp\|_{\dot{H}^{1}(\heis)}-\sqrt{2}K\varepsilon \|h\|_{\dot{H}^1(\heis)}-\sqrt{\frac{3C_1^2}{2\varepsilon^2}}K\|h_\perp\|_{\dot{H}^1(\heis)}.
\end{align*}

$\bullet$ We conclude by combining all the estimates. Because of the orthogonality of the decomposition along the spaces $\dot{H}^{-1}(\heis)\cap V_n^\pm$ in $\dot{H}^{-1}(\heis)$,
$$
\|\L_{Q_\beta}h\|_{\dot{H}^{-1}(\heis)}^2
	=\|\L h^++r_++r_-\|_{\dot{H}^{-1}(\heis)}^2
	+\|\L_{Q_\beta}^- h\|_{\dot{H}^{-1}(\heis)}^2,
$$
so we can add up the estimates to get
\begin{align*}
\sqrt{2}\|\L_{Q_\beta} h\|_{\dot{H}^{-1}(\heis)}
		+|(h,\partial_sQ_+)_{\dot{H}^1(\heis)}|
		+|(h,iQ_+)_{\dot{H}^1(\heis)}|
		+|(h,Q_++2i\partial_sQ_+)_{\dot{H}^1(\heis)}|\\
	\geq c\|h^+\|_{\dot{H}^1(\heis)}-C_2\|h_{\perp}\|_{\dot{H}^1(\heis)}
		-C_2\varepsilon\|h^+\|_{\dot{H}^1(\heis)} 
	 	+\sqrt{\frac{\beta}{1-\beta}}\|h_\perp\|_{\dot{H}^{1}(\heis)}\\
	 -\sqrt{2}K\varepsilon \|h\|_{\dot{H}^1(\heis)}
	 	-\sqrt{\frac{3C_1^2}{2\varepsilon^2}}K\|h_\perp\|_{\dot{H}^1(\heis)}.
\end{align*}
The terms compensate as follows. Concerning $\|h^+\|_{\dot{H}^1(\heis)}$, fix $\varepsilon>0$ small enough in the sense that
$$
(C_2+\sqrt{2}K)\varepsilon <\frac{c}{2}.
$$
Then for all $\beta>\beta_*(\varepsilon)$,
\begin{align*}
\sqrt{2}\|\L_{Q_\beta} h\|_{\dot{H}^{-1}(\heis)}
	&+|(h,\partial_sQ_+)_{\dot{H}^1(\heis)}|
	+|(h,iQ_+)_{\dot{H}^1(\heis)}|
	+|(h,Q_++2i\partial_sQ_+)_{\dot{H}^1(\heis)}|\\
	&\geq \frac{c}{2}\|h^+\|_{\dot{H}^1(\heis)}
	 +\left(\sqrt{\frac{\beta}{1-\beta}}-\left(C_2+\sqrt{2}K\varepsilon+\sqrt{\frac{3C_1^2}{2\varepsilon^2}}K\right)\right)\|h_\perp\|_{\dot{H}^1(\heis)}.
\end{align*}
Let now $\beta_*\in(0,1)$ such that for all $\beta\in(\beta_*,1)$,
$$
\sqrt{\frac{\beta}{1-\beta}}
	\geq C_2+\sqrt{2}K\varepsilon+\sqrt{\frac{3C_1^2}{2\varepsilon^2}}+\frac{c}{2}.
$$
Then for all $\beta\in(\beta_*,1)$, 
\begin{align*}
\sqrt{2}\|\L_{Q_\beta} h\|_{\dot{H}^{-1}(\heis)}
	+|(h,\partial_sQ_+)_{\dot{H}^1(\heis)}|
	+|(h,iQ_+)_{\dot{H}^1(\heis)}|
	+|(h,Q_+ &+2i\partial_sQ_+)_{\dot{H}^1(\heis)}|\\
	&\geq \frac{c}{2}(\|h^+\|_{\dot{H}^1(\heis)}+\|h_\perp\|_{\dot{H}^1(\heis)})\\
	&\geq \frac{c}{2}\|h\|_{\dot{H}^1(\heis)}.
\end{align*}
\end{proof}

%\newpage
%%%%%%%%%%%%%%%%%%%%%%%%%%%%%%%%%%%%%%%%%%%%%%%%%%%%%%%%%%%%%%%%%%%%%%%%%%%%%%%%%%%%%%%%%%%%%%%%%%%%%%%%%%%%%%%%%%%%%%%%%%%%%%%%%%%%%%%%%%%%%%%%%
\subsection{Uniqueness of the traveling waves for \texorpdfstring{$\beta$}{beta} close to \texorpdfstring{$1^-$}{1}}
%%%%%%%%%%%%%%%%%%%%%%%%%%%%%%%%%%%%%%%%%%%%%%%%%%%%%%%%%%%%%%%%%%%%%%%%%%%%%%%%%%%%%%%%%%%%%%%%%%%%%%%%%%%%%%%%%%%%%%%%%%%%%%%%%%%%%%%%%%%%%%%%%

\begin{thm}\label{thm:description_Qbeta}
There exist $\beta_*\in(0,1)$ and a neighbourhood $\mathcal{V}$ of $Q_+$ in $\dot{H}^1(\heis)$ such that for all $\beta\in(\beta_*,1)$, there is a unique $Q_\beta\in \Qbeta\cap\mathcal{V}\cap (\partial_sQ_+,iQ_+,Q_++2i\partial_sQ_+)^{\perp,\dot{H}^1(\heis)}$.
Moreover,
\begin{enumerate}
\item for all $\beta\in(\beta_*,1)$,
$$
\Qbeta=\left\{T_{s_0,\theta,\alpha}Q_\beta:(x,y,s)\mapsto \e^{i\theta}\alpha Q_\beta(\alpha x,\alpha y,\alpha^2 (s+s_0));\quad (s_0,\theta,\alpha)\in \R\times\T\times\R_+^* \right\};
$$
\item for all $\gamma\in(0,\frac{1}{4})$ and all $k\in[1,+\infty)$, $\|Q_\beta-Q_+\|_{\dot{H}^k(\heis)}=\gdO((1-\beta)^\gamma)$; 
\item the map $\beta\in(\beta_*,1)\mapsto Q_\beta\in\dot{H}^1(\heis)$ is smooth, tends to $Q_+$ as $\beta$ tends to $1$, and its derivative $\dot{Q_\beta}$ is uniquely determined by
\begin{equation}\label{eq:Qpoint}
\begin{cases}
\L_{Q_\beta}(\dot{ Q_\beta})=-\frac{\hlapl +D_s}{(1-\beta)^2}Q_\beta
\\
\dot{Q_\beta}\in\dot{H}^{1}(\heis)\cap (\partial_sQ_+,iQ_+,Q_++2i\partial_sQ_+)^{\perp,\dot{H}^1(\heis)}
\end{cases}.
\end{equation}
\end{enumerate}
\end{thm}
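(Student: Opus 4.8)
The plan is to assemble the quantitative estimates already established, following the strategy of \cite{GerardLenzmannPocovnicuRaphael2018}.

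\textbf{Existence of a canonical representative.} Starting from an arbitrary $Q_\beta^0\in\Qbeta$, I first decompose it as in Lemma \ref{lem:delta_Qbeta} into $Q_\beta^++R_\beta$; that lemma gives $\delta(Q_\beta^+)=\gdO((1-\beta)^{1/2})$ and $\|R_\beta\|_{\dot{H}^1(\heis)}=\gdO((1-\beta)^{1/2})$. Corollary \ref{cor:estimates_stability} then produces symmetry parameters bringing $Q_\beta^+$ to within $\gdO((1-\beta)^{1/4})$ of $Q_+$ in $\dot{H}^1(\heis)$, and adding back $R_\beta$ the same holds for $Q_\beta^0$. Because the maps $T_{s_0,\theta,\alpha}$ are genuine symmetries of \eqref{eq:Hbeta}, the transformed function remains in $\Qbeta$. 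I then run the implicit function theorem exactly as in the proof of Corollary \ref{cor:estimates_stability} — whose Jacobian $\d_{s,\theta,\alpha}G(0,0,1,Q_+)$ was computed to be invertible — to adjust the three parameters so that the result is orthogonal to $\partial_sQ_+$, $iQ_+$ and $Q_++2i\partial_sQ_+$ in $\dot{H}^1(\heis)$. This yields an element $Q_\beta\in\Qbeta\cap\V\cap(\partial_sQ_+,iQ_+,Q_++2i\partial_sQ_+)^{\perp,\dot{H}^1(\heis)}$ which still satisfies $\|Q_\beta-Q_+\|_{\dot{H}^1(\heis)}=\gdO((1-\beta)^{1/4})$.

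\textbf{Uniqueness and the orbit description.} If $Q_\beta^{(1)}$ and $Q_\beta^{(2)}$ both lie in this slice, set $h=Q_\beta^{(1)}-Q_\beta^{(2)}$, which is orthogonal to the three directions. Subtracting the two instances of \eqref{eq:Hbeta} and expanding the cubic nonlinearity around $Q_\beta^{(2)}$, the linear part rebuilds $\L_{Q_\beta^{(2)}}h$ while the remainder is at least quadratic in $h$; using $\dot{H}^1(\heis)\hookrightarrow L^4(\heis)$, the uniform $L^4$ and $L^\infty$ bounds on $Q_\beta$, and the dual embedding $L^{4/3}(\heis)\hookrightarrow\dot{H}^{-1}(\heis)$, this remainder is $\gdO(\|h\|_{\dot{H}^1(\heis)}^2)$. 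Hence $\|\L_{Q_\beta^{(2)}}h\|_{\dot{H}^{-1}(\heis)}=\gdO(\|h\|_{\dot{H}^1(\heis)}^2)$, and since the orthogonality terms vanish, Proposition \ref{prop:estimates_Lbeta_invertible} gives $c\|h\|_{\dot{H}^1(\heis)}\le C\|h\|_{\dot{H}^1(\heis)}^2$, forcing $h=0$ once $\beta$ is close enough to $1$. Item (1) then follows: any element of $\Qbeta$ can be steered into the slice by a symmetry, hence equals $Q_\beta$ by uniqueness, while conversely $T_{s_0,\theta,\alpha}Q_\beta\in\Qbeta$ by invariance of the minimization problem.

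\textbf{Higher Sobolev estimates.} For item (2) I already control $\|Q_\beta-Q_+\|_{\dot{H}^1(\heis)}=\gdO((1-\beta)^{1/4})$. Corollary \ref{cor:bound_Hk} bounds $\|Q_\beta\|_{\dot{H}^K(\heis)}$ uniformly for every $K$, and $Q_+\in\dot{H}^K(\heis)$, so $\|Q_\beta-Q_+\|_{\dot{H}^K(\heis)}=\gdO(1)$. Interpolating the homogeneous norms (through the spectral calculus of $-\hlapl$) between exponents $1$ and $K$, with $k=(1-\theta)+\theta K$, gives $\|Q_\beta-Q_+\|_{\dot{H}^k(\heis)}=\gdO((1-\beta)^{\frac14\cdot\frac{K-k}{K-1}})$. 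Letting $K\to\infty$ the exponent tends to $\tfrac14$, so for any prescribed $\gamma<\tfrac14$ and any $k\ge1$ a large enough $K$ yields $\|Q_\beta-Q_+\|_{\dot{H}^k(\heis)}=\gdO((1-\beta)^\gamma)$.

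\textbf{Smoothness and the derivative.} For item (3), fix $\beta_0\in(\beta_*,1)$ and consider $\Phi(\beta,u)=-\frac{\hlapl+\beta D_s}{1-\beta}u-|u|^2u$ on $(\beta_*,1)\times X$, where $X=\dot{H}^1(\heis)\cap(\partial_sQ_+,iQ_+,Q_++2i\partial_sQ_+)^{\perp,\dot{H}^1(\heis)}$, so that $\partial_u\Phi(\beta,Q_\beta)=\L_{Q_\beta}$. The main obstacle is the functional-analytic setup of the implicit function theorem. I plan to argue that $\L_{Q_\beta}$ is a compact perturbation of the isomorphism $-\frac{\hlapl+\beta D_s}{1-\beta}\colon\dot{H}^1(\heis)\to\dot{H}^{-1}(\heis)$ — the potential terms being relatively compact thanks to the decay $|Q_\beta|\lesssim\rho^{-2}$ and the uniform $\dot{H}^k$ bounds — hence Fredholm of index $0$; Proposition \ref{prop:estimates_Lbeta_invertible} shows its kernel meets $X$ trivially, while the three symmetry generators of $Q_\beta$ (which tend to $\partial_sQ_+$, $iQ_+$, $Q_++2i\partial_sQ_+$) span a three-dimensional kernel transverse to $X$. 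Projecting $\Phi$ onto the closed range of $\L_{Q_{\beta_0}}$ turns $\partial_u\Phi(\beta_0,Q_{\beta_0})$ into an isomorphism from $X$ onto that range, so the implicit function theorem yields a smooth local branch; by the local uniqueness already proven, this branch coincides with $\beta\mapsto Q_\beta$, which is therefore smooth on all of $(\beta_*,1)$. Differentiating \eqref{eq:Hbeta} in $\beta$, using $\frac{d}{d\beta}\big(-\frac{\hlapl+\beta D_s}{1-\beta}\big)=-\frac{\hlapl+D_s}{(1-\beta)^2}$, and rearranging gives the linear characterization \eqref{eq:Qpoint} of $\dot{Q_\beta}$, the orthogonality being inherited from membership in $X$.
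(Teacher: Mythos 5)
Your existence, uniqueness, orbit-description, and $\dot H^k$-convergence arguments follow the paper's proof essentially step by step (the paper applies Corollary \ref{cor:estimates_stability} together with $\delta(Q_\beta)=\gdO((1-\beta)^{1/2})$ from Lemma \ref{lem:delta_Qbeta}, runs the same implicit-function adjustment of the three parameters, and proves uniqueness from $\L_{Q_\beta}h=\gdO(\|h\|_{\dot H^1(\heis)}^2)$ against Proposition \ref{prop:estimates_Lbeta_invertible}; your interpolation between the exponents $1$ and $K$ is only a cosmetic variant of the paper's choice of $m$ with $k=(1-4\gamma)m+4\gamma$). One small slip: in the uniqueness step, $h=0$ is forced by shrinking the neighbourhood $\mathcal V$ (so that $\|h\|_{\dot H^1(\heis)}$ is small), not by taking $\beta$ closer to $1$.

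The genuine gap is in item (3). Your plan is Fredholm theory plus Lyapunov--Schmidt: project $\Phi=F$ onto the closed range of $\L_{Q_{\beta_0}}$, apply the implicit function theorem to the projected equation, and then assert that ``by the local uniqueness already proven, this branch coincides with $\beta\mapsto Q_\beta$''. That identification does not follow. The IFT branch $u(\beta)$ solves only the \emph{projected} equation; it is not known to solve the full Euler--Lagrange equation, let alone to be a normalized minimizer, so it need not lie in $\Qbeta$, and your uniqueness statement (uniqueness inside $\Qbeta\cap\mathcal V\cap(\partial_sQ_+,iQ_+,Q_++2i\partial_sQ_+)^{\perp,\dot H^1(\heis)}$) says nothing about it. Conversely, to use the IFT's own local uniqueness to conclude $Q_\beta=u(\beta)$ you would need $Q_\beta$ to lie in the small IFT neighbourhood of $Q_{\beta_0}$ for $\beta$ near $\beta_0$, i.e.\ continuity of $\beta\mapsto Q_\beta$ at a fixed $\beta_0<1$ --- which is exactly part of what is being proven; the bound $\|Q_\beta-Q_+\|_{\dot H^1(\heis)}=\gdO((1-\beta)^{1/4})$ gives smallness near $\beta=1$, not continuity at interior points. (The compactness of the potential terms, on which your Fredholm claim rests, is also asserted rather than proven.) The paper's route avoids the abstract branch entirely: writing $W_1:=\dot H^1(\heis)\cap(\partial_sQ_+,iQ_+,Q_++2i\partial_sQ_+)^{\perp,\dot H^1(\heis)}$ and $W_{-1}:=\dot H^{-1}(\heis)\cap(\partial_sQ_+,iQ_+,Q_++2i\partial_sQ_+)^{\perp,L^2(\heis)}$, it uses self-adjointness of $\L_{Q_\beta}$ on $L^2(\heis)$ and the kernel bound from Proposition \ref{prop:estimates_Lbeta_invertible} to get $\textnormal{Im}(\L_{Q_\beta})=W_{-1}$, hence an isomorphism $\L_{Q_\beta}:W_1\to W_{-1}$; it defines $\dot{Q_\beta}:=(\L_{Q_\beta})^{-1}\big(\partial_\beta F(\beta,Q_\beta)\big)$, checking that $\partial_\beta F(\beta,Q_\beta)\in W_{-1}$; it then proves differentiability by a direct difference-quotient estimate between the two actual solutions $F(\beta+\varepsilon,Q_{\beta+\varepsilon})=F(\beta,Q_\beta)=0$, closed by the coercivity of $\L_{Q_\beta}$; and it bootstraps higher regularity by applying the implicit function theorem not to $F$ but to $\Phi(\beta,U,V)=\L_{Q_\beta}V-\partial_\beta F(\beta,U)$, i.e.\ to the equation satisfied by the derivative. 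You should either switch to an argument of this type, or first establish continuity of $\beta\mapsto Q_\beta$ at fixed $\beta_0$ (say, by a compactness argument) before identifying your branch with it, and additionally justify why the projected-out component of $\Phi$ vanishes along your branch.
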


\begin{proof}
$\bullet $ Fix any neighbourhood $\mathcal{V}$ of $Q_+$. We first prove the existence of a profile $Q_\beta\in \Qbeta\cap\mathcal{V}\cap (\partial_sQ_+,iQ_+,Q_++2i\partial_sQ_+)^{\perp,\dot{H}^1(\heis)}$ for $\beta$ close enough to $1$. For $\beta\in(0,1)$, we choose $Q_\beta\in\Qbeta$ arbitrarily. By combining Corollary \ref{cor:estimates_stability} with the fact that $\delta(Q_\beta)=\gdO((1-\beta)^{\half})$ from Lemma \ref{lem:delta_Qbeta}, we know that
$$
\inf_{(s_0,\theta,\alpha)\in \R\times\T\times\R_+^*}\|T_{s_0,\theta,\alpha}Q_\beta-Q_+\|_{\dot{H}^1(\heis)}
	=\gdO((1-\beta)^{\frac{1}{4}}).
$$
The same argument as in the proof of Corollary \ref{cor:estimates_stability}, based on the implicit function theorem, enables us to state that for $\beta$ close enough to $1$, one can choose $(s_\beta,\theta_\beta,\alpha_\beta)\in \R\times\T\times\R_+^*$ such that $\tilde{Q_\beta}:=T_{s_\beta,\theta_\beta,\alpha_\beta}Q_\beta\in\mathcal{V}$ and
$$
(\tilde{Q_\beta},\partial_sQ_+)_{\dot{H}^1(\heis)}
	=(\tilde{Q_\beta},iQ_+)_{\dot{H}^1(\heis)}
	=(\tilde{Q_\beta},Q_++2i\partial_sQ_+)_{\dot{H}^1(\heis)}
	=0.
$$
This gives the existence part of the result.

$\bullet$ We now prove uniqueness for some small neighbourhood $\mathcal{V}$ of $Q_+$. We first set $\mathcal{V}$ as the neighbourhood of $Q_+$ from Proposition \ref{prop:estimates_Lbeta_invertible}. Let $\beta\in(\beta_*,1)$, and fix two profiles $Q_{\beta}$ and $\tilde{Q_\beta}$ in $\Qbeta\cap \mathcal{V}\cap (\partial_sQ_+,iQ_+,Q_++2i\partial_sQ_+)^{\perp,\dot{H}^1(\heis)}$.
We define
$$
h:=Q_\beta-\tilde{Q_\beta}\in\dot{H}^1(\heis)\cap (\partial_sQ_+,iQ_+,Q_++2i\partial_sQ_+)^{\perp,\dot{H}^1(\heis)}.
$$
By subtracting the equations solved by $Q_\beta$ and $\tilde{Q_\beta}$, $h$ satisfies
$$
-\frac{\hlapl+\beta D_s}{1-\beta}h=2\Pi_0^+(|Q_\beta|^2h)+\Pi_0^+(Q_\beta^2\overline{h})+\gdO(\|h\|_{\dot{H}^1(\heis)}^2),
$$
so that
$$
\L_{Q_\beta}h=\gdO(\|h\|_{\dot{H}^1(\heis)}^2).
$$
Since $Q_\beta$ belongs to  the neighbourhood $\mathcal{V}$ from Proposition \ref{prop:estimates_Lbeta_invertible}, this means that for some constants $c>0$ and $C>0$,
$$
C\|h\|_{\dot{H}^1(\heis)}^2
	\geq \|\L_{Q_\beta} h\|_{\dot{H}^{-1}(\heis)}
%	+|(h,\partial_sQ_+)_{\dot{H}^1(\heis)}|
%	+|(h,iQ_+)_{\dot{H}^1(\heis)}|
%	+|(h,Q_++2i\partial_sQ_+)_{\dot{H}^1(\heis)}|
	\geq c \|h\|_{\dot{H}^1(\heis)}.
$$
%But by orthogonality of $h$ to $\partial_sQ_+$, $iQ_+$ and $Q_++2i\partial_sQ_+$, this means that for some constants $c>0$ and $C>0$,
%$$
%C\|h\|_{\dot{H}^1(\heis)}^2\geq  c \|h\|_{\dot{H}^1(\heis)}.
%$$
Up to reducing the neighbourhood $\mathcal{V}$, one can chose it small enough such that $h$ has to be the zero function.

$\bullet$ The description of the set $\Qbeta$ is then a direct consequence. Indeed, if $\beta\in(\beta_*,1)$, fix $U_\beta\in\Qbeta$. We know from the first point that $\beta_*$ is sufficiently close to $1$ to ensure the existence of $(s_\beta,\theta_\beta,\alpha_\beta)\in\R\times\T\times\R_+^*$ such that $T_{s_\beta,\theta_\beta,\alpha_\beta}U_\beta\in\mathcal{V}\cap (\partial_sQ_+,iQ_+,Q_++2i\partial_sQ_+)^{\perp,\dot{H}^1(\heis)}$. By the uniqueness point, $ T_{s_\beta,\theta_\beta,\alpha_\beta}U_\beta=Q_\beta$.

$\bullet$ We now show the convergence of $(Q_\beta)_\beta$ to $Q_+$ in $\dot{H}^k(\heis)$ for all $k\geq 1$. Applying Corollary \ref{cor:estimates_delta} to $(Q_\beta-Q_+)$, we know that for $\beta$ close to $1$,
$$
\delta(Q_\beta)\geq c\|Q_\beta-Q_+\|_{\dot{H}^1(\heis)}^2.
$$
But $\delta(Q_\beta)=\gdO((1-\beta)^{\half})$ from Proposition \ref{lem:delta_Qbeta}, therefore
$
\|Q_\beta-Q_+\|_{\dot{H}^1(\heis)}=\gdO((1-\beta)^{\frac{1}{4}}).
$

One can now deduce that for all $0<\gamma<\frac{1}{4}$, as $\beta$ goes to $1$, 
$$
\|Q_\beta-Q_+\|_{\dot{H}^k(\heis)}=\gdO((1-\beta)^{\gamma}).
$$
Indeed, the interpolation formula \cite{Bergh1976}
$$
(\dot{H}^m(\heis),\dot{H}^1(\heis))_{4\gamma}=\dot{H}^k(\heis)
$$
with $m\in\R$ chosen so that $k=(1-4\gamma)m+4\gamma$, leads to
$$
\|Q_\beta-Q_+\|_{\dot{H}^k(\heis)}
	\leq \|Q_\beta-Q_+\|_{\dot{H}^m(\heis)}^{1-4\gamma}
	\|Q_\beta-Q_+\|_{\dot{H}^1(\heis)}^{4\gamma},
$$
and it only remains to use the fact that $(Q_\beta-Q_+)_\beta$ is bounded in $\dot{H}^m(\heis)$ for $\beta$ close to $1$ (Corollary \ref{cor:bound_Hk}) and that $\|Q_\beta-Q_+\|_{\dot{H}^1(\heis)}^{4\gamma}=\gdO((1-\beta)^{\gamma})$ as $\beta$ goes to $1$.

$\bullet$ We now prove the last point of the theorem about the smoothness of the map $\beta\mapsto Q_\beta$. We first show that equation \eqref{eq:Qpoint} uniquely determines a function $\dot{Q_\beta}$ lying on the appropriate space
$$
W_1:=\dot{H}^{1}(\heis)\cap (\partial_sQ_+,iQ_+,Q_++2i\partial_sQ_+)^{\perp,\dot{H}^1(\heis)}.
$$
Define
$$
W_{-1}:=\dot{H}^{-1}(\heis)\cap (\partial_sQ_+,iQ_+,Q_++2i\partial_sQ_+)^{\perp,L^2(\heis)},
$$
and set
$$
F:(\beta,U)\in(\beta_*,1)\times W_1 \mapsto -\frac{\hlapl+\beta D_s}{1-\beta}U-|U|^2U\in\dot{H}^{-1}(\heis).
$$
Notice that $\partial_\beta F$ takes values in the space $W_{-1}$.
Indeed, the derivative $\partial_\beta F(\beta,U)$ is equal to
$$
\partial_\beta F(\beta,U)
	=-\frac{\hlapl+D_s}{(1-\beta)^2}U.
$$
In particular, since $Q_+,iQ_+,\partial_sQ_+$ and $i\partial_sQ_+$ belong to $\dot{H}^1(\heis)\cap V_0^+$, and since $-(\hlapl+D_s)$ vanishes on this space,
\begin{align*}
(\partial_\beta F(\beta,U),\partial_sQ_+)_{\dot{H}^{-1}(\heis)\times\dot{H}^1(\heis)}
	&=(\partial_\beta F(\beta,U),iQ_+)_{\dot{H}^{-1}(\heis)\times\dot{H}^1(\heis)}\\
	&=(\partial_\beta F(\beta,U),Q_++2i\partial_sQ_+)_{\dot{H}^{-1}(\heis)\times\dot{H}^1(\heis)}\\
	&=0,
\end{align*}
or equivalently $\partial_\beta F(\beta,U)\in W_{-1}$.

Consider $\L_{Q_\beta}$ as a self-adjoint operator on $L^2(\heis)$.
Then thanks to Proposition \ref{prop:estimates_Lbeta_invertible}, we get that $\textnormal{Ker}(\L_{Q_\beta})\subset \textnormal{Vect}_{\R}(\partial_sQ_+,iQ_+,Q_+2i\partial_s Q_+)$. Therefore,
$$
\textnormal{Im}(\L_{Q_\beta})
	=\textnormal{Ker}(\L_{Q_\beta})^{\perp,L^2(\heis)}
	=\dot{H}^{-1}(\heis)\cap\textnormal{Vect}_{\R}(\partial_sQ_+,iQ_+,Q_++2i\partial_sQ_+)^{\perp,L^2(\heis)},
$$
so $\textnormal{Im}(\L_{Q_\beta})=W_{-1}$.
This implies that $\L_{Q_\beta}$ is an isomorphism from $W_1$ to $W_{-1}$, with continuous inverse :
$$
\|\L_{Q_\beta}h\|_{\dot{H}^{-1}(\heis)}\geq c\|h\|_{\dot{H}^{1}(\heis)},\quad h\in W_1.
$$
In particular, $\partial_\beta F(\beta,Q_\beta)\in W_{-1}=\textnormal{Im}(\L_{Q_\beta})$, and by invertibility of $\L_{Q_\beta}$ from $W_1$ to $W_{-1}$, $\dot{Q_\beta}:= (\L_{Q_\beta})^{-1}(\partial_\beta F(\beta,Q_\beta))$ is uniquely determined and satisfies \eqref{eq:Qpoint}.

We now show that $\dot{Q_\beta}$ is a derivative of the map $\beta\in(\beta_*,1)\mapsto Q_\beta\in\dot{H}^1(\heis)$. Fix $\beta\in(\beta_*,1)$. For $\varepsilon>0$ small enough, $f_\varepsilon:=\frac{Q_{\beta+\varepsilon}-Q_\beta}{\varepsilon}-\dot{Q_\beta}$ is well defined. Moreover, since $(\beta+\varepsilon,Q_{\beta+\varepsilon})$ and $(\beta,Q_\beta)$ are both solution to the equation $F(\alpha,U)=0$, then
\begin{align*}
0
	&=F(\beta+\varepsilon,Q_{\beta+\varepsilon})-F(\beta,Q_\beta)\\
	&=F(\beta+\varepsilon,Q_{\beta+\varepsilon})-F(\beta,Q_{\beta+\varepsilon})
	+F(\beta,Q_{\beta+\varepsilon})-F(\beta,Q_\beta)\\
	&=\varepsilon\partial_\beta F(\beta+\varepsilon,Q_\beta)+\L_{Q_\beta}(Q_{\beta+\varepsilon}-Q_\beta)+\gdO(\varepsilon^2+\|Q_{\beta+\varepsilon}-Q_\beta\|_{\dot{H}^1(\heis)}^2).
\end{align*}
Actually, since $F$ is smooth in the $\beta$ variable,
\begin{align*}
0
	&=\varepsilon\partial_\beta F(\beta,Q_\beta)
	+\L_{Q_\beta}(Q_{\beta+\varepsilon}-Q_\beta)
	+\gdO(\varepsilon^2+\|Q_{\beta+\varepsilon}-Q_\beta\|_{\dot{H}^1(\heis)}^2).
\end{align*}
Replacing $\partial_\beta F(\beta,Q_\beta)$ by $\L_{Q_\beta}(\dot{Q_\beta})$, we get
$$
\L_{Q_\beta}(f_\varepsilon)
	=\gdO(\varepsilon+\frac{\|Q_{\beta+\varepsilon}-Q_\beta\|_{\dot{H}^1(\heis)}^2}{\varepsilon}).
$$
Since $f_\varepsilon\in W_1$, we know that $\|\L_{Q_\beta}(f_\varepsilon)\|_{\dot{H}^{-1}(\heis)}\geq c \|f_\varepsilon\|_{\dot{H}^1(\heis)}$. This implies that for some constant $C>0$,
$$
C(\varepsilon+\frac{\|Q_{\beta+\varepsilon}-Q_\beta\|_{\dot{H}^1(\heis)}^2}{\varepsilon})
	\geq c \|f_\varepsilon\|_{\dot{H}^1(\heis)}.
$$
But
$$
\|Q_{\beta+\varepsilon}-Q_\beta\|_{\dot{H}^1(\heis)}^2
	=\varepsilon^2\|f_\varepsilon+\dot{Q_\beta}\|_{\dot{H}^1(\heis)}^2
$$
so
$$
C\varepsilon(1+\|f_\varepsilon+\dot{Q_\beta}\|_{\dot{H}^1(\heis)}^2)
	\geq c\|f_\varepsilon\|_{\dot{H}^1(\heis)}.
$$
Letting $\varepsilon\to 0$, we get that $ \|f_\varepsilon\|_{\dot{H}^1(\heis)}\to 0$, so the map $\beta\mapsto Q_\beta$ is indeed $\classeC^1$ with derivative $\dot{Q_\beta}$. The smoothness follows from an implicit function theorem. Set
$$
\Phi:(\beta,U,V)\in(\beta_*,1)\times W_1\times W_{1}\mapsto \L_{Q_\beta} V-\partial_\beta F(\beta,U) \in W_{-1}.
$$
If $\beta\mapsto Q_\beta$ has regularity $\classeC^n$ for $\beta\in(\beta_*,1)$, then the function $\Phi$ is also $\classeC^n$. For fixed $\beta\in(\beta_*,1)$, $\Phi(\beta,Q_\beta,\dot{Q_\beta})=0$, and $\partial_VF(\beta,Q_\beta,\cdot)=\L_{Q_\beta}$, which is an isomorphism from $W_{1}$ to $W_{-1}$. Applying the implicit function theorem, there exists a $\classeC^n$ map $V$ defined on a neighbourhood of $(\beta,Q_\beta)$ in $(\beta_*,1)\times W_1$ and valued in $W_1$ such that $V(\beta,Q_\beta)=\dot{Q_\beta}$ and that on this neighbourhood,
$$
F(\beta,U,V(\beta,U))=0.
$$
In particular for $\beta'$ close to $\beta$, $F(\beta',Q_\beta',V(\beta',Q_{\beta'}))=0$ and since $\dot{Q_{\beta'}}$ is uniquely determined by \eqref{eq:Qpoint}, $\dot{Q_{\beta'}}=V(\beta',Q_{\beta'})$. The function $V$ being $\classeC^n$, supposing that $\beta\mapsto Q_\beta$ is $\classeC^n$ for some integer $n$, then $\beta\mapsto\dot{Q_\beta}$ is $\classeC^{n}$, and therefore $\beta\mapsto Q_\beta$ is $\classeC^{n+1}$.
\end{proof}

%\newpage
%%%%%%%%%%%%%%%%%%%%%%%%%%%%%%%%%%%%%%%%%%%%%%%%%%%%%%%%%%%%%%%%%%%%%%%%%%%%%%%%%%%%%%%%%%%%%%%%%%%%%%%%%%%%%%%%%%%%%%%%%%%%%%%%%%%%%%%%%%%%%%%%%
%\section{Appendices}
%%%%%%%%%%%%%%%%%%%%%%%%%%%%%%%%%%%%%%%%%%%%%%%%%%%%%%%%%%%%%%%%%%%%%%%%%%%%%%%%%%%%%%%%%%%%%%%%%%%%%%%%%%%%%%%%%%%%%%%%%%%%%%%%%%%%%%%%%%%%%%%%%

%\newpage
%%%%%%%%%%%%%%%%%%%%%%%%%%%%%%%%%%%%%%%%%%%%%%%%%%%%%%%%%%%%%%%%%%%%%%%%%%%%%%%%%%%%%%%%%%%%%%%%%%%%%%%%%%%%%%%%%%%%%%%%%%%%%%%%%%%%%%%%%%%%%%%%%
\section{Appendix~: proof of Lemma \ref{lem:calculation_proj}}\label{appendix:calculation_proj}
%%%%%%%%%%%%%%%%%%%%%%%%%%%%%%%%%%%%%%%%%%%%%%%%%%%%%%%%%%%%%%%%%%%%%%%%%%%%%%%%%%%%%%%%%%%%%%%%%%%%%%%%%%%%%%%%%%%%%%%%%%%%%%%%%%%%%%%%%%%%%%%%%

We establish an explicit formula for the orthogonal projections $P_0F_1$, $P_0F_2$ and $P_0F_3$ which are under integral form. Then, we estimate numerically $\langle P_0F_j,F_j\rangle_{L^2(\C_+)}$, $j=1,2,3$, in order to get Lemma \ref{lem:calculation_proj}.

$\bullet$ We  know that
$$
-\pi P_0(F_1)(s+it)
	=\int_{v\in \R_+}\int_{u\in\R}\frac{1}{(s-u+i(t+v))^2}\frac{1}{(u+i(v+1))}\frac{1}{\sqrt{u^2+(v+1)^2}}\d u\d v.
$$
Let us apply the change of variables $u=(v+1)\sh(y)$, $\d u=(v+1)\ch(y)\d y=\sqrt{u^2+(v+1)^2}\d y$. Then
$$
-\pi P_0(F_1)(s+it)
	=\int_{v\in \R_+}\int_{y\in\R}\frac{1}{(s-(v+1)\sh(y)+i(t+v))^2}\frac{1}{(\sh(y)+i)(v+1)}\d y\d v.
$$
We now apply the change of variables $x=\exp(y)$, $\d x=\exp(y)\d y$~:
\begin{align*}
-\pi P_0&(F_1)(s+it)\\
	&=\int_{v\in \R_+}\int_{y\in\R}\frac{8\e^{3y}}{\big(2(s+i(t+v))\e^y-(v+1)\e^{2y}+(v+1)\big)^2}\frac{1}{(\e^{2y}-1+2i\e^y)(v+1)}\d y\d v\\
	&=\int_{v\in \R_+}\int_{x\in\R_+}\frac{8x^2}{\big(2(s+i(t+v))x-(v+1)x^2+(v+1)\big)^2}\frac{1}{(x^2-1+2ix)(v+1)}\d x\d v.
\end{align*}
Thanks to Fubini's theorem, one can exchange the integral signs so that
\begin{align*}
-\pi P_0&(F_1)(s+it)\\
	&=\int_{x\in\R_+}\frac{8x^2}{(x+i)^2}\int_{v\in \R_+}\frac{1}{\big(2(s+it)x-x^2+1+v(-x^2+2ix+1)\big)^2}\frac{1}{(v+1)}\d x\d v\\
	&=\int_{x\in\R_+}\frac{8x^2}{(x+i)^2(x-i)^4}\int_{v\in \R_+}\frac{1}{\big(\frac{x^2-2(s+it)x-1}{x^2-2ix-1}+v\big)^2}\frac{1}{(v+1)}\d v\d x.
\end{align*}

The residue formula implies the following result. For any rational function $R$ such that $\int_{\R_+}R(v)\d v$ is convergent, then
$$
\int_{\R_+}R(v)\d v=-\sum_{w\in\C}\Res_w(R(w)\log_0(w)),
$$
where $\log_0$ is the positive determination of the logarithm.
Here, we consider the rational function $R(v)=\frac{1}{\big(\frac{x^2-2zx-1}{x^2-2ix-1}+v\big)^2}\frac{1}{(v+1)}$, $z=s+it$. We fix $\lambda=\frac{x^2-2zx-1}{x^2-2ix-1}$.

Assume that $z\neq i$ so that $\lambda\neq 1$. The residues at the simple pole $-1$ and the double pole $-\lambda$ are equal to
\begin{align*}
\Res_{-1}(R(w)\log_0(w))
	=\left(\frac{1}{(\lambda+w)^2}\log_0(w)\right)\Big|_{w=-1}
	=\frac{1}{(\lambda-1)^2}i\pi
\end{align*}
and
\begin{align*}
\Res_{-\lambda}(R(w)\log_0(w))
	&=\frac{\d}{\d w}\left(\frac{1}{(w+1)}\log_0(w)\right)\Big|_{w=-\lambda}\\
	&=\left(\frac{1}{w(w+1)}-\frac{\log_0(w)}{(w+1)^2}\right)\Big|_{w=-\lambda}\\
	&=\frac{1}{\lambda(\lambda-1)}-\frac{\log_0(-\lambda)}{(\lambda-1)^2}.
\end{align*}
Remark that
$$
\lambda=1-2(z-i)\frac{x}{(x-i)^2},
\quad
\frac{1}{\lambda-1}=-\frac{1}{2}\frac{(x-i)^2}{x}\frac{1}{z-i},
\quad
\text{and}
\quad
\frac{1}{(\lambda-1)^2}=\frac{1}{4}\frac{(x-i)^4}{x^2}\frac{1}{(z-i)^2}.
$$
Therefore,
\begin{align*}
\Res_{-1}(R(w)\log_0(w))
	&=i\pi\frac{1}{4}\frac{(x-i)^4}{x^2}\frac{1}{(z-i)^2}
\end{align*}
and
\begin{align*}
\Res_{-\lambda}(R(w)\log_0(w))
	=-\frac{(x-i)^2}{x^2-2zx-1}\frac{(x-i)^2}{2x(z-i)}
	-\log_0\left(-1+\frac{2(z-i)x}{(x-i)^2}\right)\frac{(x-i)^4}{4x^2(z-i)^2}.
\end{align*}
Consequently,
\begin{align*}
\frac{8x^2}{(x+i)^2(x-i)^2}\int_{\R_+}R(v)\d v
	=&\frac{-2i\pi}{(x+i)^2(z-i)^2}
	+\frac{4x}{(x^2-2zx-1)(x+i)^2(z-i)}\\
	&+2\log_0\left(-1+\frac{2(z-i)x}{(x-i)^2}\right)\frac{1}{(x+i)^2}\frac{1}{(z-i)^2}.
\end{align*}

We can integrate every term of the right hand side.
First,
%$$
%\int_{x\in\R_+}\frac{1}{(x+i)^2}\d x
%	=\frac{1}{i}
%$$
%so
$$
\int_{x\in\R_+}\frac{-2i\pi}{(x+i)^2(z-i)^2}\d x
	=\frac{-2\pi}{(z-i)^2}.
$$
Then, an integration by parts leads to
\begin{align*}
\int_{x\in\R_+}
	\log_0	\Big(-1+\frac{2(z-i)x}{(x-i)^2}\Big)\frac{1}{(x+i)^2} \d x
%	&=\left[-\log_0\Big(-1+\frac{2(z-i)x}{(x-i)^2}\Big)\frac{1}{x+i}\right]_{x=0}^{+\infty}
%	+\int_{\R_+}2\frac{(z-i)(x-i-2x)}{(x+i)(x-i)^3}\frac{1}{-1+2(z-i)\frac{x}{(x-i)^2}}\d x\\
	&=\pi+2(z-i)\int_{\R_+}\frac{1}{(x-i)(x^2-2zx-1)}\d x.
\end{align*}
We conclude that
\begin{dmath*}
-\pi P_0(F_1)(z)
	=\frac{-2\pi}{(z-i)^2}
	+\frac{4}{z-i}\int_{x\in\R_+}\frac{1}{x^2-2zx-1}\frac{x}{(x+i)^2}\d x
	+\frac{2}{(z-i)^2}\left(\pi+2(z-i)\int_{x\in\R_+}\frac{1}{(x-i)(x^2-2zx-1)}\d x\right)
%	=\frac{4}{z-i}\int_{x\in\R_+}\frac{1}{x^2-2zx-1}\left(\frac{x}{(x+i)^2}+\frac{1}{(x-i)}\right)\d x
%\end{dmath*}
%
%We get that as soon as $z\neq i$,
%\begin{dmath*}
%-\pi P_0(F_1)(z)
	=\frac{4}{z-i}\int_{x\in\R_+}\frac{1}{x^2-2zx-1}\frac{2x^2+ix-1}{(x+i)^2(x-i)}\d x.
\end{dmath*}

We apply the residue formula to get an exact expression for $-\pi P_0(F_1)$. We consider the rational function $R(x)=\frac{1}{x^2-2zx-1}\frac{2x^2+ix-1}{(x+i)^2(x-i)}$. Fix
$$
x_{\pm}:=z\pm\sqrt{z^2+1}.
$$
Since $z\neq i$, the rational function $R$ admits three simple poles $x_+$, $x_-$ and $i$ and one double pole $-i$. We calculate the residue
\begin{align*}
\Res_{x_+}(R(w)\log_0(w))
	&=\frac{2x_+^2+ix_+-1}{(x_+-x_-)(x_++i)^2(x_+-i)}\log_0(x_+).
\end{align*}
The identities $x_+^2=2zx_++1$, $(x_++i)^2=2(z+i)x_+$, $x_+x_-=-1$ and $(x_+-i)(x_--i)=-2i(z-i)$ enable to simplify% $2x_+^2+ix_+-1=(4z+i)x_++1$ and $\frac{1}{(x_+-x_-)(x_++i)^2(x_+-i)}=-x_-(x_-+i)/(8i(z^2+1)^{\frac{3}{2}})$, so
\begin{align*}
\Res_{x_+}(R(w)\log_0(w))
	&=i\frac{(z+i)x_--2iz}{4(z^2+1)^{\frac{3}{2}}}\log_0(x_+).
\end{align*}
The same arguments lead to
\begin{align*}
\Res_{x_-}(R(w)\log_0(w))
	&=\frac{2x_-^2+ix_--1}{(x_--x_+)(x_-+i)^2(x_--i)}\log_0(x_-)\\
	&=-i\frac{(z+i)x_+-2iz}{4(z^2+1)^{\frac{3}{2}}}\log_0(x_-).
\end{align*}
Moreover, the residue at the pole $i$ is
$$
\Res_{i}(R(w)\log_0(w))
	=\frac{1}{-1-2zi-1}\frac{-4}{-4}\frac{i\pi}{2}
	=-\frac{\pi}{4(z-i)}.
$$
Finally, the residue a the double pole $-i$ is
\begin{align*}
\Res_{-i}(R(w)\log_0(w))
	=&\Big[\frac{1}{x(x^2-2zx-1)}\frac{2x^2+ix-1}{(x-i)}+\frac{4x+i}{(x^2-2zx-1)(x-i)}\log_0(x)\\
	&-\frac{2x^2+ix-1}{(x^2-2zx-1)(x-i)}\Big(\frac{1}{x-x_+}+\frac{1}{x-x_-}+\frac{1}{x-i}\Big)\log_0(x)\Big]_{x=-i},
\end{align*}
which simplifies as
$$
\Res_{-i}(R(w)\log_0(w))
	=-\frac{i}{2(z+i)}.
$$
%
%Therefore
%\begin{align*}
%-\int_{x\in\R_+} &\frac{1}{x^2-2zx-1} \frac{2x^2+ix-1}{(x+i)^2(x-i)}\d x\\
%	&=\frac{2x_+^2+ix_+-1}{(x_+-x_-)(x_++i)^2(x_+-i)}\log_0(x_+)
%	+\frac{2x_-^2+ix_--1}{(x_--x_+)(x_-+i)^2(x_--i)}\log_0(x_-)\\
%	&+\frac{1}{-1-2zi-1}\frac{-4}{-4}\frac{i\pi}{2}
%	+\Big[\Big(\frac{4x+i}{(x^2-2zx-1)(x-i)}-\frac{2x^2+ix-1}{(x^2-2zx-1)(x-i)}\\
%	&\Big(\frac{1}{x-x_+}+\frac{1}{x-x_-}+\frac{1}{x-i}\Big)\Big)\log_0(x)
%	+\frac{1}{x(x^2-2zx-1)}\frac{2x^2+ix-1}{(x-i)}\Big]_{x=-i}
%\end{align*}

We conclude that
\begin{align*}
\int_{x\in\R_+}\frac{1}{x^2-2zx-1}\frac{2x^2+ix-1}{(x+i)^2(x-i)}\d x
	=&-i\frac{(z+i)x_--2iz}{4(z^2+1)^{\frac{3}{2}}}\log_0(x_+)
	+i\frac{(z+i)x_+-2iz}{4(z^2+1)^{\frac{3}{2}}}\log_0(x_-)\\
	&+\frac{\pi}{4(z-i)}
	+\frac{i}{2(z+i)},
\end{align*}
therefore, as soon as $z\neq i$,
\begin{align*}
-\pi P_0(F_1)(z)
	=-i\frac{(z+i)x_--2iz}{(z-i)(z^2+1)^{\frac{3}{2}}}\log_0(x_+)
	+i\frac{(z+i)x_+-2iz}{(z-i)(z^2+1)^{\frac{3}{2}}}\log_0(x_-)
	+\frac{\pi}{(z-i)^2}
	+\frac{2i}{z^2+1},
\end{align*}
with
$$
x_{\pm}=z\pm\sqrt{z^2+1}.
$$
Note that $\log_0(x_{\pm})$ is well defined because if $z\pm\sqrt{z^2+1}$ is real, then $z$ should be real, which we exclude by assumption ($z\in\C_+$).
%It simplifies as
%\begin{dmath*}
%-\pi P_0(F_1)(z)
%	=\frac{ i (z+i)(2 z+i \pi  z-2 i-\pi )}{\left(z^2+1\right)^{2}}
%	+\frac{i (z+i)}{\left(z^2+1\right)^{5/2}}
%	\left(\left(z^2+\left(\sqrt{z^2+1}-i\right) z+i \sqrt{z^2+1}\right) \log \left(\sqrt{z^2+1}-z\right)+\left(-z^2+\left(\sqrt{z^2+1}+i\right) z+i \sqrt{z^2+1}\right) \log \left(-\sqrt{z^2+1}-z\right)\right)
%\end{dmath*}
%for all $z\neq i$.
%\begin{dmath*}
%-\pi P_0(F_1)(z)
%	=-\frac{2\pi}{(z-i)^2}
%	+\frac{4}{z-i}\left(\pi-i\frac{\pi}{2}\frac{1}{(x_+-i)(x_--i)}+\frac{1}{2\sqrt{z^2+1}}\left(\frac{\log_0(x_+)}{x_+-i}-\frac{\log_0(x_-)}{x_--i}\right)\right)
%	+\frac{4}{z-i}\left(\frac{1}{2\sqrt{z^2+1}}\left(-\frac{x_+\log(x_+)}{(x_++i)^2}+\frac{x_-\log(x_-)}{(x_-+i)^2}\right)+\frac{i\pi/2-1}{(i+x_+)(i+x_-)}+\pi\frac{i+z}{(i+x_+)^2(i+x_-)^2}\right).
%\end{dmath*}

$\bullet$ We apply the same strategy for $(F_1+F_2)(z)=\frac{2i}{(z+i)^2}\frac{1}{|z+i|}$. We have
$$
-\frac{\pi}{2i} P_0(F_1+F_2)(s+it)
	=\int_{v\in \R_+}\int_{u\in\R}\frac{1}{(s-u+i(t+v))^2}\frac{1}{(u+i(v+1))^2}\frac{1}{\sqrt{u^2+(v+1)^2}}\d u\d v.
$$
With the change of variables $u=(v+1)\sh(y)$, $\d u=(v+1)\ch(y)\d y=\sqrt{u^2+(v+1)^2}\d y$, we get
$$
\frac{i\pi}{2}P_0(F_1+F_2(z))
	=\int_{v\in \R_+}\int_{y\in\R}\frac{1}{(s-(v+1)\sh(y)+i(t+v))^2}\frac{1}{(\sh(y)+i)^2(v+1)^2}\d y\d v.
$$
Now apply the change of variables $x=\exp(y)$, $\d x=\exp(y)\d y$~:
\begin{align*}
\frac{i\pi}{2}P_0 & (F_1+F_2(z))\\
	&=\int_{v\in \R_+}\int_{y\in\R}\frac{16\e^{4y}}{\big(2(s+i(t+v))\e^y-(v+1)\e^{2y}+(v+1)\big)^2}\frac{1}{(\e^{2y}-1+2i\e^y)^2(v+1)^2}\d y\d v\\
	&=\int_{v\in \R_+}\int_{x\in\R_+}\frac{16x^3}{\big(2(s+i(t+v))x-(v+1)x^2+(v+1)\big)^2}\frac{1}{(x^2-1+2ix)^2(v+1)^2}\d x\d v.
\end{align*}
Thanks to Fubini's theorem, one can exchange the integral signs so that
\begin{align*}
\frac{i\pi}{2}P_0 & (F_1+F_2(z))\\
	&=\int_{x\in\R_+}\frac{16x^3}{(x+i)^4}\int_{v\in \R_+}\frac{1}{\big(2(s+it)x-x^2+1+v(-x^2+2ix+1)\big)^2}\frac{1}{(v+1)^2}\d x\d v\\
	&=\int_{x\in\R_+}\frac{16x^3}{(x+i)^4(x-i)^4}\int_{v\in \R_+}\frac{1}{\big(\frac{x^2-2(s+it)x-1}{x^2-2ix-1}+v\big)^2}\frac{1}{(v+1)^2}\d v\d x.
\end{align*}

We apply the consequence of the residue formula to $R(v)=\frac{1}{\big(\frac{x^2-2zx-1}{x^2-2ix-1}+v\big)^2}\frac{1}{(v+1)^2}$, $z=s+it$. We fix $\lambda=\frac{x^2-2zx-1}{x^2-2ix-1}$ as in the first point.

Assume that $z\neq i$, therefore $\lambda\neq 1$. The residue at the double pole $-1$ is equal to
\begin{align*}
\Res_{-1}(R(w)\log_0(w))
	&=\frac{\d}{\d w}\left(\frac{1}{(\lambda+w)^2}\log_0(w)\right)\Big|_{w=-1}\\
	&=\left(\frac{1}{w(w+\lambda)^2}-2\frac{\log_0(w)}{(w+\lambda)^3}\right)\Big|_{w=-1}\\
	&=\frac{-1}{(-1+\lambda)^2}-2\frac{1}{(\lambda-1)^3}i\pi\\
	&=-\frac{(x-i)^4}{4x^2(z-i)^2}+\frac{i\pi}{4}\frac{(x-i)^6}{x^3(z-i)^3}.
\end{align*}
%since
%%$$
%%\lambda=1-2(s+it-i)\frac{x}{(x-i)^2},
%%$$
%$$
%%\frac{1}{\lambda-1}=-\frac{1}{2}\frac{(x-i)^2}{x}\frac{1}{s+it-i},\quad
%\frac{1}{(\lambda-1)^2}=\frac{1}{4}\frac{(x-i)^4}{x^2}\frac{1}{(s+it-i)^2},\quad \textnormal{ and } \quad
%\frac{1}{(\lambda-1)^3}=-\frac{1}{8}\frac{(x-i)^6}{x^3}\frac{1}{(z-i)^3}.
%$$
The residue at the double pole $-\lambda$ is
\begin{align*}
\Res_{-\lambda}(R(w)\log_0(w))
	&=\frac{\d}{\d w}\left(\frac{1}{(w+1)^2}\log_0(w)\right)\Big|_{w=-\lambda}\\
	&=\left(\frac{1}{w(w+1)^2}-2\frac{\log_0(w)}{(w+1)^3}\right)\Big|_{w=-\lambda}\\
	&=\frac{-1}{\lambda(\lambda-1)^2}-2\frac{-\log_0(-\lambda)}{(\lambda-1)^3}\\
	&=-\frac{(x-i)^6}{x^2-2xz-1}\frac{1}{4x^2(z-i)^2}-\frac{(x-i)^6}{4x^3(z-i)^3}\log_0\left(-1+\frac{2(z-i)x}{(x-i)^2}\right).
\end{align*}
Therefore,
\begin{align*}
\frac{16x^3}{(x+i)^4(x-i)^4} \int_{\R_+}R(v)\d v
	=&\frac{4x}{(x+i)^4(z-i)^2}-\frac{4i\pi(x-i)^2}{(x+i)^4(z-i)^3}
	+\frac{4(x-i)^2x}{(x+i)^4(x^2-2xz-1)(z-i)^2}\\
	&+\frac{4(x-i)^2}{(x+i)^4(z-i)^3}\log_0\left(-1+\frac{2(z-i)x}{(x-i)^2}\right).
\end{align*}

We now integrate again in $x$ to get that for all $z\neq i$,
\begin{align*}
\frac{i\pi}{2}P_0(F_1+F_2(z))
	=\frac{-2 (z-2 i)}{3 (z-i) (z+i)^2}
	-\frac{(1+2 i z)\left( \log_0 (z+\sqrt{z^2+1}) -\log_0 (z-\sqrt{z^2+1})\right)}{3 (z-i) (z+i)^2 \sqrt{z^2+1}}.
\end{align*}

$\bullet$ We do the last computation for $(F_1+F_3)(z)=\frac{-2i}{(z+i)(\overline{z}-i)}\frac{1}{|z+i|}=\frac{-2i}{|z+i|^3}$. We have
$$
-\frac{\pi}{-2i} P_0(F_1+F_3)(s+it)
	=\int_{v\in \R_+}\int_{u\in\R}\frac{1}{(s-u+i(t+v))^2}\frac{1}{(u^2+(v+1)^2)^{3/2}}\d u\d v.
$$
Apply the change of variables $u=(v+1)\sh(y)$, $\d u=(v+1)\ch(y)\d y=\sqrt{u^2+(v+1)^2}\d y$, then
$$
-\frac{i\pi}{2} P_0(F_1+F_3)(s+it)
	=\int_{v\in \R_+}\int_{y\in\R}\frac{1}{(s-(v+1)\sh(y)+i(t+v))^2}\frac{1}{\ch(y)^2(v+1)^2}\d y\d v.
$$
We now put $x=\exp(y)$, $\d x=\exp(y)\d y$~:
\begin{align*}
-\frac{i\pi}{2}  P_0 & (F_1+F_3)(s+it)\\
	&=\int_{v\in \R_+}\int_{y\in\R}\frac{16\e^{4y}}{\big(2(s+i(t+v))\e^y-(v+1)\e^{2y}+(v+1)\big)^2}\frac{1}{(\e^{2y}+1)^2(v+1)^2}\d y\d v\\
	&=\int_{v\in \R_+}\int_{x\in\R_+}\frac{16x^3}{\big(2(s+i(t+v))x-(v+1)x^2+(v+1)\big)^2}\frac{1}{(x^2+1)^2(v+1)^2}\d x\d v.
\end{align*}
Thanks to Fubini's theorem, one can exchange the integral signs so that
\begin{align*}
-\frac{i\pi}{2} P_0 & (F_1+F_3)(s+it)\\
	&=\int_{x\in\R_+}\frac{16x^3}{(x+i)^2(x-i)^2}\int_{v\in \R_+}\frac{1}{\big(2(s+it)x-x^2+1+v(-x^2+2ix+1)\big)^2}\frac{1}{(v+1)^2}\d x\d v\\
	&=\int_{x\in\R_+}\frac{16x^3}{(x+i)^2(x-i)^6}\int_{v\in \R_+}\frac{1}{\big(\frac{x^2-2(s+it)x-1}{x^2-2ix-1}+v\big)^2}\frac{1}{(v+1)^2}\d v\d x.
\end{align*}

We have already done the computation of the integral in the $v$ variable in the latter point. We proved that putting $R(v)=\frac{1}{\big(\frac{x^2-2(s+it)x-1}{x^2-2ix-1}+v\big)^2}\frac{1}{(v+1)^2}$,
\begin{align*}
\frac{16x^3}{(x+i)^4(x-i)^4} \int_{\R_+}R(v)\d v
	=&\frac{4x}{(x+i)^4(z-i)^2}-\frac{4i\pi(x-i)^2}{(x+i)^4(z-i)^3}
	+\frac{4(x-i)^2x}{(x+i)^4(x^2-2xz-1)(z-i)^2}\\
	&+\frac{4(x-i)^2}{(x+i)^4(z-i)^3}\log_0\left(-1+\frac{2(z-i)x}{(x-i)^2}\right).
\end{align*}
%\begin{align*}
%\int_{v\in \R_+}\frac{1}{\big(\frac{x^2-2(s+it)x-1}{x^2-2ix-1}+v\big)^2}\frac{1}{(v+1)^2}\d v
%	=&\frac{(x-i)^4}{4x^2(z-i)^2}
%	-\frac{i\pi}{4}\frac{(x-i)^6}{x^3(z-i)^3}
%	+\frac{(x-i)^6}{4(x^2-2xz-1)x^2(z-i)^2}\\
%	&+\frac{(x-i)^6}{4x^3(z-i)^3}\log_0\left(-1+\frac{2(z-i)x}{(x-i)^2}\right).
%\end{align*}
Therefore,
\begin{align*}
\frac{16x^3}{(x+i)^2(x-i)^6}\int_{v\in \R_+}R(v)\d v
	= & \frac{4x}{(x+i)^2(x-i)^2(z-i)^2}
	-\frac{4i\pi}{(x+i)^2(z-i)^3}\\
	&+\frac{4x}{(x+i)^2(x^2-2xz-1)(z-i)^2}\\
	&+\frac{4}{(x+i)^2(z-i)^3}\log_0\left(-1+\frac{2(z-i)x}{(x-i)^2}\right).
\end{align*}

We now integrate again in $x$ to get that for all $z\neq i$,
\begin{align*}
-\frac{i\pi}{2}P_0(F_1+F_3)(z)
	=\frac{2 (z+2 i)}{(z-i)^2 (z+i)}
	+\frac{(1-2 i z) \left(\log_0 (z+\sqrt{z^2+1})-\log_0 (z-\sqrt{z^2+1})\right)}{(z-i)^2 (z+i) \sqrt{z^2+1}}
\end{align*}

$\bullet$ Now we can compute numerically $\langle P_0F_j,F_j\rangle_{L^2(\C_+)}$, $j=1,2,3$, the error estimate for every term can be chosen almost arbitrarily now that we know $P_0F_j$.

We set $\varepsilon=10^{-10}$ and we deduce
$$
|\langle \pi P_0F_1,F_1\rangle_{L^2(\C_+)}-2|\leq \varepsilon,
$$
$$
|\langle \pi P_0F_2,F_2\rangle_{L^2(\C_+)}-\frac{10}{9}|\leq \varepsilon
$$
and
$$
|\langle \pi P_0F_3,F_3\rangle_{L^2(\C_+)}-0.1303955989|\leq \varepsilon.
$$

\newpage

\bibliography{mybib}{}
\bibliographystyle{abbrv}

\Addresses
\end{document}